\documentclass[12pt, reqno]{amsart}

\usepackage{
amsmath,
amssymb,
amsthm,
mathrsfs,
amsfonts,
ytableau,
enumitem,
comment,
upgreek,
soul,
yhmath,
mathtools,
shuffle,
kotex,
array,
caption,
tabularx,
bm
}

\usepackage[linktocpage]{hyperref}

\hypersetup{
    colorlinks=true,
    linkcolor = blue,
    citecolor=magenta,
    urlcolor=cyan,
}

\usepackage[capitalise, nameinlink]{cleveref}

\crefname{equation}{}{}
\crefname{figure}{{\sc Figure}}{{\sc Figure}}

\usepackage[euler]{textgreek}

\usepackage{tikz,tikz-cd}

\ytableausetup{mathmode, boxsize=1.2em}

\newtheorem{theorem}{Theorem}[section]
\newtheorem{proposition}[theorem]{Proposition}
\newtheorem{lemma}[theorem]{Lemma}
\newtheorem{corollary}[theorem]{Corollary}

\newtheorem*{claim*}{Claim}

\theoremstyle{definition}
\newtheorem{algorithm}[theorem]{Algorithm}
\newtheorem{example}[theorem]{Example}
\newtheorem{definition}[theorem]{Definition}

\numberwithin{equation}{section} \numberwithin{figure}{section}
\numberwithin{table}{section}

\def\Z{\mathbb Z}
\def\C{\mathbb C}

\newcommand{\nc}{\newcommand}

\nc{\ra}{\rightarrow}
\nc{\la}{\leftarrow}
\nc{\SG}{\mathfrak{S}}
\nc{\ch}{\mathrm{ch}}
\nc{\id}{\mathrm{id}}
\nc{\set}{\mathrm{set}}
\nc{\comp}{\mathrm{comp}}
\nc{\setB}{\set_B}
\nc{\compB}{\comp_B}
\nc{\QSym}{\mathrm{QSym}}
\nc{\QSymB}{\QSym^B}
\nc{\Des}{\mathrm{Des}}
\nc{\Asc}{\mathrm{Asc}}
\nc{\bfgam}{\boldsymbol{\upgamma}}
\nc{\inc}{\mathrm{inc}}
\nc{\modHn}{\text{\bf mod-}H_n(0)}
\nc{\modHBn}{\text{\bf mod-}H^B_n(0)}
\nc{\modHW}{\text{\bf mod-}H_W(0)}
\nc{\GHb}{\calG_0(H_\bullet(0))}
\nc{\GHBb}{\calG_0(H^B_\bullet(0))}
\nc{\bulB}{\cdot_B}
\nc{\bracket}[1]{\bm{\lbrack} #1 \bm{\rbrack}}
\nc{\LS}{\mathsf{LS}}
\nc{\tQ}{\widetilde{Q}}
\nc{\stB}{\mathrm{st}^B}
\nc{\ab}{\mathsf{ab}}
\nc{\conc}{\mathsf{conc}}
\nc{\upper}{\mathsf{upper}}
\nc{\sign}{\mathrm{sign}}
\nc{\opi}{\overline{\pi}}
\nc{\Conv}{\mathrm{Conv}}
\nc{\symdiff}{\, \Delta \,}
\nc{\poset}{\mathsf{poset}}

\nc{\bfA}{\mathbf{A}}
\nc{\bfF}{\mathbf{F}}
\nc{\bfx}{\mathbf{x}}

\nc{\calA}{\mathcal{A}}
\nc{\calB}{\mathcal{B}}
\nc{\calC}{\mathcal{C}}
\nc{\calF}{\mathcal{F}}
\nc{\calG}{\mathcal{G}}
\nc{\calI}{\mathcal{I}}
\nc{\calL}{\mathcal{L}}
\nc{\calR}{\mathcal{R}}

\nc{\tcalF}{\widetilde{\mathcal{F}}}

\nc{\fkL}{\mathfrak{L}}

\nc{\rmst}{\mathrm{st}}

\nc{\sfA}{\mathsf{A}}
\nc{\sfM}{\mathsf{M}}
\nc{\sfP}{\mathsf{P}}
\nc{\sfR}{\mathsf{R}}
\nc{\sfd}{\mathsf{d}}
\nc{\sft}{\mathsf{t}}

\nc{\scrA}{\mathscr{A}}
\nc{\scrB}{\mathscr{B}}
\nc{\scrP}{\mathscr{P}}
\nc{\scrS}{\mathscr{S}}

\title[Poset modules of the $0$-Hecke algebras of type $B$]{Poset modules of the $0$-Hecke algebras of type $B$}

\author[Y.-H. Kim]{Young-Hun Kim}
\address{Center for Quantum structures in Modules and Spaces, Seoul National University, Seoul 08826, Republic of Korea \& Department of Mathematics, Seoul Women’s University, Seoul 01797, Republic of Korea}
\email{ykim.math@gmail.com}

\author[D. Searles]{Dominic Searles}
\address{
Department of Mathematics and Statistics, University of Otago, 730 Cumberland St., Dunedin 9016, New Zealand
}
\email{dominic.searles@otago.ac.nz}

\keywords{Partially ordered set (poset), 0-Hecke algebra, Quasisymmetric function, Weak Bruhat order}

\date{January 30, 2026}
\subjclass[2020]{20C08, 06A07, 05E10, 05E05}

\begin{document}
\maketitle

\begin{abstract}
In 2001, Chow developed the theory of the $B_n$ posets $P$ and the type $B$ $P$-partition enumerators $K^B_P$.  
To provide a representation-theoretic interpretation of $K^B_P$, we define the poset modules $M^B_P$ of the 0-Hecke algebra $H_n^B(0)$ of type $B$ by endowing the set of type-$B$ linear extensions of $P$ with an $H_n^B(0)$-action.
We then show that the Grothendieck group of the category associated to type-$B$ poset modules is isomorphic to the space of type $B$ quasisymmetric functions as both a $\QSym$-module and comodule, where $\QSym$ denotes the Hopf algebra of quasisymmetric functions.
Considering an equivalence relation on $B_n$ posets, where two posets are equivalent if they share the same set of type-$B$ linear extensions, we identify a natural representative of each equivalence class, which we call a distinguished poset.
We further characterize the distinguished posets whose sets of type-$B$ linear extensions form intervals in the right weak Bruhat order on the the hyperoctahedral groups. 
Finally, we discuss the relationship among the categories associated to type-$B$ weak Bruhat interval modules, $B_n$ poset modules, and finite-dimensional $H_n^B(0)$-modules.
\end{abstract}

\section{Introduction}

The theory of \emph{$P$-partitions}, for $P$ a poset on $[n] := \{1, 2, \ldots , n\}$, has significant application in algebraic combinatorics and has been widely studied. Systematically developed by Stanley \cite{Stanley72}, this theory was later used by Gessel \cite{Gessel84} to introduce the Hopf algebra $\QSym$ of \emph{quasisymmetric functions.} The generating function $K_P$ for the $P$-partitions associated to a poset $P$ is a quasisymmetric function, known as the \emph{$P$-partition enumerator}, and the functions $K_E$ for linear posets $E$ form precisely the famed basis of \emph{fundamental quasisymmetric functions} for $\QSym$. The fundamental theorem of $P$-partitions \cite{Stanley72} states that the set of $P$-partitions for a poset $P$ is the disjoint union of the sets of $E$-partitions for the linear extensions $E$ of $P$. In particular, $K_P$ is a sum of fundamental quasisymmetric functions with nonnegative coefficients. A survey of developments in the theory of $P$-partitions can be found in \cite{Gessel15}.

In \cite{DKLT96}, Duchamp–Krob–Leclerc–Thibon established a close connection between quasisymmetric functions and the representation theory of type $A$ $0$-Hecke algebras $H_n(0)$. 
Specifically, they defined an isomorphism, known as the \emph{quasisymmetric characteristic} $\ch$, from the Grothendieck group $\GHb$ associated to type $A$ $0$-Hecke algebras to $\QSym$. Under this isomorphism, the images of the simple $H_n(0)$-modules are precisely the fundamental quasisymmetric functions.  Equipping $\GHb$ with the product arising from induction of modules, $\ch$ is in fact an isomorphism of rings \cite{DKLT96}, and it was subsequently shown in \cite{BL09} that $\ch$ is an isomorphism of Hopf algebras.

Since many notable families of quasisymmetric functions have the property that their elements expand positively in fundamental quasisymmetric functions, this connection motivated research in constructing $H_n(0)$-modules whose quasisymmetric characteristics are elements of such families, including \cite{BS22, BBSSZ15, CKNO22, KY24, NSvWVW22, Searles19, TvW15, TvW19}. In the case of $P$-partition enumerators, for each poset $P$ on $[n]$ Duchamp--Hivert--Thibon \cite{DHT02} defined an $H_n(0)$-module whose underlying space is spanned by the linear extensions of $P$, and proved the quasisymmetric characteristics of these \emph{poset modules} are precisely the functions $K_P$. Moreover, they showed the induction product of two poset modules is itself a poset module. Recently, Choi--Kim--Oh \cite{CKO24} proved that restriction of a poset module is a direct sum of tensor products of poset modules, and consequently proved that the Grothendieck group associated to poset modules is isomorphic to $\QSym$ as Hopf algebras. 
The images of poset modules, their induction and restriction under certain \emph{(anti-)automorphism twist functors} \cite{Fayers05} on the category of finite-dimensional right $H_n(0)$-modules, were also determined in \cite{CKO24}.

Theory of $P$-partitions, including the fundamental theorem, was extended to finite Coxeter type 
by Reiner \cite{Reiner92, Reiner93} in terms of root systems, and to type $B$ by Chow \cite{Chow01} in terms of $B_n$ posets. 
A \emph{$B_n$ poset} \cite{Chow01} is a poset on $[-n,n] := \{-n, \ldots , -1, 0 , 1, \ldots , n\}$ satisfying the additional condition that $i\preceq j$ if and only if $-j\preceq -i$; in the type $B$ context, $P$ will denote a $B_n$ poset. Using type $B$ $P$-partition enumerators $K^B_P$, Chow defined the ring $\QSym^B$ of \emph{type $B$ quasisymmetric functions} \cite{Chow01}. The functions $K^B_E$ for linear $B_n$ posets $E$ are the \emph{type B fundamental quasisymmetric functions}. 

Subsequently, Huang \cite{Huang17} extended the notion of quasisymmetric characteristic to classical type. In type $B$, there is an action \cite{Huang17} and a coaction \cite{Chow01} of $\QSym$ on $\QSym^B$, and an action and a coaction \cite{Huang17} of $\GHb$ on the Grothendieck group $\GHBb$ associated to type $B$ $0$-Hecke algebras. 
Under the type $B$ quasisymmetric characteristic, $\GHBb$ is isomorphic to $\QSym^B$ as both $\QSym$-modules and $\QSym$-comodules, and the images of the simple modules of type $B$ $0$-Hecke algebras are the type $B$ fundamental quasisymmetric functions \cite{Huang17}.  

In this paper, we develop the theory of the $B_n$ posets and type $B$ $P$-partitions of \cite{Chow01} in terms of the type $B$ $0$-Hecke algebras $H_n^B(0)$. For each $B_n$ poset $P$, we define a right $H_n^B(0)$-module, called a \emph{$B_n$ poset module}, whose underlying space is spanned by the \emph{type $B$ linear extensions of $P$}: the linear extensions of $P$ that are themselves $B_n$ posets. We prove that the type $B$ quasisymmetric characteristic of the $B_n$ poset module corresponding to $P$ is precisely the type $B$ $P$-partition enumerator $K^B_P$.

We obtain combinatorial formulas for an analogue of induction product for a $B_m$ poset module and a poset module for $H_n(0)$, and for the restriction of a $B_n$ poset module. 
As a corollary, we show the Grothendieck group $\bigoplus_{n\ge 0}\calG_0(\scrP^B(n))$ associated to $B_n$ poset modules is isomorphic to $\QSym^B$ as $\QSym$-modules and comodules. 
Moreover, via the type $B$ quasisymmetric characteristic we obtain formulas for the $\QSym$ action of a type $A$ $P$-partition enumerator on a type $B$ $P$-partition enumerator and the $\QSym$ coaction on a type $B$ $P$-partition enumerator. This generalises formulas given by Huang \cite{Huang17} for the $\QSym$ action and coaction on $\QSym^B$ in terms of (type $B$) fundamental quasisymmetric functions.  
We then consider (anti-)automorphism twists \cite{Fayers05} of the category of finite-dimensional right $H_n^B(0)$-modules. We derive formulas for (anti)-automorphism twists of $B_n$ poset modules, and of the associated inductions and restrictions, analogous to type $A$ results of Choi--Kim--Oh \cite{CKO24}.

Linear extensions of a poset on $[n]$ can be naturally interpreted as permutations on $[n]$; similarly, type $B$ linear extensions of $B_n$ posets are naturally associated to signed permutations. While a poset on $[n]$ is uniquely determined by its linear extensions, distinct $B_n$ posets can have the same set of type $B$ linear extensions. Defining two $B_n$ posets to be equivalent if they have the same set of type $B$ linear extensions, we identify a natural representative for each equivalence class; we call these $B_n$ posets \emph{distinguished}. 
As a result, each $B_n$ poset module is isomorphic to the $B_n$ poset module for a unique distinguished poset. 

Bj\"orner--Wachs \cite{BW91} introduced the concept of \emph{regular posets} on $[n]$. They proved that a poset is regular if and only if the set of its linear extensions is an interval in the right weak Bruhat order on the symmetric group $\SG_n$, and that every interval in right weak Bruhat interval is the set of linear extensions of a regular poset.
We define regular $B_n$ posets in an analogous manner, and prove a distinguished $B_n$ poset is regular if and only if the set of its type $B$ linear extensions is an interval in the right weak Bruhat order on the hyperoctahedral group $\SG^B_n$, and that every interval in right weak Bruhat interval in $\SG^B_n$ is the set of linear extensions of a regular $B_n$ poset.
It was noted in \cite{CKO24} that the modules for regular posets on $[n]$ are instances of the \emph{weak Bruhat interval modules} of \cite{JKLO22}; similarly, the $B_n$ poset modules for regular $B_n$ posets are isomorphic to type $B$ weak Bruhat interval modules. 
Finally we show that the Grothendieck group associated to modules for regular $B_n$ posets (or equivalently, weak Bruhat interval modules in type $B$) is isomorphic to $\QSym^B$, and thus also isomorphic to $\bigoplus_{n\ge 0}\calG_0(\scrP^B(n))$, as $\QSym$-modules and $\QSym$-comodules. This establishes a type $B$ analogue of a result of \cite{JKLO22} in type $A$. 

This paper is organized as follows. In \cref{Sec: Preliminaries}, we provide necessary background on quasisymmetric functions of types $A$ and $B$, $0$-Hecke algebras of types $A$ and $B$, poset modules for $0$-Hecke algebras of type $A$, and posets and $P$-partitions of type $B$. 
In \cref{Sec: poset modules}, we define $B_n$ poset modules and prove their type $B$ quasisymmetric characteristics are type $B$ $P$-partition enumerators. We then give formulas for induction and restriction of $B_n$ poset modules, prove the Grothendieck group associated to $B_n$ poset modules is isomorphic to $\QSym^B$, and determine the behavior of $B_n$ poset modules under (anti)-automorphism twists. 
In \cref{Sec: regular}, we define distinguished $B_n$ posets and regular $B_n$ posets, prove that intervals in right weak Bruhat order on $\SG^B_n$ are characterized by regular $B_n$ posets, and prove the Grothendieck group associated to modules for regular $B_n$ posets is isomorphic to $\QSym^B$.

\section{Preliminaries}\label{Sec: Preliminaries}

For any $m,n \in \Z$, we set $[m,n] := \{x \in \Z \mid m \le x \le n\}$ and $[n] := [1,n]$.
Unless otherwise stated, $m$ and $n$ denote positive integers.

\subsection{Coxeter groups}

Given two letters $\gamma_1, \gamma_2$ and a nonnegative integer $m$, let $[\gamma_1 \mid \gamma_2]_m$ denote the word consisting of $m$ letters alternating between $\gamma_1$ and $\gamma_2$ and finishing with $\gamma_2$.
For example, $[\gamma_1 \mid \gamma_2]_7 = \gamma_2 \gamma_1 \gamma_2 \gamma_1 \gamma_2 \gamma_1 \gamma_2$.

A \emph{Coxeter system} is a pair $(W,S)$ where $W$ is a group and $S$ is a generating set for $W$ such that each $s\in S$ is an involution, and for distinct $s,t \in S$ 
there exists $m(s,t) = m(t,s) \in \{2,3, \ldots \} \cup \{\infty\}$ such that $[s \mid t]_{m(s,t)} = [t \mid s]_{m_{(s,t)}}$ if $m_{s,t}\in \Z$, and there is no relation between $s$ and $t$ otherwise. 
The group $W$ is called a \emph{Coxeter group}. The Coxeter system $(W,S)$ is said to be finite if $W$ is finite.

Given $w \in W$, a \emph{reduced word for $w$} is a minimum-length word over the alphabet $S$ that represents $w$; the number of letters in a reduced word for $w$ is called the \emph{length} of $w$ and is denoted by $\ell(w)$.
Let
\begin{align*}
\Des_R(w) & := \{s \in S \mid \ell(w s) < \ell(w)\} \quad \text{and} \quad
\Asc_R(w) := \{s \in S \mid \ell(w s) > \ell(w)\}.
\end{align*}
Each element in $\Des_R(w)$ (respectively, $\Asc_R(w)$) is called a (right) \emph{descent} of $w$ (respectively, a (right) \emph{ascent} of $w$).
If $W$ is finite, we denote by $w_0$ the unique longest element of $W$.

The \emph{right weak Bruhat order $\preceq_R$} on $W$ is defined to be the partial order whose covering relations $\preceq^c_R$ are given as follows:
\[
w \preceq^c_R w s
\quad \text{if and only if} \quad
s \notin \Des_R(w).
\]
Given $u, w \in W$ such that $u \preceq_R w$, we define
\[
[u, w]_R := \{v \in W \mid u \preceq_R v \preceq_R w\}.
\]
A subset $U \subseteq W$ is called a \emph{right weak Bruhat interval} if $U = [u, w]_R$ for some $u, w \in U$.

The Coxeter group of type $A_{n-1}$ is the symmetric group $\SG_n$ of permutations on $n$ letters.
For $i \in [n-1]$, let $s_i$ be the simple transposition $(i, i+1)$.
For $w \in \SG_n$,
\[
\Des_R(w) = \{s_i \mid i \in [n-1] \text{ with } w(i) > w(i+1)\}.
\]

The Coxeter group of type $B_n$ is the group $\SG^B_n$ of all \emph{signed permutations of $[n]$}, that is, permutations $\sigma$ of $[-n,n]\setminus\{0\}$ such that $\sigma(-i) = -\sigma(i)$ for all $i \in [-n,n]\setminus\{0\}$. 
For convenience, throughout this paper we regard a signed permutation $\sigma$ of $[n]$ as a permutation on $[-n,n]$ by setting $\sigma(0)=0$. 
For $\sigma \in \SG^B_n$, if $\sigma(i) = w_i$ for $i = 1,2,\ldots, n$, we write $\sigma = \bracket{w_1, w_2,\ldots, w_n}$; this is called the \emph{window notation for $\sigma$}.
For $i \in [n-1]$, let $s^B_i := \bracket{1,\ldots, i-1, i+1, i, i+2, \ldots, n}$ and 
$s^B_0 := \bracket{-1, 2, 3, \ldots, n}$.
It is well-known that $\SG^B_n$ is generated by $s^B_0, s^B_1, s^B_2, \ldots, s^B_{n-1}$ subject to the relations
\begin{align*}
\begin{array}{ll}
(s^B_i)^2 = \id &\text{for $0 \le i \le n-1$}, \\[1ex]
s^B_{i} s^B_{i+1} s^B_{i} = s^B_{i+1} s^B_{i} s^B_{i+1} & \text{for $1 \le i < n-1$}, \\[1ex]
s^B_i s^B_j = s^B_j s^B_i & \text{if $|i-j| \ge 2$}, \\[1ex]
s^B_0 s^B_i = s^B_i s^B_0 & \text{if $i \ge 2$}, \\[1ex]
s^B_0 s^B_1 s^B_0 s^B_1 = s^B_1 s^B_0 s^B_1 s^B_0.
\end{array}
\end{align*}

For $w \in \SG^B_n$, by \cite[Proposition 8.1.2]{BB05} we have
\begin{equation}\label{eq: Descent for type B}
\Des_R(w) = \{s^B_i \mid i \in [0,n-1] \text{ with } w(i) > w(i+1)\}.
\end{equation}

\subsection{Quasisymmetric functions and their type $B$ analogues}
\label{subsec: QSym}

A \emph{composition} $\alpha$ of $n$, denoted by $\alpha \models n$, is a finite ordered list of positive integers $(\alpha_1, \alpha_2, \ldots, \alpha_k)$ satisfying $\sum_{i=1}^k \alpha_i = n$.
We call $k =: \ell(\alpha)$ the \emph{length} of $\alpha$ and $n =:|\alpha|$ the \emph{size} of $\alpha$. For convenience, we define the empty composition $\emptyset$ to be the unique composition of size and length $0$.
Given $\alpha = (\alpha_1, \alpha_2, \ldots,\alpha_{\ell(\alpha)}) \models n$ and $I = \{i_1 < i_2 < \cdots < i_l\} \subset [1, n-1]$, 
let 
\begin{align*}
&\set(\alpha) := \{\alpha_1,\alpha_1+\alpha_2,\ldots, \alpha_1 + \alpha_2 + \cdots + \alpha_{\ell(\alpha)-1}\}, \\
&\comp(I) := (i_1,i_2 - i_1,\ldots,n-i_l).
\end{align*}
The set of compositions of $n$ is in bijection with the set of subsets of $[n-1]$ under the correspondence $\alpha \mapsto \set(\alpha)$ (or $I \mapsto \comp(I)$).

For compositions $\alpha = (\alpha_{1}, \alpha_{2}, \ldots, \alpha_{\ell(\alpha)})$ and $\beta= (\beta_{1}, \beta_{2}, \ldots, \beta_{\ell(\beta)})$,
let $\alpha \cdot \beta$ denote the \emph{concatenation}
and $\alpha \odot \beta$ the \emph{near concatenation} of $\alpha$ and $\beta$. 
That is, 
$ \alpha \cdot \beta = (\alpha_1, \alpha_2, \ldots, \alpha_{\ell(\alpha)}, \beta_1, \beta_2, \ldots, \beta_{\ell(\beta)})$ and 
$\alpha \odot \beta = (\alpha_1,\alpha_2,\ldots, \alpha_{k-1},\alpha_{\ell(\alpha)} + 
\beta_1,\beta_2, \ldots, \beta_{\ell(\beta)})$.

Quasisymmetric functions are power series of bounded degree in variables $x_{1},x_{2},x_{3},\ldots$  with coefficients in $\Z$, such that the coefficient of the monomial $x_{1}^{\alpha _{1}}x_{2}^{\alpha _{2}}\cdots x_{k}^{\alpha _{k}}$ is equal to the coefficient of the monomial $x_{i_{1}}^{\alpha _{1}}x_{i_{2}}^{\alpha _{2}}\cdots x_{i_{k}}^{\alpha _{k}}$, for any strictly increasing sequence of positive integers $i_{1} < i_{2} < \cdots < i_{k}$ indexing the variables and any positive integer sequence $(\alpha _{1},\alpha _{2},\ldots,\alpha _{k})$ of exponents.
The ring $\QSym$ of quasisymmetric functions is a graded $\Z$-algebra, decomposing as
\[
\QSym =\bigoplus _{n\geq 0} \QSym_n,
\]
where $\QSym_n$ is the $\Z$-module consisting of all quasisymmetric functions that are homogeneous of degree $n$.

Given a composition $\alpha = (\alpha_1, \alpha_2, \ldots, \alpha_k)$ of $n$, the \emph{monomial quasisymmetric function} $M_\alpha$ is defined by 
\[
M_\alpha = \sum_{i_1 < i_2 < \cdots < i_k} x_{i_1}^{\alpha_1} x_{i_2}^{\alpha_2}
\cdots
x_{i_k}^{\alpha_k}
\]
and the \emph{fundamental quasisymmetric function} $F_\alpha$ is defined by 
\[
F_\alpha = \sum_{\substack{1 \le i_1 \le \cdots \le i_n \\ i_j < i_{j+1} \text{ if } j \in \set(\alpha)}} x_{i_1} \cdots x_{i_n}.
\]
At times, we will index fundamental quasisymmetric functions by subsets of $[n-1]$ instead of compositions. For $I\subseteq [n-1]$, we have
\[
F_I = \sum_{\substack{1 \le i_1 \le \cdots \le i_n \\ i_j < i_{j+1} \text{ if } j \in I}} x_{i_1} \cdots x_{i_n}.
\]

For every nonnegative integer $n$, it is known that both $\{F_\alpha \mid \alpha \models n\}$ and $\{M_\alpha \mid \alpha \models n\}$ are $\Z$-bases for $\QSym_n$.
Define a $\Z$-linear map 
\[
\Delta: \QSym \ra \QSym \otimes \QSym, \quad
F_\alpha \mapsto \sum_{\beta \cdot \gamma = \alpha \text{ or } \beta \odot \gamma = \alpha} F_\beta \otimes F_\gamma.
\]
It is well known that $\QSym$ is not only a ring, but also a Hopf algebra.  
For more information, see \cite[Section 5.1]{GR20}.

A \emph{type-$B$ composition} $\alpha$ of a nonnegative integer $n$, denoted by $\alpha \models_B n$, is a tuple $\alpha = (\alpha_1, \alpha_2, \ldots, \alpha_k)$ of integers such that $\alpha_1 \ge 0$, $\alpha_i >0$ for $i > 1$, and $\sum_{i = 1}^k \alpha_i = n$.
We call $k$ the \emph{length} of $\alpha$ and denote it by $\ell(\alpha)$.
Given $\alpha \models_B n$, define 
\[\setB(\alpha) = \{\alpha_1, \alpha_1+\alpha_2, \ldots, \alpha_1 + \alpha_2 + \cdots + \alpha_{\ell(\alpha)-1}\}.\]
Conversely, given $I = \{i_1 < i_2 < \cdots < i_p\} \subseteq [0,n-1]$, define
\[
\compB(I) = (i_1, i_2 - i_1, \ldots, i_k - i_{k-1}, n - i_k).
\]
One can easily see that the set of type-$B$ compositions of $n$ is in bijection with the set of subsets of $[0,n-1]$ under the correspondence $\alpha \mapsto \set_B(\alpha)$ (or $I \mapsto \compB(I)$).
For $\alpha, \beta \models_B n$, if $\set_B(\beta) \subseteq \set_B(\alpha)$, then we say that \emph{$\alpha$ refines $\beta$}, denoted $\alpha \preceq_B \beta$.

Let $X_{\ge 0 } = \{x_0, x_1, x_2, \ldots\}$ be a set of commutative variables.
Below, we present the type $B$ analogue of $\QSym$, which was first introduced by Chow \cite{Chow01}.
Given $\alpha \models_B n$, 
the \emph{type-$B$ monomial quasisymmetric function} $M^B_\alpha$ is defined by
\[
M^B_\alpha = \sum_{0 < i_2 < i_3 < \cdots < i_n} x_0^{\alpha_1} x_{i_2}^{\alpha_2} x_{i_3}^{\alpha_3} \cdots x_{i_n}^{\alpha_n}.
\]
For $n \in \Z_{\ge 0}$, let
\[
\QSymB_n := \Z \{M^B_\alpha \mid \alpha \models_B n\}
\quad \text{and} \quad 
\QSymB := \bigoplus_{n \ge 0} \QSymB_n.
\]
We call elements in $\QSymB$ \emph{type-$B$ quasisymmetric functions.}

The \emph{type-$B$ fundamental quasisymmetric function} $F^B_\alpha$ is defined by
\[
F^B_\alpha = \sum_{\substack{0 = i_0 \le i_1 \le \cdots \le i_n \\ \text{$i_j < i_{j+1}$ if $j \in \setB(\alpha)$}}} x_{i_1} x_{i_2} \cdots x_{i_n}.
\]
As in type $A$, we will at times index type-$B$ fundamental quasisymmetric functions by subsets of $[0,n-1]$ instead of type-$B$ compositions. For $I\subseteq [0,n-1]$, we have
\[
F^B_I = \sum_{\substack{0 = i_0 \le i_1 \le \cdots \le i_n \\ \text{$i_j < i_{j+1}$ if $j \in I$}}} x_{i_1} x_{i_2} \cdots x_{i_n}.
\]

In \cite[Lemma 2.2.5]{Chow01}, it was shown that for any $\alpha \models_B n$,
\[
F^B_\alpha = \sum_{\beta \preceq_B \alpha} M^B_\beta
\quad \text{and} \quad 
M^B_\alpha = \sum_{\beta \preceq_B \alpha} (-1)^{\ell(\alpha) - \ell(\beta)} F^B_\beta.
\]
In particular, $\{F^B_\alpha \mid \alpha \models_B n\}$ is also a $\Z$-basis for $\QSym^B_n$.

Let $X_{\ge 0} + Y_{>0} = \{x_0,x_1,\ldots\} \cup \{y_1,y_2, \ldots\}$ be a set of commuting variables.
Define a $\Z$-linear map 
\[
\updelta^B: \QSymB \ra \QSymB \otimes \QSym , \quad 
f(X_{\ge 0}) \mapsto f(X_{\ge 0} + Y_{>0}) \quad \text{for $f \in \QSymB$.}
\]
In \cite[Remark 3.2.4]{Chow01}, it was implicitly mentioned that $\updelta^B$ is a right $\QSym$-coaction on $\QSymB$.
In \cite[p.857]{Huang16}, Huang provided the formula
\begin{align}\label{eq: coaction on fund B}
\updelta^B(F^B_\alpha) = \sum_{0 \le i \le n} F^B_{\alpha_{\le i}} \otimes F_{\alpha_{>i}} \quad \text{for all $\alpha \models_B n$.}
\end{align}
Here,
\[
\alpha_{\le i} := (1) \ast_1 (1) \ast_2 \cdots \ast_{i-1} (1)
\quad \text{and} \quad
\alpha_{> i} := (1) \ast_{i+1} (1) \ast_{i+2} \cdots \ast_{n-1} (1),
\]
where $(\ast_1, \ast_2, \ldots, \ast_{n-1}) \in \{\cdot, \odot\}^{n-1}$ such that
$\alpha = (1) \ast_1 (1) \ast_2 \cdots \ast_{n-1} (1)$.

In addition, Huang~\cite[p.415]{Huang17} defined a right $\QSym$-action on $\QSym^B$.
We need some notation to describe this action.

Let $w = w_1, w_2, \ldots, w_n \in \Z^n$ be a word.
\begin{enumerate}[label = $\bullet$]
\item 
For $A \subseteq [n]$, let $w|_A$ be the subword of $w$ consisting of the letters whose absolute values belong to $A$. 

\item 
Let $\mathrm{Neg}(w) := \{i \in [n] \mid w_i < 0\}$. 
Enumerate the sets $\mathrm{Neg}(w)$ and $[n] \setminus \mathrm{Neg}(w)$ in increasing order as $i_1 < i_2 < \cdots i_k$ and $j_1 < j_2 < \cdots < j_{n-k}$, respectively.
Define 
$$
\widehat{w} := -w_{i_k}, -w_{i_{k-1}}, \ldots, -w_{i_1}, w_{j_1}, w_{j_2}, \ldots, w_{j_{n-k}}.
$$

\item
Let $\rmst(w)$ be the unique permutation in $\SG_n$ such that
\[
\rmst(w)(i) < \rmst(w)(j) 
\quad \text{if and only if} \quad
w_i \le w_j \quad \text{for $1 \le i < j \le n$.}
\]
\end{enumerate}
Given $u \in \SG^B_m$ and $v \in \SG_n$, define 
\[
u \shuffle^B v := \left\{
w \in \SG^B_{m+n} \; \middle| \; w|_{[m]} = u \text{ and } \rmst(\widehat{w}|_{[m+1,m+n]}) = v
\right\}.
\]
Then the right $\QSym$-action $\odot^B$ on $\QSym^B$ is defined as follows:
for $I \subseteq [0,m-1]$ and $J \subseteq [n-1]$,
\begin{align}\label{eq: odot action on QSym^B}
F^B_I \odot^B F_J :=
\sum_{w \in u \, \shuffle^B \, v} F^B_{\Des_R(w)},
\end{align}
where $u \in \SG^B_m$ and $v \in \SG_n$ such that $\Des_R(u) = I$ and $\Des_R(v) = J$, respectively.

\subsection{$0$-Hecke algebras and their representation theory}

The $0$-Hecke algebra $H_W(0)$ of a Coxeter system $(W,S)$ is the associative $\C$-algebra generated by $\{\opi_s \mid s \in S\}$ subject to the relations
\[
\opi_s^2 = -\opi_s 
\quad \text{and}\quad
[\opi_s \mid \opi_t]_{m(s,t)} = [\opi_t \mid \opi_s]_{m(s,t)}
\]
for all distinct $s,t \in S$. The algebra $H_W(0)$ is also generated by $\{\pi_s \mid s\in S\}$, where $\pi_s = \opi_s+1$. This generating set has relations $\pi_s^2 = \pi_s$ and $[\pi_s \mid \pi_t]_{m(s,t)} = [\pi_t \mid \pi_s]_{m(s,t)}$. We will typically use the generators $\opi_s$, however we will also consider the generators $\pi_s$ in the context of (anti-)automorphism twists in \cref{subsec:twists}.

In this work, we will focus on the $0$-Hecke algebra $H_n(0)$ of type $A_{n-1}$ and the $0$-Hecke algebra $H^B_n(0)$ of type $B_n$.
For $i \in [n-1]$, let $\opi_i := \opi_{s_i}$ denote a generator of $H_n(0)$, and for $i \in [0, n-1]$, let $\opi^B_i := \opi_{s^B_i}$ denote a generator of $H^B_n(0)$.

When $(W,S)$ is a finite Coxeter system of rank $n$, Norton \cite{Norton79} showed 
the irreducible $H_W(0)$-modules are all one-dimensional, and they are in bijection with the subsets of $S$.
For $I \subseteq S$, let $\bfF^W_I$ be the 
one-dimensional $\C$-vector space spanned by $v_I$ and endow it with the right $H_W(0)$-action as follows: for each $s \in S$,
\[
v_I \cdot \opi_s = \begin{cases}
-v_I & \text{if $s \in I$,} \\
0 & \text{if $s \notin I$.}
\end{cases}
\]
This module is the irreducible $H_W(0)$-module corresponding to $I$.
For convenience, for $\alpha \models n$ (respectively, $\alpha \models_B n$), let $\bfF_\alpha$ (respectively, $\bfF^B_\alpha$) be the irreducible $H_n(0)$-module ($H^B_n(0)$-module) corresponding to $\{s_i \mid i \in \set(\alpha)\}$ 
(respectively, $\{s^B_i \mid i \in \setB(\alpha)\}$).

In \cite[Section 3]{DS25}, Defant--Searles defined a notion of \emph{ascent-compatibility} for subsets $X$ of $W$ in terms of left ascents of elements of $X$, extending the type $A$ notion of \cite{Searles25}, and proved that ascent-compatible subsets give rise to a natural left $0$-Hecke module structure on $\C X$. We now state the analogue of ascent-compatibility for right ascents and right $0$-Hecke modules.

For $u, v \in W$ and $s,t \in S$, the quadruple $(u,v,s,t)$ is said to be \emph{aligned} if $s \in \Asc_R(u)$, $t \in  \Asc_R(v)$, and $u s u^{-1} = v t v^{-1}$. 
We define a subset $X$ of $W$ to be \emph{ascent-compatible} if for all $u,v \in X$ and all $s,t \in S$ such that $(u,v,s,t)$ is aligned, we have $us \in X$ if and only if $vt \in X$.

For $s \in S$, define a linear operator $\overline{\uppi}_s: \C X \ra \C X$ by 
\[
\overline{\uppi}_s(x) = \begin{cases}
-x & \text{if $s \in \Des_R(x)$}, \\
0 & \text{if $s \notin \Des_R(x)$ and $xs \notin X$}, \\
xs & \text{if $s \notin \Des_R(x)$ and $xs \in X$}
\end{cases}
\]
for all $x \in X$ and extending by linearity. The following result is the analogue of \cite[Theorem 3.1]{DS25} for right $0$-Hecke modules, and is proved in an essentially identical manner.

\begin{theorem}{{\rm (cf.} \cite[Theorem 3.1]{DS25}{\rm)}}\label{theorem:ascentcompatible}
\label{thm: ascent-comp and 0-Hecke action}
Let $(W,S)$ be a Coxeter system.
If $X$ is an ascent-compatible subset of $W$, then the linear operators $\overline{\uppi}_s$ define a right action of the $0$-Hecke algebra $H_W(0)$ on $\C X$.
\end{theorem}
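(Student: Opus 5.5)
Since $H_W(0)$ is presented by the generators $\opi_s$ with the quadratic relations $\opi_s^2=-\opi_s$ and the braid relations $[\opi_s\mid\opi_t]_{m(s,t)}=[\opi_t\mid\opi_s]_{m(s,t)}$, the plan is to check that the operators $\overline{\uppi}_s$ on $\C X$ satisfy exactly these relations. Throughout, $\overline{\uppi}_s$ acts on the right, so a word $\overline{\uppi}_{s_1}\cdots\overline{\uppi}_{s_k}$ means apply $\overline{\uppi}_{s_1}$ first.

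\textbf{Quadratic relation.} Let $x\in X$.
\begin{enumerate}[label = $\bullet$]
\item If $s\in\Des_R(x)$, then $x$ goes to $-x$, and applying $\overline{\uppi}_s$ again gives $x$, which equals $-(-x)$.
\item If $s\notin\Des_R(x)$ and $xs\notin X$, both sides vanish.
\item If $xs\in X$, then $s\in\Des_R(xs)$, so $\overline{\uppi}_s(\overline{\uppi}_s(x))=\overline{\uppi}_s(xs)=-xs=-\overline{\uppi}_s(x)$.
\end{enumerate}
This uses only the definition.

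\textbf{Braid relations.} Fix distinct $s,t$ with $m=m(s,t)<\infty$ (there is nothing to check when $m=\infty$) and let $W_{st}=\langle s,t\rangle$, a dihedral group of order $2m$. Each $x$ has a unique factorisation $x=x^J y$, where $x^J$ is minimal in its coset $xW_{st}$ and $y\in W_{st}$, with $\ell(x)=\ell(x^J)+\ell(y)$. Both braid words $[\overline{\uppi}_s\mid\overline{\uppi}_t]_m$ and $[\overline{\uppi}_t\mid\overline{\uppi}_s]_m$ keep $x$ inside $X\cap x^JW_{st}$, up to sign or zero. Descents of $x^Jy$ within $\{s,t\}$ are exactly the descents of $y$ in $W_{st}$. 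So the analysis reduces to the coset $x^JW_{st}$, identified with the dihedral group. This coset is a $2m$-gon graph: two chains from $x^J$ up to $x^Jw_0(s,t)$. The operators act as sign on descents, and on ascents they either move up an edge or kill the vector.

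\textbf{Key step.} I want to show that ascent-compatibility forces $X\cap x^JW_{st}$ to behave like an interval-type set on which the braid words agree. The relevant alignment is the following. Take $u=x^Jy$ and the ascent $s$, and let the word continue along a chain. The edge $(u,us)$ corresponds to the reflection $usu^{-1}$. In the dihedral $2m$-gon, each reflection of $x^JW_{st}$ (conjugated by $x^J$) labels exactly two edges, one on each chain, and these are opposite edges of the polygon. These two edges are aligned quadruples $(u,v,s,t')$. So ascent-compatibility says an edge on one chain stays inside $X$ if and only if its opposite edge does. Starting at $x$ and applying either braid word, the result is either the top element $x^Jw_0(s,t)$ with some sign or $0$. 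The sign is $(-1)^{(\text{number of descents encountered})}$, and this count is determined by $\ell(y)$ together with the word length $m$; it is the same for both words because each word contributes exactly $m-\ell(y)$ ascent steps and $\ell(y)$ descent steps. A step is a descent step if and only if the current element already has that letter as a descent. A path reaching the top must go up along one chain, and for the other word along the other chain. Using the opposite-edge pairing, the edges on the first path lie in $X$ if and only if the opposite edges on the second path do, so both are nonzero together.

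\textbf{Main obstacle.} The hardest part is making the bookkeeping rigorous when $x$ is not the bottom of the coset. Here the braid word starting at $x$ first applies descents, giving signs while staying put, until it reaches an ascent. I would handle this by noting that $\overline{\uppi}_s$ and $\overline{\uppi}_t$ both fix the line through $x$ until the first ascent. After that, the remaining ascent steps climb one chain to the top, and the pairing of edges by reflections (a standard dihedral fact) gives the required equivalence. This follows \cite[Theorem 3.1]{DS25} with left replaced by right.
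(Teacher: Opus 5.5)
You have the right overall plan. Checking $\overline{\uppi}_s^2=-\overline{\uppi}_s$ directly, reducing each braid relation to the coset $x^JW_{st}$, counting signs, and using ascent-compatibility through the fact that the two edges of the $2m$-gon labelled by the same reflection are opposite is the natural route. The paper itself gives no proof beyond pointing to \cite[Theorem 3.1]{DS25}. Your quadratic relation and your sign count are correct.

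The gap is in the key step, and it has two parts. First, the picture ``one word climbs one chain, the other word climbs the other chain'' is only valid when $x=x^J$, i.e.\ $y=e$. If $0<\ell(y)<m$, then $y$ has a unique reduced word, so exactly one of $s,t$ is a descent of $x$. One braid word begins with a sign step and the other begins by ascending. After that, both are forced up the \emph{same} chain $y\to ya\to yab\to\cdots\to w_0(s,t)$, where $a$ is the unique ascent of $y$ and $b$ is the other generator. Hence both words give $0$ or $(-1)^{\ell(y)}x^Jw_0(s,t)$ through the same vertices, with no appeal to ascent-compatibility. If $y=w_0(s,t)$, both give $(-1)^m x$. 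Your ``main obstacle'' paragraph instead invokes the opposite-edge pairing in this case, and that step does not work as written. The pairing matches the part of the chain above $y$ with the bottom part of the other chain, which neither word visits when started at $x$.

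Second, consider the case that genuinely needs the hypothesis, $y=e$. Ascent-compatibility does not say that an edge lies in $X$ if and only if its opposite edge does. It applies only to aligned quadruples whose two lower endpoints are already in $X$, and the pairing reverses the order along the chains. Write $c_0=e,c_1=s,c_2=st,\dots,c_m$ and $d_0=e,d_1=t,d_2=ts,\dots,d_m$ for the two chains, so $c_m=d_m=w_0(s,t)$. Then the edges $(x^Jc_{i-1},x^Jc_i)$ and $(x^Jd_{m-i},x^Jd_{m-i+1})$ carry the same reflection, since $(st)^{i-1}s=(ts)^{m-i}t$ follows from $(ts)^m=e$. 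So you need an induction. Assume $x^Jc_0,\dots,x^Jc_m\in X$. For $j=1,\dots,m$, apply ascent-compatibility to the aligned quadruple with lower endpoints $x^Jc_{m-j}$ (in $X$ by hypothesis) and $x^Jd_{j-1}$ (in $X$ by induction). This lets you deduce $x^Jd_j\in X$ from $x^Jc_{m-j+1}\in X$. The converse is symmetric. With these two repairs the argument is complete.
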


Let $\modHW$ denote the category of finite-dimensional right $H_W(0)$-modules.
The \emph{Grothendieck group} of $\modHW$, denoted by $\calG_0(\modHW)$, is the $\Z$-span of the isomorphism classes of the finite dimensional right $H_W(0)$-modules, modulo the relation $[M] = [M'] + [M'']$ whenever there exists a short exact sequence $0 \ra M' \ra M \ra M'' \ra 0$.
Let
\[
\GHb := \bigoplus_{n \ge 0} \calG_0(\modHn)
\quad \text{and} \quad 
\GHBb := \bigoplus_{n \ge 0} \calG_0(\modHBn).
\]

Given $m,n \in \Z_{\ge 0}$, let us view $H_m(0) \otimes H_n(0)$ as the subalgebra of $H_{m+n}(0)$ generated by $\{\opi_i \mid i \in [m+n-1] \setminus \{m\} \}$.
For an $H_m(0)$-module $M$ and $H_n(0)$-module $N$, we define
\begin{align}\label{eq: product and coproduct type A}
[M] \boxtimes [N] = 
\left[
M \otimes N \uparrow_{H_m(0) \otimes H_n(0)}^{H_{m+n}(0)}
\right]
\quad \text{and} \quad
\Delta([M]) := \sum_{0 \le k \le m} 
\left[
M \downarrow_{H_k(0) \otimes H_{m-k}(0)}^{H_{m}(0)}
\right]. 
\end{align}
It was shown in \cite{BL09} that $\GHb$ has a Hopf algebra structure with the product $\boxtimes$ and coproduct $\Delta$.

Given $m,n \in \Z_{\ge 0}$, let us view $H^B_m(0) \otimes H_n(0)$ as the subalgebra of $H^B_{m+n}(0)$ generated by 
$\{\opi^B_i \mid i \in [0,m+n-1] \setminus \{m\}\}$.
For an $H^B_m(0)$-module $M$ and $H_n(0)$-module $N$, we define 
\begin{align*}
[M] \boxtimes^B [N] = 
\left[
M \otimes N \uparrow_{H^B_m(0) \otimes H_n(0)}^{H^B_{m+n}(0)}
\right]
\quad \text{and} \quad
\updelta^B([M]) := \sum_{0 \le k \le m} 
\left[
M \downarrow_{H^B_k(0) \otimes H_{m-k}(0)}^{H^B_{m}(0)}
\right]. 
\end{align*}
Here, we use the notation $\boxtimes^B$ due to analogy with the type $A$ definition, but it should be noted that $\boxtimes^B$ does not define a product on $\GHBb$, since $[M] \in \GHBb$ but $[N] \in \GHb$. In \cite[Theorem 5(i)]{Huang17}, Huang proved that $\boxtimes^B$ defines a right $\GHb$-action on $\GHBb$, and that $\updelta^B$ defines a right $\GHb$-coaction on $\GHBb$.

In \cite{DKLT96, KT97}, the ring isomorphism
\begin{align}\label{eq: quasi characteristic}
\ch: \GHb \ra \QSym, \quad [\bfF_{\alpha}] \mapsto F_{\alpha} \quad \text{for $\alpha \models n$ and $n \in \Z_{\ge 0}$},
\end{align}
called the \emph{quasisymmetric characteristic}, was introduced.
In fact, $\ch$ is not only a ring isomorphism, but also a Hopf algebra isomorphism \cite{BL09}. 

Huang \cite{Huang17} defined the $\Z$-linear map 
\begin{align}\label{eq: quasi characteristic of type B}
\ch^B: \GHBb \ra \QSymB, \quad [\bfF^B_\alpha] \mapsto F^B_\alpha \quad \text{for $\alpha \models_B n$ and $n \in \Z_{\ge 0}$}
\end{align}
and showed in \cite[Theorem 5(iv)]{Huang17} that $\ch^B$ is not only a $\QSym$-module isomorphism but also a $\QSym$-comodule isomorphism.

The following result is the analogue of \cite[Theorem 3.5]{DS25} for right $0$-Hecke modules, and is proved similarly. It gives the type-$B$ quasisymmetric characteristics of the modules described in \cref{theorem:ascentcompatible}.

\begin{theorem}{{\rm (cf.} \cite[Theorem 3.5]{DS25}{\rm)}}\label{theorem:characteristic}
If $X$ is an ascent-compatible subset of $\SG^B_n$, then
\[\ch^B([\C X]) = \sum_{x\in X}F^B_{\Des_R(x)}.\]
\end{theorem}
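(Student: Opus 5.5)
The strategy is to build a composition series of $\C X$ whose successive quotients are the one-dimensional simple modules $\bfF^B_{\Des_R(x)}$, one for each $x \in X$. The definition of $\ch^B$ in \eqref{eq: quasi characteristic of type B} and additivity in the Grothendieck group then give the formula.

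Choose a linear extension $x_1, x_2, \ldots, x_N$ of the restriction of the right weak Bruhat order $\preceq_R$ to $X$, listed from the top down, so that $x_i \preceq_R x_j$ with $x_i \ne x_j$ forces $j < i$. For $0 \le k \le N$, let $V_k$ be the span of $x_1, \ldots, x_k$.

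First we check that each $V_k$ is an $H^B_n(0)$-submodule of $\C X$ under the action of \cref{theorem:ascentcompatible}. Take $x_i$ with $i \le k$ and $s \in S$.
\begin{enumerate}[label=(\roman*)]
\item If $s \in \Des_R(x_i)$, then $x_i \cdot \overline{\uppi}_s = -x_i \in V_k$.
\item If $s \notin \Des_R(x_i)$ and $x_i s \notin X$, the image is $0$.
\item Otherwise the image is $x_i s$. Here $x_i \preceq^c_R x_i s$, so $x_i s = x_j$ for some $j < i \le k$, and again the image lies in $V_k$.
\end{enumerate}

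So $0 = V_0 \subset V_1 \subset \cdots \subset V_N = \C X$ is a filtration by submodules. Each quotient $V_k/V_{k-1}$ is one-dimensional, spanned by the image of $x_k$. By the same three cases, $\overline{\uppi}_s$ acts on this image by $-1$ when $s \in \Des_R(x_k)$. When $s \notin \Des_R(x_k)$ it acts by $0$: either the image is already $0$, or it is $x_j$ with $j < k$, which lies in $V_{k-1}$. Hence $V_k/V_{k-1} \cong \bfF^{W}_{\Des_R(x_k)}$, which is the module $\bfF^B_\alpha$ with $\setB(\alpha)$ the index set of $\Des_R(x_k)$. This uses the identification of descents from \eqref{eq: Descent for type B} and the convention that $F^B_{\Des_R(x)}$ means $F^B_I$ with $I = \{i \mid s^B_i \in \Des_R(x)\}$.

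Summing over the short exact sequences $0 \to V_{k-1} \to V_k \to V_k/V_{k-1} \to 0$ gives
\[
[\C X] = \sum_{k=1}^{N} [\bfF^B_{\Des_R(x_k)}],
\]
and applying $\ch^B$ yields the stated formula.

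There is no real obstacle. The only point needing care is the ordering: it must make the filtration stable under the action, and taking the order opposite to $\preceq_R$ does this. Ascent-compatibility is used only through \cref{theorem:ascentcompatible}, which guarantees that the operators define a module at all.
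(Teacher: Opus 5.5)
Your proof is correct and is essentially the argument the paper relies on: it states that the result is the right-module analogue of \cite[Theorem 3.5]{DS25}, proved similarly. That argument filters $\C X$ by spans of initial segments of a linear extension of the reverse of $\preceq_R$ on $X$, so that each successive quotient is the one-dimensional simple $\bfF^B_{\Des_R(x_k)}$, and then uses additivity in the Grothendieck group to apply $\ch^B$.
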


\subsection{Poset modules for $0$-Hecke algebras of type $A$}
Let $P$ be a poset on $[n]$.
A map $f: [n] \ra \Z_{\ge 0}$ is called a \emph{$P$-partition} if it satisfies the following conditions:
\begin{enumerate}[label = {\rm (\arabic*)}]
\item 
If $i \preceq_P j$, then $f(i) \le f(j)$.
\item 
If $i \preceq_P j$ and $i > j$, then $f(i) < f(j)$.
\end{enumerate}
The \emph{$P$-partition generating function} is the (type $A$) quasisymmetric function
\[
K_{P} := \sum_{f:  \text{$P$-partition}} x_{f(1)} x_{f(2)} \cdots x_{f(n)}.
\]

Modules of the type-$A$ $0$-Hecke algebra providing a representation-theoretic interpretation of $P$-partition generating functions were introduced in \cite{DHT02}.
Let
$\calL(P)$ be the set of all linear extensions of $P$ and
\[
\Sigma_{R}(P) := \{\gamma \in \SG_n \mid 
\gamma(1) \preceq_E \gamma(2) \preceq_E \cdots \preceq_E \gamma(n) \ \text{for some $E \in \calL(P)$}\}.
\]

\begin{definition}{\rm (\cite[Definition 3.18]{DHT02})}
\label{def: poset module}
Let $P$ be a poset on $[n]$. 
The \emph{poset module associated with $P$} is the right $H_n(0)$-module $M_P$ with $\C \Sigma_{R}(P)$ as the underlying space and with the $H_n(0)$-action defined by
\begin{align}\label{left action}
\gamma \cdot \opi_{i} :=
\begin{cases}
-\gamma & \text{if $i \in \Des_R(\gamma)$}, \\
0 & \text{if $i \notin \Des_R(\gamma)$ and $\gamma s_i \notin \Sigma_{R}(P)$,} \\
\gamma s_i & \text{if $i \notin \Des_R(\gamma)$ and $\gamma s_i \in \Sigma_{R}(P)$}
\end{cases} 
\end{align}
for any  $i \in [n-1]$ and $\gamma \in \Sigma_R(P)$.
\end{definition}

Given $P_1 = ([m], \preceq_{P_1})$ and $P_2 = ([n], \preceq_{P_2})$, define the poset $P_1 \sqcup P_2 := ([m+n], \preceq_{P_1 \sqcup P_2})$ by 
\[
i \prec_{P_1 \sqcup P_2} j \quad \text{if and only if} \quad i \prec_{P_1} j \ \ \text{or} \ \  i-m \prec_{P_2} j-m.
\]

\begin{proposition}{\rm (\cite[Proposition 3.21]{DHT02})}\label{prop: DHT proposition}
Let $m,n$ be positive integers. 
Then we have the following.
\begin{enumerate}[label = {\rm (\arabic*)}]
\item Let $P$ be a poset on $[n]$. Then $\ch([M_P]) = K_P$.

\item Let $P_1$ be a poset on $[m]$ and $P_2$ a poset on $[n]$. 
Then $M_{P_1 \sqcup P_2} \cong M_{P_1} \boxtimes M_{P_2}$, and thus  $\ch([M_{P_1 \sqcup P_2}]) = K_{P_1} K_{P_2} = \ch([M_{P_1}]) \ch([M_{P_2}])$.
\end{enumerate}
\end{proposition}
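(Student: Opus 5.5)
Part (1) reduces to the standard expansion of $K_P$ over linear extensions, and part (2) to an explicit basis correspondence. For (1), I would first check that $\Sigma_R(P)$ is ascent-compatible in $\SG_n$. Note that $\gamma\in\Sigma_R(P)$ exactly when $\gamma(1),\dots,\gamma(n)$ lists the elements of $P$ in an order compatible with $\preceq_P$. Equivalently, for all $a\prec_P b$ we need $\gamma^{-1}(a)<\gamma^{-1}(b)$.

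Take $s_i\in\Asc_R(\gamma)$ with $\gamma\in\Sigma_R(P)$. Then $\gamma s_i\in\Sigma_R(P)$ if and only if the pair $\{\gamma(i),\gamma(i+1)\}$ is incomparable in $P$. This condition depends only on the reflection $\gamma s_i\gamma^{-1}=(\gamma(i),\gamma(i+1))$. An aligned quadruple $(\gamma,\gamma',s_i,s_j)$ has $\gamma s_i\gamma^{-1}=\gamma' s_j\gamma'^{-1}$, so the two transposed values coincide. Hence $\gamma s_i\in\Sigma_R(P)$ if and only if $\gamma' s_j\in\Sigma_R(P)$. The operators in \cref{def: poset module} are exactly the $\overline{\uppi}_{s_i}$, so \cref{theorem:ascentcompatible} shows $M_P$ is a module. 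The type $A$ version of \cref{theorem:characteristic} then gives
\[
\ch([M_P])=\sum_{\gamma\in\Sigma_R(P)}F_{\Des_R(\gamma)}.
\]
Finally, I invoke Stanley's fundamental theorem of $P$-partitions. It says $K_P=\sum_E K_E$ over linear extensions $E$, and $K_E=F_{\Des(\gamma)}$ where $\gamma$ is the word reading $E$ upward. The strictness condition $f(i)<f(j)$ for $i\preceq j$ with $i>j$ yields strict inequalities exactly at the descents $\gamma(k)>\gamma(k+1)$. This proves (1).

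For (2), $P_1\sqcup P_2$ has as its orderings exactly the shuffles of a $P_1$-ordering $\gamma_1$ with a shifted $P_2$-ordering $\gamma_2+m$. I would define $\Phi\colon M_{P_1}\otimes M_{P_2}\uparrow\to M_{P_{1}\sqcup P_2}$ by sending $(\gamma_1\otimes\gamma_2)\cdot\opi_{w}$ to $(\gamma_1\times\gamma_2)\cdot\opi_w$. Here $w$ ranges over minimal coset representatives of $\SG_m\times\SG_n$, so the induced module has basis $(\gamma_1\otimes\gamma_2)\opi_w$. First, the map $\gamma_1\otimes\gamma_2\mapsto\gamma_1\times\gamma_2$ is a homomorphism of $H_m(0)\otimes H_n(0)$-modules, since the generators act on disjoint positions. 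By Frobenius reciprocity this extends to an $H_{m+n}(0)$-map $\Phi$.

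To see that $\Phi$ is bijective, I would compare dimensions: both sides have dimension $\binom{m+n}{m}|\Sigma_R(P_1)||\Sigma_R(P_2)|$. It then suffices to prove surjectivity. Every shuffle arises from $\gamma_1\times\gamma_2$ by applying $\overline{\uppi}$ along ascents that swap an entry of value at most $m$ past an entry of value greater than $m$. These pairs are incomparable in $P_1\sqcup P_2$, so each such step stays inside $\Sigma_R$ and produces the shuffle up to sign. The product formula for $\ch$ then follows since $\ch$ is a ring map, or alternatively from $K_{P_1\sqcup P_2}=K_{P_1}K_{P_2}$.

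The main obstacle is the surjectivity bookkeeping. One must order the swaps along a reduced word of the coset representative so that each step is a genuine ascent. I would first try taking the swaps in the order given by a reduced word of the minimal coset representative.
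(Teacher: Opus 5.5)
The paper gives no proof of this proposition; it is quoted from \cite{DHT02}. Your argument is correct, so the useful comparison is with how the paper proves the type-$B$ analogues.

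\textbf{Part (1).} This follows the same template the paper uses for $\ch^B([M^B_P])=K^B_P$: ascent-compatibility, then the characteristic formula for ascent-compatible sets, then the expansion of $K_P$ over linear extensions. Your observation that the condition depends only on the reflection $\gamma s_i\gamma^{-1}$ is the idea behind \cref{thm: Bn poset asc compatible}.

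\textbf{Part (2).} Here you take a different route from the paper's proof of \cref{thm: induction product}.

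\emph{The paper's route.} It writes $\fkL$ down on the basis $(\gamma\otimes\gamma')\otimes\opi^B_\delta$ and declares it bijective by \cref{lem: B shuffle of Sigma}. It then checks compatibility with every generator $\opi^B_i$ by hand, using the parabolic factorization $\delta s^B_i=\xi\xi'\delta'$.

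\emph{Your route.} The module-map property comes free from the universal property of induction. You only check compatibility for generators of $H_m(0)\otimes H_n(0)$, where it is immediate, and you replace injectivity by a dimension count.

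\emph{The shared key identity.} What your route genuinely needs is $(\gamma_1\times\gamma_2)\cdot\opi_w=(\gamma_1\times\gamma_2)w$. The paper's proof also uses this implicitly when it deduces bijectivity of $\fkL$ from \cref{lem: B shuffle of Sigma}. The resolution you propose for your ``main obstacle'' works:
\begin{enumerate}[label = {\rm (\roman*)}]
\item For $u\in\SG_m\times\SG_n$ and $w$ a minimal coset representative, $\ell(uw)=\ell(u)+\ell(w)$.
\item Every prefix of a reduced word for $w$ is again a minimal coset representative.
\item Hence each step along the reduced word is a genuine ascent swapping a letter $\le m$ with a letter $>m$. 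Such letters are incomparable in $P_1\sqcup P_2$, so no zero and no sign ever appears; your ``up to sign'' can be dropped.
\end{enumerate}

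\emph{What each buys.} The paper's approach yields an explicit formula for the action on the induced basis, which it uses for its pictures. Yours is shorter, avoids the generator-by-generator case check, and would transfer verbatim to type $B$ with $\bulB$ and the representatives ${}^{I^{(m)}}(\SG^B_{m+n})$.
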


Let $\scrP(n)$ be the full subcategory of $\modHn$ whose objects are direct sums of finitely many isomorphic copies of poset modules.
Recently, Choi--Kim--Oh \cite{CKO24} showed that $\bigoplus_{n \ge 0} \calG_0(\scrP(n))$ inherits essential structure, such as product and coproduct, from $\GHb$.
As a result, they showed that 
\begin{align}\label{eq: G0 poset modules isom to QSym}
\bigoplus_{n \ge 0} \calG_0(\scrP(n)) \cong \QSym \quad \text{as Hopf algebras.}
\end{align}
For further details, see \cite[Section 3]{CKO24}.

\subsection{Posets and $P$-partitions of type $B$}
In this work, we will focus on the type $B$ analogues of posets on $[n]$ and $P$-partitions introduced and studied by Chow in \cite{Chow01}.

\begin{definition}{\rm (\cite[Definition 2.1.2]{Chow01})}
A poset $P = ([-n,n], \preceq_P)$ is called a \emph{$B_n$ poset} if, for any $i,j\in [-n,n]$, $i \preceq_P j$ if and only if $-j \preceq_P -i$.
\end{definition}

\begin{example}\label{eg: Bn posets}
(1) The posets 
\[
\def \pp {0.6}
P^{(1)}_1 =
\begin{array}{l}
\begin{tikzpicture}
\node at (0*\pp, 1.5*\pp) {$\bullet$};
\node[right] at (0*\pp, 1.5*\pp) {\scriptsize $1$};
\node at (0*\pp, 0*\pp) {$\bullet$};
\node[right] at (0*\pp, 0*\pp) {\scriptsize $0$};
\node at (0*\pp, -1.5*\pp) {$\bullet$};
\node[right] at (0*\pp, -1.5*\pp) {\scriptsize $-1$};

\draw (0*\pp, 1.5*\pp) -- (0*\pp, 0*\pp);
\draw (0*\pp, 0*\pp) -- (0*\pp, -1.5*\pp);
\end{tikzpicture}
\end{array}
\quad \text{and} \quad
P^{(1)}_2 =
\begin{array}{l}
\begin{tikzpicture}
\node at (-2*\pp, 0*\pp) {$\bullet$};
\node[right] at (-2*\pp, 0*\pp) {\small $0$};
\node at (0*\pp, 0.75*\pp) {$\bullet$};
\node[right] at (0*\pp, 0.75*\pp) {\small $1$};
\node at (0*\pp, -0.75*\pp) {$\bullet$};
\node[right] at (0*\pp, -0.75*\pp) {\small $-1$};

\draw (0*\pp, -0.75*\pp) -- (0*\pp, 0.75*\pp);
\end{tikzpicture}
\end{array}
\]
are $B_1$ posets.
\smallskip

\noindent 
(2) The posets
\[
\def \pp {0.6}
P^{(2)}_1 =
\begin{array}{l}
\begin{tikzpicture}
\node at (0*\pp, 2*\pp) {$\bullet$};
\node[right] at (0*\pp, 2*\pp) {\scriptsize $2$};
\node at (0*\pp, 1*\pp) {$\bullet$};
\node[right] at (0*\pp, 1*\pp) {\scriptsize $-1$};
\node at (0*\pp, 0*\pp) {$\bullet$};
\node[right] at (0*\pp, 0*\pp) {\scriptsize $0$};
\node at (0*\pp, -1*\pp) {$\bullet$};
\node[right] at (0*\pp, -1*\pp) {\scriptsize $1$};
\node at (0*\pp, -2*\pp) {$\bullet$};
\node[right] at (0*\pp, -2*\pp) {\scriptsize $-2$};

\draw (0*\pp, 1*\pp) -- (0*\pp, 2*\pp);
\draw (0*\pp, 0*\pp) -- (0*\pp, 1*\pp);
\draw (0*\pp, -1*\pp) -- (0*\pp, 0*\pp);
\draw (0*\pp, -2*\pp) -- (0*\pp, -1*\pp);
\end{tikzpicture}
\end{array}, 
\quad
P^{(2)}_2 =
\begin{array}{l}
\begin{tikzpicture}
\node at (0*\pp, 1.5*\pp) {$\bullet$};
\node[right] at (0*\pp, 1.65*\pp) {\scriptsize $2$};
\node at (0*\pp, 0*\pp) {$\bullet$};
\node[right] at (0*\pp, 0*\pp) {\scriptsize $0$};
\node at (0*\pp, -1.5*\pp) {$\bullet$};
\node[right] at (0*\pp, -1.5*\pp) {\scriptsize $-2$};
\node at (1.5*\pp, 0.75*\pp) {$\bullet$};
\node[right] at (1.5*\pp, 0.75*\pp) {\scriptsize $-1$};
\node at (1.5*\pp, -0.75*\pp) {$\bullet$};
\node[right] at (1.5*\pp, -0.75*\pp) {\scriptsize $1$};

\draw (0*\pp, 0*\pp) -- (0*\pp, 1.5*\pp);
\draw (0*\pp, -1.5*\pp) -- (0*\pp, 0*\pp);

\draw (0*\pp, 1.5*\pp) -- (1.5*\pp, 0.75*\pp);
\draw (1.5*\pp, 0.75*\pp) -- (1.5*\pp, -0.75*\pp);
\draw (0*\pp, -1.5*\pp) -- (1.5*\pp, -0.75*\pp);
\end{tikzpicture}
\end{array}
\quad \text{and} \quad
P^{(2)}_3 =
\begin{array}{l}
\begin{tikzpicture}
\node at (-1*\pp, 1.5*\pp) {$\bullet$};
\node[right] at (-1*\pp, 1.5*\pp) {\scriptsize $-1$};
\node at (1*\pp, 1.5*\pp) {$\bullet$};
\node[right] at (1*\pp, 1.5*\pp) {\scriptsize $2$};
\node at (0*\pp, 0*\pp) {$\bullet$};
\node[right] at (0*\pp, 0*\pp) {\scriptsize $0$};
\node at (-1*\pp, -1.5*\pp) {$\bullet$};
\node[right] at (-1*\pp, -1.5*\pp) {\scriptsize $1$};
\node at (1*\pp, -1.5*\pp) {$\bullet$};
\node[right] at (1*\pp, -1.5*\pp) {\scriptsize $-2$};

\draw (0*\pp, 0*\pp) -- (-1*\pp, 1.5*\pp);
\draw (0*\pp, 0*\pp) -- (1*\pp, 1.5*\pp);
\draw (0*\pp, 0*\pp) -- (-1*\pp, -1.5*\pp);
\draw (0*\pp, 0*\pp) -- (1*\pp, -1.5*\pp);
\end{tikzpicture}
\end{array}
\]
are $B_2$ posets.
\end{example}

\begin{definition}{\rm (\cite[Definition 2.1.2]{Chow01})}\label{def: P-partitions type B}
Let $P = ([-n,n], \preceq_P)$ be a $B_n$ poset.
A map $f: [-n,n] \ra \Z$ is called a \emph{$P$-partition of type $B$} if it satisfies the following conditions:
\begin{enumerate}[label = {\rm (\arabic*)}]
\item If $i \preceq_P j$, then $f(i) \le f(j)$.
\item If $i \preceq_P j$ and $i > j$, then $f(i) < f(j)$.
\item $f(-i) = -f(i)$.
\end{enumerate}
\end{definition}

Note that conditions (1) and (2) in \cref{def: P-partitions type B} are precisely the conditions defining $P$-partitions of type $A$. Let $\calA^B(P)$ denote the set of all $P$-partitions of type $B$.

A \emph{type-$B$ linear extension} of a $B_n$ poset $P$ is a linear extension of $P$ that is itself a $B_n$ poset. Let $\calL^B(P)$ denote the set of type-$B$ linear extensions of $P$.

\begin{theorem}{\rm (\cite[Theorem 2.1.4]{Chow01})}\label{thm: Bn P-partition decomp}
Let $P$ be a $B_n$ poset.
Then $\calA^B(P) = \bigsqcup_{E \in \calL^B(P)} \calA^B(E)$, where $\bigsqcup$ denotes disjoint union.
\end{theorem}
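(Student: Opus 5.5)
We prove the disjoint-union decomposition of Chow's fundamental theorem, adapting Stanley's type $A$ argument: first show every $f\in\calA^B(P)$ lies in $\calA^B(E)$ for exactly one $E\in\calL^B(P)$, then check the reverse inclusion. The reverse inclusion is immediate: if $E$ is a linear extension of $P$, then $i\preceq_P j$ implies $i\preceq_E j$. So conditions (1)–(2) of \cref{def: P-partitions type B} for $E$ imply them for $P$, and condition (3) does not depend on the poset. Hence $\calA^B(E)\subseteq\calA^B(P)$.

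\textbf{Construction of $E$.} Given $f\in\calA^B(P)$, we build a total order $E_f$ on $[-n,n]$:
\begin{enumerate}[label = {\rm (\roman*)}]
\item Put $i\prec_{E_f} j$ whenever $f(i)<f(j)$.
\item Within a level set $f^{-1}(c)$, order elements by $\preceq_P$ extended linearly in a canonical way: elements with equal $f$-value that are comparable in $P$ must satisfy $i\prec_P j\Rightarrow i<j$ by condition (2). So we order each level set by increasing integer value, i.e. $i\prec_{E_f} j$ iff $i<j$.
\end{enumerate}

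We then check three properties of $E_f$.
\begin{itemize}
\item \emph{Linear extension.} If $i\prec_P j$, then $f(i)\le f(j)$. If equality holds, condition (2) forces $i<j$, so $i\prec_{E_f} j$.
\item \emph{$B_n$ poset.} We need $i\prec_{E_f} j\iff -j\prec_{E_f}-i$. Since $f(-i)=-f(i)$, the strict comparison $f(i)<f(j)$ becomes $f(-j)<f(-i)$. For equal values, $i<j$ becomes $-j<-i$, and $f^{-1}(c)$ maps to $f^{-1}(-c)$ under negation. So the order is symmetric.
\item \emph{$f\in\calA^B(E_f)$.} Condition (1) holds by construction. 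For condition (2), if $i\prec_{E_f} j$ with $i>j$, then $f(i)\neq f(j)$, since otherwise $i<j$; thus $f(i)<f(j)$.
\end{itemize}

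\textbf{Uniqueness.} Suppose $f\in\calA^B(E)$ for some $E\in\calL^B(P)$. Since $E$ is total, for $f(i)<f(j)$ we cannot have $j\preceq_E i$, because condition (1) would give $f(j)\le f(i)$; so $i\prec_E j$. For $f(i)=f(j)$, condition (2) forbids $i\prec_E j$ when $i>j$; since $E$ is total, $E$ orders equal-valued elements increasingly. Therefore $E=E_f$, and the union is disjoint.

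The only delicate point is the interaction with $0$ and the symmetry. Since $f(0)=0$, the level set $f^{-1}(0)$ is closed under negation and contains $0$, and increasing order on it is compatible with negation symmetry. We must confirm that no element of $P$ below $0$ receives a positive value. This holds automatically: if $i\preceq_P 0$, then $f(i)\le f(0)=0$.
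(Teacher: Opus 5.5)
Your argument is correct and complete: ordering $[-n,n]$ lexicographically by $(f(i),i)$ gives a linear extension of $P$ by conditions (1)--(2), the identity $f(-i)=-f(i)$ shows that $i\mapsto -i$ reverses this order (so it is a $B_n$ poset), and conditions (1)--(2) force any total order $E$ with $f\in\calA^B(E)$ to coincide with it, which yields both the covering and the disjointness. The paper does not reprove this theorem but quotes it from Chow, and your proof is the standard adaptation of Stanley's type-$A$ argument. Your closing remark about elements below $0$ is harmless but not needed, since the symmetry check already covers the level set $f^{-1}(0)$.
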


Let $P$ be a $B_n$ poset.
The \emph{$P$-partition generating function of type $B$} \cite[Definition 2.1.2]{Chow01} is the type $B$ quasisymmetric function $K^B_P$ defined by 
\[
K^B_P := \sum_{f \in \calA^B(P)} x_{|f(1)|}x_{|f(2)|} \cdots x_{|f(n)|}.
\]
Let $\gamma\in \SG^B_n$, and let $P(\gamma)$ be the $B_n$ poset defined by $i\prec_{P(\gamma)} j$ if $\gamma^{-1}(i)<\gamma^{-1}(j)$. 
Note that $P(\gamma)$ is a total ordering. In \cite{Chow01}, given $I \subseteq [0,n-1]$, $F^B_I$ is defined to satisfy $F^B_I = K^B_{P(\gamma)}$ for all $\gamma \in \SG^B_n$ with $\Des_R(\gamma) = I$. 
As noted in \cite{Chow01}, it follows from \cref{thm: Bn P-partition decomp} that
\begin{align}\label{eq: Bn type P-partitions}
K^B_P 
= \sum_{E \in \calL^B(P)} K^B_E.
\end{align}

\section{Poset modules of type $B$ and the associated category}\label{Sec: poset modules}

In this section, for each $B_n$-poset $P$, we define an $H^B_n(0)$-module $M^B_P$, called the \emph{$B_n$ poset module associated to $P$}, and show that $\ch^B([M^B_P]) = K^B_P$. 
We then provide formulas for induction and restriction of $B_n$ poset modules, and show that the Grothendieck group associated to $B_n$ poset modules is isomorphic to $\QSym^B$ as $\QSym$-modules and comodules. 
Finally, we describe how $M^B_P$ behaves under (anti-)automorphism twists.

\subsection{Definition of poset modules of type $B$}

Let $P$ be a $B_n$ poset.
For $(E, \preceq_E) \in \calL^B(P)$, let $\bfgam_E$ be the signed permutation such that $P(\bfgam_E) = E$, equivalently, $\bfgam_E(1) \prec_E \bfgam_E(2) \prec_E \cdots \prec_E \bfgam_E(n)$.
We define 
\[
\Sigma^B_R(P) := \{\bfgam_E \mid E \in \calL^B(P)\}.
\]
For example, consider $P^{(2)}_3$ in \cref{eg: Bn posets}(2).
We have 
\[
\def \pp {0.6}
\calL^B\left(\hspace{-1ex}
\begin{array}{l}
\begin{tikzpicture}
\node at (-0.7*\pp, 1*\pp) {$\bullet$};
\node[right] at (-0.7*\pp, 1*\pp) {\scriptsize $-1$};
\node at (0.7*\pp, 1*\pp) {$\bullet$};
\node[right] at (0.7*\pp, 1*\pp) {\scriptsize $2$};
\node at (0*\pp, 0*\pp) {$\bullet$};
\node[right] at (0*\pp, 0*\pp) {\scriptsize $0$};
\node at (-0.7*\pp, -1*\pp) {$\bullet$};
\node[right] at (-0.7*\pp, -1*\pp) {\scriptsize $1$};
\node at (0.7*\pp, -1*\pp) {$\bullet$};
\node[right] at (0.7*\pp, -1*\pp) {\scriptsize $-2$};

\draw (0*\pp, 0*\pp) -- (-0.7*\pp, 1*\pp);
\draw (0*\pp, 0*\pp) -- (0.7*\pp, 1*\pp);
\draw (0*\pp, 0*\pp) -- (-0.7*\pp, -1*\pp);
\draw (0*\pp, 0*\pp) -- (0.7*\pp, -1*\pp);
\end{tikzpicture}
\end{array}
\hspace{-2ex}\right) 
= 
\left\{
\begin{array}{l}
\begin{tikzpicture}
\node at (0*\pp, 2*\pp) {$\bullet$};
\node[right] at (0*\pp, 2*\pp) {\scriptsize $2$};
\node at (0*\pp, 1*\pp) {$\bullet$};
\node[right] at (0*\pp, 1*\pp) {\scriptsize $-1$};
\node at (0*\pp, 0*\pp) {$\bullet$};
\node[right] at (0*\pp, 0*\pp) {\scriptsize $0$};
\node at (0*\pp, -1*\pp) {$\bullet$};
\node[right] at (0*\pp, -1*\pp) {\scriptsize $1$};
\node at (0*\pp, -2*\pp) {$\bullet$};
\node[right] at (0*\pp, -2*\pp) {\scriptsize $-2$};

\draw (0*\pp, 1*\pp) -- (0*\pp, 2*\pp);
\draw (0*\pp, 0*\pp) -- (0*\pp, 1*\pp);
\draw (0*\pp, -1*\pp) -- (0*\pp, 0*\pp);
\draw (0*\pp, -2*\pp) -- (0*\pp, -1*\pp);
\end{tikzpicture}
\end{array}, 
\begin{array}{l}
\begin{tikzpicture}
\node at (0*\pp, 2*\pp) {$\bullet$};
\node[right] at (0*\pp, 2*\pp) {\scriptsize $-1$};
\node at (0*\pp, 1*\pp) {$\bullet$};
\node[right] at (0*\pp, 1*\pp) {\scriptsize $2$};
\node at (0*\pp, 0*\pp) {$\bullet$};
\node[right] at (0*\pp, 0*\pp) {\scriptsize $0$};
\node at (0*\pp, -1*\pp) {$\bullet$};
\node[right] at (0*\pp, -1*\pp) {\scriptsize $-2$};
\node at (0*\pp, -2*\pp) {$\bullet$};
\node[right] at (0*\pp, -2*\pp) {\scriptsize $1$};

\draw (0*\pp, 1*\pp) -- (0*\pp, 2*\pp);
\draw (0*\pp, 0*\pp) -- (0*\pp, 1*\pp);
\draw (0*\pp, -1*\pp) -- (0*\pp, 0*\pp);
\draw (0*\pp, -2*\pp) -- (0*\pp, -1*\pp);
\end{tikzpicture}
\end{array}
\right\}
\quad \text{and} \quad
\Sigma^B_R(P^{(2)}_3) = \{\bracket{-1, 2}, \bracket{2, -1}\}.
\]

\begin{theorem}\label{thm: Bn poset asc compatible}
For any $B_n$ poset $P$, the set $\Sigma^B_R(P)$ is ascent-compatible.
\end{theorem}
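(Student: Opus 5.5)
Our plan is to characterize membership in $\Sigma^B_R(P)$ in terms of pairs of letters, and then to identify what an aligned quadruple means concretely for signed permutations.

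\textbf{Step 1: membership criterion.} For $\gamma \in \SG^B_n$, extend the window to the full word $\gamma(-n),\ldots,\gamma(-1),0,\gamma(1),\ldots,\gamma(n)$ on $[-n,n]$. The poset $P(\gamma)$ is the total order in which these letters appear. Hence
\[
\gamma \in \Sigma^B_R(P) \iff \text{for all } a,b \in [-n,n],\ a \prec_P b \text{ implies } \gamma^{-1}(a) < \gamma^{-1}(b).
\]
Indeed, $P(\gamma)$ is automatically a $B_n$ poset, so the only requirement is that it be a linear extension of $P$.

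\textbf{Step 2: the aligned condition.} Fix $s = s^B_i \in \Asc_R(u)$. Right multiplication by $s^B_i$ with $i \ge 1$ swaps the adjacent letters $u(i), u(i+1)$, and simultaneously the mirrored pair $u(-i-1), u(-i)$. For $i=0$ it swaps the adjacent letters $u(-1), u(1)$, which are separated only by $0$.

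The conjugate $u s u^{-1}$ is a reflection of $\SG^B_n$ determined by the unordered pair of values being exchanged. For $i \ge 1$ this is the transposition of $u(i)$ and $u(i+1)$, together with that of their negatives. For $i = 0$ it is the sign change of $u(1)$.

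The ascent condition says the exchanged values appear in increasing order: $u(i) < u(i+1)$, respectively $u(1) > 0$. So the reflection, together with the ascent condition, determines an ordered pair of values up to the $\pm$ symmetry. Thus $(u,v,s,t)$ aligned means one of the following. Either $s, t$ both have index $\ge 1$ and both swap the same values $a<b$ (or $-b<-a$, which is the mirrored pair); or $s = t = s^B_0$ and $u(1) = v(1) = a > 0$. One must also rule out mixing the two kinds: a sign change reflection is not a transposition of two distinct values, so this cannot happen.

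\textbf{Step 3: the key claim.} We claim the following. Suppose $u \in \Sigma^B_R(P)$ and $s \in \Asc_R(u)$ swaps adjacent values $x$ and $y$, where $x$ immediately precedes $y$ (for $s^B_0$ take $x = -a$, $y = a$, which are adjacent apart from $0$). Then $us \in \Sigma^B_R(P)$ if and only if $x \not\prec_P y$; in the $s^B_0$ case the condition reads $-a \not\prec_P a$.

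Proof sketch for $i \ge 1$. The word of $us$ differs from that of $u$ only in the relative order of the pair $\{x,y\}$ and of the mirrored pair $\{-y,-x\}$. Every other relation $a \prec_P b$ is still respected. The relation $-y \prec_P -x$ holds if and only if $x \prec_P y$, by the $B_n$ symmetry. So by Step 1, $us$ is a linear extension exactly when $x \not\prec_P y$.

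Proof sketch for $s^B_0$. Here the pair $\{-a, a\}$ is swapped, and $0$ lies between them. In $u$ we have $-a$ before $0$ before $a$, while in $us$ we have $a$ before $0$ before $-a$. The relative order of $0$ with $\pm a$ also changes. So we need $a \not\prec_P 0$, $0 \not\prec_P -a$ and $-a \not\prec_P a$ to fail appropriately. Using the symmetry and transitivity of $P$, the conditions $-a \prec_P 0$ and $0 \prec_P a$ are equivalent to each other, and each implies $-a \prec_P a$. This step needs the most care: we must make sure that the obstruction is again expressible purely in terms of the value $a$, namely as the condition that $-a \prec_P 0$ or $-a \prec_P a$.

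\textbf{Step 4: conclusion.} In both cases the criterion for $us \in \Sigma^B_R(P)$ depends only on the exchanged values, and not on $u$. By Step 2, aligned quadruples exchange the same values in the same order, so $us \in \Sigma^B_R(P)$ if and only if $vt \in \Sigma^B_R(P)$. This is exactly ascent-compatibility.

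The main obstacle we expect is Step 2. We must verify carefully that $u s u^{-1} = v t v^{-1}$, together with both ascent conditions, forces the same ordered value pair up to the global sign symmetry. In particular we must handle the possibility that $u$ swaps $(a,b)$ while $v$ swaps $(-b,-a)$; this is harmless by the $B_n$ symmetry of $P$.
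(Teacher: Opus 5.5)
Your proposal is correct and follows essentially the same route as the paper: alignment forces the two ascents to exchange the same values, up to the global sign symmetry, and the $s^B_0$ and $s^B_i$ ($i\ge 1$) cases cannot mix. Membership of $us$ in $\Sigma^B_R(P)$ is then equivalent to those values being incomparable in $P$; the paper phrases this as a contradiction argument, while you state it as an explicit ``if and only if'' criterion depending only on the exchanged values. One small slip: in the $s^B_0$ case the relations that must fail are $-a\prec_P 0$, $0\prec_P a$ and $-a\prec_P a$. The conditions you listed, $a\not\prec_P 0$ and $0\not\prec_P -a$, hold automatically. Your next sentence nevertheless correctly reduces the obstruction to the single condition $-a\prec_P a$.
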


\begin{proof}
Since a quadruple  $(\gamma,\xi,s,t) \in (\Sigma^B_R(P))^2 \times (S^B)^2$ is aligned if and only if the quadruple $(\xi,\gamma, t, s)$ is aligned, it suffices to show that if $(\gamma,\xi,s,t)$ is aligned and $\gamma s \in \Sigma^B_R(P)$, then $\xi t \in \Sigma^B_R(P)$.
Choose arbitrary $\gamma,\xi \in \Sigma^B_R(P)$ and $i,j \in [0,n-1]$ such that $(\gamma,\xi,s^B_i,s^B_j)$ is aligned and $\gamma s^B_i \in \Sigma^B_R(P)$.

Suppose that both $i$ and $j$ are nonzero.
Then
\[
\gamma(i) \prec_{P(\gamma)} \gamma(i+1)
\ \  \text{and} \ \ 
\gamma(i+1) = \gamma s^B_i(i) \prec_{P(\gamma s^B_i)} \gamma s^B_i(i+1) = \gamma(i).
\]
Since $P(\gamma), P(\gamma s^B_i) \in \calL^B(P)$, $\gamma(i)$ and $\gamma(i+1)$ are incomparable in $P$.
Assume, for the sake of contradiction, that $\xi s^B_j \notin \Sigma^B_R(P)$.
Since $\xi \in \Sigma^B_R(P)$, we have 
\[
\xi(l_1) \prec_{P} \xi(l_2) 
\quad  \text{or} \quad
\text{$\xi(l_1)$ and $\xi(l_2)$ are incomparable in $P$}
\]
for any $l_1,l_2 \in [-n,n]^2$ with $l_1 < l_2$.
Combining this with the assumption $\xi s^B_j \notin \Sigma^B_R(P)$ yields that 
\[
\xi(j) \prec_P \xi(j+1) 
\quad \text{and} \quad
-\xi(j+1) \prec_P -\xi(j).
\]
Since $i$ and $j$ are nonzero and $(\gamma,\xi,s^B_i,s^B_j)$ is aligned, we have
\[
(\gamma(i), \gamma(i+1)) (-\gamma(i), -\gamma(i+1)) 
= \gamma s^B_i \gamma^{-1}
= \xi s^B_j \xi^{-1}
= (\xi(j), \xi(j+1)) (-\xi(j), -\xi(j+1)).
\]
In addition, since $s^B_i \in \Asc_R(\gamma)$ and $s^B_j \in \Asc_R(\xi)$, by \cref{eq: Descent for type B} it follows that $\gamma(i) < \gamma(i+1)$ and $\xi(j) < \xi(j+1)$.
Thus, we have either 
\[
\gamma(i) = \xi(j) \text{ and } \gamma(i+1) = \xi(j+1), 
\quad \text{or} \quad
\gamma(i) = -\xi(j+1) \text{ and } \gamma(i+1) = -\xi(j).
\]
However, this is impossible because $\gamma(i)$ and $\gamma(i+1)$ are incomparable in $P$, whereas $\xi(j) \prec_P \xi(j+1)$ and $-\xi(j+1) \prec_P -\xi(j)$.
Therefore, $\xi s^B_j \in \Sigma^B_R(P)$.

Suppose that one of $i$ and $j$ is zero.
Note that for any $w \in \SG^B_n$ and $l \in [1,n-1]$, 
\[
w s^B_0 w^{-1} = (w(1), w(-1)) \quad \text{and} \quad w s^B_l w^{-1} = (w(l), w(l+1))(-w(l), -w(l+1)).
\]
Combining these equalities with $\gamma s^B_i \gamma^{-1} = \xi s^B_j \xi^{-1}$, we deduce that $i = 0$ if and only if $j = 0$, so we have $i = j = 0$.
Since $(\gamma(1), \gamma(-1)) = \gamma s^B_0 \gamma^{-1} = \xi s^B_0 \xi^{-1} = (\xi(1), \xi(-1))$, we have 
\[
|\gamma(1)| = |\xi(1)|.
\]
In addition, we have
\[
\gamma(-1) \prec_{P(\gamma)} \gamma(1)
\quad \text{and} \quad 
\gamma(1) = \gamma s^B_0 (-1) \prec_{P(\gamma s^B_0)} \gamma s^B_0 (1) = \gamma(-1),
\] 
which implies that $\gamma(-1)$ and $\gamma(1)$ are incomparable in $P$.
Putting these together shows that $\xi(-1)$ and $\xi(1)$ are incomparable in $P$.
Therefore, $\xi s^B_0 \in \Sigma^B_R(P)$.
\end{proof}

Now, based on \cref{thm: ascent-comp and 0-Hecke action} and \cref{thm: Bn poset asc compatible}, given a $B_n$ poset $P$, we define an $H^B_n(0)$-module with underlying space $\C \Sigma^B_R(P)$.

\begin{definition}
Let $P$ be a $B_n$ poset.
The \emph{$B_n$ poset module associated to $P$}, denoted by $M^B_P$, is the $H^B_{n}(0)$-module with underlying space $\C \Sigma^B_R(P)$ and with the right $H^B_{n}(0)$-action defined by
\[
\gamma \cdot \opi_i = \begin{cases}
-\gamma & \text{if $s^B_i \in \Des_R(\gamma)$}, \\
0 & \text{if $s^B_i \notin \Des_R(\gamma)$ and $ \gamma s^B_i \notin \Sigma^B_R(P)$}, \\
\gamma s^B_i & \text{if $s^B_i \notin \Des_R(\gamma)$ and $\gamma s^B_i \in \Sigma^B_R(P)$}
\end{cases}
\]
for all $\gamma \in \Sigma^B_R(P)$ and extending by linearity.
\end{definition}

The modules $M^B_P$ provide a representation-theoretic interpretation of the type-$B$ $P$-partition generating functions.

\begin{theorem}
Let $P$ be a $B_n$ poset. We have
\[\ch^B([M^B_P]) = K^B_P.\]
\end{theorem}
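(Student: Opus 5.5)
The plan is to combine the two general results stated earlier with Chow's decomposition \cref{eq: Bn type P-partitions}. By \cref{thm: Bn poset asc compatible}, $\Sigma^B_R(P)$ is an ascent-compatible subset of $\SG^B_n$. Moreover, the action defining $M^B_P$ is exactly the action of the operators $\overline{\uppi}_{s^B_i}$ from \cref{thm: ascent-comp and 0-Hecke action}, so $M^B_P = \C\Sigma^B_R(P)$ as $H^B_n(0)$-modules. Applying \cref{theorem:characteristic} then gives
\[
\ch^B([M^B_P]) = \sum_{\gamma \in \Sigma^B_R(P)} F^B_{\Des_R(\gamma)}.
\]

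The map $E \mapsto \bfgam_E$ is a bijection from $\calL^B(P)$ onto $\Sigma^B_R(P)$; its inverse is $\gamma \mapsto P(\gamma)$. Indeed, a total order on $[-n,n]$ that is a $B_n$ poset has $0$ in the middle, with the positions above $0$ occupied by one element of each pair $\{i,-i\}$. So it equals $P(\gamma)$ for a unique signed permutation $\gamma$, and this $\gamma$ is $\bfgam_E$. Reindexing the sum over $E \in \calL^B(P)$ gives
\[
\ch^B([M^B_P]) = \sum_{E \in \calL^B(P)} F^B_{\Des_R(\bfgam_E)}.
\]

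Next, Chow's convention gives $F^B_{\Des_R(\gamma)} = K^B_{P(\gamma)}$ for every $\gamma \in \SG^B_n$. Taking $\gamma = \bfgam_E$, so that $P(\bfgam_E) = E$, turns each summand into $K^B_E$. Then \cref{eq: Bn type P-partitions}, which follows from \cref{thm: Bn P-partition decomp}, gives $\sum_{E \in \calL^B(P)} K^B_E = K^B_P$, completing the argument.

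The step needing the most care is matching conventions: the descent set of $\gamma$ computed via \cref{eq: Descent for type B} (with $\gamma(0)=0$) must be the one that makes $K^B_{P(\gamma)} = F^B_{\Des_R(\gamma)}$ under the paper's definition of $F^B_I$. If this is not simply taken from Chow, I would verify it directly. A $P(\gamma)$-partition is determined by the values $f(\gamma(1)),\dots,f(\gamma(n))$, which are forced to be weakly increasing and nonnegative because $f(0)=0$ and $0 \prec \gamma(1)$. The inequality $f(\gamma(j)) \le f(\gamma(j+1))$ is strict exactly when $\gamma(j) > \gamma(j+1)$, and at $j=0$ this says $f(\gamma(1))>0$ when $\gamma(1)<0$. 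Since $f(-i)=-f(i)$ gives $|f(i)| = f(\gamma(j))$ for $i = \pm\gamma(j)$, the monomial $\prod_i x_{|f(i)|}$ matches the definition of $F^B_{\Des_R(\gamma)}$ with $i_j = f(\gamma(j))$. The remaining ingredients are routine.
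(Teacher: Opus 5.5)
Your proposal is correct and follows the paper's proof essentially step for step. You apply the characteristic formula for ascent-compatible sets, reindex over $\calL^B(P)$ via $E \mapsto \bfgam_E$, identify $F^B_{\Des_R(\bfgam_E)} = K^B_E$, and sum using Chow's decomposition. Your direct check that $K^B_{P(\gamma)} = F^B_{\Des_R(\gamma)}$ under the paper's conventions (which the paper simply quotes from Chow) is also correct.
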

\begin{proof}
From \cref{theorem:characteristic}, it follows that
\[
\ch^B([M^B_P]) = \sum_{\gamma \in \Sigma^B_R(P)} F^B_{\Des_R(\gamma)}
= \sum_{E \in \calL^B(P)} F^B_{\Des_R(\bfgam_E)}.
\]
Recall that given $I \subseteq [0,n-1]$, $F^B_I = K^B_{P(\gamma)}$ for all $\gamma \in \SG^B_n$ with $\Des_R(\gamma) = I$.
Putting these together with \cref{eq: Bn type P-partitions} yields that
\[
\ch^B([M^B_P]) = \sum_{E \in \calL^B(P)} K^B_E = K^B_P. \qedhere
\]
\end{proof}

\subsection{The category of $B_n$ poset modules and its Grothendieck group}

For each $n \in \Z_{\ge 0}$, let $\scrP^B(n)$ be the full subcategory of $\modHBn$ whose objects are direct sums of finitely many isomorphic copies of $B_n$ poset modules.
The purpose of this subsection is to prove that $\calG_0(\scrP^B(n))$ is isomorphic to $\QSymB_n$ as a $\Z$-module.

Given a $B_n$ poset $P$, let 
\[
\inc^B(P) := \{(u,v) \in [-n,n] \mid u \prec_{E_1} v \text{ and } v \prec_{E_2} u \text{ for some $E_1, E_2 \in \calL^B(P)$} \}.
\]
Note that if $(u,v)\in \inc^B(P)$, then $u$ and $v$ are incomparable in $P$. However, due to the definition of $\calL^B(P)$, the converse is not always true. 
For example, the poset $P^{(1)}_2$ in \cref{eg: Bn posets} has incomparable elements, but $\inc^B(P^{(1)}_2) = \emptyset$ since $|\calL^B(P^{(1)}_2)| = 1$. 

For any $(u,v) \in \inc^B(P)$, let $\preceq_{P_{(u,v)}}$ be the relation on $[-n,n]$ defined as the transitive closure of the union of the covering relations of $P$ and the additional relations where $v$ covers $u$ and $-u$ covers $-v$.

\begin{lemma}
Let $P$ be a $B_n$ poset.
For any $(u,v) \in \inc^B(P)$, $P_{(u,v)} := ([-n,n], \preceq_{P_{(u,v)}})$ is a $B_n$ poset.
\end{lemma}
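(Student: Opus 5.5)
We need to show that $\preceq_{P_{(u,v)}}$ is a partial order and that it satisfies the $B_n$ symmetry. Reflexivity and transitivity hold by construction, since $\preceq_{P_{(u,v)}}$ is the reflexive–transitive closure of a relation. Symmetry is also straightforward. The generating relation set $R$ consists of the covering relations of $P$ together with $u \to v$ and $-v \to -u$. This set is closed under the involution $(a \to b) \mapsto (-b \to -a)$. For covering relations of $P$ this holds because $P$ is a $B_n$ poset, so $a \lessdot_P b$ if and only if $-b \lessdot_P -a$. The two added relations are swapped with each other. Reversing and negating a chain $a = c_0 \to c_1 \to \cdots \to c_k = b$ in $R$ therefore gives a chain from $-b$ to $-a$. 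Hence $a \preceq_{P_{(u,v)}} b$ if and only if $-b \preceq_{P_{(u,v)}} -a$.

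The main point is antisymmetry, that is, that the closure has no nontrivial cycles. The plan is to exploit the hypothesis $(u,v) \in \inc^B(P)$: there is some $E_1 \in \calL^B(P)$ with $u \prec_{E_1} v$. Since $E_1$ is a $B_n$ poset, we also have $-v \prec_{E_1} -u$. So every generating relation in $R$ holds in the total order $E_1$. The covering relations of $P$ hold because $E_1$ extends $P$, and the two new relations were just checked. Consequently, $a \preceq_{P_{(u,v)}} b$ implies $a \preceq_{E_1} b$. Antisymmetry of $E_1$ then forces antisymmetry of $\preceq_{P_{(u,v)}}$.

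So the only delicate step is noticing that we need a \emph{type-$B$} linear extension $E_1$ with $u \prec_{E_1} v$, rather than a mere linear extension. Mere incomparability of $u$ and $v$ in $P$ would not suffice: adding both $u \lessdot v$ and $-v \lessdot -u$ could create a cycle, for instance when $v = -u$ is involved in a way incompatible with the symmetry. The definition of $\inc^B(P)$ supplies exactly this extension. It also remains to observe that $P_{(u,v)}$ is a poset on $[-n,n]$, which is immediate, so $P_{(u,v)}$ is a $B_n$ poset.
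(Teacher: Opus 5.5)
Your proof is correct, but it is organized differently from the paper's.

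\textbf{The paper's route.} The paper builds $P_{(u,v)}$ in two stages, as $(P'_{(u,v)})'_{(-v,-u)}$, using the fact that adding one relation between incomparable elements yields a poset. It then has to check that $-v$ and $-u$ remain incomparable in $P'_{(u,v)}$. To do this it treats $u=-v$ separately. It excludes $-u\prec -v$ using the type-$B$ extension $E_1$, via $\calL(P'_{(u,v)})=\{E\in\calL(P)\mid u\prec_E v\}$. It excludes $-v\prec -u$ using the $B_n$-symmetry of $P$.

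\textbf{Your route.} You use the same key input, the extension $E_1\in\calL^B(P)$ with $u\prec_{E_1}v$, but more economically. Every generating relation holds strictly in the total order $E_1$, so the entire closure lies inside $\prec_{E_1}$. This gives antisymmetry at once, with no case split, and shows in passing that $E_1\in\calL^B(P_{(u,v)})$.

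Your chain-reversal argument for the $B_n$-symmetry is also more robust than the paper's. The paper writes $i\preceq_P x_1\prec x_2\preceq_P j$ with a single added relation, but a chain can pass through both added relations. For example, take $n=2$, let $P$ have only the relation $-2\prec_P 2$, and take $(u,v)=(1,-2)\in\inc^B(P)$. Then $P_{(u,v)}$ contains the chain $1\prec -2\prec 2\prec -1$. Reversing the whole chain, as you do, handles this case uniformly.

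One small correction to your closing remark. The case $v=-u$ is harmless, since the two added relations then coincide. The actual danger for merely incomparable $u,v$ is $-u\preceq_P u$ together with $v\preceq_P -v$. That combination yields the cycle $u\to v\preceq_P -v\to -u\preceq_P u$.
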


\begin{proof}
Let $(u,v) \in \inc^B(P)$.
We first show that $P_{(u,v)}$ is a poset. 
For an arbitrary poset $Q$ on $[-n,n]$ and incomparable elements $a,b \in Q$,  let $Q'_{(a,b)} = ([-n,n], \preceq_{Q'_{(a,b)}})$ be the poset whose order relation $\preceq_{Q'_{(a,b)}}$ is the transitive closure of the union of the covering relations of $Q$ and the relation where $b$ covers $a$.
It is straightforward to show that $Q'_{(a,b)}$ is a poset.
Since $P_{(u,v)} = (P'_{(u,v)})'_{(-v,-u)}$, to show that $P_{(u,v)}$ is a poset, it suffices to prove that $-v$ and $-u$ are incomparable in $P'_{(u,v)}$.
In the case where $u = -v$, we have $P_{(u,v)} = P'_{(u,v)}$ which is already a poset.
Hence, we assume that $u \neq -v$.

Suppose that $-u \prec_{P'_{(u,v)}} -v$.
Note that $\calL(P'_{(u,v)}) = \{E \in \calL(P) \mid u \prec_E v\}$.
This, together with the relation 
$-u \prec_{P'_{(u,v)}} -v$, implies that for all $E \in \calL(P)$ satisfying $u \prec_{E} v$, we have $-u \prec_E -v$.
However, this contradicts the assumption $(u, v) \in \inc^B(P)$, which 
guarantees the existence of an $E\in \calL(P)$ satisfying $u \prec_{E} v$ and $-v \prec_E -u$.
Suppose that $-v \prec_{P'_{(u,v)}} -u$.
Since $-v$ and $-u$ are incomparable in $P$, the only way this can happen is if $-v \prec_P u$ and $v \prec_P -u$.
But, these two relations contradict $P$ being a $B_n$ poset.
Therefore, $-v$ and $-u$ are incomparable in $P'_{(u,v)}$.

Next, let us prove that for any $i,j \in [-n,n], i \preceq_{P_{(u,v)}} j$ if and only if $-j \preceq_{P_{(u,v)}} -i$.
Let $i,j \in [-n,n]$ be with $i \preceq_{P_{(u,v)}} j$.
In the case where $i \preceq_P j$, we have $-j \preceq_P -i$ and so $-j \preceq_{P_{(u,v)}} -i$.
Suppose that $i \not\preceq_{P} j$.
Then, by the definition of $P_{(u,v)}$,
\[
i \preceq_P x_1 \prec_{P_{(u,v)}} x_2 \preceq_P j \quad \text{for some $(x_1,x_2) \in \{(u,v), (-v,-u)\}$.}
\]
Since $P$ is a $B_n$ poset, we have 
$-j \preceq_P -x_2$ and $-x_1 \preceq_P -i$.
Combining these with the relation $-x_2 \preceq_{P_{(u,v)}} -x_1$ implies $-j \preceq_{P_{(u,v)}} -i$.
\end{proof}

Let $(u,v) \in \inc^B(P)$.
By definition, $\calL^B(P)$ is the disjoint union of $\calL^B(P_{(u,v)})$ and $\calL^B(P_{(v,u)})$, and so $\Sigma^B_R(P)$ is the disjoint union of $\Sigma^B_R(P_{(u,v)})$ and $\Sigma^B_R(P_{(v,u)})$. Let $\iota: M^B_{P_{(v,u)}} \rightarrow M^B_P$ be the injection of $\mathbb{C}$-vector spaces defined by $\iota(\gamma)=\gamma$ for $\gamma\in \Sigma^B_R(P_{(v,u)})$, and ${\rm pr}: M^B_P \rightarrow M^B_{P_{(u,v)}}$ the projection of $\mathbb{C}$-vector spaces defined by 
\[{\rm pr}(\gamma) = 
\begin{cases} \gamma & \text{ if } \gamma \in \Sigma^B_R(P_{(u,v)}), \\
                0    & \text{ otherwise},
\end{cases}                
\]
for $\gamma \in \Sigma^B_R(P)$. By definition, ${\rm Im}(\iota) = {\rm ker}({\rm pr})=\mathbb{C}\Sigma^B_R(P_{(v,u)})$.

\begin{lemma}\label{lem: short exact sequence}
Let $P$ be a $B_n$ poset. If $(u,v)\in \inc^B(P)$ with $u<v$ (as integers), then
\[
\begin{tikzcd}
0 \arrow[r] & {M^B_{P_{(v,u)}} } \arrow[r] & M^B_P \arrow[r] & {M^B_{P_{(u,v)}}} \arrow[r] & 0
\end{tikzcd}
\]
is a short exact sequence of $H_n^B(0)$-modules.    
\end{lemma}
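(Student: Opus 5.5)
The plan is to check that the two $\C$-linear maps $\iota$ and ${\rm pr}$ already defined are $H^B_n(0)$-module homomorphisms. Exactness as vector spaces is already in hand: $\iota$ is injective, ${\rm pr}$ is surjective, and ${\rm Im}(\iota) = \ker({\rm pr})$, because $\Sigma^B_R(P)$ is the disjoint union of $\Sigma^B_R(P_{(u,v)})$ and $\Sigma^B_R(P_{(v,u)})$. So only equivariance remains. All three modules use the same recipe for the action of $\opi_i$ on a basis vector $\gamma$ (namely $-\gamma$, $0$, or $\gamma s^B_i$). Hence everything reduces to one combinatorial claim about how an ascent move can cross between the two pieces of this decomposition.

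\emph{Key claim.} Let $\gamma \in \Sigma^B_R(P_{(v,u)})$ and let $s^B_i \in \Asc_R(\gamma)$ with $\gamma s^B_i \in \Sigma^B_R(P)$. Then $\gamma s^B_i \in \Sigma^B_R(P_{(v,u)})$. In words, an ascent never moves an element with $v$ before $u$ to one with $u$ before $v$.

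To prove it, note that right multiplication by $s^B_i$ only swaps the entries in positions $i,i+1$ together with their mirror images in positions $-i-1,-i$; for $i=0$ it swaps positions $-1$ and $1$. The relative order of $u$ and $v$ can therefore change only in the following cases.
\begin{enumerate}[label = $\bullet$]
\item For $i\ge 1$: either $(\gamma(i),\gamma(i+1)) = (v,u)$, or $(\gamma(i),\gamma(i+1)) = (-u,-v)$, the latter being the mirror pair. Since $s^B_i$ is an ascent, \cref{eq: Descent for type B} gives $\gamma(i)<\gamma(i+1)$. This means $v<u$ in the first case and $-u<-v$, i.e.\ again $v<u$, in the second. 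Both contradict $u<v$.
\item For $i=0$: we need $u=-v$ and $(\gamma(-1),\gamma(1)) = (v,u)$. Being an ascent means $\gamma(0)=0<\gamma(1)=u$, so $u>0>v$, again contradicting $u<v$.
\end{enumerate}
This sign bookkeeping is the only real content of the proof, and I expect it to be the main (though mild) obstacle: one must check the mirrored swap and the $s^B_0$ case carefully.

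\emph{$\iota$ is a homomorphism.} The key claim shows two things. First, $\C\Sigma^B_R(P_{(v,u)})$ is stable under the $M^B_P$-action. Second, for $\gamma \in \Sigma^B_R(P_{(v,u)})$ and an ascent $s^B_i$, we have $\gamma s^B_i \in \Sigma^B_R(P_{(v,u)})$ if and only if $\gamma s^B_i \in \Sigma^B_R(P)$. So the three-case formulas in $M^B_{P_{(v,u)}}$ and in $M^B_P$ agree on $\gamma$, and $\iota$ is equivariant.

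\emph{${\rm pr}$ is a homomorphism.} Take first $\gamma \in \Sigma^B_R(P_{(v,u)})$. Then ${\rm pr}(\gamma)=0$, and $\gamma\cdot\opi_i$ lies in $\C\Sigma^B_R(P_{(v,u)})$, so ${\rm pr}(\gamma\cdot\opi_i)=0$ as well. Next take $\gamma \in \Sigma^B_R(P_{(u,v)})$. The descent case and the case $\gamma s^B_i \notin \Sigma^B_R(P)$ agree trivially in $M^B_P$ and $M^B_{P_{(u,v)}}$. If $\gamma s^B_i \in \Sigma^B_R(P_{(u,v)})$, both modules send $\gamma$ to $\gamma s^B_i$. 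If $\gamma s^B_i \in \Sigma^B_R(P_{(v,u)})$, then ${\rm pr}(\gamma s^B_i)=0$, while in $M^B_{P_{(u,v)}}$ we get $\gamma\cdot\opi_i = 0$ because $\gamma s^B_i\notin\Sigma^B_R(P_{(u,v)})$. This proves equivariance of ${\rm pr}$, and hence the sequence in \cref{lem: short exact sequence} is a short exact sequence of $H^B_n(0)$-modules.
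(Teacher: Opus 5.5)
Your route is the paper's: show that no ascent move takes an element of $\Sigma^B_R(P_{(v,u)})$ into $\Sigma^B_R(P_{(u,v)})$. Then $\C\Sigma^B_R(P_{(v,u)})$ is a submodule of $M^B_P$ carrying the action of $M^B_{P_{(v,u)}}$, with quotient $M^B_{P_{(u,v)}}$. Your equivariance checks for $\iota$ and $\mathrm{pr}$ are fine, and they spell out what the paper calls clear.

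The gap is in the case $i=0$ of your key claim: your list of ways the order of $u$ and $v$ can change is incomplete. Right multiplication by $s^B_0$ exchanges the entries in positions $-1$ and $1$, so each of them also crosses position $0$. The chain $\cdots\prec\gamma(-1)\prec 0\prec\gamma(1)\prec\cdots$ becomes $\cdots\prec\gamma(1)\prec 0\prec\gamma(-1)\prec\cdots$. Hence three pairs flip: $\{\gamma(-1),0\}$, $\{0,\gamma(1)\}$ and $\{\gamma(-1),\gamma(1)\}$. Your assertion that a flip at $i=0$ forces $u=-v$ is therefore false.

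These extra cases genuinely occur, because $0$ can belong to a pair in $\inc^B(P)$. Take the $B_1$ antichain $P$ on $\{-1,0,1\}$. Then $\Sigma^B_R(P)=\{[1],[-1]\}$, so $(u,v)=(0,1)\in\inc^B(P)$. Passing from $\gamma=[-1]\in\Sigma^B_R(P_{(1,0)})$ to $\gamma s^B_0=[1]$ reverses the order of $0$ and $1$, although $u\neq -v$.

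The missing cases yield to the same sign argument you already use:
\begin{itemize}
\item If $(v,u)=(\gamma(-1),0)$, then $0<v=-\gamma(1)$.
\item If $(v,u)=(0,\gamma(1))$, then $\gamma(1)=u<0$.
\end{itemize}
Either way $\gamma(1)<0=\gamma(0)$, so $s^B_0\in\Des_R(\gamma)$, contradicting that $s^B_0$ is an ascent. These are exactly cases (i) and (ii) of the paper's proof; your case $(\gamma(-1),\gamma(1))=(v,u)$ is its case (iii). Once these two cases are added, your key claim holds and the rest of your argument goes through unchanged.
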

\begin{proof}
We show that $M^B_{P_{(v,u)}}$ is a $H_n^B(0)$-submodule of $M^B_P$; in other words, $M^B_{P_{(v,u)}}$ is closed under the action of $H_n^B(0)$ on $M^B_P$. 
Assume, for the sake of a contradiction, that there exist $\gamma\in \Sigma^B_R(P_{(v,u)})$ and $i\in [0, n-1]$ such that $\gamma \cdot \opi^B_i \notin M^B_{P_{(v,u)}}$. 
Then, by the definition of the $H_n^B(0)$-action, $\gamma \cdot \opi^B_i = \gamma s^B_i$. 
If $i\neq 0$, this means either $\gamma(i)=v$ and $\gamma(i+1)=u$, or $\gamma(i)=-u$ and $\gamma(i+1)=-v$. 
Since $v > u$ as integers, we have $i \in \Des_R(\gamma)$, which contradicts the equality $\gamma \cdot \opi^B_i = \gamma s^B_i$.
If $i=0$, we have three cases
\[
\textrm{(i)}~
\gamma(-1) = v  > u = \gamma(0),
\quad
\textrm{(ii)}~
\gamma(0) = v > u = \gamma(1)
\quad \text{and} \quad
\textrm{(iii)}~
\gamma(-1) = v > u = \gamma(1).
\]
In each case $\gamma(1)$ is negative, hence $0 \in \Des_R(\gamma)$, which again contradicts the equality $\gamma \cdot \opi^B_0 = \gamma s^B_0$.

It is clear that $M_{P_{(u,v)}}$ is isomorphic to the quotient module $M_P/M_{P_{(v,u)}}$.
\end{proof}

\begin{theorem}\label{thm: Grothendieck group of poset modules}
For each $n \in \Z_{\ge 0}$, $\calG_0(\scrP^B(n)) \cong \QSymB_n$ as $\Z$-modules.
\end{theorem}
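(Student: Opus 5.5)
The plan is to follow the type $A$ strategy of Choi--Kim--Oh. We show that the classes of the modules $M^B_{P(\gamma)}$ span $\calG_0(\scrP^B(n))$, and that $\ch^B$ identifies these classes with the basis $\{F^B_I\}$ of $\QSymB_n$.

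First we must be precise about the Grothendieck group of the full subcategory $\scrP^B(n)$. It is the $\Z$-span of isomorphism classes of objects modulo $[M]=[M']+[M'']$ for short exact sequences whose three terms all lie in $\scrP^B(n)$. Since every object is a finite direct sum of $B_n$ poset modules, the group is generated by the classes $[M^B_P]$.

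\textbf{Reduction to linear posets.} We show by induction on $|\calL^B(P)|$ that
\[
[M^B_P]=\sum_{E\in\calL^B(P)}[M^B_E]
\]
in $\calG_0(\scrP^B(n))$. If $|\calL^B(P)|=1$, say $\calL^B(P)=\{E\}$, then $\Sigma^B_R(P)=\Sigma^B_R(E)$ and the two actions agree, so $M^B_P=M^B_E$. If $|\calL^B(P)|\ge 2$, pick two distinct type-$B$ linear extensions; they disagree on some pair, giving $(u,v)\in\inc^B(P)$, and after swapping we may assume $u<v$. By \cref{lem: short exact sequence} there is a short exact sequence
\[
0\to M^B_{P_{(v,u)}}\to M^B_P\to M^B_{P_{(u,v)}}\to 0
\]
whose terms are all $B_n$ poset modules, hence in $\scrP^B(n)$. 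Both $\calL^B(P_{(u,v)})$ and $\calL^B(P_{(v,u)})$ are nonempty (by the definition of $\inc^B$), and they partition $\calL^B(P)$. So each is strictly smaller than $\calL^B(P)$, and induction applies. Consequently $\calG_0(\scrP^B(n))$ is spanned by the classes $[M^B_{P(\gamma)}]$ for $\gamma\in\SG^B_n$. Moreover $M^B_{P(\gamma)}$ is one-dimensional with action given by $\Des_R(\gamma)$, so it is isomorphic to $\bfF^B_{\compB(\Des_R(\gamma))}$. Hence the group is spanned by the $2^n$ classes of simple modules $\bfF^B_\alpha$, for $\alpha\models_B n$.

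\textbf{Independence via $\ch^B$.} Define $\phi:\calG_0(\scrP^B(n))\to\QSymB_n$ as the composite of the natural map $\calG_0(\scrP^B(n))\to\calG_0(\modHBn)$ with $\ch^B$. The natural map is well defined because a short exact sequence in $\scrP^B(n)$ is one in $\modHBn$. Then
\[
\phi([\bfF^B_\alpha])=F^B_\alpha .
\]
The $F^B_\alpha$ form a $\Z$-basis of $\QSymB_n$, so the spanning classes $[\bfF^B_\alpha]$ are $\Z$-linearly independent. Surjectivity of $\phi$ is immediate. Hence $\phi$ is an isomorphism.

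\textbf{Main obstacle.} The delicate point is that all the $\bfF^B_\alpha$ genuinely lie in $\scrP^B(n)$, i.e.\ each is realized as $M^B_{P(\gamma)}$ for some $\gamma$. This holds because every subset $I\subseteq[0,n-1]$ is the descent set of some signed permutation. We also need the inductive splitting to stay inside $\scrP^B(n)$, which requires the earlier lemma that $P_{(u,v)}$ is again a $B_n$ poset. Beyond these checks the argument is routine.
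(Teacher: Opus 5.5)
Your proposal is correct and follows the paper's own argument. The paper likewise uses \cref{lem: short exact sequence} to split $[M^B_P]$ by induction on $\dim M^B_P = |\calL^B(P)|$ down to classes of one-dimensional simple modules, and then identifies $\calG_0(\scrP^B(n))$ with $\calG_0(\modHBn)\cong\QSymB_n$. Your composite map through $\ch^B$, which proves the simple classes are linearly independent, and your check that every $\bfF^B_\alpha$ is realized as some $M^B_{P(\gamma)}$, make explicit the step the paper compresses into ``it follows that.''
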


\begin{proof}
For every $B_n$ poset $P$, if $\dim(M^B_P) > 1$, then by \cref{lem: short exact sequence} there exist $B_n$ posets $P'$ and $P''$ such that $\dim(M^B_{P'}), \dim(M^B_{P''}) < \dim(M^B_P)$ and 
\[
[M^B_P] = [M^B_{P'}] + [M^B_{P''}] 
\quad \text{in $\calG_0(\scrP^B(n))$.}
\]
By repeatedly applying such decompositions, we can express $[M^B_P]$ as a sum of classes of simple modules.
It follows that $\calG_0(\scrP^B(n))$ is isomorphic to $\calG_0(\modHBn)$ as a $\Z$-module, and hence isomorphic to $\QSymB_n$.
\end{proof}

\subsection{Induction of poset modules}
The purpose of this subsection is to show that, for any $B_m$ poset $P_1$ and any poset $P_2$ on $[n]$, the induced representation $M^B_{P_1} \otimes M_{P_2}\uparrow_{H^B_m(0) \otimes H_n(0)}^{H^B_{m+n}(0)}$ is isomorphic to a $B_{m+n}$ poset module. 
This immediately implies that
\[
[M^B_{P_1}] \boxtimes^B [M_{P_2}] := 
\left[
M^B_{P_1} \otimes M_{P_2} \uparrow_{H^B_m(0) \otimes H_n(0)}^{H^B_{m+n}(0)}
\right]
\]
defines a right $\bigoplus_{n \ge 0} \calG_0(\scrP(n))$-action on $\bigoplus_{n \ge 0} \calG_0(\scrP^B(n))$.

Let 
\begin{align}\label{eq: calIs}
\calI_{-1} := [-(m+n), -(m+1)], 
\quad
\calI_{0} := [-m,m]
\quad \text{and} \quad 
\calI_{1} := [m+1, m+n].
\end{align}
Given a $B_m$ poset $P_1 = ([-m,m], \preceq_{P_1})$ and a poset $P_2 = ([n], \preceq_{P_2})$, define the relation $\preceq_{P_1 \sqcup_B P_2}$ on $[-(m+n), m+n]$ by
\begin{align}\label{eq: def of sqcupB}
i \preceq_{P_1 \sqcup_B P_2} j \quad \text{if and only if} \quad
\begin{cases}
\text{$i,j \in \calI_{-1}$ and $-j-m \preceq_{P_2} -i-m$,} \\
\text{$i,j \in \calI_{0}$ and $i \preceq_{P_1} j$, or} \\
\text{$i,j \in \calI_{1}$ and $i-m \preceq_{P_2} j-m$.}
\end{cases}
\end{align}

\begin{lemma}
Let $P_1 = ([-m,m], \preceq_{P_1})$ be a $B_m$ poset and $P_2 = ([n], \preceq_{P_2})$ a poset. Then $P_1 \sqcup_B P_2 := ([-(m+n), m+n], \preceq_{P_1 \sqcup_B P_2})$ is a $B_{m+n}$ poset.
\end{lemma}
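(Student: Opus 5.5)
The plan is to check the poset axioms and the $B_n$ symmetry condition directly from the definition \cref{eq: def of sqcupB}. The three blocks $\calI_{-1}$, $\calI_0$, $\calI_1$ partition $[-(m+n),m+n]$, and no relation is ever imposed between elements of different blocks. So $P_1 \sqcup_B P_2$ is the disjoint union of three posets:
\begin{enumerate}[label = {\rm (\roman*)}]
\item $\calI_0$ with the order of $P_1$;
\item $\calI_1$ with the order transported from $P_2$ along the bijection $k \mapsto k+m$ from $[n]$ to $\calI_1$;
\item $\calI_{-1}$ with the dual order of $P_2$, transported along the bijection $k \mapsto -(k+m)$ from $[n]$ to $\calI_{-1}$.
\end{enumerate}
For (iii), note that for $i,j\in\calI_{-1}$ we have $-i-m,-j-m\in[n]$, and the condition $-j-m \preceq_{P_2} -i-m$ is exactly the dual of $P_2$ pulled back along this map.

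First I will verify that $\preceq_{P_1 \sqcup_B P_2}$ is a partial order. Reflexivity and antisymmetry follow blockwise from the corresponding properties of $P_1$, $P_2$ and the dual of $P_2$. For transitivity, note that $i \preceq j$ and $j \preceq k$ force $i,j,k$ to lie in a common block. Transitivity then follows blockwise; in $\calI_{-1}$ it is transitivity of $P_2$ with the inequalities read in reverse.

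Next I will verify the $B_{m+n}$ condition: $i \preceq j$ if and only if $-j \preceq -i$. Negation maps $\calI_0$ to itself and swaps $\calI_1$ with $\calI_{-1}$, so both sides of the equivalence require $i,j$ to lie in a common block. If $i,j\in\calI_0$, the condition is exactly the $B_m$ property of $P_1$. If $i,j\in\calI_1$, then $-i,-j\in\calI_{-1}$, and $-j\preceq -i$ means $-(-i)-m \preceq_{P_2} -(-j)-m$, that is, $i-m\preceq_{P_2} j-m$. This is precisely the defining condition for $i\preceq j$. The case $i,j\in\calI_{-1}$ is symmetric.

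There is no real obstacle here; the only care needed is the sign bookkeeping in the $\calI_{-1}$ case, and checking that elements from different blocks are never related, so that no cross-block transitivity can occur.
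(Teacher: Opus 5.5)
Your proof is correct and takes essentially the same route as the paper. Both verify reflexivity, antisymmetry and transitivity blockwise, using that no relations cross between $\calI_{-1}$, $\calI_0$, $\calI_1$, and then check the symmetry condition $i \preceq j \iff -j \preceq -i$; your description as a disjoint union of three posets, with $\calI_{-1}$ carrying the transported dual of $P_2$, and your explicit sign computation simply spell out what the paper calls straightforward.
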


\begin{proof}
We first show that $P_1 \sqcup_B P_2$ is a poset.
It is clear that $\preceq_{P_1 \sqcup_B P_2}$ is reflexive.
To show that $\preceq_{P_1 \sqcup_B P_2}$ is antisymmetric, suppose that we have $i,j \in [-(m+n), m+n]$ such that $i \preceq_{P_1 \sqcup_B P_2} j$ and $j \preceq_{P_1 \sqcup_B P_2} i$.
By the definition of $\preceq_{P_1 \sqcup_B P_2}$, $i$ and $j$ are in the same $\calI_l$ for some $l \in \{-1,0,1\}$, otherwise they are incomparable under $\preceq_{P_1 \sqcup_B P_2}$.
If $l = -1$, then the relations $i \preceq_{P_1 \sqcup_B P_2} j$ and $j \preceq_{P_1 \sqcup_B P_2} i$ imply 
\[
-j -m \preceq_{P_2} -i - m \quad \text{and} \quad 
-i -m \preceq_{P_2} -j -m,
\]
respectively.
Combining these relations with the antisymmetry of $\preceq_{P_2}$ yields that $-j - m = -i -m$, hence $i = j$.
If $l = 0$, then the relations $i \preceq_{P_1 \sqcup_B P_2} j$ and $j \preceq_{P_1 \sqcup_B P_2} i$ imply 
\[
i \preceq_{P_1} j \quad \text{and} \quad 
j \preceq_{P_1} i,
\]
respectively.
Combining these relations with the antisymmetry of $\preceq_{P_1}$ yields that $i = j$.
If $l = 1$, then the relations $i \preceq_{P_1 \sqcup_B P_2} j$ and $j \preceq_{P_1 \sqcup_B P_2} i$ imply 
\[
i-m \preceq_{P_2} j-m \quad \text{and} \quad 
j-m \preceq_{P_2} i-m,
\]
respectively.
Combining these relations with the antisymmetry of $\preceq_{P_2}$ yields that $i-m = j-m$, hence $i = j$.

In order to prove the transitivity of $\preceq_{P_1 \sqcup_B P_2}$, suppose that we have $i, j ,k \in [-(m+n), m+n]$ such that $i \preceq_{P_1 \sqcup_B P_2} j$ and $j \preceq_{P_1 \sqcup_B P_2} k$.
Again, by the definition of $\preceq_{P_1 \sqcup_B P_2}$, $i,j,k$ are in the same $\calI_l$ for some $l \in \{-1,0,1\}$.
If $l = -1$, then the relations $i \preceq_{P_1 \sqcup_B P_2} j$ and $j \preceq_{P_1 \sqcup_B P_2} k$ imply
\[
-j-m \preceq_{P_2} -i -m \quad \text{and}\quad
-k -m \preceq_{P_2} -j -m,
\]
respectively.
Combining these relations with the transitivity of $\preceq_{P_2}$, we have $-k-m \preceq_{P_2} -i -m$, hence $i \preceq_{P_1 \sqcup_B P_2} k$.
If $l = 0$, then the relations $i \preceq_{P_1 \sqcup_B P_2} j$ and $j \preceq_{P_1 \sqcup_B P_2} k$ imply
\[
i \preceq_{P_1} j \quad \text{and}\quad
j \preceq_{P_1} k,
\]
respectively.
Combining these relations with the transitivity of $\preceq_{P_1}$, we have $i \preceq_{P_1} k$, hence $i \preceq_{P_1 \sqcup_B P_2} k$.
If $l = 1$, then the relations $i \preceq_{P_1 \sqcup_B P_2} j$ and $j \preceq_{P_1 \sqcup_B P_2} k$ imply
\[
i-m \preceq_{P_2} j -m \quad \text{and}\quad
j -m \preceq_{P_2} k -m,
\]
respectively.
Combining these relations with the transitivity of $\preceq_{P_2}$, we have $i-m \preceq_{P_2} k -m$, hence $i \preceq_{P_1 \sqcup_B P_2} k$.
Therefore, $P_1 \sqcup_B P_2$ is a poset.

In addition, from the definition of $\preceq_{P_1 \sqcup_B P_2}$ it is straightforward that 
for any $i,j \in [-(m+n), m+n]$, if $i \preceq_{P_1 \sqcup_B P_2} j$, then $-j \preceq_{P_1 \sqcup_B P_2} -i$.
\end{proof}

\begin{example}\label{eg: combining posets}
Let $\def \pp {0.6}
P_1 = 
\begin{array}{l}
\begin{tikzpicture}
\node at (-1*\pp, 2*\pp) {$\bullet$};
\node[right] at (-1*\pp, 2*\pp) {\scriptsize $-2$};
\node at (1*\pp, 2*\pp) {$\bullet$};
\node[right] at (1*\pp, 2*\pp) {\scriptsize $1$};
\node at (0*\pp, 1*\pp) {$\bullet$};
\node[right] at (0*\pp, 1*\pp) {\scriptsize $3$};
\node at (0*\pp, 0*\pp) {$\bullet$};
\node[right] at (0*\pp, 0*\pp) {\scriptsize $0$};
\node at (0*\pp, -1*\pp) {$\bullet$};
\node[right] at (0*\pp, -1*\pp) {\scriptsize $-3$};
\node at (-1*\pp, -2*\pp) {$\bullet$};
\node[right] at (-1*\pp, -2*\pp) {\scriptsize $2$};
\node at (1*\pp, -2*\pp) {$\bullet$};
\node[right] at (1*\pp, -2*\pp) {\scriptsize $-1$};

\draw (-1*\pp, 2*\pp) -- (0*\pp, 1*\pp);
\draw (1*\pp, 2*\pp) -- (0*\pp, 1*\pp);
\draw (0*\pp, 1*\pp) -- (0*\pp, 0*\pp);
\draw (0*\pp, 0*\pp) -- (0*\pp, -1*\pp);
\draw (-1*\pp, -2*\pp) -- (0*\pp, -1*\pp);
\draw (1*\pp, -2*\pp) -- (0*\pp, -1*\pp);
\end{tikzpicture}
\end{array}$ be a $B_3$ poset and 
$\def \pp {0.6}
P_2 =
\begin{array}{l}
\begin{tikzpicture}
\node at (0*\pp, 1.5*\pp) {$\bullet$};
\node[right] at (0*\pp, 1.5*\pp) {\scriptsize $1$};

\node at (0*\pp, 0*\pp) {$\bullet$};
\node[right] at (0*\pp, 0*\pp) {\scriptsize $2$};

\draw (0*\pp, 0*\pp) -- (0*\pp, 1.5*\pp);
\end{tikzpicture}
\end{array}$ be a poset on $[2]$.
Then 
\[\def \pp {0.6}
P_1 \sqcup_B P_2 = 
\begin{array}{l}
\begin{tikzpicture}
\node at (-1*\pp, 2*\pp) {$\bullet$};
\node[right] at (-1*\pp, 2*\pp) {\scriptsize $-2$};
\node at (1*\pp, 2*\pp) {$\bullet$};
\node[right] at (1*\pp, 2*\pp) {\scriptsize $1$};
\node at (0*\pp, 1*\pp) {$\bullet$};
\node[right] at (0*\pp, 1*\pp) {\scriptsize $3$};
\node at (0*\pp, 0*\pp) {$\bullet$};
\node[right] at (0*\pp, 0*\pp) {\scriptsize $0$};
\node at (0*\pp, -1*\pp) {$\bullet$};
\node[right] at (0*\pp, -1*\pp) {\scriptsize $-3$};
\node at (-1*\pp, -2*\pp) {$\bullet$};
\node[right] at (-1*\pp, -2*\pp) {\scriptsize $2$};
\node at (1*\pp, -2*\pp) {$\bullet$};
\node[right] at (1*\pp, -2*\pp) {\scriptsize $-1$};

\draw (-1*\pp, 2*\pp) -- (0*\pp, 1*\pp);
\draw (1*\pp, 2*\pp) -- (0*\pp, 1*\pp);
\draw (0*\pp, 1*\pp) -- (0*\pp, 0*\pp);
\draw (0*\pp, 0*\pp) -- (0*\pp, -1*\pp);
\draw (-1*\pp, -2*\pp) -- (0*\pp, -1*\pp);
\draw (1*\pp, -2*\pp) -- (0*\pp, -1*\pp);

\node at (3*\pp, 0.75*\pp) {$\bullet$};
\node[right] at (3*\pp, 0.75*\pp) {\scriptsize $4$};
\node at (3*\pp, -0.75*\pp) {$\bullet$};
\node[right] at (3*\pp, -0.75*\pp) {\scriptsize $5$};
\draw (3*\pp, -0.75*\pp) -- (3*\pp, 0.75*\pp);

\node at (5*\pp, 0.75*\pp) {$\bullet$};
\node[right] at (5*\pp, 0.75*\pp) {\scriptsize $-5$};
\node at (5*\pp, -0.75*\pp) {$\bullet$};
\node[right] at (5*\pp, -0.75*\pp) {\scriptsize $-4$};
\draw (5*\pp, -0.75*\pp) -- (5*\pp, 0.75*\pp);
\end{tikzpicture}
\end{array}.
\]
\end{example}

Given $\sigma \in \SG^B_m$ and $\rho \in \SG_n$, define $\sigma \bulB \rho \in \SG^B_{m+n}$ by
\[
\sigma \bulB \rho(i) := \begin{cases}
-\rho(-i -m) -m & \text{if $i \in \calI_{-1}$,} \\
\sigma(i) & \text{if $i \in \calI_0$,} \\
\rho(i-m) + m & \text{if $i \in \calI_1$.}
\end{cases}
\]
Indeed, 
\begin{align*}
\sigma \bulB \rho(-i) & = \rho(-i - m) + m = -\sigma \bulB \rho(i) 
\quad \text{for $i \in \calI_{-1}$,} \\
\sigma \bulB \rho(-i) & = \sigma(-i) = -\sigma \bulB \rho(i) 
\quad \text{for $i \in \calI_{0}$, and}\\
\sigma \bulB \rho(-i) &= - \rho(i - m) - m = -\sigma \bulB \rho(i) 
\quad \text{for $i \in \calI_1$.}
\end{align*}
For example, $\bracket{3, 1, -2} \cdot_B 12 = \bracket{3, 1, -2, 4, 5}$.
One can easily see that
\begin{align*}
(\sigma \bulB \rho)^{-1}(i) = 
\begin{cases}
-\rho^{-1}(-i -m) -m & \text{if $i \in \calI_{-1}$,} \\
\sigma^{-1}(i) & \text{if $i \in \calI_0$,} \\
\rho^{-1}(i-m) + m & \text{if $i \in \calI_1$.}
\end{cases}
\end{align*}
Given a subset $X$ of $[0,m+n-1]$, let $(\SG^B_{m+n})_X$ denote the parabolic subgroup of $\SG^B_{m+n}$ generated by $\{s_i^B : i\in X\}$. Let $I^{(m)} := [0,m+n-1] \setminus \{m\}$, and let ${}^{I^{(m)}}(\SG^B_{m+n})$ denote the set of minimal length right coset representatives of $(\SG^B_{m+n})_{I^{(m)}}$. 
Given $\sigma \in \SG^B_m$ and $\rho \in \SG_n$, define $\sigma \shuffle_B \rho \subseteq \SG^B_{m+n}$ by
\begin{align*}
\sigma \shuffle_B \rho := \left\{(\sigma \bulB \rho) \delta \; \middle| \; \delta \in {}^{I^{(m)}}(\SG^B_{m+n})\right\}.
\end{align*}
By \cite[Proposition 8.1.4]{BB05},
\begin{align}\label{eq: description for minimal coset representatives}
{}^{I^{(m)}}(\SG^B_{m+n}) = 
\left\{
\sigma \in \SG^B_{m+n} 
\; \middle| \; 
\begin{array}{l}
0 < \sigma^{-1}(1) < \cdots < \sigma^{-1}(m), \\
\sigma^{-1}(m+1) < \cdots < \sigma^{-1}(m+n)
\end{array}
\right\}.
\end{align}
For $X \subseteq \SG^B_m$ and $Y \subseteq \SG_n$, let
\[
X \shuffle_B Y := \bigcup_{\sigma \in X, \;  \rho \in Y} \sigma \shuffle_B \rho.
\]

\begin{lemma}\label{lem: B shuffle of Sigma}
Let $P_1 = ([-m,m], \preceq_{P_1})$ be a $B_m$ poset and $P_2 = ([n], \preceq_{P_2})$ a poset.
Then 
\[
\Sigma^B_R(P_1) \shuffle_B \Sigma_R(P_2) = \Sigma^B_R(P_1 \sqcup_B P_2).
\]
\end{lemma}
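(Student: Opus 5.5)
The proof rests on a membership criterion, which I will record first. For a $B_N$ poset $Q$ and $w \in \SG^B_N$, we have $w \in \Sigma^B_R(Q)$ if and only if
\[
i \preceq_Q j \implies w^{-1}(i) \le w^{-1}(j) \quad \text{for all } i,j \in [-N,N].
\]
This holds because $P(w)$ is exactly the total order $w(-N) \prec w(-N+1) \prec \cdots \prec w(N)$. That order is automatically a $B_N$ poset, since $w$ commutes with negation. So $P(w) \in \calL^B(Q)$ precisely when $P(w)$ extends $Q$.

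I also need two facts about the pieces of $w = (\sigma \bulB \rho)\delta$.

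First, the map $(\sigma,\rho) \mapsto \sigma \bulB \rho$ is an isomorphism of $\SG^B_m \times \SG_n$ onto the parabolic subgroup $(\SG^B_{m+n})_{I^{(m)}}$. Indeed, $s^B_i \bulB \id = s^B_i$ for $i<m$, and $\id \bulB s_j = s^B_{m+j}$. Every $\sigma \bulB \rho$ preserves each block $\calI_{-1}, \calI_0, \calI_1$, and so does its inverse.

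Second, by \cref{eq: description for minimal coset representatives}, every $\delta \in {}^{I^{(m)}}(\SG^B_{m+n})$ has $\delta^{-1}$ order-preserving on each block separately. On $\calI_0$ this follows from $0<\delta^{-1}(1)<\cdots<\delta^{-1}(m)$ together with $\delta^{-1}(-x) = -\delta^{-1}(x)$ and $\delta^{-1}(0)=0$. On $\calI_1$ it is the second chain of inequalities. On $\calI_{-1}$ it follows from the $\calI_1$ case by negation.

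For the inclusion $\subseteq$, take $w = (\sigma \bulB \rho)\delta$ with $\sigma \in \Sigma^B_R(P_1)$ and $\rho \in \Sigma_R(P_2)$. Then $w^{-1} = \delta^{-1} \circ (\sigma\bulB\rho)^{-1}$. Any relation $i \prec_{P_1 \sqcup_B P_2} j$ has $i,j$ in a common block $\calI_l$, and there are three cases.
\begin{itemize}
\item If $l=0$, then $(\sigma\bulB\rho)^{-1}(i) = \sigma^{-1}(i) < \sigma^{-1}(j)$ because $\sigma \in \Sigma^B_R(P_1)$.
\item If $l=1$, then $(\sigma\bulB\rho)^{-1}(i) = \rho^{-1}(i-m)+m < \rho^{-1}(j-m)+m$ because $\rho \in \Sigma_R(P_2)$.
\item If $l=-1$, the same inequality follows from the defining relation $-j-m \preceq_{P_2} -i-m$ after negating.
\end{itemize}
In each case both values lie in the same block, where $\delta^{-1}$ is increasing. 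Hence $w^{-1}(i) < w^{-1}(j)$, and $w \in \Sigma^B_R(P_1 \sqcup_B P_2)$ by the criterion.

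For the inclusion $\supseteq$, take $w \in \Sigma^B_R(P_1 \sqcup_B P_2)$. Using the standard parabolic factorization for right cosets (Björner–Brenti), write $w = u\delta$ with $u \in (\SG^B_{m+n})_{I^{(m)}}$ and $\delta \in {}^{I^{(m)}}(\SG^B_{m+n})$. By the first fact above, $u = \sigma \bulB \rho$ for unique $\sigma \in \SG^B_m$ and $\rho \in \SG_n$. Now reverse the argument. For $i \prec_{P_1} j$ we know $\delta^{-1}(\sigma^{-1}(i)) < \delta^{-1}(\sigma^{-1}(j))$, and both $\sigma^{-1}(i), \sigma^{-1}(j)$ lie in $\calI_0$. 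Since $\delta^{-1}$ is strictly increasing, hence order-reflecting, on $\calI_0$, we get $\sigma^{-1}(i) < \sigma^{-1}(j)$. So $\sigma \in \Sigma^B_R(P_1)$. The same argument on $\calI_1$ gives $\rho \in \Sigma_R(P_2)$. Therefore $w \in \sigma \shuffle_B \rho$.

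The main obstacle is bookkeeping rather than an idea. Specifically, I need to verify the identification of the parabolic subgroup with $\{\sigma \bulB \rho\}$ and get the side of the coset right: it is $w = u\delta$, so $w^{-1} = \delta^{-1}u^{-1}$. I would check both against the example $\bracket{3,1,-2} \bulB 12$.
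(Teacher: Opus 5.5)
Your argument is correct. The inclusion $\subseteq$ is the same as the paper's: reduce to the three block cases and use that $\delta^{-1}$ is increasing on each of $\calI_{-1},\calI_0,\calI_1$.

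For $\supseteq$ you take a different route. The paper builds the factorization by hand. It records the positive positions $x_1<\cdots<x_m$ where $\gamma$ takes values in $\pm[m]$, and the positions $y_1<\cdots<y_n$ where it takes values in $[m+1,m+n]$. It sets $\gamma_1=\bracket{\gamma(x_1),\ldots,\gamma(x_m)}$ and $\gamma_2(i)=\gamma(y_i)-m$, and checks directly that these lie in $\Sigma^B_R(P_1)$ and $\Sigma_R(P_2)$. It then computes $\delta^{-1}(k)=x_k$ (resp.\ $y_{k-m}$) to see that $\delta=(\gamma_1\bulB\gamma_2)^{-1}\gamma$ is a minimal coset representative.

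You instead take the parabolic factorization $w=u\delta$ as given and identify $(\SG^B_{m+n})_{I^{(m)}}$ with $\{\sigma\bulB\rho\}$. You then run the single observation from the first inclusion backwards: $\delta^{-1}$ is strictly increasing, hence order-reflecting, on each block. This buys three things:
\begin{itemize}
\item The two inclusions become mirror images of each other.
\item The nonpositive indices of $\calI_0$ are handled uniformly. The paper's check that $\gamma_1\in\Sigma^B_R(P_1)$ is written only for $i,j\in[m]$, leaving the extension to $[-m,m]$ implicit.
\item Uniqueness of $w=(\sigma\bulB\rho)\delta$ comes at no extra cost. This uniqueness is what is actually needed when the lemma is used to conclude that $\fkL$ is a linear isomorphism in the induction theorem.
\end{itemize}

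In exchange, you must fully justify the identification of the parabolic subgroup. Checking that $\bulB$ sends generators to generators does not by itself show that every $u\in(\SG^B_{m+n})_{I^{(m)}}$ has the form $\sigma\bulB\rho$. Add one of the following:
\begin{itemize}
\item $(\sigma\bulB\rho)(\sigma'\bulB\rho')=(\sigma\sigma')\bulB(\rho\rho')$; or
\item the generators $s^B_i$, $i\in I^{(m)}$, preserve each block, and every block-preserving signed permutation is visibly some $\sigma\bulB\rho$.
\end{itemize}
The paper's explicit construction avoids this step and gives concrete formulas for $\sigma,\rho,\delta$, as in its worked example.
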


\begin{proof}
To prove the inclusion $\Sigma^B_R(P_1) \shuffle_B \Sigma_R(P_2) \subseteq \Sigma^B_R(P_1 \sqcup_B P_2)$, take any $(\gamma \bulB \gamma') \delta \in \Sigma^B_R(P_1) \shuffle_B \Sigma_R(P_2)$.
Since $(\gamma \bulB \gamma') \delta \in \SG^B_{m+n}$, it suffices to show that $(\gamma \bulB \gamma') \delta \in \Sigma_R(P_1 \sqcup_B P_2)$, equivalently,
\[
\left( (\gamma \bulB \gamma') \delta \right)^{-1}(i) \le \left( (\gamma \bulB \gamma') \delta \right)^{-1}(j)
\quad \text{for $i,j \in [-(m+n), m+n]$ with $i \preceq_{P_1 \sqcup_B P_2} j$.}
\]
By the definition of $\preceq_{P_1 \sqcup_B P_2}$, we only need to consider the following cases:
\begin{enumerate}[label = {\rm (\roman*)}]
\item $i,j \in \calI_{-1}$ and $-j-m \preceq_{P_2} -i-m$,
\item $i,j \in \calI_{0}$ and $i \preceq_{P_1} j$,
\item $i,j \in \calI_{1}$ and $i-m \preceq_{P_2} j-m$.
\end{enumerate}

Suppose that $i,j \in \calI_{-1}$ and $-j-m \preceq_{P_2} -i-m$.
From the assumption $i,j \in \calI_{-1}$, it follows that
\[
(\gamma \bulB \gamma')^{-1}(i) 
= -\gamma'^{-1}(-i-m)-m
\quad \text{and} \quad
(\gamma \bulB \gamma')^{-1}(j) 
= -\gamma'^{-1}(-j-m)-m.
\]
Since $\gamma' \in \Sigma_R(P_2)$, the assumption $-j-m \preceq_{P_2} -i-m$ implies that $\gamma'^{-1}(-j-m) \le \gamma'^{-1}(-i-m)$, so
\[
-\gamma'^{-1}(-i-m)-m \le -\gamma'^{-1}(-j-m)-m.
\]
In addition, it is clear that both $-\gamma'^{-1}(-i-m)-m$ and $-\gamma'^{-1}(-j-m)-m$ are in $\calI_{-1}$.
Thus, by \cref{eq: description for minimal coset representatives}, 
\[
\left( (\gamma \bulB \gamma') \delta \right)^{-1}(i) =
\delta^{-1} (-\gamma'^{-1}(-i-m)-m) 
\le 
\delta^{-1} (-\gamma'^{-1}(-j-m)-m)
= \left( (\gamma \bulB \gamma') \delta \right)^{-1}(j).
\]

Suppose that $i,j \in \calI_{0}$ and $i \preceq_{P_1} j$.
From the assumption $i,j \in \calI_{0}$, it follows that
\[
(\gamma \bulB \gamma')^{-1}(i) 
= \gamma^{-1}(i)
\quad \text{and} \quad
(\gamma \bulB \gamma')^{-1}(j) 
= \gamma^{-1}(j).
\]
Since $\gamma \in \Sigma^B_R(P_1)$, the assumption $i \preceq_{P_1} j$ implies that $\gamma^{-1}(i) \le \gamma^{-1}(j)$.
It is clear that $\gamma^{-1}(i), \gamma^{-1}(j) \in \calI_0$.
Thus, by \cref{eq: description for minimal coset representatives}, 
\[
\left( (\gamma \bulB \gamma') \delta \right)^{-1}(i) =
\delta^{-1} (\gamma^{-1}(i)) 
\le 
\delta^{-1} (\gamma^{-1}(j))
= \left( (\gamma \bulB \gamma') \delta \right)^{-1}(j).
\]

Suppose that $i,j \in \calI_{1}$ and $i-m \preceq_{P_2} j-m$.
From the assumption $i,j \in \calI_{1}$, it follows that
\[
(\gamma \bulB \gamma')^{-1}(i) 
= \gamma'^{-1}(i-m)+m
\quad \text{and} \quad
(\gamma \bulB \gamma')^{-1}(j) 
= \gamma'^{-1}(j-m)+m.
\]
Since $\gamma' \in \Sigma_R(P_2)$, the assumption $i-m \preceq_{P_2} j-m$ implies that $\gamma'^{-1}(i-m) \le \gamma'^{-1}(j-m)$, so
\[
\gamma'^{-1}(i-m)+m \le \gamma'^{-1}(j-m)+m.
\]
It is clear that both $\gamma'^{-1}(i-m)+m$ and $\gamma'^{-1}(j-m)+m$ are in $\calI_{1}$.
Thus, by \cref{eq: description for minimal coset representatives}, 
\[
\left( (\gamma \bulB \gamma') \delta \right)^{-1}(i) =
\delta^{-1} (\gamma'^{-1}(i-m)+m) 
\le 
\delta^{-1} (\gamma'^{-1}(j-m)+m)
= \left( (\gamma \bulB \gamma') \delta \right)^{-1}(j).
\]

To prove the inclusion $\Sigma^B_R(P_1 \sqcup_B P_2) \subseteq \Sigma^B_R(P_1) \shuffle_B \Sigma_R(P_2)$, take any $\gamma \in \Sigma^B_R(P_1 \sqcup_B P_2)$.
Let $x_1, x_2, \ldots, x_m \in [m+n]$ such that 
\[
0 < x_1 < x_2 < \cdots <x_m
\quad \text{and} \quad
\{|\gamma(x_1)|, |\gamma(x_2)|, \ldots, |\gamma(x_m)|\} = [m]
\]
and let $y_1, y_2, \ldots, y_n \in [-(m+n), m+n]$ such that 
\[
y_1 < y_2 < \cdots < y_n
\quad \text{and} \quad 
\{\gamma(y_1), \gamma(y_2), \ldots, \gamma(y_n)\} = [m+1, m+n].
\]

Let $\gamma_1 := [\gamma(x_1), \gamma(x_2), \ldots, \gamma(x_m)] \in \SG^B_m$.
Since $\gamma(x_1), \gamma(x_2), \ldots, \gamma(x_m) \in \calI_0$, for any $i,j\in [m]$ we have
$\gamma_1(i) \preceq_{P_1} \gamma_1(j)$ if and only if $\gamma(x_i) \preceq_{P_1 \sqcup_B P_2} \gamma(x_j)$.
In addition, if $\gamma(x_i) \preceq_{P_1 \sqcup_B P_2} \gamma(x_j)$, then $x_i \le x_j$, which implies $i \le j$.
Therefore, if $\gamma_1(i) \preceq_{P_1} \gamma_1(j)$ then $i \le j$, and thus $\gamma_1 \in \Sigma_R^B(P_1)$.

Define $\gamma_2 \in \SG_n$ by $\gamma_2(i) := \gamma(y_i)-m$ for $1 \le i \le n$.
Since $\gamma(y_1), \gamma(y_2), \ldots, \gamma(y_n) \in [m+1,m+n]$, for any $i,j\in [n]$ we have $\gamma_2(i) \preceq_{P_2} \gamma_2(j)$ if and only if $\gamma(y_i) \preceq_{P_1 \sqcup_B P_2} \gamma(y_j)$.
In addition, if $\gamma(y_i) \preceq_{P_1 \sqcup_B P_2} \gamma(y_j)$, then $y_i \le y_j$, which implies $i \le j$.
Therefore, if $\gamma_2(i) \preceq_{P_2} \gamma_2(j)$ then $i \le j$, and thus $\gamma_2 \in \Sigma_R(P_2)$.

Let $\delta := (\gamma_1 \bulB \gamma_2)^{-1} \gamma$. Then $\gamma =(\gamma_1 \bulB \gamma_2) \delta$; we will show $\delta  \in {}^{I^{(m)}}(\SG^B_{m+n})$.
For $k \in [m]$, 
\[
\delta^{-1}(k) 
= \gamma^{-1} \circ (\gamma_1 \bulB \gamma_2)(k) 
= \gamma^{-1}(\gamma_1(k))
= \gamma^{-1}(\gamma(x_k))
= x_k.
\]
For $k \in [m+1,m+n]$,
\[
\delta^{-1}(k) 
= \gamma^{-1} \circ (\gamma_1 \bulB \gamma_2)(k) 
= \gamma^{-1}(\gamma_2(k-m)+m)
= \gamma^{-1}(\gamma(y_{k-m}))
= y_{k-m}.
\]
Since $(x_i)_{1 \le i \le m}$ and $(y_j)_{1 \le j \le n}$ are increasing sequences, we have
\[
0 < 
\delta^{-1}(1) < \delta^{-1}(2) < \cdots < \delta^{-1}(m)
\quad \text{and} \quad
\delta^{-1}(m+1) < \delta^{-1}(m+2) < \cdots <\delta^{-1}(m+n),
\]
which implies $\delta \in {}^{I^{(m)}}(\SG^B_{m+n})$.
Therefore, $\gamma =(\gamma_1 \bulB \gamma_2) \delta \in \Sigma^B_R(P_1) \shuffle_B \Sigma_R(P_2)$.
\end{proof}

\begin{example}
Revisit \cref{eg: combining posets}.
Let $\gamma = [3,1,4,-5,-2]$.
One sees that $\gamma \in \Sigma^B_R(P_1 \sqcup_B P_2)$.
The integers $x_1,x_2,x_3$ and $y_1,y_2$, chosen in the proof of \cref{lem: B shuffle of Sigma}, are given by
\[
x_1 = 1, \  x_2 = 2, \  x_3 = 5,
\quad \text{and} \quad
y_1 = -4, \  y_2 = 3.
\]
So, $\gamma_1 = \bracket{3,1,-2}$, $\gamma_2 = 21$, and
\[
\delta = (\gamma_1 \bulB \gamma_2)^{-1} \gamma 
= \bracket{3,1,-2,5,4}^{-1} \bracket{3,1,4,-5,-2} 
= \bracket{1,2,5,-4,3}.
\]
Indeed, one can see that $\gamma_1 \in \Sigma^B_R(P_1)$, $\gamma_2 \in \Sigma_R(P_2)$, and $\delta \in {}^{I^{(3)}}(\SG^B_{5})$, that is,
$\gamma = (\gamma_1 \cdot_B \gamma_2) \delta \in \Sigma^B_R(P_1) \shuffle_B \Sigma_R(P_2)$.
\end{example}

Now, we are ready to prove the main theorem of this subsection.

\begin{theorem}\label{thm: induction product}
Let $P_1 = ([-m,m], \preceq_{P_1})$ be a $B_m$ poset and $P_2 = ([n], \preceq_{P_2})$ a poset. Then $(M^B_{P_1} \otimes M_{P_2})\uparrow_{H^B_m(0) \otimes H_n(0)}^{H^B_{m+n}(0)} \cong M^B_{P_1 \sqcup_B P_2}$.
\end{theorem}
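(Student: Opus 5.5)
The plan is to write down an explicit isomorphism of vector spaces and then check that it intertwines the $H^B_{m+n}(0)$-actions. Since $H^B_{m+n}(0)$ is a free left module over the parabolic subalgebra $H^B_m(0)\otimes H_n(0)$ with basis $\{\opi_\delta \mid \delta \in {}^{I^{(m)}}(\SG^B_{m+n})\}$, where $\opi_\delta$ is defined via any reduced word of $\delta$, the induced module has $\C$-basis
\[
\{(\gamma\otimes\gamma')\otimes \opi_\delta \mid \gamma\in\Sigma^B_R(P_1),\ \gamma'\in\Sigma_R(P_2),\ \delta\in {}^{I^{(m)}}(\SG^B_{m+n})\}.
\]
I would define
\[
\Phi\bigl((\gamma\otimes\gamma')\otimes\opi_\delta\bigr) := (\gamma\bulB\gamma')\cdot\opi_\delta \in M^B_{P_1\sqcup_B P_2},
\]
with the action on the right computed in $M^B_{P_1\sqcup_B P_2}$. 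First I check that $\gamma\bulB\gamma'\in\Sigma^B_R(P_1\sqcup_B P_2)$; this is the case $\delta=\id$ of \cref{lem: B shuffle of Sigma}. Once $\Phi$ is well defined, it is an $H^B_{m+n}(0)$-module map, because it is the map induced by an $H^B_m(0)\otimes H_n(0)$-map $M^B_{P_1}\otimes M_{P_2}\to M^B_{P_1\sqcup_B P_2}$, $\gamma\otimes\gamma'\mapsto\gamma\bulB\gamma'$. This uses Frobenius reciprocity (the universal property of $-\otimes_{H^B_m(0)\otimes H_n(0)}H^B_{m+n}(0)$).

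The key step, and the main obstacle, is verifying that $\gamma\otimes\gamma'\mapsto \gamma\bulB\gamma'$ is a module map for the parabolic subalgebra. For $i\in[0,m-1]$ I compare $\gamma\cdot\opi^B_i$ in $M^B_{P_1}$ with $(\gamma\bulB\gamma')\cdot\opi^B_i$. Descents agree because $(\gamma\bulB\gamma')(k)=\gamma(k)$ for $|k|\le m$. The condition $(\gamma s^B_i)\bulB\gamma'\in\Sigma^B_R(P_1\sqcup_B P_2)$ is equivalent to $\gamma s^B_i\in\Sigma^B_R(P_1)$: the relations of $P_1\sqcup_B P_2$ restricted to $\calI_0$ are exactly those of $P_1$, and there are no relations between different blocks $\calI_l$. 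For $i\in[m+1,m+n-1]$ the argument is the same with $\gamma'$ and $P_2$, using the $\calI_1$ block and, by symmetry, the $\calI_{-1}$ block. Here $(\gamma'\bulB)$ shifted descents match the type $A$ descents of $\gamma'$, and membership is checked on $\calI_1$.

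Bijectivity then follows by comparing bases. I would show by induction on $\ell(\delta)$ that $(\gamma\bulB\gamma')\cdot\opi_\delta = \pm(\gamma\bulB\gamma')\delta$ with sign $(-1)^0=1$, and in particular is nonzero. The argument is as follows. Write $\delta=\delta' s^B_k$ with $\ell(\delta)=\ell(\delta')+1$, so $\delta'$ is also a minimal coset representative. Then $s^B_k$ is an ascent of $(\gamma\bulB\gamma')\delta'$, since lengths add for minimal coset representatives: $\ell((\gamma\bulB\gamma')\delta)=\ell(\gamma\bulB\gamma')+\ell(\delta)$. Moreover $(\gamma\bulB\gamma')\delta$ lies in $\Sigma^B_R(P_1\sqcup_B P_2)$ by \cref{lem: B shuffle of Sigma}, so the third case of the action applies. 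Hence $\Phi$ sends basis vectors to the elements $(\gamma\bulB\gamma')\delta$.

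These elements are distinct because of the uniqueness of the parabolic factorization $w=w_I\cdot{}^Iw$, and because $(\gamma,\gamma')\mapsto\gamma\bulB\gamma'$ is injective into $(\SG^B_{m+n})_{I^{(m)}}$. By \cref{lem: B shuffle of Sigma} these elements exhaust $\Sigma^B_R(P_1\sqcup_B P_2)$. So $\Phi$ maps a basis bijectively onto a basis, and is therefore an isomorphism.
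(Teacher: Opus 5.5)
Your proposal is correct and follows essentially the same route as the paper. You use the same map $(\gamma\otimes\gamma')\otimes\opi^B_\delta\mapsto(\gamma\bulB\gamma')\cdot\opi^B_\delta$, get bijectivity from \cref{lem: B shuffle of Sigma}, and reduce equivariance to the compatibility of $\bulB$ with the generators $\opi^B_i$, $i\neq m$. There are two small differences, both improvements. You obtain $H^B_{m+n}(0)$-equivariance from the universal property of induction, rather than checking it by hand through the factorization $\delta s^B_i=\xi\xi'\delta'$. You also prove explicitly, via length-additivity for minimal coset representatives, that $(\gamma\bulB\gamma')\cdot\opi^B_\delta=(\gamma\bulB\gamma')\delta$; the paper leaves this step implicit when it asserts that the map is a linear isomorphism.
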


\begin{proof}
Let $\sfR := \{\opi^B_{\delta} \mid \delta \in {}^{I^{(m)}}(\SG^B_{m+n})\}$ and
\[
(\Sigma^B_R(P_1) \otimes \Sigma_R(P_2)) \otimes \sfR := \left\{ (\gamma \otimes \gamma') \otimes \opi^B_\delta \; \middle| \; \gamma \in \Sigma^B_R(P_1), \gamma' \in \Sigma_R(P_2), \opi^B_\delta \in \sfR \right\}.
\]
It is clear that $(\Sigma^B_R(P_1) \otimes \Sigma_R(P_2)) \otimes \sfR$ is a basis for $(M^B_{P_1} \otimes M_{P_2}) \otimes_{H^B_m(0) \otimes H_n(0)} H^B_{m+n}(0)$.
Define a linear map
\[
\fkL: (M^B_{P_1} \otimes M_{P_2}) \otimes_{H^B_m(0) \otimes H_n(0)} H^B_{m+n}(0) \ra M^B_{P_1 \sqcup_B P_2}, \quad
(\gamma \otimes \gamma') \otimes \opi^B_\delta \mapsto (\gamma \bulB \gamma') \cdot \opi^B_\delta
\]
for $(\gamma \otimes \gamma') \otimes \opi^B_\delta \in (\Sigma^B_R(P_1) \otimes \Sigma_R(P_2)) \otimes \sfR$.
By \cref{lem: B shuffle of Sigma}, $\fkL$ is a $\C$-linear isomorphism.

Let us show that $\fkL$ preserves the $H^B_{m+n}(0)$-action.
Take any $(\gamma \otimes \gamma') \otimes \opi^B_\delta \in (\Sigma^B_R(P_1) \otimes \Sigma_R(P_2)) \otimes \sfR$ and $i \in [0,m+n-1]$.
If $i \in \Des_R(\delta)$, then 
\[
\fkL((\gamma \otimes \gamma') \otimes \opi^B_\delta \cdot \opi^B_i)
= - (\gamma \bulB \gamma') \cdot \opi^B_\delta
= \fkL((\gamma \otimes \gamma') \otimes \opi^B_\delta) \cdot \opi^B_i.
\] 
Suppose that $i \notin \Des_R(\delta)$.
By \cref{lem: B shuffle of Sigma}, if $i \in [0,m-1]$, then $\gamma s^B_i \in \Sigma^B_R(P_1)$ if and only if $(\gamma \bulB \gamma') s^B_i \in \Sigma^B_R(P_1 \sqcup_B P_2)$, and if $i \in [m+1,m+n-1]$, then $\gamma' s_{i-m} \in \Sigma_R(P_2)$ if and only if $(\gamma \bulB \gamma')s^B_i \in \Sigma^B_R(P_1 \sqcup_B P_2)$.
Thus,
\begin{align}\label{eq: action on gam bulB gam'}
(\gamma \bulB \gamma') \cdot \opi^B_i = 
\begin{cases}
(\gamma \cdot \opi^B_i) \bulB \gamma' & \text{for $i \in [0,m-1]$,} \\
\gamma \bulB (\gamma' \cdot \opi_{i-m}) & \text{for $i \in [m+1, m+n-1]$},
\end{cases}
\end{align}
where $\gamma \bulB 0$ and $0 \bulB \gamma'$ are defined to be $0$. 
By \cite[Equation (2.12)]{BB05}, there exists a unique triple 
\[
(\xi, \xi', \delta') \in (\SG^B_{m+n})_{[0,m-1]} \times (\SG^B_{m+n})_{[m+1, m+n-1]} \times {}^{I^{(m)}}(\SG^B_{m+n})
\]
such that $\delta s^B_i = \xi \xi' \delta'$ and $\ell(\delta s^B_i) = \ell(\xi \xi') + \ell(\delta')$.
It follows that $\opi^B_\delta \opi^B_i = \opi^B_{\xi\xi'} \opi^B_{\delta'}$.
By \cref{eq: action on gam bulB gam'}, we have
\[
\fkL((\gamma \otimes \gamma') \otimes \opi^B_\delta \cdot \opi^B_i)
= (\gamma \cdot \opi^B_{\xi} \bulB \gamma' \cdot \opi_{\xi'_{-m}}) \cdot \opi^B_{\delta'}
= \fkL((\gamma \otimes \gamma') \otimes \opi^B_\delta) \cdot \opi^B_i,
\]
where $\xi'_{-m} \in \SG_n$ is defined by $\xi'_{-m}(i) = \xi'(i+m)$ for $1 \le i \le n$.
\end{proof}

\begin{corollary}\label{cor: module isom}
The Grothendieck group $\bigoplus_{n \ge 0} \calG_0(\scrP^B(n))$ is isomorphic to $\QSym^B$ as $\QSym$-modules.
\end{corollary}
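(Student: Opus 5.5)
The plan is to show first that the action $\boxtimes^B$ on $\GHBb$ restricts to a well-defined action of $\bigoplus_{n\ge0}\calG_0(\scrP(n))$ on $\bigoplus_{n\ge0}\calG_0(\scrP^B(n))$. We then transport everything along the characteristic maps and use \cref{thm: Grothendieck group of poset modules} and \eqref{eq: G0 poset modules isom to QSym} to identify the underlying groups.

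\textbf{Step 1: the groups agree.} By the proof of \cref{thm: Grothendieck group of poset modules}, every class $[M^B_P]$ decomposes via the short exact sequences of \cref{lem: short exact sequence} into classes of modules $M^B_E$ with $E$ linear. These are one-dimensional and isomorphic to the simples $\bfF^B_{\Des_R(\bfgam_E)}$. Every simple $H^B_n(0)$-module arises this way, since every $\gamma\in\SG^B_n$ equals $\bfgam_{P(\gamma)}$ and $P(\gamma)$ is a $B_n$ poset.

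Hence the natural map $\calG_0(\scrP^B(n))\to\calG_0(\modHBn)$ is an isomorphism of $\Z$-modules. Composing with $\ch^B$ gives a $\Z$-isomorphism
\[
\bigoplus_{n\ge0}\calG_0(\scrP^B(n))\to\QSym^B, \qquad [M^B_P]\mapsto K^B_P.
\]
In the same way, \eqref{eq: G0 poset modules isom to QSym} identifies $\bigoplus_{n}\calG_0(\scrP(n))$ with $\GHb$ and, via $\ch$, with $\QSym$.

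\textbf{Step 2: the action is well defined.} We define the action on generators by $[M^B_{P_1}]\boxtimes^B[M_{P_2}]:=[M^B_{P_1}\otimes M_{P_2}\uparrow]$. \Cref{thm: induction product} says this equals $[M^B_{P_1\sqcup_B P_2}]$, so the result again lies in the poset-module Grothendieck group. Induction is exact because $H^B_{m+n}(0)$ is free as a module over the parabolic subalgebra, with basis indexed by the minimal coset representatives. Tensor products over $\C$ are also exact. Therefore $\boxtimes^B$ respects the Grothendieck relations, and it is the restriction of Huang's action on $\GHBb$ \cite[Theorem 5(i)]{Huang17}.

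\textbf{Step 3: the isomorphism intertwines the actions.} Since the inclusions of Step 1 are isomorphisms compatible with $\boxtimes^B$ and $\boxtimes$, the module axioms on the poset side are inherited from $\GHBb$. Huang's theorem that $\ch^B$ is a $\QSym$-module isomorphism \cite[Theorem 5(iv)]{Huang17}, together with $\ch$ being an isomorphism, then gives
\[
\ch^B\bigl([M^B_{P_1}]\boxtimes^B[M_{P_2}]\bigr)=K^B_{P_1}\odot^B K_{P_2}.
\]
This identifies $\bigoplus_n\calG_0(\scrP^B(n))$ with $\QSym^B$ as right $\QSym$-modules.

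\textbf{Main obstacle.} The delicate point is Step 1: one must check that relations in $\calG_0(\scrP^B(n))$, which only use short exact sequences inside the full subcategory, do not collapse more than in $\calG_0(\modHBn)$. I would handle this by noting that $\ch^B$ separates the basis: the classes of simples map to the linearly independent $F^B_\alpha$, and these classes span $\calG_0(\scrP^B(n))$ by the decomposition argument. Hence the map to $\QSym^B_n$ is injective as well as surjective.
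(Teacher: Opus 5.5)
Your proposal is correct and follows essentially the same route as the paper: closure of $B_n$ poset modules under $\boxtimes^B$ via \cref{thm: induction product}, the identification $\calG_0(\scrP^B(n))\cong\calG_0(\modHBn)$ from \cref{thm: Grothendieck group of poset modules} together with its type $A$ analogue from \cite{CKO24}, and Huang's theorem that $\ch^B$ is a $\QSym$-module isomorphism. Your extra checks make explicit two details the paper leaves implicit: exactness of induction, via freeness of $H^B_{m+n}(0)$ over the parabolic subalgebra, and injectivity of $\calG_0(\scrP^B(n))\to\calG_0(\modHBn)$, via linear independence of the $F^B_\alpha$.
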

\begin{proof}
Let $M$ be a $B_m$ poset module and $N$ a module for a poset on $[n]$. By \cref{thm: induction product}, we have $M\boxtimes^B N$ is a $B_{m+n}$ poset module. Combining this with \cref{thm: Grothendieck group of poset modules}, the fact that $\GHBb$ is isomorphic as $\QSym$-modules to $\QSym^B$ (\cite[Theorem 5(i)]{Huang17}), and the fact that $\bigoplus_{n \ge 0} \calG_0(\scrP(n))$ is isomorphic to $\QSym$ as Hopf algebras (\cite[Theorem 3.5]{CKO24}), the result follows.
\end{proof}

As a further corollary, we obtain a generalization of \cref{eq: odot action on QSym^B}, which describes the right $\QSym$-action $\odot^B$ on $\QSym^B$.

\begin{corollary}\label{cor: KBP product}
Let $P_1 = ([-m,m], \preceq_{P_1})$ be a  $B_m$ poset and $P_2 = ([n], \preceq_{P_2})$ a poset. Then 
$ K^B_{P_1} \odot^B K_{P_2} = K^B_{P_1 \sqcup_B P_2}$.
\end{corollary}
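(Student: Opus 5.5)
The plan is to transport the module isomorphism of \cref{thm: induction product} through the characteristic maps. The identity to prove is then simply the image of
\[
[M^B_{P_1}] \boxtimes^B [M_{P_2}] = [M^B_{P_1 \sqcup_B P_2}]
\]
under $\ch^B$, once we know that $\ch^B$ intertwines the action $\boxtimes^B$ of $\GHb$ on $\GHBb$ with the action $\odot^B$ of $\QSym$ on $\QSym^B$.

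The first step is to recall the compatibility from \cite[Theorem 5]{Huang17}. There $\ch^B$ is a $\QSym$-module isomorphism, where $\GHBb$ is regarded as a $\QSym$-module via $\ch$. Concretely, for every finite-dimensional $H^B_m(0)$-module $M$ and every $H_n(0)$-module $N$,
\[
\ch^B([M] \boxtimes^B [N]) = \ch^B([M]) \odot^B \ch([N]).
\]
On simple modules this is exactly Huang's formula \cref{eq: odot action on QSym^B}. It extends to arbitrary modules because both sides are bilinear on Grothendieck groups: induction from $H^B_m(0) \otimes H_n(0)$ is exact, since $H^B_{m+n}(0)$ is free over the parabolic subalgebra with basis indexed by minimal coset representatives. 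If the cited statement is only phrased on bases, I will spell out this exactness remark in one line.

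The second step is the computation itself. By the theorem just proved, $\ch^B([M^B_{P_1}]) = K^B_{P_1}$ and $\ch^B([M^B_{P_1 \sqcup_B P_2}]) = K^B_{P_1 \sqcup_B P_2}$. By \cref{prop: DHT proposition}(1), $\ch([M_{P_2}]) = K_{P_2}$. Applying $\ch^B$ to the isomorphism of \cref{thm: induction product} then gives
\[
K^B_{P_1 \sqcup_B P_2} = \ch^B\bigl([M^B_{P_1}] \boxtimes^B [M_{P_2}]\bigr) = K^B_{P_1} \odot^B K_{P_2}.
\]

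The only point needing care is the compatibility in the first step, and specifically that the $\odot^B$ defined combinatorially via $u \shuffle^B v$ agrees with the one coming from induction. This agreement is the content of Huang's theorem, so I would cite it rather than reprove it.

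As a sanity check, and as an alternative route if needed, one could expand $K^B_{P_1}$ and $K_{P_2}$ by \cref{eq: Bn type P-partitions} and its type $A$ analogue, apply \cref{eq: odot action on QSym^B} term by term, and compare with \cref{lem: B shuffle of Sigma}. That route, however, requires matching $u \shuffle^B v$ with $\sigma \shuffle_B \rho$ up to descent sets, which is more laborious. I will therefore use the representation-theoretic argument.
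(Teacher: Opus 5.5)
Your proposal is correct and follows the paper's argument: apply $\ch^B$ to the isomorphism of \cref{thm: induction product}, and use that $\ch^B$ is a right $\QSym$-module isomorphism (Huang), together with $\ch^B([M^B_P]) = K^B_P$ and $\ch([M_{P_2}]) = K_{P_2}$. Your extra remark on the exactness of induction is harmless; it is already covered by Huang's result that $\boxtimes^B$ gives a well-defined action on $\GHBb$.
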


\begin{proof}
By combining \cref{thm: induction product} with the fact that $\ch^B$ is a right $\QSym$-module isomorphism, the assertion follows immediately.
\end{proof}

\begin{example}
Let 
$\def \pp {0.6}
P_1 = \begin{array}{l}
\begin{tikzpicture}
\node at (-0.7*\pp, 1*\pp) {$\bullet$};
\node[right] at (-0.7*\pp, 1*\pp) {\scriptsize $1$};
\node at (0.7*\pp, 1*\pp) {$\bullet$};
\node[right] at (0.7*\pp, 1*\pp) {\scriptsize $-2$};
\node at (0*\pp, 0*\pp) {$\bullet$};
\node[right] at (0*\pp, 0*\pp) {\scriptsize $0$};
\node at (-0.7*\pp, -1*\pp) {$\bullet$};
\node[right] at (-0.7*\pp, -1*\pp) {\scriptsize $-1$};
\node at (0.7*\pp, -1*\pp) {$\bullet$};
\node[right] at (0.7*\pp, -1*\pp) {\scriptsize $2$};

\draw (0*\pp, 0*\pp) -- (-0.7*\pp, 1*\pp);
\draw (0*\pp, 0*\pp) -- (0.7*\pp, 1*\pp);
\draw (0*\pp, 0*\pp) -- (-0.7*\pp, -1*\pp);
\draw (0*\pp, 0*\pp) -- (0.7*\pp, -1*\pp);
\end{tikzpicture}
\end{array}$
be a $B_2$ poset and
$\def \hp {1}
\def \vp {0.4}
P_2 = \begin{array}{l}
\begin{tikzpicture}
\node at (-1*\hp, 0*\vp) {$\bullet$};
\node[right] at (-1*\hp, 0*\vp) {\scriptsize $1$};
\end{tikzpicture}
\end{array}$
be a poset on $[1]$. 

Then 
\[\def \pp {0.6}
P_1 \sqcup_B P_2 = 
\begin{array}{l}
\begin{tikzpicture}
\node at (-0.7*\pp, 1*\pp) {$\bullet$};
\node[right] at (-0.7*\pp, 1*\pp) {\scriptsize $1$};
\node at (0.7*\pp, 1*\pp) {$\bullet$};
\node[right] at (0.7*\pp, 1*\pp) {\scriptsize $-2$};
\node at (0*\pp, 0*\pp) {$\bullet$};
\node[right] at (0*\pp, 0*\pp) {\scriptsize $0$};
\node at (-0.7*\pp, -1*\pp) {$\bullet$};
\node[right] at (-0.7*\pp, -1*\pp) {\scriptsize $-1$};
\node at (0.7*\pp, -1*\pp) {$\bullet$};
\node[right] at (0.7*\pp, -1*\pp) {\scriptsize $2$};

\draw (0*\pp, 0*\pp) -- (-0.7*\pp, 1*\pp);
\draw (0*\pp, 0*\pp) -- (0.7*\pp, 1*\pp);
\draw (0*\pp, 0*\pp) -- (-0.7*\pp, -1*\pp);
\draw (0*\pp, 0*\pp) -- (0.7*\pp, -1*\pp);

\node at (3*\pp, 0*\pp) {$\bullet$};
\node[right] at (3*\pp, 0*\pp) {\scriptsize $3$};

\node at (5*\pp, 0*\pp) {$\bullet$};
\node[right] at (5*\pp, 0*\pp) {\scriptsize $-3$};
\end{tikzpicture}
\end{array}.
\]

The $H_3^B(0)$-action on $\C\Sigma_R^B(P_1 \sqcup_B P_2)$ is illustrated in the following figure:

\begin{displaymath}
\def \hp {2}
\def \vp {1.5}
\def \lp {0.55}
\scalebox{0.95}{$
\begin{tikzpicture}

\node at (-1*\hp, 6*\vp) {\small $\bracket{-2,1,3}$};
\node at (-1*\hp + 1.2*\lp , 6*\vp) {} edge [out=40,in=320, loop] ();
\node at (-1*\hp + 2.9*\lp, 6*\vp) {\small $\opi^B_0$};

\draw [->] (-1.3*\hp, 5.7*\vp) -- (-1.7*\hp, 5.3*\vp);
\node at (-1.65*\hp, 5.6*\vp) {\small $\opi^B_1$};

\draw [->] (-0.7*\hp, 5.7*\vp) -- (-0.3*\hp, 5.3*\vp);
\node at (-0.35*\hp, 5.6*\vp) {\small $\opi^B_2$};

\node at (-2*\hp, 5*\vp) {\small $\bracket{1,-2,3}$};
\node at (-2*\hp + 1.2*\lp , 5*\vp) {} edge [out=40,in=320, loop] ();
\node at (-2*\hp + 2.9*\lp, 5*\vp) {\small $\opi^B_1$};

\node at (0*\hp, 5*\vp) {\small $\bracket{-2,3,1}$};
\node at (0*\hp + 1.2*\lp , 5*\vp) {} edge [out=40,in=320, loop] ();
\node at (0*\hp + 3.5*\lp, 5*\vp) {\small $\opi^B_0, \opi^B_2$};

\draw [->] (-2*\hp, 4.7*\vp) -- (-2*\hp, 4.3*\vp);
\node at (-1.8*\hp, 4.5*\vp) {\small $\opi^B_2$};

\draw [->] (0*\hp, 4.7*\vp) -- (0*\hp, 4.3*\vp);
\node at (0.2*\hp, 4.5*\vp) {\small $\opi^B_1$};

\node at (-2*\hp, 4*\vp) {\small $\bracket{1,3,-2}$};
\node at (-2*\hp + 1.2*\lp , 4*\vp) {} edge [out=40,in=320, loop] ();
\node at (-2*\hp + 2.9*\lp, 4*\vp) {\small $\opi^B_2$};

\node at (0*\hp, 4*\vp) {\small $\bracket{3,-2,1}$};
\node at (0*\hp + 1.2*\lp , 4*\vp) {} edge [out=40,in=320, loop] ();
\node at (0*\hp + 2.9*\lp, 4*\vp) {\small $\opi^B_1$};

\draw [->] (-1.7*\hp, 3.7*\vp) -- (-1.3*\hp, 3.3*\vp);
\node at (-1.35*\hp, 3.6*\vp) {\small $\opi^B_1$};

\draw [->] (-0.3*\hp, 3.7*\vp) -- (-0.7*\hp, 3.3*\vp);
\node at (-0.65*\hp, 3.6*\vp) {\small $\opi^B_2$};

\draw [->] (0.3*\hp, 3.7*\vp) -- (0.7*\hp, 3.3*\vp);
\node at (0.65*\hp, 3.6*\vp) {\small $\opi^B_0$};

\node at (-1*\hp, 3*\vp) {\small $\bracket{3,1,-2}$};
\node at (-1*\hp + 1.2*\lp , 3*\vp) {} edge [out=40,in=320, loop] ();
\node at (-1*\hp + 3.5*\lp, 3*\vp) {\small $\opi^B_1, \opi^B_2$};

\node at (1*\hp, 3*\vp) {\small $\bracket{-3,-2,1}$};
\node at (1*\hp + 1.4*\lp , 3*\vp) {} edge [out=40,in=320, loop] ();
\node at (1*\hp + 3.1*\lp, 3*\vp) {\small $\opi^B_0$};

\draw [->] (-0.7*\hp, 2.7*\vp) -- (-0.3*\hp, 2.3*\vp);
\node at (-0.35*\hp, 2.6*\vp) {\small $\opi^B_0$};

\draw [->] (0.7*\hp, 2.7*\vp) -- (0.3*\hp, 2.3*\vp);
\node at (0.35*\hp, 2.6*\vp) {\small $\opi^B_2$};

\draw [->] (1.3*\hp, 2.7*\vp) -- (1.7*\hp, 2.3*\vp);
\node at (1.65*\hp, 2.6*\vp) {\small $\opi^B_1$};

\node at (0*\hp, 2*\vp) {\small $\bracket{-3,1,-2}$};
\node at (0*\hp + 1.4*\lp , 2*\vp) {} edge [out=40,in=320, loop] ();
\node at (0*\hp + 3.7*\lp, 2*\vp) {\small $\opi^B_0, \opi^B_2$};

\node at (2*\hp, 2*\vp) {\small $\bracket{-2,-3,1}$};
\node at (2*\hp + 1.4*\lp , 2*\vp) {} edge [out=40,in=320, loop] ();
\node at (2*\hp + 3.7*\lp, 2*\vp) {\small $\opi^B_0, \opi^B_1$};

\draw [->] (0*\hp, 1.7*\vp) -- (0*\hp, 1.3*\vp);
\node at (0.2*\hp, 1.5*\vp) {\small $\opi^B_1$};

\draw [->] (2*\hp, 1.7*\vp) -- (2*\hp, 1.3*\vp);
\node at (2.2*\hp, 1.5*\vp) {\small $\opi^B_2$};

\node at (0*\hp, 1*\vp) {\small $\bracket{1,-3,-2}$};
\node at (0*\hp + 1.4*\lp , 1*\vp) {} edge [out=40,in=320, loop] ();
\node at (0*\hp + 3.1*\lp, 1*\vp) {\small $\opi^B_1$};

\node at (2*\hp, 1*\vp) {\small $\bracket{-2,1,-3}$};
\node at (2*\hp + 1.4*\lp , 1*\vp) {} edge [out=40,in=320, loop] ();
\node at (2*\hp + 3.7*\lp, 1*\vp) {\small $\opi^B_0, \opi^B_2$};

\draw [->] (0.3*\hp, 0.7*\vp) -- (0.7*\hp, 0.3*\vp);
\node at (0.65*\hp, 0.6*\vp) {\small $\opi^B_2$};

\draw [->] (1.7*\hp, 0.7*\vp) -- (1.3*\hp, 0.3*\vp);
\node at (1.35*\hp, 0.6*\vp) {\small $\opi^B_1$};

\node at (1*\hp, 0*\vp) {\small $\bracket{1,-2,-3}$};
\node at (1*\hp + 1.4*\lp , 0*\vp) {} edge [out=40,in=320, loop] ();
\node at (1*\hp + 3.7*\lp, 0*\vp) {\small $\opi^B_1, \opi^B_2$};

\end{tikzpicture}
$}
\end{displaymath}
In this picture, $\opi^B_i$ acts by $0$ if it is not represented.
\end{example}

\subsection{Restriction of poset modules}

In this subsection, we prove that for any $B_n$ poset $P$, the restriction $M^B_P \downarrow^{H^B_n(0)}_{H^B_m(0) \otimes H_{n-m}(0)}$ is isomorphic to a direct sum of tensor products of a $B_m$ poset module and a poset module for $H_{n-m}(0)$.
Thus, we prove that
\begin{align*}
\updelta^B([M]) := \sum_{0 \le m \le n} 
\left[
M \downarrow_{H^B_m(0) \otimes H_{n-m}(0)}^{H^B_{n}(0)}
\right]
\end{align*}
defines a right $\bigoplus_{n \ge 0} \calG_0(\scrP(n))$-coaction on $\bigoplus_{n \ge 0} \calG_0(\scrP^B(n))$.

Let $P$ be a $B_n$ poset.
A subset $Q$ of $P$ is called an \emph{induced subposet} (simply, \emph{subposet}) of $P$ if $Q$ is equipped with the partial order $\preceq_Q$ defined by $x \preceq_Q y$ if and only if $x \preceq_P y$ for all $x,y\in Q$.
Let $Q$ be a subposet of $P$, and let $P \setminus Q$ denote the subposet of $P$ with underlying set $P \setminus Q$.
Define another subposet $\tQ$ of $P$ by 
\[
\tQ := \{y \in P \mid \text{$y \preceq_P x$ for some $x \in Q$}\}.
\]
Letting $Q = \{x_1 < x_2 < \cdots < x_k\}$, we define $\rmst(Q)$ to be the poset $([k], \preceq_{\rmst(Q)})$ such that for all $i,j \in [k]$, $i \preceq_{\rmst(Q)} j$ if and only if $x_i \preceq_Q x_j$. 
We call a subposet $Q$ of $P$ a \emph{type-$B$ subposet} if 
\begin{enumerate}
    \item $0\in Q$, and
    \item $x \in Q$ implies $-x \in Q$.
\end{enumerate}
For a type-$B$ subposet $Q$ of $P$ with $2m + 1$ elements, we will denote the elements of $Q$ by $x^Q_i$, with $i\in [-m,m]$, such that 
\begin{align}\label{eq: elements of Q}
x^Q_{-m} < \cdots < x^Q_{-1} < x^Q_0 = 0 < x^Q_1 < \cdots < x^Q_m
\end{align}
(as integers), unless otherwise stated.
We define $\stB(Q)$ to be the $B_m$ poset with underlying set $([-m,m], \preceq_{\stB(Q)})$, such that $i \preceq_{\stB(Q)} j$ if and only if $x^Q_i \preceq_Q x^Q_j$ for $i,j \in [-m, m]$.

\begin{definition}
Let $P$ be a $B_n$ poset. 
\begin{enumerate}[label={\rm (\arabic*)}, itemsep = 1ex]
\item 
A type-$B$ subposet $Q$ of $P$ is called a \emph{type-$B$ lower subposet of $P$} if for all $x \in Q$ and $y \in P$ satisfying $-x \preceq_P y \preceq_P x$, we have $y \in Q$.

\item
Let $Q$ be a type-$B$ lower subposet of $P$.  
A subposet $U$ of $P \setminus \tQ$ is called an \emph{upper subposet of $P$ over $Q$} if the following conditions hold:
\begin{enumerate}[label={\rm (\roman*)}]
\item For each $x \in P \setminus \tQ$, there exists a unique $y \in U$ such that $|x| = |y|$. 
\item If $x \in P \setminus \tQ$, $y\in U$, and $y\preceq_P x$, then $x\in U$.
\end{enumerate}
\end{enumerate}
\end{definition}

Let $P$ be a $B_n$ poset.
For $m \in [0,n]$, we denote by $\LS^B(P;m)$ the set of all type-$B$ lower subposets of $P$ with $2m+1$ elements.
For $Q \in \LS^B(P;m)$, let
$\upper(Q)$ be the set of all upper subposets of $P$ over $Q$.
For $U \in \upper(Q)$, we will denote the elements of $U$ by $y^U_j$, with $j\in [n-m]$, such that 
\begin{align}\label{eq: elements of U}
y^U_1 < y^U_2 < \cdots < y^U_{n-m}
\end{align}
(as integers).

Given $\gamma \in \SG^B_n$ and $m \in [0,n]$, let $\rmst^+_{m}(\gamma)$ be the signed permutation in $\SG^B_{m}$ defined by
\begin{align}\label{eq: st plus}
\rmst^+_{m}(\gamma)(i) 
& := 
\begin{cases}
\sign(\gamma(i))
\left| \big\{
j \in [m] \; \middle| \;
|\gamma(j)| \le |\gamma(i)| 
\big\} \right|
& \text{if $i \in [-m,m] \setminus \{0\}$,} \\
0 & \text{if $i = 0$,}
\end{cases}
\end{align}
where $\sign(\gamma(i)) := \gamma(i) /|\gamma(i)|$, and let $\rmst^-_{n-m}(\gamma)$ be the permutation in $\SG_{n-m}$ defined by
\begin{align}\label{eq: st minus}
\rmst^-_{n-m}(\gamma)(i) 
& := |\{j \in [m+1, n] \mid \gamma(j) \le \gamma(i+m) \}|
\quad \text{for all $1 \le i \le n-m$.}
\end{align}
For example, if $\gamma = [-4, 7, -1, 3, -6, 2, -5]$ then $\rmst_4^+(\gamma) = [-3, 4, -1, 2]$ and $\rmst^-_3(\gamma) = 132$. 
Given a $B_n$ poset $P$ and $\gamma \in \Sigma^B_R(P)$, we define subposets $Q_\gamma$ and $U_\gamma$ of $P$ by
\[
Q_\gamma := \{\gamma(i) \mid i \in [-m,m]\}
\quad \text{and} \quad
U_\gamma := \{\gamma(i) \mid i \in [m+1, n]\}.
\]
By definition, $Q_\gamma$ is a type-$B$ subposet of $P$.

\begin{example}\label{eg: decomposition of Bn poset}
Let $
\def \hp {1}
\def \vp {0.4}
P = 
\begin{array}{l}
\begin{tikzpicture}
\node at (-1*\hp, 1.5*\vp) {$\bullet$};
\node[right] at (-1*\hp, 1.5*\vp) {\scriptsize $2$};
\node at (-1*\hp, 0*\vp) {$\bullet$};
\node[right] at (-1*\hp, 0*\vp) {\scriptsize $0$};
\node at (-1*\hp, -1.5*\vp) {$\bullet$};
\node[right] at (-1*\hp, -1.5*\vp) {\scriptsize $-2$};
\draw[] (-1*\hp, 1.5*\vp) -- (-1*\hp, -1.5*\vp);

\node at (0*\hp, 0.75*\vp) {$\bullet$};
\node[right] at (0*\hp, 0.75*\vp) {\scriptsize $1$};
\node at (0*\hp, -0.75*\vp) {$\bullet$};
\node[right] at (0*\hp, -0.75*\vp) {\scriptsize $3$};
\draw[] (0*\hp, 0.75*\vp) -- (0*\hp, -0.75*\vp);

\node at (1*\hp, 0.75*\vp) {$\bullet$};
\node[right] at (1*\hp, 0.75*\vp) {\scriptsize $-3$};
\node at (1*\hp, -0.75*\vp) {$\bullet$};
\node[right] at (1*\hp, -0.75*\vp) {\scriptsize $-1$};
\draw[] (1*\hp, 0.75*\vp) -- (1*\hp, -0.75*\vp);
\end{tikzpicture}
\end{array}$ and let $m=1$.

We have
\begin{displaymath}
\def \hp {1}
\def \vp {0.4}
\LS^B(P;1) = \left\{
Q_1 := \begin{array}{l}
\begin{tikzpicture}
\node at (-1*\hp, 0*\vp) {$\bullet$};
\node[right] at (-1*\hp, 0*\vp) {\scriptsize $0$};

\node at (0*\hp, 0.75*\vp) {$\bullet$};
\node[right] at (0*\hp, 0.75*\vp) {\scriptsize $1$};

\node at (1*\hp, -0.75*\vp) {$\bullet$};
\node[right] at (1*\hp, -0.75*\vp) {\scriptsize $-1$};
\end{tikzpicture}
\end{array},
\quad
Q_2 := 
\begin{array}{l}
\begin{tikzpicture}
\node at (-1*\hp, 1.5*\vp) {$\bullet$};
\node[right] at (-1*\hp, 1.5*\vp) {\scriptsize $2$};
\node at (-1*\hp, 0*\vp) {$\bullet$};
\node[right] at (-1*\hp, 0*\vp) {\scriptsize $0$};
\node at (-1*\hp, -1.5*\vp) {$\bullet$};
\node[right] at (-1*\hp, -1.5*\vp) {\scriptsize $-2$};
\draw[] (-1*\hp, 1.5*\vp) -- (-1*\hp, -1.5*\vp);
\end{tikzpicture}
\end{array},
\quad 
Q_3 := 
\begin{array}{l}
\begin{tikzpicture}
\node at (-1*\hp, 0*\vp) {$\bullet$};
\node[right] at (-1*\hp, 0*\vp) {\scriptsize $0$};

\node at (0*\hp, -0.75*\vp) {$\bullet$};
\node[right] at (0*\hp, -0.75*\vp) {\scriptsize $3$};

\node at (1*\hp, 0.75*\vp) {$\bullet$};
\node[right] at (1*\hp, 0.75*\vp) {\scriptsize $-3$};
\end{tikzpicture}
\end{array}
\right\}.
\end{displaymath}
Moreover,
\begin{displaymath}
\def \hp {1}
\def \vp {0.4}
\tQ_1 = \begin{array}{l}
\begin{tikzpicture}
\node at (-1*\hp, 0*\vp) {$\bullet$};
\node[right] at (-1*\hp, 0*\vp) {\scriptsize $0$};
\node at (-1*\hp, -1.5*\vp) {$\bullet$};
\node[right] at (-1*\hp, -1.5*\vp) {\scriptsize $-2$};
\draw[] (-1*\hp, 0*\vp) -- (-1*\hp, -1.5*\vp);

\node at (0*\hp, 0.75*\vp) {$\bullet$};
\node[right] at (0*\hp, 0.75*\vp) {\scriptsize $1$};
\node at (0*\hp, -0.75*\vp) {$\bullet$};
\node[right] at (0*\hp, -0.75*\vp) {\scriptsize $3$};
\draw[] (0*\hp, 0.75*\vp) -- (0*\hp, -0.75*\vp);

\node at (1*\hp, -0.75*\vp) {$\bullet$};
\node[right] at (1*\hp, -0.75*\vp) {\scriptsize $-1$};
\end{tikzpicture}
\end{array},
\quad
\tQ_2 = 
\begin{array}{l}
\begin{tikzpicture}
\node at (-1*\hp, 1.5*\vp) {$\bullet$};
\node[right] at (-1*\hp, 1.5*\vp) {\scriptsize $2$};
\node at (-1*\hp, 0*\vp) {$\bullet$};
\node[right] at (-1*\hp, 0*\vp) {\scriptsize $0$};
\node at (-1*\hp, -1.5*\vp) {$\bullet$};
\node[right] at (-1*\hp, -1.5*\vp) {\scriptsize $-2$};
\draw[] (-1*\hp, 1.5*\vp) -- (-1*\hp, -1.5*\vp);
\end{tikzpicture}
\end{array},
\quad 
\tQ_3 = 
\begin{array}{l}
\begin{tikzpicture}
\node at (-1*\hp, 0*\vp) {$\bullet$};
\node[right] at (-1*\hp, 0*\vp) {\scriptsize $0$};
\node at (-1*\hp, -1.5*\vp) {$\bullet$};
\node[right] at (-1*\hp, -1.5*\vp) {\scriptsize $-2$};
\draw[] (-1*\hp, 0*\vp) -- (-1*\hp, -1.5*\vp);

\node at (0*\hp, -0.75*\vp) {$\bullet$};
\node[right] at (0*\hp, -0.75*\vp) {\scriptsize $3$};

\node at (1*\hp, 0.75*\vp) {$\bullet$};
\node[right] at (1*\hp, 0.75*\vp) {\scriptsize $-3$};
\node at (1*\hp, -0.75*\vp) {$\bullet$};
\node[right] at (1*\hp, -0.75*\vp) {\scriptsize $-1$};
\draw[] (1*\hp, 0.75*\vp) -- (1*\hp, -0.75*\vp);
\end{tikzpicture}
\end{array},
\end{displaymath}
and
\begin{displaymath}
\def \hp {1}
\def \vp {0.4}
\begin{array}{rl}
\upper(Q_1) 
& = \left\{
U_{1,1} := 
\begin{array}{l}
\begin{tikzpicture}
\node at (0*\hp, 0*\vp) {$\bullet$};
\node[right] at (0*\hp, 0*\vp) {\scriptsize $2$};

\node at (1*\hp, 0*\vp) {$\bullet$};
\node[right] at (1*\hp, 0*\vp) {\scriptsize $-3$};
\end{tikzpicture}
\end{array}
\right\}, 
\\[1ex]

\upper(Q_2) 
& = \left\{
U_{2,1} := 
\begin{array}{l}
\begin{tikzpicture}
\node at (0*\hp, 0.75*\vp) {$\bullet$};
\node[right] at (0*\hp, 0.75*\vp) {\scriptsize $1$};
\node at (0*\hp, -0.75*\vp) {$\bullet$};
\node[right] at (0*\hp, -0.75*\vp) {\scriptsize $3$};
\draw[] (0*\hp, 0.75*\vp) -- (0*\hp, -0.75*\vp);
\end{tikzpicture}
\end{array}
, \quad
U_{2,2} := 
\begin{array}{l}
\begin{tikzpicture}
\node at (0*\hp, 0.75*\vp) {$\bullet$};
\node[right] at (0*\hp, 0.75*\vp) {\scriptsize $-3$};
\node at (0*\hp, -0.75*\vp) {$\bullet$};
\node[right] at (0*\hp, -0.75*\vp) {\scriptsize $-1$};
\draw[] (0*\hp, 0.75*\vp) -- (0*\hp, -0.75*\vp);
\end{tikzpicture}
\end{array}
, \quad
U_{2,3} := 
\begin{array}{l}
\begin{tikzpicture}
\node at (0*\hp, 0*\vp) {$\bullet$};
\node[right] at (0*\hp, 0*\vp) {\scriptsize $1$};
\node at (1*\hp, 0*\vp) {$\bullet$};
\node[right] at (1*\hp, 0*\vp) {\scriptsize $-3$};
\end{tikzpicture}
\end{array}
\right\}, 
\\[3ex]

\upper(Q_3) 
& = \left\{
U_{3,1} := 
\begin{array}{l}
\begin{tikzpicture}
\node at (0*\hp, 0*\vp) {$\bullet$};
\node[right] at (0*\hp, 0*\vp) {\scriptsize $2$};
\node at (1*\hp, 0*\vp) {$\bullet$};
\node[right] at (1*\hp, 0*\vp) {\scriptsize $1$};
\end{tikzpicture}
\end{array}
\right\}.
\end{array}
\end{displaymath}

We also have

\begin{displaymath}
\def \hp {1}
\def \vp {0.4}
\stB(Q_2) = 
\begin{array}{l}
\begin{tikzpicture}
\node at (-1*\hp, 1.5*\vp) {$\bullet$};
\node[right] at (-1*\hp, 1.5*\vp) {\scriptsize $1$};
\node at (-1*\hp, 0*\vp) {$\bullet$};
\node[right] at (-1*\hp, 0*\vp) {\scriptsize $0$};
\node at (-1*\hp, -1.5*\vp) {$\bullet$};
\node[right] at (-1*\hp, -1.5*\vp) {\scriptsize $-1$};
\draw[] (-1*\hp, 1.5*\vp) -- (-1*\hp, -1.5*\vp);
\end{tikzpicture}
\end{array} \,\,\,
\mbox{ and } \,\,\, 
\rmst(U_{2,2}) = \begin{array}{l}\begin{tikzpicture}
\node at (0*\hp, 0.75*\vp) {$\bullet$};
\node[right] at (0*\hp, 0.75*\vp) {\scriptsize $1$};
\node at (0*\hp, -0.75*\vp) {$\bullet$};
\node[right] at (0*\hp, -0.75*\vp) {\scriptsize $2$};
\draw[] (0*\hp, 0.75*\vp) -- (0*\hp, -0.75*\vp);
\end{tikzpicture}
\end{array}.
\end{displaymath}

For $\gamma = [2,-1,-3]\in \Sigma_R^B(P)$, we have $Q_{\gamma} = Q_2$ and $U_{\gamma} = U_{2,2}$. Moreover, $(x^{Q_2}_{-1}, x^{Q_2}_{0}, x^{Q_2}_{1}) = (-2,0,2)$ and $(y^{U_{2,2}}_1, y^{U_{2,2}}_2) = (-3,-1)$.
\end{example}

\begin{lemma}\label{lem: bijection for restriction}
Let $P$ be a $B_n$ poset and let $m\in [0,n]$. 
The map 
\[
\calF_{P;m}:
\Sigma^B_R(P) \ra  
\bigsqcup_{Q \in \LS^B(P;m)}
\bigsqcup_{U \in \upper(Q)}
\Sigma^B_R(\stB(Q)) \times \Sigma_R(\rmst(U)),
\]
sending $\gamma \mapsto (\rmst^+_{m}(\gamma), \rmst^-_{n-m}(\gamma)) \in \Sigma^B_R(\stB(Q_\gamma)) \times \Sigma_R(\rmst(U_\gamma))$, is a bijection.
\end{lemma}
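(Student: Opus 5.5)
We prove the lemma in three steps: well-definedness, injectivity, surjectivity.

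\emph{Well-definedness.} Let $\gamma \in \Sigma^B_R(P)$ and let $E = P(\gamma)$, so that $\gamma(i) \preceq_P \gamma(j)$ forces $i \le j$.

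First, $Q_\gamma$ is a type-$B$ lower subposet of size $2m+1$. Suppose $x = \gamma(k) \in Q_\gamma$ with $|k| \le m$, and $-x \preceq_P y \preceq_P x$. Writing $y = \gamma(l)$, the linear-extension property gives $-k \le l \le k$. Hence $|l| \le m$ and $y \in Q_\gamma$.

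Second, $U_\gamma \in \upper(Q_\gamma)$. Any element $z \in \tQ_\gamma$ satisfies $z \preceq_P \gamma(k)$ with $|k|\le m$, so $\gamma^{-1}(z) \le m$. Therefore $U_\gamma \subseteq P \setminus \tQ_\gamma$. For condition (i): if $x \in P\setminus \tQ_\gamma$, then $|x| = |\gamma(l)|$ for a unique $l$ with $|l|>m$, and exactly one of $\pm\gamma(l)$ lies in $U_\gamma$. For condition (ii): if $y = \gamma(l)$ with $l>m$ and $y \preceq_P x$, then $\gamma^{-1}(x) \ge l > m$, so $x \in U_\gamma$.

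Third, the two components land in the right sets. By construction $\rmst^+_m(\gamma)$ is the signed permutation with $x^{Q_\gamma}_{\rmst^+_m(\gamma)(i)} = \gamma(i)$. Indeed, the elements of $Q_\gamma$ ordered as integers are exactly relabeled by the rank of $|\cdot|$ together with the sign. Similarly $y^{U_\gamma}_{\rmst^-_{n-m}(\gamma)(i)} = \gamma(i+m)$. Consequently, the relations of $\stB(Q_\gamma)$ and $\rmst(U_\gamma)$ pulled back through these relabelings are relations of $P$ among the corresponding entries of $\gamma$. So the linear-extension property of $\gamma$ transfers, giving $\rmst^+_m(\gamma)\in\Sigma^B_R(\stB(Q_\gamma))$ and $\rmst^-_{n-m}(\gamma)\in\Sigma_R(\rmst(U_\gamma))$.

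\emph{Injectivity and surjectivity.} We construct the inverse. Given $Q \in \LS^B(P;m)$, $U \in \upper(Q)$, $\sigma \in \Sigma^B_R(\stB(Q))$ and $\rho \in \Sigma_R(\rmst(U))$, define $\gamma$ by
\[
\gamma(i) = x^Q_{\sigma(i)} \quad (|i|\le m), \qquad \gamma(m+j) = y^U_{\rho(j)} \quad (j\in[n-m]),
\]
extended to negative indices by $\gamma(-k) = -\gamma(k)$. We check three things.

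\begin{enumerate}[label=(\alph*)]
\item $\gamma$ is a signed permutation. This follows from $Q = -Q$, from $x^Q_{-i} = -x^Q_i$ (the labels are in integer order and symmetric), and from condition (i), which says $U$ picks exactly one element from each pair $\pm x$ outside $Q$. The one additional fact needed is $\tQ \cup (-\tQ) \cup Q$ versus $U$: we must show that the absolute values of $P\setminus Q$ are covered by $P\setminus\tQ$. This uses the lower property. If $z\in\tQ\setminus Q$ and $-z\in\tQ$, then $z\preceq x$ and $-z\preceq x'$ for some $x,x'\in Q$, hence $-x' \preceq z \preceq x$, which should force $z\in Q$. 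For this we need the lower property to handle two different elements $x,x'$, and we expect to argue via $-x'\preceq z\preceq x$ combined with a chain through $0$ or a suitable maximal element.
\item $\gamma \in \Sigma^B_R(P)$, i.e.\ $\gamma(a)\preceq_P\gamma(b)$ implies $a \le b$. We split into cases according to the blocks $[-n,-m-1]$, $[-m,m]$, $[m+1,n]$ containing $a$ and $b$. Within $Q$, or within $U$ or $-U$, the hypotheses on $\sigma$ and $\rho$ (together with the $B_n$ symmetry) give the claim. Across blocks, we must rule out relations going the wrong way. No element of $Q$ lies above an element of $U$, because $U\cap\tQ=\emptyset$ and $\tQ$ is a down-set. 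No element of $U$ lies below an element of $-U$ or of $Q$, by the up-closure in condition (ii) applied together with the symmetry $i\preceq j \iff -j\preceq -i$.
\item This construction is two-sided inverse to $\calF_{P;m}$. We have $Q_\gamma = Q$, $U_\gamma = U$, and the standardizations recover $\sigma$ and $\rho$; conversely, any $\gamma$ is rebuilt from its image.
\end{enumerate}

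\emph{Main obstacle.} We expect the hardest part to be the cross-block comparisons in (b), together with the covering statement in (a). Specifically, we must show that $P\setminus Q$ splits as $U \sqcup (-U)$ with $-U$ a down-set relative to $Q$. The first attempt is to prove $-U \subseteq \tQ$ or an analogous order-ideal statement. If that fails, we would redo the case analysis directly using condition (ii) and the lower-subposet condition.
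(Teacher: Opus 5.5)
\textbf{There is a genuine gap at step (a), and the argument you sketch cannot close it.}

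You need the fact that $z\in\tQ$ and $-z\in\tQ$ together force $z\in Q$. This does not follow from the definition of a type-$B$ lower subposet, which only controls elements $y$ with $-x\preceq_P y\preceq_P x$ for a \emph{single} $x\in Q$. For two different elements $x\neq x'$ the implication is false, so no chain through $0$ or through a maximal element will produce it.

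For example, let $P$ be the $B_3$ poset whose only nontrivial relations are $-2\prec_P 3\prec_P 1$ and $-1\prec_P -3\prec_P 2$, together with their consequences $-2\prec_P 1$ and $-1\prec_P 2$. Take $m=2$ and $Q=\{0,\pm1,\pm2\}$.
\begin{itemize}
\item No nonzero $x\in Q$ satisfies $-x\preceq_P x$, so $Q\in\LS^B(P;2)$ vacuously.
\item However $3\preceq_P 1$ and $-3\preceq_P 2$, so $\pm3\in\tQ$. This is exactly $-x'\preceq_P 3\preceq_P x$ with $x=1$, $x'=2$.
\item Hence $\tQ=[-3,3]$, and under the literal definition $\upper(Q)=\{\emptyset\}$.
\item No $\gamma\in\Sigma^B_R(P)$ has $Q_\gamma=Q$: $\gamma(3)=3$ would place $3$ above $1$, and $\gamma(3)=-3$ would place $-3$ above $2$.
\end{itemize}
So the nonempty component $\Sigma^B_R(\stB(Q))\times\Sigma_R(\rmst(\emptyset))$ lies outside the image. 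Concretely, $|\Sigma^B_R(P)|=8$, while the literal codomain has $12$ elements.

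\textbf{What is missing is a hypothesis that step (a) must invoke rather than derive.} The natural one is that every $U\in\upper(Q)$ has exactly $n-m$ elements, which the labelling $y^U_1<\cdots<y^U_{n-m}$ already presupposes. With it, (a) becomes a count:
\begin{itemize}
\item $Q$, $U$ and $-U$ are pairwise disjoint, since $U\subseteq P\setminus\tQ\subseteq P\setminus Q$, $Q=-Q$, and $U\cap(-U)=\emptyset$ by the uniqueness in condition (i);
\item their sizes sum to $(2m+1)+2(n-m)=2n+1$, so they partition $[-n,n]$.
\end{itemize}
Equivalently, you can strengthen the lower condition to all pairs $x,x'\in Q$, i.e.\ require $\tQ\cap(-\tQ)=Q$. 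Then $Q_\gamma$ still qualifies by your own argument: whenever $-x'\preceq_P y\preceq_P x$ with $x,x'\in Q_\gamma$, we get $-m\le\gamma^{-1}(-x')\le\gamma^{-1}(y)\le\gamma^{-1}(x)\le m$.

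The paper's proof passes over this point silently. It declares $\conc(\gamma_1,\gamma_2)$ a signed permutation mapping $[-n,-m-1]$, $[-m,m]$, $[m+1,n]$ onto $-U$, $Q$, $U$, without checking that these three sets partition $[-n,n]$.

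Once this is repaired, the rest of your outline is exactly the paper's argument and goes through. That covers the well-definedness checks, the inverse $\gamma(i)=x^Q_{\sigma(i)}$, $\gamma(m+j)=y^U_{\rho(j)}$, the three-block case analysis via $U\cap\tQ=\emptyset$, condition (ii) and the $B_n$ symmetry, and the two-sided inverse check.

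Your fallback idea $-U\subseteq\tQ$ would not help either: it already fails for the $B_n$ poset with no nontrivial relations and $Q=\{0\}$.
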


\begin{proof}
We first show that $\calF_{P;m}$ is well-defined. Specifically, we will show that for every $\gamma \in \Sigma^B_R(P)$, we have $Q_\gamma \in \LS^B(P;m)$, $U_\gamma \in \upper(Q)$, $\rmst^+_{m}(\gamma) \in \Sigma^B_R(\stB(Q_\gamma))$, and $\rmst^-_{n-m}(\gamma) \in \Sigma_R(\rmst(U_\gamma))$.

Take any $\gamma \in \Sigma^B_R(P)$.
For any $x \in Q_\gamma$ and $y \in P$ satisfying $-x \preceq_P y \preceq_P x$, we have $\gamma^{-1}(-x) \le \gamma^{-1}(y) \le \gamma^{-1}(x)$, so $y \in \{\gamma(i) \mid i \in [-m,m]\} = Q_\gamma$.
Hence $Q_\gamma \in \LS^B(P;m)$.

From the definitions of $Q_\gamma$ and $U_\gamma$, we see that $\gamma^{-1}(x) \in [-m,m]$ and $\gamma^{-1}(y) \in [m+1, m+n]$ for all $x \in Q_\gamma$ and $y \in U_\gamma$.
Since $\gamma \in \Sigma^B_R(P)$, we have $y \not\preceq_P x$ for all $x \in Q_\gamma$ and $y \in U_\gamma$, thus $U_\gamma \subseteq P \setminus \widetilde{Q_\gamma}$.
Again from the definitions of $Q_\gamma$ and $U_\gamma$, we see that for each $x \in P \setminus \widetilde{Q_\gamma}$, there exists a unique $y \in U_\gamma$ such that $|x| = |y|$.
In addition, for $x,y \in P \setminus \widetilde{Q_\gamma}$ with $x \preceq_P y$, if $x \in U_\gamma$, then $m+1 \le \gamma^{-1}(x) \le \gamma^{-1}(y)$ and so $y \in U_\gamma$.
Putting these all together, we have $U_\gamma \in \upper(Q)$.

Let us show that $\rmst^+_{m}(\gamma) \in \Sigma^B_R(\stB(Q_\gamma))$ and $\rmst^-_{n-m}(\gamma) \in \Sigma_R(\rmst(U_\gamma))$.
From \cref{eq: elements of Q} and \cref{eq: elements of U}, recall that we use the notation $x^{Q_\gamma}_{-m}, \ldots, x^{Q_\gamma}_{m}$ and $y^{U_\gamma}_1, \ldots, y^{U_\gamma}_{n-m}$ to denote the elements of $Q_\gamma$ and $U_\gamma$, respectively.
Note that
\begin{align*}
\rmst^+_{m}(\gamma)(\gamma^{-1}(x^{Q_\gamma}_i)) = 
\sign(i) \left| \left\{
p \in [m] \; \middle| \;
|\gamma(p)| \le |x^{Q_\gamma}_i| 
\right\} \right| = i \quad \text{for $i \in [-m,m]$}
\end{align*}
and
\begin{align*}
\rmst^-_{n-m}(\gamma)(\gamma^{-1}(y^{U_\gamma}_i)) = 
\left| \left\{
p \in [m+1,n] \; \middle| \;
\gamma(p) \le y^{U_\gamma}_i 
\right\} \right| = i \quad \text{for $i \in [n-m]$}.
\end{align*}
For any $i,j \in [-m,m]$ with $i \preceq_{\stB(Q_\gamma)} j$, we have $\gamma^{-1}(x^{Q_\gamma}_i) \le \gamma^{-1}(x^{Q_\gamma}_j)$.
It follows that
\[
\rmst^+_{m}(\gamma)^{-1}(i) 
= \gamma^{-1}(x^{Q_\gamma}_i) 
\le \gamma^{-1}(x^{Q_\gamma}_j)
= \rmst^+_{m}(\gamma)^{-1}(j).
\]
Similarly, for any $i,j \in [n-m]$ with $i \preceq_{\rmst(U_\gamma)} j$, we have $\gamma^{-1}(y^{U_\gamma}_i) \le \gamma^{-1}(y^{U_\gamma}_j)$, which implies
\[
\rmst^-_{n-m}(\gamma)^{-1}(i) 
= \gamma^{-1}(y^{U_\gamma}_i) - m  
\le \gamma^{-1}(y^{U_\gamma}_j) - m
= \rmst^-_{n-m}(\gamma)^{-1}(j).
\]
Consequently, $\rmst^+_{m}(\gamma) \in \Sigma^B_R(\stB(Q_\gamma))$ and $\rmst^-_{n-m}(\gamma) \in \Sigma_R(\rmst(U_\gamma))$, therefore $\calF_{P;m}$ is well-defined.

We next find the inverse of $\calF_{P;m}$.
Given $Q \in \LS^B(P;m)$, $U \in \upper(Q)$, $\gamma_1 \in \Sigma^B_R(\stB(Q))$, and $\gamma_2 \in \Sigma_R(\rmst(U))$, let
$\conc(\gamma_1, \gamma_2)$ be the signed permutation in $\SG^B_n$ defined by
\[
\conc(\gamma_1, \gamma_2)(i) = \begin{cases}
-y^{U}_{\gamma_2(-i-m)} & \text{for $i \in [-n, -m-1]$,}\\
x^{Q}_{\gamma_1(i)} & \text{for $i \in [-m, m]$,} \\
y^{U}_{\gamma_2(i-m)} & \text{for $i \in [m+1,n]$.}
\end{cases}
\]
See \cref{eg: decomposition of Bn poset 2} for an example. 
Then we have that $\conc(\gamma_1, \gamma_2)([-n, -m-1]) = -U$, $\conc(\gamma_1, \gamma_2)([-m, m]) = Q$,  and 
$\conc(\gamma_1, \gamma_2)([m+1, n]) = U$, where $-U := \{-x \mid x \in U\}$.
Define
\[
\calF'_{P;m}:
\bigsqcup_{Q \in \LS^B(P;m)}
\bigsqcup_{U \in \upper(Q)}
\Sigma^B_R(\stB(Q)) \times \Sigma_R(\rmst(U))
\ra
\Sigma^B_R(P),
\quad
(\gamma_1, \gamma_2) \mapsto \conc(\gamma_1,\gamma_2)
\]
for $Q \in \LS^B(P;m)$, $U \in \upper(Q)$, $\gamma_1 \in \Sigma^B_R(\stB(Q))$, and  $\gamma_2 \in \Sigma_R(\rmst(U))$.

We claim that $\calF'_{P;m}$ is well-defined, i.e., that $\conc(\gamma_1,\gamma_2)\in \Sigma^B_R(P)$.
Take any $Q \in \LS^B(P;m)$, $U \in \upper(Q)$, $\gamma_1 \in \Sigma^B_R(\stB(Q))$, and $\gamma_2 \in \Sigma_R(\rmst(U))$.
It suffices to show that 
\[
\conc(\gamma_1, \gamma_2)^{-1}(a) \le \conc(\gamma_1, \gamma_2)^{-1}(b)
\quad
\text{for any $a,b \in [-n, n]$ with $a \preceq_P b$.}
\]
Choose any $a,b \in [-n, n]$ with $a \preceq_P b$.
We will deal with the following three cases
\[
\text{(i) } a \in -U, \quad 
\text{(ii) }a \in Q, \quad \text{and} \quad 
\text{(iii) } a \in U.
\]

Suppose that $a \in -U$.
If $b \in Q \cup U$, then $\conc(\gamma_1, \gamma_2)^{-1}(a) <-m \le \conc(\gamma_1, \gamma_2)^{-1}(b)$.
Assume that $b \in -U$.
Since $a \preceq_P b$ and $-a,-b \in U$, we have $-b \preceq_U -a$.
Let $i_a, i_b \in [n-m]$ such that $y^U_{i_a} = -a$ and $y^U_{i_b} = -b$.
The relation $y^U_{i_b} \preceq_U y^U_{i_a}$ implies $i_b \preceq_{\rmst(U)} i_a$.
Since $\gamma_2 \in \Sigma_R(\rmst(U))$, we have $\gamma_2^{-1}(i_b) \le \gamma_2^{-1}(i_a)$.
One can see that
\[
\conc(\gamma_1, \gamma_2)^{-1}(-a)
= \gamma^{-1}_2(i_a) + m
\quad \text{and} \quad
\conc(\gamma_1, \gamma_2)^{-1}(-b) 
= \gamma^{-1}_2(i_b) + m.
\]
Putting these observations together, we have $\conc(\gamma_1, \gamma_2)^{-1}(-b) \le \conc(\gamma_1, \gamma_2)^{-1}(-a)$, which is equivalent to $\conc(\gamma_1, \gamma_2)^{-1}(a) \le \conc(\gamma_1, \gamma_2)^{-1}(b)$.

Suppose that $a \in Q$.
If $b \in U$, then $
\conc(\gamma_1, \gamma_2)^{-1}(a) \le m < \conc(\gamma_1, \gamma_2)^{-1}(b)$.
If $b \in -U$, then $-b \in \tQ$ because $-b \preceq_P -a$ and $-a \in Q$.
This leads to a contradiction since $-b \in U \subseteq P \setminus \tQ$.
Assume that $b \in Q$.
Let $i_a, i_b \in [-m,m]$ such that $x^Q_{i_a} = a$ and $x^Q_{i_b} = b$.
The relation $x^Q_{i_a} \preceq_Q x^Q_{i_b}$ implies that $i_a \preceq_{\stB(Q)} i_b$.
Since $\gamma_1 \in \Sigma^B_R(\stB(Q))$, we have $\gamma_1^{-1}(i_a) \le \gamma_1^{-1}(i_b)$.
One can see that
\[
\conc(\gamma_1, \gamma_2)^{-1}(a)
= \gamma^{-1}_1(i_a)
\quad \text{and} \quad
\conc(\gamma_1, \gamma_2)^{-1}(b)
= \gamma^{-1}_1(i_b).
\]
Thus, $\conc(\gamma_1, \gamma_2)^{-1}(a) \le \conc(\gamma_1, \gamma_2)^{-1}(b)$.

Suppose that $a \in U$.
Since $a \preceq_P b$ and $a \in P \setminus \tQ$, we have $b \in P \setminus \tQ$.
So, by the definition of upper subposets, we have $b \in U$.
Let $i_a, i_b \in [n-m]$ such that $y^U_{i_a} = a$ and $y^U_{i_b} = b$.
The relation $y^U_{i_a} \preceq_U y^U_{i_b}$ implies $i_a \preceq_{\rmst(U)} i_b$.
Since $\gamma_2 \in \Sigma_R(\rmst(U))$, we have $\gamma_2^{-1}(i_a) \le \gamma_2^{-1}(i_b)$.
One can see that
\[
\conc(\gamma_1, \gamma_2)^{-1}(a)
= \gamma^{-1}_2(i_a) + m
\quad \text{and} \quad
\conc(\gamma_1, \gamma_2)^{-1}(b) 
= \gamma^{-1}_2(i_b) + m.
\]
Thus, $\conc(\gamma_1, \gamma_2)^{-1}(a) \le \conc(\gamma_1, \gamma_2)^{-1}(b)$.

Finally, we show that $\calF'_{P;m}$ is the inverse of $\calF_{P;m}$.
To prove that $\calF'_{P;m}$ is a left inverse of $\calF_{P;m}$, let $\gamma \in \Sigma^B_R(P)$ and $i \in [n]$.
Then
\begin{align*}
(\calF'_{P;m} \circ \calF_{P;m}) (\gamma) (i)
& = \begin{cases}
x^{Q_\gamma}_{\rmst^+_{m}(\gamma)(i)} & \text{if $i \in [m]$,} \\[1ex]
y^{U_\gamma}_{\rmst^-_{n-m}(\gamma)(i-m)} & \text{if $i \in [m+1, n]$.}
\end{cases}
\end{align*}
In the case where $i \in [m]$, let $k_i \in [-m,m]$ such that $\gamma(i) = x^{Q_\gamma}_{k_i}$.
Then we have 
\[
\rmst^+_{m}(\gamma)(i) = \sign(x^{Q_\gamma}_{k_i}) \left| \left\{
j \in [m] \; \middle| \; |\gamma(j)| \le |x^{Q_\gamma}_{k_i}|
\right\} \right| 
= k_i.
\]
In the case where $i \in [m+1, n]$, let $k_i \in [n-m]$ such that $\gamma(i) = y^{U_\gamma}_{k_i}$.
Then we have
\[
\rmst^-_{n-m}(\gamma)(i-m) 
= \left| \left\{
j \in [m+1,n] \; \middle| \; \gamma(j) \le y^{U_\gamma}_{k_i}
\right\} \right|
= k_i.
\]
Thus, $(\calF'_{P;m} \circ \calF_{P;m})  (\gamma) (i) = \gamma(i)$.

To prove that $\calF'_{P;m}$ is a right inverse of $\calF_{P;m}$, let us choose $Q \in \LS^B(P;m)$, $U \in \upper(Q)$, $\gamma_1 \in \Sigma^B_R(\stB(Q))$, and $\gamma_2 \in \Sigma_R(\rmst(U))$.
Note that
\[
(\calF_{P;m} \circ \calF'_{P;m}) ((\gamma_1, \gamma_2))
= \left(\rmst^+_{m}(\conc(\gamma_1, \gamma_2)), \, \rmst^-_{n-m}(\conc(\gamma_1, \gamma_2))\right).
\]
By definition, for $i \in [m]$ we have $\conc(\gamma_1, \gamma_2)(i)
= x^Q_{\gamma_1(i)}$, so
\begin{align*}
\rmst^+_{m}(\conc(\gamma_1, \gamma_2)) (i)
& = \sign(x^Q_{\gamma_1(i)})
\left| \left\{
j \in [m] \; \middle| \; |x^Q_{\gamma_1(j)}| \le |x^Q_{\gamma_1(i)}| 
\right\} \right| = \gamma_1(i).
\end{align*}
Similarly, for $i \in [n-m]$ we have $\conc(\gamma_1, \gamma_2)(i+m) = y^U_{\gamma_2(i)}$, so
\begin{align*}
\rmst^-_{n-m}(\conc(\gamma_1, \gamma_2)) (i) 
=  |\{j \in [n-m] \mid y^U_{\gamma_2(j)} \le y^U_{\gamma_2(i)} \}| 
= \gamma_2(i).
\end{align*}
Putting these all together shows that $(\calF_{P;m} \circ \calF'_{P;m}) ((\gamma_1, \gamma_2)) = (\gamma_1, \gamma_2)$. Hence $\calF'_{P;m}$ is the inverse of $\calF_{P;m}$, and $\calF_{P;m}$ is a bijection.
\end{proof}

\begin{example}\label{eg: decomposition of Bn poset 2}
As in \cref{eg: decomposition of Bn poset}, let $
\def \hp {1}
\def \vp {0.4}
P = 
\begin{array}{l}
\begin{tikzpicture}
\node at (-1*\hp, 1.5*\vp) {$\bullet$};
\node[right] at (-1*\hp, 1.5*\vp) {\scriptsize $2$};
\node at (-1*\hp, 0*\vp) {$\bullet$};
\node[right] at (-1*\hp, 0*\vp) {\scriptsize $0$};
\node at (-1*\hp, -1.5*\vp) {$\bullet$};
\node[right] at (-1*\hp, -1.5*\vp) {\scriptsize $-2$};
\draw[] (-1*\hp, 1.5*\vp) -- (-1*\hp, -1.5*\vp);

\node at (0*\hp, 0.75*\vp) {$\bullet$};
\node[right] at (0*\hp, 0.75*\vp) {\scriptsize $1$};
\node at (0*\hp, -0.75*\vp) {$\bullet$};
\node[right] at (0*\hp, -0.75*\vp) {\scriptsize $3$};
\draw[] (0*\hp, 0.75*\vp) -- (0*\hp, -0.75*\vp);

\node at (1*\hp, 0.75*\vp) {$\bullet$};
\node[right] at (1*\hp, 0.75*\vp) {\scriptsize $-3$};
\node at (1*\hp, -0.75*\vp) {$\bullet$};
\node[right] at (1*\hp, -0.75*\vp) {\scriptsize $-1$};
\draw[] (1*\hp, 0.75*\vp) -- (1*\hp, -0.75*\vp);
\end{tikzpicture}
\end{array}$ and let $m=1$.

Recall that
\begin{displaymath}
\def \hp {1}
\def \vp {0.4}
Q_1 = 
\begin{array}{l}
\begin{tikzpicture}
\node at (-1*\hp, 0*\vp) {$\bullet$};
\node[right] at (-1*\hp, 0*\vp) {\scriptsize $0$};

\node at (0*\hp, 0.75*\vp) {$\bullet$};
\node[right] at (0*\hp, 0.75*\vp) {\scriptsize $1$};

\node at (1*\hp, -0.75*\vp) {$\bullet$};
\node[right] at (1*\hp, -0.75*\vp) {\scriptsize $-1$};
\end{tikzpicture}
\end{array} \in \LS^B(P;1) \,\,\,
\mbox{ and } \,\,\, 
U_{1,1} = 
\begin{array}{l}
\begin{tikzpicture}
\node at (0*\hp, 0*\vp) {$\bullet$};
\node[right] at (0*\hp, 0*\vp) {\scriptsize $2$};

\node at (1*\hp, 0*\vp) {$\bullet$};
\node[right] at (1*\hp, 0*\vp) {\scriptsize $-3$};
\end{tikzpicture}
\end{array} \in \upper(Q_1).
\end{displaymath}
We have
\[
\Sigma^B_R(\stB(Q_1)) = \{[1],\ [-1]\}
\quad \text{and} \quad
\Sigma_R(\rmst(U_{1,1})) = \{12, 21\}. 
\]
Let $\gamma = \bracket{-1,2,-3}\in \Sigma_R^B(P)$. Then $\rmst^+_{1}(\gamma) = [-1]$, $\rmst^-_{2}(\gamma) = 21$, $Q_{\gamma} = Q_1$, and $U_{\gamma} = U_{1,1}$.
So,
\[
\calF_{P;1}(\bracket{-1,2,-3}) = ([-1],21) \in \Sigma^B_R(\stB(Q_1)) \times \Sigma_R(\rmst(U_{1,1})).
\]
Note that $(x^{Q_1}_{-1}, x^{Q_1}_{0}, x^{Q_1}_{1}) = (-1,0,1)$ and $(y^{U_{1,1}}_1, y^{U_{1,1}}_2) = (-3,2)$. Therefore, we have $\conc([-1], 21) = [-1,2,-3]$, and so
\[
\calF'_{P;1}([-1], 21) = [-1,2,-3] \in \Sigma^B_R(P).
\]
\end{example}

We are now ready to prove the main result of this subsection.

\begin{theorem}\label{thm: restriction}
Let $P$ be a $B_n$ poset and let $m\in[0,n]$.
Then
\[
M^B_P \downarrow^{H^B_n(0)}_{H^B_m(0) \otimes H_{n-m}(0)} 
\cong
\bigoplus_{Q \in \LS^B(P;m)}
\bigoplus_{U \in \upper(Q)}
M^B_{\stB(Q)} \otimes M_{\rmst(U)}.
\]
\end{theorem}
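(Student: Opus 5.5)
The proof uses the bijection $\calF_{P;m}$ of \cref{lem: bijection for restriction} as the underlying linear isomorphism. Extending it linearly gives a $\C$-linear isomorphism
\[
\calF_{P;m}: M^B_P \ra \bigoplus_{Q \in \LS^B(P;m)} \bigoplus_{U \in \upper(Q)} M^B_{\stB(Q)} \otimes M_{\rmst(U)}, \qquad \gamma \mapsto \rmst^+_m(\gamma) \otimes \rmst^-_{n-m}(\gamma),
\]
where the tensor lands in the summand indexed by $(Q_\gamma, U_\gamma)$. Since $H^B_m(0)\otimes H_{n-m}(0)$ is generated by $\opi^B_i$ for $i \in [0,n-1]\setminus\{m\}$, it remains to check that $\calF_{P;m}(\gamma\cdot \opi^B_i) = \calF_{P;m}(\gamma)\cdot \opi^B_i$ for each such $i$. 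On the target, $\opi^B_i$ with $i<m$ acts on the first tensor factor, and $\opi^B_i$ with $i>m$ acts as $\opi_{i-m}$ on the second.

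Take first $i\in[0,m-1]$. The key observation is that $\gamma s^B_i$ only permutes (and, for $i=0$, negates) the entries in positions $[-m,m]$. Hence $Q_{\gamma s^B_i}=Q_\gamma$ and $U_{\gamma s^B_i}=U_\gamma$, and $\rmst^-_{n-m}(\gamma s^B_i)=\rmst^-_{n-m}(\gamma)$. The map $\rmst^+_m$ is order- and sign-preserving on the first $m$ positions. Concretely, $\rmst^+_m(\gamma)(j)=k$ iff $\gamma(j)=x^{Q_\gamma}_k$, and $k\mapsto x^{Q_\gamma}_k$ is increasing. It follows that:
\begin{itemize}
\item $\rmst^+_m(\gamma s^B_i)=\rmst^+_m(\gamma)s^B_i$;
\item $s^B_i\in\Des_R(\gamma)$ iff $s^B_i\in\Des_R(\rmst^+_m(\gamma))$, using \cref{eq: Descent for type B}, since the comparisons $\gamma(i)$ vs.\ $\gamma(i+1)$ (with $\gamma(0)=0$) are preserved.
\end{itemize}
It remains to show that $\gamma s^B_i\in\Sigma^B_R(P)$ iff $\rmst^+_m(\gamma)s^B_i\in\Sigma^B_R(\stB(Q_\gamma))$. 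For this I would apply the bijection: $\gamma s^B_i = \conc(\rmst^+_m(\gamma)s^B_i,\rmst^-_{n-m}(\gamma))$ with the same $Q,U$. The well-definedness part of \cref{lem: bijection for restriction} gives one direction (the $\conc$ of valid data lies in $\Sigma^B_R(P)$). The other direction follows by applying $\calF_{P;m}$ to $\gamma s^B_i$ and using that $Q_{\gamma s^B_i}=Q_\gamma$. With these three facts, the three cases of the action match.

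The case $i\in[m+1,n-1]$ is analogous. Here $s^B_i$ swaps positions $i,i+1\ge m+1$, so $Q$, $U$ and $\rmst^+_m$ are unchanged, and $\rmst^-_{n-m}(\gamma s^B_i)=\rmst^-_{n-m}(\gamma)s_{i-m}$. The descent condition is preserved because $\rmst^-$ is the standardization of the increasing labelling $y^U_k$. Membership is again handled via $\conc$.

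The main obstacle is the membership equivalence, and in particular making sure nothing changes the pair $(Q,U)$. This holds because $i\neq m$: the generator $s^B_m$ is exactly the one that could move an element between $Q$ and $U$, and it is excluded. I would also double-check the $i=0$ descent compatibility, since it compares $\gamma(1)$ with $0$ and the sign is preserved by $\rmst^+_m$.
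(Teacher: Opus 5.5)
Your proposal is correct and follows the paper's proof: extend the bijection $\calF_{P;m}$ of \cref{lem: bijection for restriction} linearly and check equivariance generator by generator, using that $\rmst^+_m$ and $\rmst^-_{n-m}$ intertwine right multiplication by $s^B_i$ (respectively $s_{i-m}$) for $i \neq m$ without changing $(Q_\gamma,U_\gamma)$. You give more detail than the paper, which only asserts the compatibility and lists the range as $1 \le i \le m-1$. In particular, you handle $i=0$ explicitly and derive the equivalence $\gamma s^B_i \in \Sigma^B_R(P) \iff \rmst^+_m(\gamma)s^B_i \in \Sigma^B_R(\stB(Q_\gamma))$ from the two well-definedness halves of the lemma, which is exactly what the paper leaves implicit.
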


\begin{proof}
Let 
$\tcalF_{P;m} : M^B_{P} \ra 
\bigoplus_{Q \in \LS^B(P;m)}
\bigoplus_{U \in \upper(Q)} 
M^B_{\stB(Q)} \otimes M_{\rmst(U)}$ be the $\C$-linear map sending $\gamma\mapsto (\rmst^+_m(\gamma)\otimes \rmst^-_{n-m}(\gamma))$ for $\gamma \in \Sigma^B_R(P)$. 
By \cref{lem: bijection for restriction}, $\tcalF_{P;m}$ is a $\C$-linear isomorphism.
To prove that $\tcalF_{P;m}$ is an $H^B_m(0) \otimes H_{n-m}(0)$-module homomorphism, take any $\gamma \in \Sigma^B_R(P)$ and $i \in [0,n]$.
By considering \cref{eq: st plus} and \cref{eq: st minus}, one can see that
\begin{align*}
\tcalF_{P;m}(\gamma \cdot \opi^B_i) 
& = \rmst^+_{m}(\gamma \cdot \opi^B_i) \otimes \rmst^-_{n-m}(\gamma \cdot \opi^B_i)
\\
& = \begin{cases}
\rmst^+_{m}(\gamma) \cdot \opi^B_i \otimes \rmst^-_{n-m}(\gamma)
& \text{if $1 \leq i \leq m-1$,} \\
\rmst^+_{m}(\gamma) \otimes \rmst^-_{n-m}(\gamma) \cdot \opi_{i-m}
& \text{if $m+1 \leq i \leq n-1$}
\end{cases}
\\
&= \begin{cases}
\tcalF_{P;m}(\gamma) \cdot (\opi^B_i \otimes 1)
& \text{for $1 \leq i \leq m-1$,} \\
\tcalF_{P;m}(\gamma) \cdot (1 \otimes \opi_{i-m})
& \text{for $m+1 \leq i \leq n-1$.}
\end{cases}
\end{align*}
Thus, $\tcalF_{P;m}$ is an $H^B_m(0) \otimes H_{n-m}(0)$-module isomorphism.
\end{proof}

\begin{corollary}\label{cor: comodule isom}
The Grothendieck group $\bigoplus_{n \ge 0} \calG_0(\scrP^B(n))$ is isomorphic to $\QSym^B$ as $\QSym$-comodules.
\end{corollary}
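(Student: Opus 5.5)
The plan mirrors the proof of \cref{cor: module isom}, with induction replaced by restriction. First, \cref{thm: restriction} shows that $\updelta^B$ restricts to a well-defined map
\[
\updelta^B:\bigoplus_{n\ge 0}\calG_0(\scrP^B(n))\ra \bigoplus_{n\ge 0}\calG_0(\scrP^B(n))\otimes \bigoplus_{n\ge 0}\calG_0(\scrP(n)).
\]
For any $B_n$ poset $P$ and $m\in[0,n]$, the restriction $M^B_P\downarrow^{H^B_n(0)}_{H^B_m(0)\otimes H_{n-m}(0)}$ is a finite direct sum of modules $M^B_{\stB(Q)}\otimes M_{\rmst(U)}$. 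Each such summand is a tensor product of a $B_m$ poset module and a type $A$ poset module. Hence $\updelta^B([M^B_P])$ lies in the tensor product of the two poset-module Grothendieck groups. Since direct sums are additive in the Grothendieck group, the same holds for every object of $\scrP^B(n)$.

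Next, I identify these Grothendieck groups with the ambient ones. By \cref{thm: Grothendieck group of poset modules}, the natural map $\calG_0(\scrP^B(n))\ra\calG_0(\modHBn)$ is an isomorphism of $\Z$-modules. Its proof shows that every $[M^B_P]$ decomposes into classes of simple modules, and every simple $\bfF^B_\alpha$ is itself a poset module, namely for a linear $B_n$ poset $P(\gamma)$ with $\Des_R(\gamma)=\setB(\alpha)$. Analogously, $\bigoplus_n\calG_0(\scrP(n))\cong\GHb\cong\QSym$ as Hopf algebras by \cite[Theorem 3.5]{CKO24}.

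Under these identifications, the map $\updelta^B$ defined on poset modules is literally the restriction of Huang's coaction on $\GHBb$. By \cite[Theorem 5]{Huang17}, that coaction is a right $\GHb$-coaction, and $\ch^B$ intertwines it with the $\QSym$-coaction $\updelta^B$ on $\QSym^B$. So the comodule axioms (coassociativity and counitality) hold automatically on the poset-module Grothendieck group. Composing the identification with $\ch^B$ then gives a $\QSym$-comodule isomorphism onto $\QSym^B$.

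The only step needing care is the compatibility of identifications on the coefficient side. I must check that the isomorphism $\bigoplus\calG_0(\scrP(n))\cong\QSym$ used in \cite{CKO24} is induced by $\ch$, so that $\ch^B\otimes\ch$ carries the coaction on the poset-module side to $\updelta^B$ on $\QSym^B$. I expect this to be immediate, since both isomorphisms come from the inclusions into $\modHBn$ and $\modHn$ followed by the quasisymmetric characteristics. Optionally, I would also record the consequence $\updelta^B(K^B_P)=\sum_{m}\sum_{Q\in\LS^B(P;m)}\sum_{U\in\upper(Q)}K^B_{\stB(Q)}\otimes K_{\rmst(U)}$, obtained by applying $\ch^B\otimes\ch$ to \cref{thm: restriction} and using \cref{prop: DHT proposition}.
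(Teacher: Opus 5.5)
Your proposal is correct and follows essentially the same route as the paper. The paper also uses \cref{thm: restriction} to show that restriction of a $B_n$ poset module lands in direct sums of tensor products of poset modules, and then combines \cref{thm: Grothendieck group of poset modules}, Huang's comodule isomorphism $\ch^B$, and the identification $\bigoplus_n \calG_0(\scrP(n))\cong\QSym$ from Choi--Kim--Oh. Your extra remarks (that the simple modules are themselves poset modules for linear $B_n$ posets, and that the type $A$ identification is induced by $\ch$) only make explicit details that the paper's short proof leaves implicit.
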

\begin{proof}
By \cref{thm: restriction}, the restriction to $H^B_m(0)\otimes H_{n-m}(0)$ of a $B_n$ poset module is a direct sum of tensor products of $B_m$ poset modules and modules for posets on $[n-m]$. Combining this with \cref{thm: Grothendieck group of poset modules}, the fact that $\GHBb$ is isomorphic as $\QSym$-comodules to $\QSym^B$ (\cite[Theorem 5(i)]{Huang17}), and the fact that $\bigoplus_{n \ge 0} \calG_0(\scrP(n))$ is isomorphic to $\QSym$ as Hopf algebras (\cite[Theorem 3.5]{CKO24}), the result follows.
\end{proof}

As a further corollary, we obtain a generalization of \cref{eq: coaction on fund B}, which describes the right $\QSym$-coaction $\updelta^B$ on $\QSym^B$.

\begin{corollary}\label{cor: KBP coproduct}
Let $P$ be a $B_n$ poset. Then 
\[
\updelta^B(K^B_{P}) = 
\sum_{0 \le m \le n} \left(
\sum_{Q \in \LS^B(P;m)}
\sum_{U \in \upper(Q)}
K^B_{\stB(Q)} \otimes K_{\rmst(U)}
\right).
\]
\end{corollary}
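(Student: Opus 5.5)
The plan is to transport \cref{thm: restriction} through the characteristic maps, using that $\ch^B$ is a $\QSym$-comodule isomorphism. Concretely, we compute $\updelta^B([M^B_P])$ in $\GHBb$, apply $\ch^B \otimes \ch$, and identify the result with $\updelta^B(\ch^B([M^B_P]))$.

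First, by the theorem established at the start of this section, $\ch^B([M^B_P]) = K^B_P$. Second, by the definition of the coaction on $\GHBb$,
\[
\updelta^B([M^B_P]) = \sum_{0 \le m \le n} \left[ M^B_P \downarrow^{H^B_n(0)}_{H^B_m(0) \otimes H_{n-m}(0)} \right].
\]
By \cref{thm: restriction}, each summand equals
\[
\sum_{Q \in \LS^B(P;m)} \sum_{U \in \upper(Q)} \left[ M^B_{\stB(Q)} \otimes M_{\rmst(U)} \right].
\]
Here the class of an $H^B_m(0)\otimes H_{n-m}(0)$-module is regarded in $\calG_0(\modHBn)\otimes\calG_0(\text{\bf mod-}H_{n-m}(0))$ via $[M\otimes N]\mapsto [M]\otimes[N]$, which is the convention implicit in Huang's definition of the coaction.

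Third, we apply $\ch^B \otimes \ch$. Using $\ch^B([M^B_{\stB(Q)}]) = K^B_{\stB(Q)}$ (the characteristic theorem again, for the $B_m$ poset $\stB(Q)$) and $\ch([M_{\rmst(U)}]) = K_{\rmst(U)}$ (\cref{prop: DHT proposition}(1)), each term $[M^B_{\stB(Q)}]\otimes[M_{\rmst(U)}]$ maps to $K^B_{\stB(Q)}\otimes K_{\rmst(U)}$. Finally, since $\ch^B$ is a $\QSym$-comodule isomorphism \cite[Theorem 5(iv)]{Huang17}, we have $(\ch^B\otimes\ch)\circ\updelta^B = \updelta^B\circ\ch^B$. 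Combining the three steps gives the stated formula.

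The only point needing care is the identification of the class of a tensor product module with a tensor of classes in the Grothendieck group. This holds because simple $H^B_m(0)\otimes H_{n-m}(0)$-modules are the tensor products $\bfF^B_\alpha\otimes\bfF_\beta$, and a composition series of each factor yields one for the tensor product. Beyond this, the argument is direct bookkeeping, so no real obstacle is expected.
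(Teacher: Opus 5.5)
Your proposal is correct and follows the paper's route: the paper also derives the formula directly from \cref{thm: restriction} together with the fact that $\ch^B$ is a right $\QSym$-comodule isomorphism, and you additionally spell out the bookkeeping ($\ch^B([M^B_{\stB(Q)}]) = K^B_{\stB(Q)}$, $\ch([M_{\rmst(U)}]) = K_{\rmst(U)}$, and $[M\otimes N]\mapsto [M]\otimes[N]$). One small slip: in the restriction to $H^B_m(0)\otimes H_{n-m}(0)$, the first tensor factor of the Grothendieck group should be $\calG_0(\text{\bf mod-}H^B_m(0))$, not $\calG_0(\modHBn)$.
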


\begin{proof}
By combining \cref{thm: restriction} with the fact that $\ch^B$ is a right $\QSym$-comodule isomorphism, the assertion follows immediately.
\end{proof}

\begin{example}
Revisit \cref{eg: decomposition of Bn poset}.
The $H^B_1(0)\otimes H_2(0)$-action on $\Sigma^B_R(P)$ is illustrated in the following figure:
\begin{displaymath}
\def \hp {2}
\def \vp {1.5}
\def \lp {0.55}
\scalebox{0.95}{$
\begin{tikzpicture}

\node at (0*\hp, 5*\vp) {\small $\bracket{3,1,2}$};

\node at (2*\hp, 5*\vp) {\small $\bracket{2,3,1}$};
\node at (2*\hp + \lp , 5*\vp) {} edge [out=40,in=320, loop] ();
\node at (2*\hp + 2.6*\lp, 5*\vp) {\small $\opi_2$};

\draw [->] (-0.3*\hp, 4.7*\vp) -- (-0.7*\hp, 4.3*\vp);
\node at (-0.65*\hp, 4.6*\vp) {\small $\opi^B_0$};

\draw [->] (0.3*\hp, 4.7*\vp) -- (0.7*\hp, 4.3*\vp);
\node at (0.6*\hp, 4.6*\vp) {\small $\opi_2$};

\node at (-1*\hp, 4*\vp) {\small $\bracket{-3,1,2}$};
\node at (-1*\hp + 1.2*\lp , 4*\vp) {} edge [out=40,in=320, loop] ();
\node at (-1*\hp + 2.8*\lp, 4*\vp) {\small $\opi^B_0$};

\node at (1*\hp, 4*\vp) {\small $\bracket{3,2,1}$};
\node at (1*\hp + \lp , 4*\vp) {} edge [out=40,in=320, loop] ();
\node at (1*\hp + 2.6*\lp, 4*\vp) {\small $\opi_2$};

\draw [->] (-0.7*\hp, 3.7*\vp) -- (-0.3*\hp, 3.3*\vp);
\node at (-0.4*\hp, 3.6*\vp) {\small $\opi_2$};

\draw [->] (0.7*\hp, 3.7*\vp) -- (0.3*\hp, 3.3*\vp);
\node at (0.35*\hp, 3.6*\vp) {\small $\opi^B_0$};

\node at (-2*\hp, 3*\vp) {\small $\bracket{1,-3,2}$};

\node at (0*\hp, 3*\vp) {\small $\bracket{-3,2,1}$};
\node at (0*\hp + 1.2*\lp , 3*\vp) {} edge [out=40,in=320, loop] ();
\node at (0*\hp + 3.3*\lp, 3*\vp) {\small $\opi^B_0, \opi_2$};

\draw [->] (-2.3*\hp, 2.7*\vp) -- (-2.7*\hp, 2.3*\vp);
\node at (-2.65*\hp, 2.6*\vp) {\small $\opi^B_0$};

\draw [->] (-1.7*\hp, 2.7*\vp) -- (-1.3*\hp, 2.3*\vp);
\node at (-1.4*\hp, 2.6*\vp) {\small $\opi_2$};

\node at (-3*\hp, 2*\vp) {\small $\bracket{-1,-3,2}$};
\node at (-3*\hp + 1.4*\lp , 2*\vp) {} edge [out=40,in=320, loop] ();
\node at (-3*\hp + 3*\lp, 2*\vp) {\small $\opi^B_0$};

\node at (-1*\hp, 2*\vp) {\small $\bracket{1,2,-3}$};
\node at (-1*\hp + 1.2*\lp , 2*\vp) {} edge [out=40,in=320, loop] ();
\node at (-1*\hp + 2.8*\lp, 2*\vp) {\small $\opi_2$};

\node at (1*\hp, 2*\vp) {\small $\bracket{2,-3,1}$};

\draw [->] (-2.7*\hp, 1.7*\vp) -- (-2.3*\hp, 1.3*\vp);
\node at (-2.4*\hp, 1.6*\vp) {\small $\opi_2$};

\draw [->] (-1.3*\hp, 1.7*\vp) -- (-1.7*\hp, 1.3*\vp);
\node at (-1.65*\hp, 1.6*\vp) {\small $\opi^B_0$};

\draw [->] (0.7*\hp, 1.7*\vp) -- (0.3*\hp, 1.3*\vp);
\node at (0.4*\hp, 1.6*\vp) {\small $\opi_2$};

\node at (-2*\hp, 1*\vp) {\small $\bracket{-1,2,-3}$};
\node at (-2*\hp + 1.4*\lp , 1*\vp) {} edge [out=40,in=320, loop] ();
\node at (-2*\hp + 3.5*\lp, 1*\vp) {\small $\opi^B_0, \opi_2$};

\node at (0*\hp, 1*\vp) {\small $\bracket{2,1,-3}$};
\node at (0*\hp + 1.2*\lp , 1*\vp) {} edge [out=40,in=320, loop] ();
\node at (0*\hp + 2.8*\lp, 1*\vp) {\small $\opi_2$};

\node at (-1*\hp, 0*\vp) {\small $\bracket{2,-1,-3}$};
\node at (-1*\hp + 1.4*\lp , 0*\vp) {} edge [out=40,in=320, loop] ();
\node at (-1*\hp + 3*\lp, 0*\vp) {\small $\opi_2$};

\end{tikzpicture}
$}
\end{displaymath}
And, the $H^B_1(0)\otimes H_2(0)$-action on 
$\bigsqcup_{Q \in \LS^B(P;1)}
\bigsqcup_{U \in \upper(Q)}
\Sigma^B_R(\stB(Q)) \times \Sigma_R(\rmst(U))$ is illustrated in the following figure:
\begin{displaymath}
\def \hp {2}
\def \vp {1.5}
\def \lp {0.55}
\scalebox{0.95}{$
\begin{tikzpicture}

\node at (0*\hp, 5*\vp) {\small $(\bracket{1}, 12)$};

\node at (2*\hp, 5*\vp) {\small $(\bracket{1}, 21)$};
\node at (2*\hp + \lp , 5*\vp) {} edge [out=40,in=320, loop] ();
\node at (2*\hp + 2.6*\lp, 5*\vp) {\small $\opi_2$};

\draw [->] (-0.3*\hp, 4.7*\vp) -- (-0.7*\hp, 4.3*\vp);
\node at (-0.65*\hp, 4.6*\vp) {\small $\opi^B_0$};

\draw [->] (0.3*\hp, 4.7*\vp) -- (0.7*\hp, 4.3*\vp);
\node at (0.6*\hp, 4.6*\vp) {\small $\opi_2$};

\node at (-1*\hp, 4*\vp) {\small $(\bracket{-1}, 12)$};
\node at (-1*\hp + 1.3*\lp , 4*\vp) {} edge [out=40,in=320, loop] ();
\node at (-1*\hp + 2.9*\lp, 4*\vp) {\small $\opi^B_0$};

\node at (1*\hp, 4*\vp) {\small $(\bracket{1}, 21)$};
\node at (1*\hp + \lp , 4*\vp) {} edge [out=40,in=320, loop] ();
\node at (1*\hp + 2.6*\lp, 4*\vp) {\small $\opi_2$};

\draw [->] (-0.7*\hp, 3.7*\vp) -- (-0.3*\hp, 3.3*\vp);
\node at (-0.4*\hp, 3.6*\vp) {\small $\opi_2$};

\draw [->] (0.7*\hp, 3.7*\vp) -- (0.3*\hp, 3.3*\vp);
\node at (0.35*\hp, 3.6*\vp) {\small $\opi^B_0$};

\node at (-2*\hp, 3*\vp) {\small $(\bracket{1}, 12)$};

\node at (0*\hp, 3*\vp) {\small $(\bracket{-1}, 21)$};
\node at (0*\hp + 1.3*\lp , 3*\vp) {} edge [out=40,in=320, loop] ();
\node at (0*\hp + 3.4*\lp, 3*\vp) {\small $\opi^B_0, \opi_2$};

\draw [->] (-2.3*\hp, 2.7*\vp) -- (-2.7*\hp, 2.3*\vp);
\node at (-2.65*\hp, 2.6*\vp) {\small $\opi^B_0$};

\draw [->] (-1.7*\hp, 2.7*\vp) -- (-1.3*\hp, 2.3*\vp);
\node at (-1.4*\hp, 2.6*\vp) {\small $\opi_2$};

\node at (-3*\hp, 2*\vp) {\small $(\bracket{-1}, 12)$};
\node at (-3*\hp + 1.3*\lp , 2*\vp) {} edge [out=40,in=320, loop] ();
\node at (-3*\hp + 2.9*\lp, 2*\vp) {\small $\opi^B_0$};

\node at (-1*\hp, 2*\vp) {\small $(\bracket{1}, 21)$};
\node at (-1*\hp + 1*\lp , 2*\vp) {} edge [out=40,in=320, loop] ();
\node at (-1*\hp + 2.6*\lp, 2*\vp) {\small $\opi_2$};

\node at (1*\hp, 2*\vp) {\small $(\bracket{1}, 12)$};

\draw [->] (-2.7*\hp, 1.7*\vp) -- (-2.3*\hp, 1.3*\vp);
\node at (-2.4*\hp, 1.6*\vp) {\small $\opi_2$};

\draw [->] (-1.3*\hp, 1.7*\vp) -- (-1.7*\hp, 1.3*\vp);
\node at (-1.65*\hp, 1.6*\vp) {\small $\opi^B_0$};

\draw [->] (0.7*\hp, 1.7*\vp) -- (0.3*\hp, 1.3*\vp);
\node at (0.4*\hp, 1.6*\vp) {\small $\opi_2$};

\node at (-2*\hp, 1*\vp) {\small $(\bracket{-1}, 21)$};
\node at (-2*\hp + 1.3*\lp , 1*\vp) {} edge [out=40,in=320, loop] ();
\node at (-2*\hp + 3.4*\lp, 1*\vp) {\small $\opi^B_0, \opi_2$};

\node at (0*\hp, 1*\vp) {\small $(\bracket{1}, 21)$};
\node at (0*\hp + 1*\lp , 1*\vp) {} edge [out=40,in=320, loop] ();
\node at (0*\hp + 2.6*\lp, 1*\vp) {\small $\opi_2$};

\node at (-1*\hp, 0*\vp) {\small $(\bracket{1}, 21)$};
\node at (-1*\hp + 1*\lp , 0*\vp) {} edge [out=40,in=320, loop] ();
\node at (-1*\hp + 2.6*\lp, 0*\vp) {\small $\opi_2$};

\draw[red!50, line width=0.5mm] (1.6*\hp, 4.7*\vp) -- (2.9*\hp, 4.7*\vp) -- (2.9*\hp, 5.25*\vp) -- (1.6*\hp, 5.25*\vp) -- (1.6*\hp, 4.7*\vp); 
\node at (3.3*\hp, 4.5*\vp) {\small \color{red} $\Sigma^B_R(\stB(Q_2)) \times \Sigma_R(\rmst(U_{2,1}))$};

\draw[blue!50, line width=0.5mm] (-0.4*\hp, 5.3*\vp) -- (0.4*\hp, 5.3*\vp) -- (1.9*\hp, 4.2*\vp) -- (1.9*\hp, 3.8*\vp) -- (1.1*\hp, 2.7*\vp) -- (-0.4*\hp, 2.7*\vp) -- (-1.5*\hp, 3.7*\vp) -- (-1.5*\hp, 4.3*\vp) -- (-0.4*\hp, 5.3*\vp); 
\node at (2.55*\hp, 2.9*\vp) {\small \color{blue} $\Sigma^B_R(\stB(Q_3)) \times \Sigma_R(\rmst(U_{3,1}))$};

\draw[brown, line width=0.5mm]  (-2.4*\hp, 3.3*\vp) -- (-1.6*\hp, 3.3*\vp) -- (-0.1*\hp, 2.2*\vp) -- (-0.1*\hp, 1.8*\vp) -- (-0.9*\hp, 0.7*\vp) -- (-2.4*\hp, 0.7*\vp) -- (-3.5*\hp, 1.7*\vp) -- (-3.5*\hp, 2.3*\vp) -- (-2.4*\hp, 3.3*\vp); 
\node at (-2.5*\hp, 0.5*\vp) {\small \color{brown} $\Sigma^B_R(\stB(Q_1)) \times \Sigma_R(\rmst(U_{1,1}))$};

\draw[purple, line width=0.5mm]  (0.7*\hp, 2.25*\vp) -- (1.4*\hp, 2.25*\vp) -- (1.4*\hp, 1.85*\vp) -- (0.75*\hp, 0.75*\vp) -- (-0.4*\hp, 0.75*\vp) -- (-0.4*\hp, 1.15*\vp) -- (0.7*\hp, 2.25*\vp); 
\node at (2.35*\hp, 1.25*\vp) {\small \color{purple} $\Sigma^B_R(\stB(Q_2)) \times \Sigma_R(\rmst(U_{2,3}))$};

\draw[violet, line width=0.5mm]  (-1.4*\hp, 0.25*\vp) -- (-0.1*\hp, 0.25*\vp) -- (-0.1*\hp, -0.3*\vp) -- (-1.4*\hp, -0.3*\vp) -- (-1.4*\hp, 0.25*\vp);
\node at (1.15*\hp, -0.15*\vp) {\small \color{violet} $\Sigma^B_R(\stB(Q_2)) \times \Sigma_R(\rmst(U_{2,2}))$};

\end{tikzpicture}
$}
\end{displaymath}
In the above figures, we denote $\opi^B_0\otimes {\rm id}$ by $\opi^B_0$, and ${\rm id} \otimes \opi_1$ by $\opi_2$.
\end{example}

\subsection{(Anti-)automorphism twists}\label{subsec:twists}
In this subsection, we describe the images of $B_n$ poset modules, and their induction and restriction, under automorphism and anti-automorphism twists.
We begin by introducing some necessary terminology.

For $A$ an associative $\C$-algebra, let $\text{\bf mod-}A$ denote the category of finite-dimensional right $A$-modules. 
Given an isomorphism $f: B \ra A$ of associative $\C$-algebras and a right $A$-module $M$, define an action $\cdot_f$ of $B$ on the underlying space of $M$ by  
\[
m \cdot_f b :=  m \cdot f(b)  \quad \text{for $b \in B$ and $m \in M$,}
\]
and let $f[M]$ denote the resulting right $B$-module. 
The \emph{$f$-twist} is the covariant functor $\mathbf{T}^+_f : \text{\bf mod-}A \ra \text{\bf mod-}B$ that sends a right $A$-module $M$ to $f[M]$ and sends an $A$-module homomorphism $\phi: M \ra N$ to $\mathbf{T}^+_f(\phi): f[M] \ra f[N]$, where $\mathbf{T}^+_f(\phi)(m) := \phi(m)$ for $m \in M$.

Given an anti-isomorphism $g: B \ra A$ of associative $\C$-algebras and a right $A$-module $M$, define an action $\cdot^g$ of $B$ on $M^*$, the dual space of $M$, by
\[
(\delta \cdot^g b)(m) := \delta(m \cdot g(b))
\quad \text{for $b \in B$, $\delta \in M^*$, and $m \in M$,}
\]and let $g[M]$ denote the resulting right $B$-module. 
The \emph{$g$-twist} is the contravariant functor $\mathbf{T}^-_g: \text{\bf mod-}A \ra \text{\bf mod-}B$ that sends a right $A$-module $M$ to $g[M]$ and sends an $A$-module homomorphism $\phi:M \ra N$ to $\mathbf{T}^-_g(\phi): g[N] \ra g[M]$, where $\mathbf{T}^-_g(\phi)(\delta) := \delta \circ \phi$ for $\delta \in M^*$.

Let $(W,S)$ be a finite Coxeter system.
In \cite{Fayers05}, Fayers studies the twists arising from two involutive automorphisms $\upphi$ and $\uptheta$, and an involutive anti-automorphism $\upchi$ on $H_W(0)$, which are defined by
\[
\upphi(\opi_s) = \opi_{w_0 s w_0}, 
\quad
\uptheta (\opi_s) = - \pi_s, 
\quad \text{and} \quad 
\upchi(\opi_s) = \opi_s \quad (s \in S).
\]
Note that $\upphi, \uptheta$ and $\upchi$ commute with each other. Abusing notation, for any $H^B_n(0)$-module or $H_n(0)$-module $M$, we will also use $\upphi[M]$, $\uptheta[M]$ and $\upchi[M]$ to denote the modules $T_\upphi^+(M), T_\uptheta^+(M)$ and $T_\upchi^-(M)$, respectively. 

Let $P$ be a poset on a finite subset $I = \{i_1 < i_2 < \cdots < i_n\}$ of $\Z$. We define two posets on the same underlying set $I$, denoted by $P^*$ and $\overline{P}$, whose orders are given by
\[
i_j \preceq_{P^*} i_k 
\ \  \text{if and only if} \ \  
i_k \preceq_{P} i_j
\quad \text{and} \quad
i_j \preceq_{\overline{P}} i_k 
\ \  \text{if and only if} \ \  
i_{n+1-j} \preceq_{P} i_{n+1-k}.
\]

Note that the maps $P\mapsto P^*$ and $P\mapsto \overline{P}$ are both involutions.
Note also that $P^* = \overline{P}$ for any $B_n$ poset $P$, however, this distinction will be needed when we consider twists of poset modules of type $A$ in \cref{thm: auto twists and ind res}.

Given a $B_n$ poset $P$, we define another right $H^B_n(0)$-action on $\C\Sigma^B_R(P)$ by
\begin{align*}
\gamma \cdot \pi^B_i:= 
\begin{cases}
\gamma & \text{if $i \in \Des_R(\gamma)$},\\
0 & \text{if $i \notin \Des_R(\gamma)$ and $\gamma s^B_i \notin \Sigma^B_R(P)$},\\
\gamma s^B_i & \text{if $i \notin \Des_R(\gamma)$  and $\gamma s^B_i \in \Sigma^B_R(P)$}
\end{cases}
\end{align*}
for $i \in [0,n-1]$ and $\gamma \in \Sigma^B_R(P)$. 
We denote the resulting right $H^B_n(0)$-module by $\sfM^B_P$. Note this action is similar (but not identical) to the action defining $M^B_P$; one can prove it is an $H^B_n(0)$-action in a similar manner to the proof of \cref{thm: Bn poset asc compatible}. Moreover one can show, using similar arguments, that \cref{thm: induction product} and \cref{thm: restriction} remain valid when stated for $\sfM^B_P$. We will also make use of the modules $\sfM_P$ associated to posets $P$ on $[n]$, which are defined in \cite[Section 3]{CKO24} in an analogous manner to $\sfM^B_P$. 

With these definitions, we are now ready to describe the effect of the (anti)-automorphism twists on $B_n$ poset modules. Note that, in $\SG^B_n$, $w_0 s^B_i w_0 = s^B_i$ for all $i \in [0,n-1]$, so $\upphi$ is equal to the identity map on $H^B_n(0)$. 
Therefore, we only consider $\uptheta$ and $\upchi$ in the following theorem.

\begin{theorem}\label{thm: auto-twists}
Let $P$ be a $B_n$ poset.
Then 
\[
\uptheta[M^B_P] \cong \sfM^B_P,
\quad
\upchi[M^B_P] \cong \sfM^B_{P^*},
\quad \text{and} \quad
\uptheta \circ \upchi[M^B_P] \cong M^B_{P^*}
\quad \text{as $H^B_n(0)$-modules.}
\]
\end{theorem}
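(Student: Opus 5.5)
The plan is to construct explicit linear isomorphisms on the natural bases and to check the action of each generator $\opi^B_i$, $i \in [0,n-1]$, case by case against the three-case definitions of $M^B_P$ and $\sfM^B_P$. Since $\pi^B_i = \opi^B_i + 1$, the module $M^B_P$ has
\[
\gamma \cdot \pi^B_i = \begin{cases} 0 & \text{if $s^B_i \in \Des_R(\gamma)$,}\\ \gamma & \text{if $s^B_i \notin \Des_R(\gamma)$ and $\gamma s^B_i \notin \Sigma^B_R(P)$,}\\ \gamma + \gamma s^B_i & \text{if $s^B_i \notin \Des_R(\gamma)$ and $\gamma s^B_i \in \Sigma^B_R(P)$.}\end{cases}
\]
Similarly, $\opi^B_i = \pi^B_i - 1$ acts on $\sfM^B_P$ by $0$, $-\gamma$, and $\gamma s^B_i - \gamma$ in the same three cases. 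I will use that $\sfM^B_P$ is a genuine module, as stated before the theorem.

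For $\uptheta[M^B_P]$, we have $\gamma \cdot_\uptheta \opi^B_i = -\gamma\cdot\pi^B_i$. This gives $0$, $-\gamma$, and $-\gamma-\gamma s^B_i$ in the three cases, which agrees with $\sfM^B_P$ up to the sign of the $\gamma s^B_i$ term. I will therefore take the map $\gamma \mapsto (-1)^{\ell(\gamma)}\gamma$. Because $\ell(\gamma s^B_i) = \ell(\gamma)+1$ when $s^B_i$ is an ascent, this sign change turns $-\gamma-\gamma s^B_i$ into $-\gamma+\gamma s^B_i$ after rescaling, and a direct check shows the map intertwines the two actions.

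For $\upchi[M^B_P]$, I will work in the dual basis $\{\gamma^*\}$. Since $(\gamma^*\cdot^\upchi \opi^B_i)(\eta)$ is the coefficient of $\gamma$ in $\eta\cdot\opi^B_i$, the action is as follows. If $s^B_i \notin \Des_R(\gamma)$, then $\gamma^*\cdot\opi^B_i = 0$. If $s^B_i \in \Des_R(\gamma)$ and $\gamma s^B_i\notin\Sigma^B_R(P)$, then $\gamma^*\cdot\opi^B_i = -\gamma^*$. If $s^B_i \in \Des_R(\gamma)$ and $\gamma s^B_i \in \Sigma^B_R(P)$, then $\gamma^*\cdot\opi^B_i = -\gamma^* + (\gamma s^B_i)^*$. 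The last case comes from $\eta = \gamma s^B_i$, for which $s^B_i$ is an ascent and $\eta s^B_i = \gamma$.

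The key combinatorial input is the following. In $\SG^B_n$ we have $w_0 = \bracket{-1,\dots,-n}$, which is central, and reversing a linear order gives a bijection $\calL^B(P)\to\calL^B(P^*)$, $E\mapsto E^*$, with $\bfgam_{E^*} = \bfgam_E w_0$. Hence $\Sigma^B_R(P^*) = \Sigma^B_R(P)\,w_0$. Moreover $\Des_R(\gamma w_0)$ is the complement of $\Des_R(\gamma)$ in $S^B$, since $\ell(\gamma w_0 s) = \ell(w_0)-\ell(\gamma s)$ and $w_0 s = s w_0$. I will define $\gamma^*\mapsto \gamma w_0$. Descents of $\gamma$ become ascents of $\gamma w_0$ and conversely, and $\gamma s^B_i \in \Sigma^B_R(P)$ if and only if $(\gamma w_0)s^B_i = \gamma s^B_i w_0 \in \Sigma^B_R(P^*)$. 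With these facts the three cases above match the action on $\sfM^B_{P^*}$ exactly, with no signs needed.

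The third isomorphism follows by composing. We have $\uptheta\circ\upchi[M^B_P] = \uptheta[\upchi[M^B_P]] \cong \uptheta[\sfM^B_{P^*}]$, and the same sign map $\gamma\mapsto(-1)^{\ell(\gamma)}\gamma$ identifies $\uptheta[\sfM^B_{Q}]$ with $M^B_Q$ for $Q = P^*$; this is the computation of the first step run in reverse. The only real care point is the second step: correctly computing the dual action, including that $\gamma s^B_i$ must lie in $\Sigma^B_R(P)$ to contribute, and justifying $\Sigma^B_R(P^*) = \Sigma^B_R(P)w_0$ from the definition of $\bfgam_E$. The first and third steps are sign bookkeeping.
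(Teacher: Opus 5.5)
Your proposal is correct and follows the paper's proof. You use the same sign map $\gamma\mapsto(-1)^{\ell(\gamma)}\gamma$ for $\uptheta$, the same map $\gamma^*\mapsto \gamma w_0 = w_0\gamma$ for $\upchi$ (the two agree since $w_0$ is central), and obtain the third isomorphism by composing. Your explicit dual-basis computation, together with $\Sigma^B_R(P^*)=\Sigma^B_R(P)\,w_0$ via reversed linear extensions and the complement rule for descents, supplies the compatibility check that the paper only attributes to an argument similar to \cite{JKLO22}.
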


\begin{proof}
It is straightforward that the $\C$-linear map \begin{align*}
f_1 &: \uptheta[M^B_P] \ra \sfM^B_P, \quad \gamma \mapsto (-1)^{\ell(\gamma)} \gamma  \quad \text{for $\gamma \in \Sigma^B_R(P)$}
\end{align*}
is an $H^B_n(0)$-module isomorphism.
In addition, if we prove $\upchi[M^B_P] \cong \sfM^B_{P^*}$, then it follows that
\[
\uptheta \circ \upchi[M^B_P] \cong \uptheta [\sfM^B_{P^*}] \cong M^B_{P^*}.
\]

Let us prove that $\upchi[M^B_P] \cong \sfM^B_{P^*}$.
Define a $\C$-linear map
\begin{align*}
f_2 &: \upchi[M^B_{P}] \ra \sfM^B_{P^*}, 
\quad \gamma^* \mapsto w_0 \gamma 
\quad \text{for $\gamma \in \Sigma^B_R(P)$,}
\end{align*}
where $\gamma^*$ denotes the dual of $\gamma$ with respect to the basis $\Sigma^B_R(P)$ for $M^B_P$.
In a similar manner to the proof of \cite[Theorem 4]{JKLO22}, one can prove that the $H^B_n(0)$-action is compatible with the linear map $f_2$.
Thus, it remains to prove that $f_2$ is a well-defined $\C$-linear isomorphism.

Let $\gamma$ be an element of $\Sigma^B_R(P)$, and choose arbitrary $u, v \in [-n, n]$ with $u \preceq_{P^*} v$.
By the definition of $P^*$, we have $v \preceq_{P} u$.
This implies $\gamma^{-1}(v) \le \gamma^{-1}(u)$, or equivalently, $\gamma^{-1}(-u) \le \gamma^{-1}(-v)$.
Since $w_0(i) = -i$ for all $i \in [-n, n]$, we have $(w_0 \gamma)^{-1}(u) \le (w_0 \gamma)^{-1}(v)$.
This shows that $w_0 \gamma$ belongs to $\Sigma^B_R(P^*)$, thus $f_2$ is well-defined.
Moreover, since the map 
$P \mapsto P^*$ 
is involutive, the inverse of $f_2$ is given by $\gamma \mapsto (w_0\gamma)^*$. Therefore, $f_2$ is a $\C$-linear isomorphism.
\end{proof}

The following theorem shows how the $\uptheta$-twist and the $\upchi$-twist interact with induction and restriction.
To state and prove this theorem, we need the $\upphi$-, $\uptheta$- and $\upchi$-twists on poset modules of type $A$, which were determined by Choi--Kim--Oh in \cite[Theorem 3.6]{CKO24}. Note that, unlike for $H_n^B(0)$, the automorphism $\upphi$ is not the identity on $H_n(0)$.

\begin{theorem}\label{thm: auto twists and ind res}
We have the following.
\begin{enumerate}[label = {\rm (\arabic*)}]
\item 
Let $P_1$ be a $B_m$ poset and $P_2$ a poset on $[n]$. Then
\[
\uptheta\left[M^B_{P_1} \otimes M_{P_2}\uparrow_{H^B_m(0) \otimes H_n(0)}^{H^B_{m+n}(0)} \right] 
\cong
\uptheta\left[M^B_{P_1}] \otimes \uptheta[M_{P_2}\right] \uparrow_{H^B_m(0) \otimes H_n(0)}^{H^B_{m+n}(0)}
\]
and
\[
\upchi\left[M^B_{P_1} \otimes M_{P_2}\uparrow_{H^B_m(0) \otimes H_n(0)}^{H^B_{m+n}(0)} \right] 
\cong
\upchi\left[M^B_{P_1}] \otimes 
(\upchi \circ \upphi)[M_{P_2} \right]\uparrow_{H^B_m(0) \otimes H_n(0)}^{H^B_{m+n}(0)}.
\]

\item 
Let $P$ be a $B_n$ poset and $0 \le m \le n$.
Then
\[
\uptheta\left[M^B_{P}\right] \downarrow^{H^B_{n}(0)}_{H^B_m(0) \otimes H_{n-m}(0)} 
\cong
(\uptheta \otimes \uptheta) \left[M^B_{P}\downarrow^{H^B_{n}(0)}_{H^B_m(0) \otimes H_{n-m}(0)} \right] 
\]
and
\[
\upchi\left[M^B_{P}\right] \downarrow^{H^B_{n}(0)}_{H^B_m(0) \otimes H_{n-m}(0)}
\cong
(\upchi \otimes \upchi)\left[M^B_{P} \downarrow^{H^B_{n}(0)}_{H^B_m(0) \otimes H_{n-m}(0)} \right]. 
\]
\end{enumerate}
\end{theorem}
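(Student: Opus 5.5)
Every side of each isomorphism is a $B$-type poset module or a tensor product of poset modules, so I will compute both sides with the explicit descriptions already available and compare posets. The tools are \cref{thm: induction product} and \cref{thm: restriction}, together with their stated analogues for $\sfM^B_P$. I also need \cref{thm: auto-twists} and the type $A$ twist formulas of \cite[Theorem 3.6]{CKO24}. In type $A$ these read $\uptheta[M_{P_2}]\cong \sfM_{P_2}$ and $\upchi[M_{P_2}]\cong \sfM_{P_2^*}$, with $\upphi$ replacing a poset by one of the form $\overline{P_2}$ or $\overline{P_2}^{\,*}$. I will use them in the precise form proved in \cite{CKO24}.

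For (1), the $\uptheta$-statement goes as follows. By \cref{thm: induction product} and then \cref{thm: auto-twists}, the left side is $\uptheta[M^B_{P_1\sqcup_B P_2}]\cong \sfM^B_{P_1\sqcup_B P_2}$. The right side is $\sfM^B_{P_1}\otimes \sfM_{P_2}\uparrow$, which is $\sfM^B_{P_1\sqcup_B P_2}$ by the $\sfM$-version of \cref{thm: induction product}. For $\upchi$, the left side is $\sfM^B_{(P_1\sqcup_B P_2)^*}$. Using \cite{CKO24}, $(\upchi\circ\upphi)[M_{P_2}]$ is an $\sfM$-module for some explicit poset $P_2'$ on $[n]$ built from $P_2$ by $*$ and bar. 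The right side is then $\sfM^B_{P_1^*\sqcup_B P_2'}$. It therefore suffices to prove the combinatorial identity
\[
(P_1\sqcup_B P_2)^* = P_1^*\sqcup_B P_2'.
\]
I will check this blockwise on $\calI_{-1},\calI_0,\calI_1$ from \cref{eq: def of sqcupB}. On $\calI_0$ it is immediate. On $\calI_1$, dualizing $(P_1\sqcup_B P_2)$ gives $i\preceq j$ iff $j-m\preceq_{P_2} i-m$, so the block is governed by $P_2^*$. The role of $\upphi$ is to compensate for the mismatch with the $\calI_{-1}$ block, which carries the negated copy. I expect this bookkeeping to be the main obstacle. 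If $P_2'$ turns out to be $P_2^*$ itself, I would instead exhibit an explicit isomorphism $\sfM^B_{P_1^*\sqcup_B P_2^*}\to\sfM^B_{P_1^*\sqcup_B P_2'}$ given by $\gamma\mapsto\gamma\cdot(\id\bulB w_0^{A})$-type conjugation. This uses that $\upphi$ on $H_n(0)$ corresponds to conjugation by $w_0^A$ and that the twist is absorbed on the Grothendieck side. In all cases I verify compatibility only on the generators $\pi_i$.

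For (2), restriction commutes with twisting by an automorphism compatible with the parabolic embedding. $\uptheta$ on $H^B_n(0)$ restricts to $\uptheta\otimes\uptheta$ on $H^B_m(0)\otimes H_{n-m}(0)$, since $\uptheta(\opi_s)=-\pi_s$ is defined generatorwise. Likewise $\upchi$ restricts to $\upchi\otimes\upchi$, and the dual of a restriction is the restriction of the dual. Hence both isomorphisms hold formally, for any module, by taking the identity map on underlying spaces (respectively on dual spaces) and checking it on each generator $\opi^B_i$ with $i\ne m$.

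As a cross-check I will also make them explicit. By \cref{thm: auto-twists} and the $\sfM$-version of \cref{thm: restriction}, both sides of the $\uptheta$-statement equal $\bigoplus_{Q,U}\sfM^B_{\stB(Q)}\otimes\sfM_{\rmst(U)}$. For $\upchi$, the left side is $\bigoplus_{Q\in\LS^B(P^*;m),\,U}\sfM^B_{\stB(Q)}\otimes \sfM_{\rmst(U)}$ and the right side is $\bigoplus_{Q\in\LS^B(P;m),\,U}\sfM^B_{\stB(Q)^*}\otimes\sfM_{\rmst(U)^*}$. These need not match termwise, but the formal argument already settles the isomorphism.
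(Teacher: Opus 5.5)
Your $\uptheta$-argument in (1) is the paper's. The $\upchi$-statement in (1), however, is left unfinished, and the obstacle you anticipate comes from a wrong type $A$ input.

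\textbf{The gap in the $\upchi$-statement of (1).} It is not true that $\upchi[M_{P_2}]\cong\sfM_{P_2^*}$. The map $\gamma^*\mapsto w_0\gamma$ is left multiplication by $w_0$, which complements descent sets, as in the proof of \cref{thm: auto-twists}. It gives $\upchi[M_{P_2}]\cong\sfM_{\overline{P_2}}$, because $\Sigma_R(\overline{P_2})=w_0\Sigma_R(P_2)$, whereas $\Sigma_R(P_2^*)=\Sigma_R(P_2)w_0$. Likewise, $\gamma\mapsto w_0\gamma w_0$ gives $\upphi[M_{P_2}]\cong M_{\overline{P_2}^{\,*}}$.

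Since $P\mapsto\overline P$ and $P\mapsto P^*$ are commuting involutions, $(\upchi\circ\upphi)[M_{P_2}]\cong\sfM_{P_2^*}$. So your $P_2'$ is exactly $P_2^*$, and the role of $\upphi$ is to turn $\overline{P_2}$ into $P_2^*$. In type $B$ no such correction is needed, since $\upphi=\id$ and $\overline P=P^*$.

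There is no mismatch on $\calI_{-1}$. For $i,j\in\calI_{-1}$, $i\preceq_{(P_1\sqcup_B P_2)^*}j$ holds if and only if $-i-m\preceq_{P_2}-j-m$, i.e.\ $-j-m\preceq_{P_2^*}-i-m$. That is exactly the $\calI_{-1}$-clause of $P_1^*\sqcup_B P_2^*$. So $(P_1\sqcup_B P_2)^*=P_1^*\sqcup_B P_2^*$ block by block, and the $\sfM$-version of \cref{thm: induction product} finishes the proof. This is precisely the paper's argument.

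Your fallback should be dropped; its hypothesis is garbled anyway, since if $P_2'=P_2^*$ there is nothing left to do. It also would not work. Equality of classes in the Grothendieck group does not give a module isomorphism. The map $\gamma\mapsto\gamma(\id\bulB w_0^A)$, or its conjugation variant, reverses positions $m+1,\ldots,m+n$. It therefore intertwines $\opi^B_i$ with $\opi^B_{2m+n-i}$ for $m<i<m+n$, but does not commute with the action of $\opi^B_m$, so it is not an $H^B_{m+n}(0)$-module map.

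\textbf{Part (2).} Your formal argument is correct and takes a different route from the paper. The paper computes both sides explicitly from \cref{thm: restriction}, \cref{thm: auto-twists} and the type $A$ twist formulas. It then matches summands via $Q\mapsto Q$ (with the dual order) and $U\mapsto -U$, using $\rmst(-U)=\overline{\rmst(U)}$; these are its observations (i)--(iv).

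You instead note that $\uptheta$ and $\upchi$ preserve the parabolic subalgebra generated by $\{\opi^B_i\mid i\neq m\}$. There they restrict to $\uptheta\otimes\uptheta$ and $\upchi\otimes\upchi$, since (anti-)homomorphisms are determined by their values on generators. Hence the identity map of $M$ (respectively of $M^*$) is already an isomorphism. This is shorter, avoids the combinatorial observations (which the paper only asserts), and holds for every finite-dimensional $H^B_n(0)$-module. What the paper's computation adds is an explicit bijection between summands.

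Your cross-check is off for the same reason as above. The right-hand summands are $\sfM^B_{\stB(Q)^*}\otimes\sfM_{\overline{\rmst(U)}}$, not $\sfM^B_{\stB(Q)^*}\otimes\sfM_{\rmst(U)^*}$. With this correction they do match the left-hand summands termwise.
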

\begin{proof}
(1)
From \cref{thm: induction product}, \cref{thm: auto-twists}, and \cite[Theorem 3.6]{CKO24}, the first isomorphism is immediate.
For the second isomorphism, note that
$(P_1 \sqcup_B P_2)^* = P_1^* \sqcup_B P_2^*$.
This, together with \cref{thm: induction product} and \cref{thm: auto-twists}, implies that
\[
\upchi\left[M^B_{P_1} \otimes M_{P_2}\uparrow_{H^B_m(0) \otimes H_n(0)}^{H^B_{m+n}(0)} \right]
\cong 
\sfM^B_{P^*_1 \sqcup_B P^*_2}.
\]
On the other hand, combining \cref{thm: induction product}, \cref{thm: auto-twists}, and \cite[Theorem 3.6]{CKO24}, we have
\[
\upchi\left[M^B_{P_1}] \otimes 
(\upchi \circ \upphi)[M_{P_2} \right]\uparrow_{H^B_m(0) \otimes H_n(0)}^{H^B_{m+n}(0)}
\cong
\sfM^B_{P^*_1 \sqcup_B P^*_2}.
\]

(2) 
From \cref{thm: restriction}, \cref{thm: auto-twists}, and \cite[Theorem 3.6]{CKO24}, the first isomorphism is immediate.
By \cref{thm: restriction} and \cref{thm: auto-twists}, we have
\[
\upchi\left[M^B_{P}\right] \downarrow^{H^B_{n}(0)}_{H^B_m(0) \otimes H_{n-m}(0)}
\cong
\sfM^B_{P^*} \downarrow^{H^B_{n}(0)}_{H^B_m(0) \otimes H_{n-m}(0)}
\cong
\bigoplus_{Q \in \LS^B(P^*;m)}
\bigoplus_{U \in \upper(Q)}
\sfM^B_{\stB(Q)} \otimes \sfM_{\rmst(U)}.
\]
On the other hand, combining \cref{thm: restriction}, \cref{thm: auto-twists}, and \cite[Theorem 3.6]{CKO24}, we have
\[
(\upchi \otimes \upchi)\left[M^B_{P} \downarrow^{H^B_{n}(0)}_{H^B_m(0) \otimes H_{n-m}(0)} \right] \cong
\bigoplus_{Q' \in \LS^B(P;m)}
\bigoplus_{U' \in \upper(Q')}
\sfM^B_{\stB(Q')^*} \otimes \sfM_{\overline{\rmst(U')}}.
\]

We claim that the right-hand sides of the two expressions above are equal. To see this, we observe the following:
\begin{enumerate}[label = {\rm (\roman*)}]
\item 
$Q \in \LS^B(P^*;m)$ if and only if $Q^* \in \LS^B(P;m)$.
\item 
For any $Q \in \LS^B(P;m)$, $\rmst^B(Q^*) = \rmst^B(Q)^*$.
\item $\upper(Q^*) = \{-U : U\in \upper(Q)\}$ (where $-U$ is the poset on $\{-i:i\in U\}$ such that $-x\preceq_{-U} -y$ if and only if $x\preceq_{U} y$), and $\rmst(-U) = \rmst(\overline{U})$ for all $U\in \upper(Q)$. 
\item 
For any $Q \in \LS^B(P;m)$ and $U \in \upper(Q)$, $\rmst(\overline{U}) = \overline{\rmst(U)}$.
\end{enumerate}
These observations imply that
\[
\bigoplus_{Q \in \LS^B(P^*;m)}
\bigoplus_{U \in \upper(Q)}
\sfM^B_{\stB(Q)} \otimes \sfM_{\rmst(U)}
=
\bigoplus_{Q' \in \LS^B(P;m)}
\bigoplus_{U' \in \upper(Q')}
\sfM^B_{\stB(Q')^*} \otimes \sfM_{\overline{\rmst(U')}}
\]
as desired.
\end{proof}

\section{Connection to weak Bruhat interval modules}\label{Sec: regular}
In this section, we connect the structure of $B_n$ posets to the right weak Bruhat order on $\SG^B_n$. We begin by defining a family of $B_n$ posets that we call \emph{distinguished} $B_n$ posets, and show there is precisely one distinguished $B_n$ poset having a given set of type-$B$ linear extensions. Consequently, it suffices to consider only distinguished $B_n$ posets when studying $B_n$ poset modules. 

Intervals in weak Bruhat order form an important family of convex subsets of $\SG^B_n$, or more generally of a Coxeter group $W$, and the $0$-Hecke modules associated to these intervals have been studied in types $A$ and $B$ and in arbitrary finite type (e.g., \cite{JKLO22, YY24, BS23}). 
We identify a subfamily of distinguished $B_n$ posets that we call \emph{regular} $B_n$ posets, analogous to the regular posets on $[n]$ defined in \cite{BW91}, and prove that a subset of $\SG^B_n$ is an interval in right weak Bruhat order if and only if it is the set of type-$B$ linear extensions of a regular $B_n$ poset. 
Finally, we consider the full subcategory $\scrB^B(n)$ of $\scrP^B(n)$ whose objects are direct sums of finitely many isomorphic copies of $B_n$ poset modules for regular $B_n$ posets (or equivalently, right weak Bruhat interval modules). 
We prove that the corresponding Grothendieck group $\bigoplus_{n\ge0} \scrB^B(n)$ is isomorphic to the Grothendieck group $\bigoplus_{n \ge 0} \calG_0(\modHBn)$ (and therefore, also isomorphic to $\QSym^B$) as right $\QSym$-modules and right $\QSym$-comodules. 

\subsection{Distinguished posets}
A poset on $[n]$ is uniquely determined by its set of linear extensions. However, the same is not true for $B_n$ posets: if $P$ and $Q$ are $B_n$ posets, it is possible that $\calL^B(P) = \calL^B(Q)$ even if $P\neq Q$. For example, the posets $P^{(1)}_1$ and $P^{(1)}_2$ in \cref{eg: Bn posets} are different but have the same (singleton) set of type-$B$ linear extensions. 
However, if $\calL^B(P) = \calL^B(Q)$, then it follows immediately that $M_P \cong M_Q$.
This motivates us to define an equivalence relation on the set of $B_n$ posets by
\[
P\sim Q \quad \text{if $\calL^B(P)=\calL^B(Q)$.}
\]
In this subsection, we provide a nice representative for each equivalence class under $\sim$.

\begin{definition}
We call a $B_n$ poset \emph{distinguished} if whenever $x$ and $-x$ are comparable, $x$ (and $-x$) is comparable to $0$.
\end{definition}

We will show that there is exactly one distinguished $B_n$ poset in each equivalence class. In fact, the distinguished $B_n$ posets are precisely the $B_n$ posets obtained as the intersection of the linear orders associated to a set of signed permutations.
To clarify this statement, we collect the following terminology:
\begin{enumerate}[label = {\rm (\roman*)}]
\item 
For any $\sigma \in \SG^B_n$, let $L_\sigma = ([-n,n], \preceq_{L_\sigma})$ be the linear $B_n$ poset defined by
\[
\sigma(-n) \preceq_{L_\sigma} \sigma(- n +1) \preceq_{L_\sigma} \cdots \preceq_{L_\sigma} \sigma(n).
\]

\item 
For any poset $P = ([-n,n], \preceq_P)$, we identify $\preceq_P$ with the set 
\[\{(i,j) \in [-n,n]^2 \mid i \preceq_{P} j\}.\] 

\item 
For any $U \subseteq \SG^B_n$, let $\poset(U)$ be the poset defined by
\[
\poset(U) = \left([-n,n], \bigcap_{\sigma \in U} \preceq_{L_{\sigma}} \right).
\]
\end{enumerate}

\begin{example}\label{eg: intersection of cal L}
Let $U = \{[2,1], [1,-2]\} \subseteq \SG^B_2$.
Note that
\[
\def \hp {1}
\def \vp {0.4}
L_{[2,1]} = 
\begin{array}{l}
\begin{tikzpicture}
\node at (0, 3*\vp) {$\bullet$};
\node[right] at (0, 3*\vp) {\scriptsize $1$};
\node at (0, 1.5*\vp) {$\bullet$};
\node[right] at (0, 1.5*\vp) {\scriptsize $2$};
\node at (0, 0*\vp) {$\bullet$};
\node[right] at (0, 0*\vp) {\scriptsize $0$};
\node at (0, -1.5*\vp) {$\bullet$};
\node[right] at (0, -1.5*\vp) {\scriptsize $-2$};
\node at (0, -3*\vp) {$\bullet$};
\node[right] at (0, -3*\vp) {\scriptsize $-1$};
\draw[] (0, 3*\vp) -- (0, -3*\vp);
\end{tikzpicture}
\end{array}
\quad \text{and} \quad
L_{[1,-2]} = 
\begin{array}{l}
\begin{tikzpicture}
\node at (0, 3*\vp) {$\bullet$};
\node[right] at (0, 3*\vp) {\scriptsize $-2$};
\node at (0, 1.5*\vp) {$\bullet$};
\node[right] at (0, 1.5*\vp) {\scriptsize $1$};
\node at (0, 0*\vp) {$\bullet$};
\node[right] at (0, 0*\vp) {\scriptsize $0$};
\node at (0, -1.5*\vp) {$\bullet$};
\node[right] at (0, -1.5*\vp) {\scriptsize $-1$};
\node at (0, -3*\vp) {$\bullet$};
\node[right] at (0, -3*\vp) {\scriptsize $2$};
\draw[] (0, 3*\vp) -- (0, -3*\vp);
\end{tikzpicture}
\end{array}.
\]
We have
\[
\preceq_{\poset(U)} \, = \, 
\preceq_{L_{[2,1]}} \cap \preceq_{L_{[1,-2]}} = \left\{ 
(-1,-2), (-1,0), (-1,1), (0,1), (2,1)
\right\}.
\]
The Hasse diagram of $\poset(U)$ is shown below. 
\[
\def \hp {0.7}
\def \vp {0.4}
\poset(U) = 
\begin{array}{l}
\begin{tikzpicture}
\node at (0.75*\hp, 1.5*\vp) {$\bullet$};
\node[right] at (0.75*\hp, 1.5*\vp) {\scriptsize $1$};

\draw[] (0.75*\hp, 1.5*\vp) -- (1.5*\hp, 0*\vp);
\draw[] (0.75*\hp, 1.5*\vp) -- (-0.75*\hp, -1.5*\vp);

\node at (-1.5*\hp, 0*\vp) {$\bullet$};
\node[right] at (-1.5*\hp, 0*\vp) {\scriptsize $-2$};

\node at (0, 0*\vp) {$\bullet$};
\node[right] at (0, 0*\vp) {\scriptsize $0$};

\node at (1.5*\hp, 0) {$\bullet$};
\node[right] at (1.5*\hp, 0) {\scriptsize $2$};

\draw[] (-1.5*\hp, 0) -- (-0.75*\hp, -1.5*\vp);

\node at (-0.75*\hp, -1.5*\vp) {$\bullet$};
\node[right] at (-0.75*\hp, -1.5*\vp) {\scriptsize $-1$};
\end{tikzpicture}
\end{array}
\]
Note that $\poset(U)$ is a distinguished $B_n$ poset.
\end{example}

Let $U\subseteq \SG^B_n$. We will prove that $\poset(U)$ is distinguished. Subsequently, for any $B_n$ poset $P$, we will prove that $\poset(\Sigma^B_R(P))$ is the unique distinguished $B_n$ poset in the equivalence class of $P$ under $\sim$. 

\begin{lemma}\label{lem: x y incomp}
Let $P$ be a $B_n$ poset, $x \in [-n,n] \setminus \{0\}$, and $y \in [-n,n]$.
If $x \not\prec_P -x$ and $x$ is incomparable to $y$ in $P$, then there exists $E \in \calL^B(P)$ such that $y \prec_{E} x$.
\end{lemma}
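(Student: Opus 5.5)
Our plan is to enlarge $P$ by adding the relations $y \prec x$ and $-x \prec -y$, in the spirit of the construction $P_{(u,v)}$ from the lemma preceding \cref{lem: short exact sequence}. We then need that the resulting $B_n$ poset has at least one type-$B$ linear extension, and any such extension is a type-$B$ linear extension $E$ of $P$ with $y \prec_E x$.

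The first step is a general fact: \emph{every $B_n$ poset $R$ has a type-$B$ linear extension.} This follows from \cref{thm: Bn P-partition decomp}, because $\calA^B(R)$ is nonempty and is the disjoint union of the $\calA^B(E)$ over $E \in \calL^B(R)$. To see nonemptiness, take a type-$A$ order ideal or rank argument; alternatively, argue directly. Choose a minimal element $z$ of $R$ with $z\neq 0$ and $z \not\succ_R -z$ (if $-z$ is also minimal, either choice is fine), place $z$ first and $-z$ last, and recurse on $R\setminus\{z,-z\}$, which is again a $B$-symmetric poset. If only $0$ and $\pm$-pairs comparable through $0$ remain, the order is forced. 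We will take this step as a small auxiliary claim; at worst it is routine.

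The second step is to define $R$ as the transitive closure of $\preceq_P \cup \{(y,x),(-x,-y)\}$ and show it is an antisymmetric $B_n$ poset. The symmetry $i\preceq_R j \iff -j\preceq_R -i$ follows exactly as in the earlier lemma. For antisymmetry, note that a cycle can only arise from a chain in $P$ that uses one or both new relations. Using $(y,x)$ alone produces a cycle only if $x\preceq_P y$, which contradicts incomparability. Using $(-x,-y)$ alone produces a cycle only if $-y\preceq_P -x$, i.e.\ $x \preceq_P y$, again a contradiction. Using both in sequence, we obtain a cycle $y\prec x\preceq_P -x \prec -y \preceq_P y$ or $y \prec x \preceq_P -y$ and $-x\preceq_P y$. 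We must exclude these.
\begin{itemize}
\item The case $x\preceq_P -x$ would require $x \prec_P -x$ (since $x\neq 0$), which is excluded by hypothesis.
\item The case $x\preceq_P -y$ and $-x \preceq_P y$ is symmetric: it says $y \succeq_P -x$ and $-y\succeq_P x$, the same condition written twice. In this case we instead combine it with the new relations: $-x\preceq_P y \prec x$ gives $-x \prec_R x$. This is harmless, so no cycle closes unless we also have $x\preceq_R -x$.
\end{itemize}
We will carefully enumerate chain shapes $a_0 \preceq_P b_0 \to a_1\preceq_P b_1\to\cdots$, reduce them to at most two new edges, and check each shape against the hypotheses $x\not\prec_P -x$ and $x \parallel_P y$. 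The case $y=-x$ also needs attention: there the two added relations coincide, namely $-x\prec x$, and this is consistent precisely because $x\not\prec_P -x$. The case $y=0$ is similar.

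This antisymmetry verification is the main obstacle, and it is where the hypothesis $x\not\prec_P -x$ is used. Once $R$ is a $B_n$ poset, Step 1 gives $E\in\calL^B(R)\subseteq \calL^B(P)$ with $y\prec_E x$, completing the proof.
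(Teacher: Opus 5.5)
Your proof is correct, but it follows a different route from the paper. The paper builds $E$ explicitly. It peels off, from the top, maximal elements of the up-set $A_1=\{a \mid x\preceq_P a\}$, removing each $a$ together with $-a$, and then arbitrary nonzero maximal elements. This makes $x$ the last element of $A_1$ placed, so everything above $x$ in $E$ lies in $A_1$ and $y\notin A_1$ lands below $x$; the hypothesis $x\not\prec_P -x$ is what keeps $0$ out of $A_1$.

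You instead reduce to two general facts: the relation $R$ generated by $\preceq_P$, $(y,x)$ and $(-x,-y)$ is a $B_n$ poset, and every $B_n$ poset has a type-$B$ linear extension. Both hold. Your recursion proves the second fact. A nonzero minimal element $z$ always exists, since otherwise $0$ would be the minimum and hence, by symmetry, also the maximum. Minimality already excludes $-z\prec z$, and $-z$ is then maximal by symmetry. So the detour through \cref{thm: Bn P-partition decomp} is unnecessary.

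The step you call the main obstacle takes two lines, and your second bullet is not a genuine cycle shape, so you can discard it. The tails of the new edges are $y$ and $-x$. After traversing $(y,x)$, neither tail is reachable by a $P$-chain from $x$, since $x\not\preceq_P y$ and $x\not\preceq_P -x$ (this is where $x\neq 0$ is used). So no cycle contains $(y,x)$, and a cycle using only $(-x,-y)$ would need $-y\preceq_P -x$, i.e.\ $x\preceq_P y$. This also covers $y=0$ and $y=-x$ without separate treatment.

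Your route isolates the unique obstruction, the cycle $y\prec x\preceq_P -x\prec -y\preceq_P y$. Hence the conclusion also holds when $x\prec_P -x$ but $-y\not\preceq_P y$. That is exactly the case the paper handles by applying the lemma to the pair $(-y,-x)$ in \cref{lem: intersection of Bn linear order} and \cref{thm: dist are representatives}. The paper's route, in turn, is self-contained, needs no closure or acyclicity check, and produces an explicit extension.
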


\begin{proof}
We will explicitly construct $E \in \calL^B(P)$ such that $y \prec_{E} x$. To this end, 
we choose a sequence of posets $P_1, P_2, \ldots, P_{n}$ and elements $a_1, a_2, \ldots, a_{n}$ inductively as follows.
\begin{enumerate}[itemsep = 1ex, label = \textbf{P\arabic*.}]
\item 
Set $P_1 := P$, $A_1 := \{a \in [-n,n] \mid x \preceq_P a\}$, and let $a_1$ be a maximal element in $A_1$ with respect to $\preceq_P$.
\item
For $i = 2, 3, \ldots, |A_1|$, let $P_i$ be the poset obtained by deleting $a_{i-1}$ and $-a_{i-1}$ from $P_{i-1}$.
Let $A_i := A_{i-1} \setminus \{a_{i-1}\}$, and let $a_i$ be a maximal element of $A_i$ with respect to $\preceq_P$.
\item 
For $i = |A_1|+1, \ldots, n$, let $P_i$ be the poset obtained by deleting $a_{i-1}$ and $-a_{i-1}$ from $P_{i-1}$.
Let $a_i$ be a nonzero maximal element in $P_i$.
\end{enumerate}

Note that \textbf{P3} is well-defined: for each $i = |A_1|+1, \ldots, n$ there exists a nonzero maximal element $a_i$ in $P_i$, since $P$ is a $B_n$ poset and $|P_{i}| > 1$.
We claim that none of $a_1, a_2, \ldots , a_n$ are equal to $0$. It suffices to confirm that $0$ cannot be chosen as an $a_i$ in \textbf{P1} or \textbf{P2}. The assumption $x \not\prec_P -x$ implies that $x \not\prec_P 0$; otherwise (since $P$ is a $B_n$ poset) we have $x \prec_P 0 \prec_P -x$, contradicting the assumption. Hence, $0\notin A_1$, which implies that $0$ cannot be chosen as an $a_i$ in \textbf{P1} or \textbf{P2}.

Now, let $E = ([-n,n], \preceq_E)$ be the linear order defined by 
\[
-a_1 \prec_E -a_2 \prec_E \cdots \prec_E -a_n \prec_E 0 \prec_E a_n \prec_E \cdots \prec_E a_2 \prec_E a_1.
\]
By the choice of $a_1, a_2, \ldots, a_n$, we have $E \in \calL^B(P)$ and $x = a_{|A_1|}$.
Moreover, since $y$ is incomparable to $x$ in $P$, we have $y \notin A_1$, and thus $y \prec_E x$.
\end{proof}

\begin{lemma}\label{lem: intersection of Bn linear order}
The following hold.
\begin{enumerate}[label = {\rm (\arabic*)}]
\item 
For any $U \subseteq \SG^B_n$, $\poset(U)$ is a distinguished $B_n$ poset.
\item
For any distinguished $B_n$ poset $P$, we have $P = \poset(\Sigma^B_R(P))$.
\end{enumerate}
\end{lemma}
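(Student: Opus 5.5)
Part (1) is a direct verification from the definition of $\poset(U)$; I assume $U \neq \emptyset$, so that the intersection is meaningful. An intersection of linear orders is reflexive, antisymmetric and transitive, so $\poset(U)$ is a poset. Each $L_\sigma$ is a $B_n$ poset: $\sigma(a)\preceq_{L_\sigma}\sigma(b)$ iff $a\le b$ iff $-b\le -a$ iff $\sigma(-b)=-\sigma(b)\preceq_{L_\sigma}-\sigma(a)$. The $B_n$ condition is preserved under intersection, so $\poset(U)$ is a $B_n$ poset.

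For distinguishedness, suppose $x$ and $-x$ are comparable in $\poset(U)$, say $x\prec -x$ (so $x \neq 0$). Then $x\prec_{L_\sigma}-x$ for every $\sigma\in U$. In any $L_\sigma$, the element $0=\sigma(0)$ sits in the middle and $x$, $-x$ lie on opposite sides of it. Hence $x\prec_{L_\sigma}0\prec_{L_\sigma}-x$ for all $\sigma$, so $x\prec 0\prec -x$ in $\poset(U)$ and $x$ is comparable to $0$.

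For (2), let $P$ be distinguished. Since $\Sigma^B_R(P)$ consists of the $\bfgam_E$ with $L_{\bfgam_E}=E\in\calL^B(P)$, we have $\poset(\Sigma^B_R(P))$ equal to the intersection of all $E\in\calL^B(P)$. This set is nonempty; for instance, the construction in \cref{lem: x y incomp} produces an element, or one appeals to \cref{thm: Bn P-partition decomp}. Every such $E$ extends $P$, so $\preceq_P\subseteq\preceq_{\poset(\Sigma^B_R(P))}$.

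For the reverse inclusion, it suffices to show: if $a\neq b$ are incomparable in $P$, then some $E\in\calL^B(P)$ has $b\prec_E a$. By symmetry, some other $E'$ then has $a\prec_{E'} b$, so $a,b$ are incomparable in the intersection. I would apply \cref{lem: x y incomp} with $x=a$, $y=b$, which requires $a\neq0$ and $a\not\prec_P -a$. I split into cases.

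If $a\neq 0$ and $a\not\prec_P-a$, the lemma applies directly.

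If $a\neq0$ and $a\prec_P -a$, then by distinguishedness $a\prec_P0\prec_P-a$. In this case I would instead use $-b$ and $-a$: note that $-b$ is incomparable to $-a$ (the $B_n$ symmetry preserves incomparability), and $-a\not\prec_P a$ by antisymmetry. Applying the lemma with $x=-a$, $y=-b$ gives $E$ with $-b\prec_E-a$, hence $a\prec_E b$. To obtain $b\prec_E a$ we instead need the lemma with $x=-b$, $y=-a$, i.e.\ we need $-b\not\prec_P b$. If that also fails, then $b\prec_P 0$ as well, while $a\prec_P 0$.

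If $a=0$, then $b \neq 0$, so swap roles and apply the lemma to $x=b$ or $x=-b$.

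Thus the real content is handling pairs where both elements satisfy $z\prec_P -z$, or one of them is $0$. The general strategy is to pick whichever of $a,b,-a,-b$ satisfies the lemma's hypothesis and use the symmetry $E\mapsto E$ (type-$B$ linear extensions are closed under negation-reversal) to transport the conclusion.

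\textbf{Main obstacle.} The main obstacle is the residual case where $a\prec_P0$, $b\prec_P0$, both are incomparable, and we need $b\prec_E a$. Here $-a$ and $-b$ lie above $0$, with $-a\not\prec_P a$ and $-b\not\prec_P b$. Applying the lemma with $x=-b$, $y=-a$ gives $-a\prec_E -b$, i.e.\ $b\prec_E a$, as required. Similarly, the case $a=0$ with $b\prec_P -b$ forces $b$ comparable to $0$, contradicting incomparability. So every case reduces to \cref{lem: x y incomp}, and I expect the careful bookkeeping of these sign cases to be the only delicate point.
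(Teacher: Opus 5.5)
Your overall strategy is the paper's. Part (1) is identical. For (2), the paper likewise applies \cref{lem: x y incomp} either to $(x,y)$ or to $(-y,-x)$, and uses distinguishedness to rule out what is left. The paper handles $x=0$ by the symmetry $(0,y)\leftrightarrow(-y,0)$ rather than by your Case 3; that difference is immaterial.

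There is, however, a sign error in your Case 2, and it leaves the one case that does not come from the lemma unhandled. Suppose $a\prec_P -a$, so $a\prec_P 0\prec_P -a$, and suppose the hypothesis $-b\not\prec_P b$ fails. Then $-b\prec_P b$, and distinguishedness gives $-b\prec_P 0\prec_P b$. So $0\prec_P b$, not $b\prec_P 0$ as you wrote. This case is not settled by the lemma at all. It is impossible, because $a\prec_P 0\prec_P b$ forces $a\prec_P b$, contradicting incomparability. That is exactly the final step of the paper's proof.

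Your ``main obstacle'' configuration $a,b\prec_P 0$ is not a residual case. There $b\prec_P 0\prec_P -b$, so $-b\not\prec_P b$ holds, and the lemma with $x=-b$, $y=-a$ applies directly, as you yourself observe. So the sentence ``every case reduces to \cref{lem: x y incomp}'' should read: every case either reduces to \cref{lem: x y incomp} or contradicts incomparability through $0$. With that correction your proof is complete.
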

\begin{proof}
(1) 
Let $U \subseteq \SG^B_n$. 
It is straightforward to verify that for any $x,y \in [-n,n]$, $x \preceq_{\poset(U)} y$ if and only if $-y \preceq_{\poset(U)} -x$, thus $\poset(U)$ is a $B_n$ poset.

To prove that $\poset(U)$ is distinguished, assume that there exists $x \in [-n,n]\setminus \{0\}$ such that $-x \prec_{\poset(U)} x$.
Then for all $\sigma \in U$, we have $-x \prec_{L_\sigma} x$.
Since $L_\sigma$ is a linear $B_n$ poset for all $\sigma\in U$, it follows that $-x \prec_{L_\sigma} 0 \prec_{L_\sigma} x$ for all $\sigma\in U$.
This implies that $-x \prec_{\poset(U)} 0 \prec_{\poset(U)} x$.
Therefore, $\poset(U)$ is distinguished. 

(2)
Let $P$ be a distinguished $B_n$ poset.
It suffices to show that $\preceq_P\, = \bigcap_{E \in \calL^B(P)} \preceq_E$. 
One can easily see that $\preceq_P\, = \bigcap_{E \in \calL(P)} \preceq_E$, and therefore $\preceq_P \, \subseteq  \bigcap_{E \in \calL^B(P)} \preceq_E$.
To prove the opposite inclusion $\preceq_P \, \supseteq \, \bigcap_{E \in \calL^B(P)} \preceq_E$, suppose that there exists $(x,y) \in \bigcap_{E \in \calL^B(P)} \preceq_E$ such that $(x,y) \notin \, \preceq_P$.

We may assume that $x\neq 0$, since for any $B_n$ poset $Q$, $(0,y)\in \, \preceq_Q$ if and only if $(-y,0)\in \, \preceq_Q$, and both $P$ and (by (1)) $\poset(\Sigma^B_R(P))$ are $B_n$ posets. 
If $y \prec_P x$, then $y \prec_E x$ for all $E \in \calL^B(P)$.
This implies that $(x,y) \notin \bigcap_{E \in \calL^B(P)} \preceq_E$, which contradicts the choice of $(x,y)$.  
So, we assume that $x$ and $y$ are incomparable in $P$.
If $x \not\prec_P -x$, then \cref{lem: x y incomp} implies that there exists $F \in \calL^B(P)$ such that $y \prec_F x$. 
It follows that $(x,y) \notin \bigcap_{E \in \calL^B(P)} \preceq_E$, which contradicts the choice of $(x,y)$. 
So, we assume that $x \prec_P -x$. 
If $y=0$, then since $P$ is distinguished and $x \prec_P -x$, we have $x\prec_P 0 = y$,  which contradicts $(x,y) \notin \, \preceq_P$. Therefore, we assume that $y\neq 0$.
Since $x$ is incomparable to $y$ in $P$, $-x$ is incomparable to $-y$ in $P$.
If $-y \not\prec_P y$, then by \cref{lem: x y incomp} again, there exists $F \in \calL^B(P)$ such that $-x \prec_F -y$, equivalently, $y \prec_F x$. 
This implies that $(x,y) \notin \bigcap_{E \in \calL^B(P)} \preceq_E$, which contradicts the choice of $(x,y)$. 
So, we assume that $-y \prec_P y$.
Now, since $P$ is distinguished, we have $x, -y \prec_P 0 \prec_P -x, y$.
In particular, $x \prec_P y$ which contradicts $(x,y) \notin \, \preceq_P$.
\end{proof}

In fact, for any $B_n$ poset $P$, $\poset(\Sigma^B_R(P))$ is a distinguished $B_n$ poset that has the same set of type-$B$ linear extensions as $P$.

\begin{lemma}\label{lem: poset have same calL}
For any $B_n$ poset $P$, we have $P \sim \poset(\Sigma^B_R(P))$.
\end{lemma}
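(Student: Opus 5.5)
Write $P' := \poset(\Sigma^B_R(P))$. By \cref{lem: intersection of Bn linear order}(1), $P'$ is a (distinguished) $B_n$ poset, so it suffices to show $\calL^B(P) = \calL^B(P')$, or equivalently $\Sigma^B_R(P) = \Sigma^B_R(P')$. A type-$B$ linear extension of either poset is a linear $B_n$ poset of the form $L_\sigma$ for a unique $\sigma \in \SG^B_n$. Under this identification, $E = P(\bfgam_E)$ is the linear order $L_{\bfgam_E}$: the chain $\bfgam_E(-n) \prec \cdots \prec \bfgam_E(n)$. So I will argue directly with the relations $\preceq_{L_\sigma}$.

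\emph{Inclusion $\calL^B(P) \subseteq \calL^B(P')$.} Take $E \in \calL^B(P)$, so $E = L_\sigma$ with $\sigma = \bfgam_E \in \Sigma^B_R(P)$. By definition, $\preceq_{P'} = \bigcap_{\tau \in \Sigma^B_R(P)} \preceq_{L_\tau} \subseteq \preceq_{L_\sigma}$. Hence $E$ extends $P'$, and since $E$ is itself a $B_n$ poset, $E \in \calL^B(P')$.

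\emph{Inclusion $\calL^B(P') \subseteq \calL^B(P)$.} It is enough to show $\preceq_P \subseteq \preceq_{P'}$, because then any linear extension of $P'$ is a linear extension of $P$, and the type-$B$ condition is intrinsic to $E$. Let $i \preceq_P j$. Every $E \in \calL^B(P)$ extends $P$, so $i \preceq_E j$ for all such $E$. This means $(i,j) \in \bigcap_{\tau \in \Sigma^B_R(P)} \preceq_{L_\tau} = \preceq_{P'}$.

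The only subtlety I expect is the degenerate case $\calL^B(P) = \emptyset$, where the intersection over an empty family would be ill-defined. I will handle it by showing that $\calL^B(P)$ is never empty. The construction in the proof of \cref{lem: x y incomp} does this: repeatedly remove a nonzero maximal element $a$ together with $-a$, then order the removed elements symmetrically around $0$. This yields a type-$B$ linear extension of any $B_n$ poset, so no hypothesis on $x$ is needed for this part. With this, both inclusions hold and $P \sim P'$.
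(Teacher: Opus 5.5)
Your proof is correct and follows the paper's argument essentially verbatim: the first inclusion uses $\preceq_{\poset(\Sigma^B_R(P))} \subseteq \preceq_E$ for each $E \in \calL^B(P)$, and the second uses $\preceq_P \subseteq \preceq_{\poset(\Sigma^B_R(P))}$. Your remark that $\calL^B(P)$ is never empty (via the peeling construction in the proof of \cref{lem: x y incomp}, run without steps P1--P2) is a useful addition, since the paper leaves this implicit and $\poset(\emptyset)$ would not be a poset.
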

\begin{proof}
For all $E \in \calL^B(P)$, $\bigcap_{\sigma \in \Sigma^B_R(P)} \preceq_{L_{\sigma}} \, \subseteq \, \preceq_E$, which implies $E \in \calL^B( \poset(\Sigma^B_R(P)))$.
Hence, $\calL^B(P) \subseteq \calL^B( \poset(\Sigma^B_R(P)))$.

To prove the opposite inclusion, take any $E \in \calL^B(\poset(\Sigma^B_R(P)))$.
Let $i,j \in [-n,n]$ with $i \preceq_P j$.
Then $i \preceq_{L_\sigma} j$ for all $\sigma \in \Sigma^B_R(P)$, and so $i \preceq_{\poset(\Sigma^B_R(P))} j$.
Since $E$ is a type-$B$ linear extension of $\poset(\Sigma^B_R(P))$, it follows that $i \preceq_{E} j$.
Since $i, j \in [-n,n]$ with $i \preceq_P j$ were arbitrary, this shows that $E \in \calL^B(P)$.
Therefore, $\calL^B( \poset(\Sigma^B_R(P))) \subseteq \calL^B(P)$.
\end{proof}

\begin{theorem}\label{thm: dist are representatives}
For any $B_n$ poset $P$, $\poset(\Sigma^B_R(P))$ is the unique distinguished $B_n$ poset in the equivalence class of $P$ under $\sim$.
\end{theorem}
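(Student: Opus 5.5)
The proof combines the three lemmas just established. There are two things to show: that $\poset(\Sigma^B_R(P))$ lies in the equivalence class of $P$ and is distinguished (existence), and that any distinguished $B_n$ poset in that class coincides with it (uniqueness).

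For existence, \cref{lem: poset have same calL} gives $P \sim \poset(\Sigma^B_R(P))$ directly. Applying \cref{lem: intersection of Bn linear order}(1) with $U = \Sigma^B_R(P)$ shows that $\poset(\Sigma^B_R(P))$ is a distinguished $B_n$ poset.

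For uniqueness, let $P'$ be any distinguished $B_n$ poset with $P' \sim P$, so $\calL^B(P') = \calL^B(P)$. Since $\Sigma^B_R(\cdot)$ is defined as $\{\bfgam_E \mid E \in \calL^B(\cdot)\}$, it depends only on the set of type-$B$ linear extensions. Hence $\Sigma^B_R(P') = \Sigma^B_R(P)$. Because $P'$ is distinguished, \cref{lem: intersection of Bn linear order}(2) gives
\[
P' = \poset(\Sigma^B_R(P')) = \poset(\Sigma^B_R(P)),
\]
which is the uniqueness claim.

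All the substance lies in the earlier lemmas; in particular \cref{lem: intersection of Bn linear order}(2) rests on the construction in \cref{lem: x y incomp}. The only point to check here is the identification $L_{\bfgam_E} = E$. This holds because $P(\bfgam_E) = E$ and $\bfgam_E$ is a signed permutation: the chain $\bfgam_E(1) \prec_E \cdots \prec_E \bfgam_E(n)$, together with the $B_n$ symmetry of $E$, forces $E$ to be the full chain $\bfgam_E(-n) \prec_E \cdots \prec_E \bfgam_E(n)$. This identification is what makes $\poset(\Sigma^B_R(P))$ the intersection of the orders in $\calL^B(P)$, as used in the lemmas. I do not expect any real obstacle.
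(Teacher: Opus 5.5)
Your proof is correct, but it handles uniqueness differently from the paper. The paper also uses \cref{lem: intersection of Bn linear order} and \cref{lem: poset have same calL} for existence. For uniqueness, however, it proves from scratch that distinct distinguished $B_n$ posets $P \neq Q$ satisfy $\calL^B(P) \neq \calL^B(Q)$. It picks $x \neq 0$ with $x \prec_P y$ and $x \not\prec_Q y$, and then runs a case analysis on whether $x \prec_Q -x$ and $-y \prec_Q y$, using \cref{lem: x y incomp} and the distinguishedness of $Q$, to produce $F \in \calL^B(Q)$ with $y \prec_F x$. You instead observe that uniqueness follows formally from \cref{lem: intersection of Bn linear order}(2). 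A distinguished poset equals $\poset$ of its own $\Sigma^B_R$, and $\Sigma^B_R$ depends only on $\calL^B$, so two equivalent distinguished posets must coincide. Your route is shorter and shows that the reconstruction statement already contains the injectivity. The paper's separate argument essentially repeats the case analysis from the proof of \cref{lem: intersection of Bn linear order}(2). Its only extra content is an explicitly exhibited separating extension $F$, and that lemma's proof produces one implicitly as well.

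One minor remark on your last paragraph. The identification $L_{\bfgam_E} = E$ is immediate from the definitions: $P(\gamma)$ and $L_\gamma$ are the same order (with $a$ below $b$ iff $\gamma^{-1}(a) < \gamma^{-1}(b)$), and $P(\bfgam_E) = E$ by definition. Your alternative justification, from the chain $\bfgam_E(1) \prec_E \cdots \prec_E \bfgam_E(n)$ plus $B_n$ symmetry, does not by itself force $0 \prec_E \bfgam_E(1)$. For example, when $n=1$ that chain condition is vacuous. This does not affect your proof.
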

\begin{proof}
Due to \cref{lem: intersection of Bn linear order} and \cref{lem: poset have same calL}, it suffices to show that for any two distinct distinguished $B_n$ posets $P$ and $Q$, we have $\calL^B(P)\neq \calL^B(Q)$.

Let $P$ and $Q$ be distinguished $B_n$ posets such that $P \neq Q$.
Without loss of generality, we may assume that there exist $x,y \in [-n,n]$ such that $x \neq 0$, $x \prec_P y$, and $x \not\prec_Q y$. 
Since every $E \in \calL^B(P)$ satisfies $x \prec_E y$, it suffices to show that there exists $F \in \calL^B(Q)$ such that $y \prec_F x$.

If $y \prec_Q x$, then we immediately have $\calL^B(P)\neq \calL^B(Q)$.
Assume that $x$ and $y$ are incomparable in $Q$.

Suppose that $y = 0$ or $y = -x$.
Since $Q$ is a distinguished $B_n$ poset and $x$ is incomparable to $y$ in $Q$, $x$ is incomparable to $0$ and $-x$ in $Q$.
By \cref{lem: x y incomp}, there exists $F \in \calL^B(Q)$ such that $y \prec_F x$.

Suppose that $y$ is neither $0$ nor $-x$. 
There are three cases to consider. If $x \not\prec_Q -x$, then \cref{lem: x y incomp} implies that there exists $F \in \calL^B(Q)$ such that $y \prec_F x$.
Similarly, if $-y \not\prec_Q y$, then there exists $F \in \calL^B(Q)$ such that $-x \prec_F -y$ by \cref{lem: x y incomp}, and thus $y \prec_F x$. 
The remaining possibility is $x \prec_Q -x$ and $-y \prec_Q y$. In this case, since $Q$ is distinguished, $x \prec_Q 0 \prec_Q -x$ and $-y \prec_Q 0 \prec_Q y$.
This contradicts the assumption that $x$ and $y$ are incomparable in $Q$.
\end{proof}

\subsection{Regular posets}
Bj\"orner--Wachs \cite{BW91} showed that the sets of permutations corresponding to linear extensions of posets on $[n]$ are precisely the \emph{convex} subsets of $\SG_n$. Analogously, the sets of signed permutations corresponding to linear extensions of $B_n$ posets are precisely the convex subsets of $\SG^B_n$. This was proved by Gaetz--Gao \cite{GG22}. Their result is stated in terms of type $B$ posets that are defined in a slightly different way than $B_n$ posets; in particular, the posets used in \cite{GG22} do not contain $0$. Nevertheless, their type $B$ posets are equivalent to the distinguished $B_n$ posets in the following sense: there is a natural bijection from the set of distinguished $B_n$ posets to the type $B$ posets of \cite{GG22} on $[-n,n] \setminus \{0\}$, sending each distinguished $B_n$ poset $P$ to the subposet $P \setminus \{0\}$.  

Moreover, Bj\"orner--Wachs \cite{BW91} defined a subfamily of the posets on $[n]$, called \emph{regular posets}, and showed that the sets of linear extensions of regular posets are precisely the intervals in the right weak Bruhat order on $\SG_n$. 
In this subsection, we define regular $B_n$ posets in an analogous manner to the regular posets of \cite{BW91}, and prove that regular $B_n$ posets characterize the intervals in the right weak Bruhat order on $\SG^B_n$.

The \emph{permutohedron of type $B_n$} is the undirected graph whose vertices are the signed permutations in $\SG^B_n$, and whose edges are $(\sigma_1, \sigma_2)$ for signed permutations $\sigma_1$ and $\sigma_2$ such that $\sigma_1$ covers $\sigma_2$ or $\sigma_2$ covers $\sigma_1$ in $\preceq_R$.

\begin{definition}
Let $U \subseteq \SG^B_n$.
\begin{enumerate}[label = {\rm (\arabic*)}]
\item 
We say $U$ is \emph{convex} if, for all $\sigma_1, \sigma_2 \in U$, every shortest path in the permutohedron of type $B_n$ connecting $\sigma_1$ and $\sigma_2$ lies in $U$.

\item
The \emph{convex hull} of $U$, denoted $\Conv(U)$, is the intersection of all convex subsets of $\SG^B_n$ that contain $U$.
\end{enumerate}
\end{definition}

\begin{proposition}{\rm (\cite[Proposition 4.14]{GG22})}
\label{prop: convex and poset}
For $U \subseteq \SG^B_n$, we have $\Conv(U) = \Sigma^B_R(\poset(U))$.
In particular, $U$ is convex if and only if $U = \Sigma^B_R(\poset(U))$.
\end{proposition}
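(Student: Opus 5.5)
Since the statement is cited from Gaetz--Gao, the task is to check that it transfers to the present setting of $B_n$ posets containing $0$. I would prove the two inclusions $\Conv(U) \subseteq \Sigma^B_R(\poset(U))$ and $\Sigma^B_R(\poset(U)) \subseteq \Conv(U)$ directly. The ``in particular'' then follows at once: if $U$ is convex then $\Conv(U)=U$, and conversely $\Sigma^B_R$ of any $B_n$ poset is shown convex below.

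\textbf{Step 1: $\Sigma^B_R(P)$ is convex for every $B_n$ poset $P$.} In the type $B$ permutohedron, a shortest path between $\sigma_1$ and $\sigma_2$ has length equal to the number of type $B$ inversions separating them. Each step $\sigma \mapsto \sigma s^B_i$ reverses the relative order, in $L_\sigma$, of exactly one symmetric pair: either $\{\sigma(i),\sigma(i+1)\}$ together with its negative, or the pair $\{\sigma(-1),\sigma(1)\}$. On a geodesic, no such pair is reversed twice. Now suppose $\sigma_1,\sigma_2 \in \Sigma^B_R(P)$ and $a \prec_P b$. Then $a$ precedes $b$ in both $L_{\sigma_1}$ and $L_{\sigma_2}$, so the pair $\{a,b\}$ is never flipped along a geodesic, and every vertex of the path respects $a \prec b$. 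Hence the path stays in $\Sigma^B_R(P)$. Since $U \subseteq \Sigma^B_R(\poset(U))$ by definition, this gives $\Conv(U) \subseteq \Sigma^B_R(\poset(U))$.

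\textbf{Step 2: $\Sigma^B_R(\poset(U)) \subseteq \Conv(U)$.} Let $C$ be any convex set containing $U$; I will show $\Sigma^B_R(\poset(U)) \subseteq C$. Using Step 1 and the inclusion just proved, $\poset(C) = \poset(U)$: the inclusion $\supseteq$ holds because $C \supseteq U$, and $\subseteq$ holds because $C \subseteq \Sigma^B_R(\poset(U))$. So it suffices to show that a convex $C$ equals $\Sigma^B_R(\poset(C))$.

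I would argue this via a separation property. For each $\tau \notin C$, I want a symmetric pair $\{a,b\}$ whose relative order is the same in every element of $C$ but opposite in $\tau$. Such a pair gives a relation in $\poset(C)$ violated by $\tau$, so $\tau \notin \Sigma^B_R(\poset(C))$.

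To find the pair, take $\sigma \in C$ minimizing the distance to $\tau$, and take the first edge $\sigma \to \sigma s$ of a geodesic from $\sigma$ toward $\tau$. Then $\sigma s \notin C$ by minimality. The reflection $t=\sigma s \sigma^{-1}$ corresponds to a pair $\{a,b\}$ that is oriented one way in $\sigma$ and the other way in $\tau$. It then remains to show this pair is oriented the same way in all of $C$. If some $\sigma' \in C$ had the opposite orientation, I would build a geodesic from $\sigma$ to $\sigma'$ that passes through $\sigma s$, contradicting convexity. This is the standard wall-crossing argument for convex sets of chambers in a hyperplane arrangement: convex sets are exactly intersections of half-spaces (Tits' lemma on convexity and roots).

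\textbf{Main obstacle.} The key difficulty is this last geodesic exchange. I would handle it by identifying convexity with convexity in the chamber system of the $B_n$ reflection arrangement, where geodesics are characterized by crossing each separating hyperplane exactly once. The $B_n$-specific check is that each type $B$ hyperplane corresponds exactly to a comparison $a$ versus $b$ (equivalently $-b$ versus $-a$) in $L_\sigma$, including the pairs $\{-x,x\}$. Because $\poset(C)$ is distinguished, the comparison of $-x$ with $x$ is encoded by comparing $x$ with $0$. Each hyperplane therefore gives a genuine relation of $\poset(C)$.
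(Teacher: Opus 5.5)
Your Step 2 reduction is false as written, though it is easy to repair. For an arbitrary convex $C\supseteq U$ you cannot have $C\subseteq\Sigma^B_R(\poset(U))$. Take $U=\{\sigma\}$ and $C=\SG^B_n$: then $\Sigma^B_R(\poset(U))=\{\sigma\}$, while $\poset(C)$ is an antichain. So $\poset(C)=\poset(U)$ fails in general, and your two justifications are attached to the wrong inclusions: $C\supseteq U$ gives $\preceq_{\poset(C)}\subseteq\preceq_{\poset(U)}$.

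That one inclusion is all you need, however. It makes every type-$B$ linear extension of $\poset(U)$ a type-$B$ linear extension of $\poset(C)$, so $\Sigma^B_R(\poset(U))\subseteq\Sigma^B_R(\poset(C))$. Your separation argument ($C=\Sigma^B_R(\poset(C))$ for convex $C$) then gives $\Sigma^B_R(\poset(U))\subseteq C$.

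The exchange step you flag as the main obstacle is a one-liner. The wall of $t=\sigma s\sigma^{-1}$ separates $\sigma$ from $\sigma'$ iff $s$ is a left inversion of $\sigma^{-1}\sigma'$. For a simple reflection this is the same as being a left descent, so $\sigma^{-1}\sigma'$ has a reduced word starting with $s$.

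There is also a bookkeeping correction in Step 1. The move $\sigma\mapsto\sigma s^B_0$ reverses not only $\sigma(-1)$ against $\sigma(1)$ but also both of them against $0$. You should attach the three comparisons $\{x,-x\}$, $\{x,0\}$, $\{0,-x\}$ to the single reflection $(x,-x)$. This works in every linear $B_n$ order, so no appeal to distinguishedness is needed, in Step 2 either.

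With these repairs your argument is a correct, self-contained proof, and it takes a different route from the paper. The paper gives no argument of its own: it imports \cite[Proposition 4.14]{GG22}. It justifies applicability only by asserting that $P\mapsto P\setminus\{0\}$ identifies distinguished $B_n$ posets with the $0$-free type-$B$ posets of Gaetz--Gao.

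Your route is the Tits-style characterization of convex sets of chambers as intersections of half-spaces in the $B_n$ arrangement, translated into the linear orders $L_\sigma$. It makes the role of $0$ explicit rather than relying on the asserted correspondence between the two notions of type-$B$ poset. Your Step 1 also yields \cref{cor: P convex} directly for arbitrary, not necessarily distinguished, $B_n$ posets; the paper obtains it only by combining the cited proposition with \cref{lem: poset have same calL}. The paper's route is shorter, at the cost of depending on the external result and on that unproved translation.
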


Combining \cref{lem: poset have same calL} and \cref{prop: convex and poset} yields the following.

\begin{corollary}\label{cor: P convex}
For any $B_n$ poset $P$, $\Sigma^B_R(P)$ is a convex subset of $\SG^B_n$.
\end{corollary}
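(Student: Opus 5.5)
The plan is to apply \cref{prop: convex and poset} to the set $U := \Sigma^B_R(P)$. By that proposition, $U$ is convex exactly when $U = \Sigma^B_R(\poset(U))$, so the whole task is to verify this equality for $U = \Sigma^B_R(P)$.

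The equality will come from \cref{lem: poset have same calL}, which gives $P \sim \poset(\Sigma^B_R(P))$, that is,
\[
\calL^B(P) = \calL^B(\poset(\Sigma^B_R(P))).
\]
Since $\Sigma^B_R(\cdot)$ is defined as the image of $\calL^B(\cdot)$ under the map $E \mapsto \bfgam_E$, equal sets of type-$B$ linear extensions yield equal sets of signed permutations. Hence
\[
\Sigma^B_R(\poset(\Sigma^B_R(P))) = \Sigma^B_R(P),
\]
which is precisely the condition $U = \Sigma^B_R(\poset(U))$. By \cref{prop: convex and poset}, $\Sigma^B_R(P)$ is therefore convex.

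The only point that needs checking is the compatibility of conventions. The relation $\preceq_{L_\sigma}$ used to define $\poset(U)$ must be the linear order $P(\sigma)$ attached to $\sigma$, so that $\bfgam_{L_\sigma} = \sigma$. By definition, $L_\sigma$ orders the elements as $\sigma(-n), \ldots, \sigma(n)$, and $P(\sigma)$ sets $i \prec j$ whenever $\sigma^{-1}(i) < \sigma^{-1}(j)$; these are the same order, so the conventions agree. Apart from this check, the statement follows directly from the two cited results.
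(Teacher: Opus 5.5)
Your proposal is correct and follows the paper's own argument: \cref{lem: poset have same calL} gives $\Sigma^B_R(\poset(\Sigma^B_R(P))) = \Sigma^B_R(P)$, and \cref{prop: convex and poset} then says $\Sigma^B_R(P)$ is convex. Your check that $L_\sigma$ coincides with $P(\sigma)$ is harmless but not needed, since it is already used inside the proof of that lemma.
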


We now consider intervals in right weak Bruhat order. We begin by collecting some notation and facts that we will need in the proof of \cref{thm: regular and interval}, the main result of this subsection.  
The following is well-known, e.g., \cite[Section 10]{BW88}. 

\begin{lemma}\label{lem: Conv and interval}
For every $\gamma_1, \gamma_2 \in \SG^B_n$ with $\gamma_1 \preceq_R \gamma_2$, $\Conv(\{\gamma_1, \gamma_2\}) = [\gamma_1, \gamma_2]_R$.
\end{lemma}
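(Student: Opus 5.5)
We prove the two inclusions $[\gamma_1,\gamma_2]_R \subseteq \Conv(\{\gamma_1,\gamma_2\})$ and $\Conv(\{\gamma_1,\gamma_2\}) \subseteq [\gamma_1,\gamma_2]_R$ separately, using the standard description of right weak order through inversion sets. For $w \in \SG^B_n$ we write $T_L(w) = \{t \in T \mid \ell(tw) < \ell(w)\}$ for its set of left inversions, where $T$ is the set of reflections of $\SG^B_n$. Two facts are standard (see \cite{BB05}). First, $u \preceq_R w$ if and only if $T_L(u) \subseteq T_L(w)$. Second, if $w' = ws$ with $s \in S$, then $T_L(w')$ is the symmetric difference of $T_L(w)$ with $\{wsw^{-1}\}$. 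The second fact implies that the graph distance in the permutohedron of type $B_n$ between $u$ and $v$ equals $|T_L(u) \symdiff T_L(v)| = \ell(u^{-1}v)$.

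\emph{Step 1: every element of the interval lies in the hull.} Let $v \in [\gamma_1,\gamma_2]_R$. Then $T_L(\gamma_1) \subseteq T_L(v) \subseteq T_L(\gamma_2)$, so
\[
|T_L(\gamma_1)\symdiff T_L(v)| + |T_L(v)\symdiff T_L(\gamma_2)| = |T_L(\gamma_1)\symdiff T_L(\gamma_2)|.
\]
Concatenating a saturated chain from $\gamma_1$ to $v$ with one from $v$ to $\gamma_2$ therefore gives a shortest path from $\gamma_1$ to $\gamma_2$ that passes through $v$. Any convex set containing $\gamma_1$ and $\gamma_2$ must contain this path, hence contains $v$. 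So $[\gamma_1,\gamma_2]_R \subseteq \Conv(\{\gamma_1,\gamma_2\})$.

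\emph{Step 2: the interval is convex.} Since $\{\gamma_1,\gamma_2\} \subseteq [\gamma_1,\gamma_2]_R$, convexity of the interval gives the reverse inclusion. Take $u, v$ in the interval and a shortest path $u = w_0, w_1, \ldots, w_k = v$. Each step toggles one reflection in the inversion set. Since the path has length $k = |T_L(u)\symdiff T_L(v)|$, no reflection is toggled twice, and only reflections in $T_L(u)\symdiff T_L(v)$ are toggled. Consequently every $T_L(w_i)$ lies between $T_L(u)\cap T_L(v)$ and $T_L(u)\cup T_L(v)$. The first set contains $T_L(\gamma_1)$ and the second is contained in $T_L(\gamma_2)$, so $T_L(\gamma_1)\subseteq T_L(w_i)\subseteq T_L(\gamma_2)$. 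By the first standard fact, each $w_i$ lies in $[\gamma_1,\gamma_2]_R$.

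\emph{Main obstacle.} The delicate point is the distance formula: that the permutohedron distance equals the size of the symmetric difference of inversion sets, and that geodesics never toggle a reflection twice. This follows because $w \mapsto T_L(w)$ is injective and each edge changes $T_L$ by exactly one element. That gives the lower bound on distance, and a reduced word for $u^{-1}v$ gives a path attaining it. I would cite \cite{BB05} for these facts, or, alternatively, verify Step 2 through \cref{prop: convex and poset} by checking directly that the interval equals $\Sigma^B_R(\poset(\{\gamma_1,\gamma_2\}))$. The inversion-set route is cleaner, however.
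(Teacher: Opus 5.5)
Your argument is correct, and it takes a genuinely different route from the paper, because the paper gives no proof at all: it records the lemma as well known and cites \cite[Section 10]{BW88}. Your proof is self-contained apart from standard inversion-set facts from \cite{BB05}. Conceptually, it rests on your first fact: $[\gamma_1,\gamma_2]_R$ is the intersection of the ``half-spaces'' $\{w \mid t\in T_L(w)\}$ for $t\in T_L(\gamma_1)$ and $\{w \mid t\notin T_L(w)\}$ for $t\notin T_L(\gamma_2)$, and geodesics cannot leave such sets. Nothing you use is specific to type $B$, so the same argument proves the statement for every Coxeter group. The paper's citation buys brevity; your version buys transparency.

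One step of your justification needs repair.
\begin{itemize}
\item Step 1 only needs the lower bound $|T_L(u)\symdiff T_L(v)|$ on the permutohedron distance between $u$ and $v$, and your second fact does give this.
\item Step 2 needs the exact value. Your sentence ``a reduced word for $u^{-1}v$ gives a path attaining it'' presupposes $\ell(u^{-1}v)=|T_L(u)\symdiff T_L(v)|$, which is exactly the point in question. Injectivity of $w\mapsto T_L(w)$ plays no role here.
\end{itemize}

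The standard fix is as follows. Let $s_1\cdots s_k$ be a reduced word for $u^{-1}v$. The path $u, us_1, us_1s_2, \ldots, v$ toggles the reflections $ut_iu^{-1}$, where $t_i=s_1\cdots s_{i-1}s_is_{i-1}\cdots s_1$. The $t_i$ of a reduced word are pairwise distinct, so no reflection is toggled twice, and hence $|T_L(u)\symdiff T_L(v)|=k=\ell(u^{-1}v)$. Equivalently, you can use the identity $T_L(xy)=T_L(x)\symdiff xT_L(y)x^{-1}$.

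This also shortens Step 2. Every path from $u$ to $v$ spells a word for $u^{-1}v$. So a geodesic spells a reduced word, and therefore toggles each reflection at most once.

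Finally, if you took your alternative route via \cref{prop: convex and poset}, be careful: the paper deduces $\Sigma^B_R(\poset(\{\sigma,\rho\}))=[\sigma,\rho]_R$ \emph{from} this lemma. That check would therefore have to be done independently to avoid circularity.
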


Let $K^B_n$ denote the set of reflections in $\SG^B_n$. By \cite[Proposition 8.1.5]{BB05},
\begin{align}\label{eq: Prop 8.1.5 of BB05}
K^B_n = \{ \sft_{(i,j)} \mid 1 \le i < j \le n \} \cup \{ \sft_{(i,j)} \mid 1 \le i \le -j \le n \},
\end{align}
where 
\[
\sft_{(i,j)} := (i,j)(-i, -j) \quad \text{for $1 \le i < |j| \le n$}
\quad \text{and} \quad 
\sft_{(i,-i)} := (i,-i) \quad \text{for $i \in [n]$}.
\]

Given $\sigma \in \SG^B_n$, define  
\[
I^B(\sigma) := \{\sft \in K^B_n \mid \ell(\sft \sigma) < \ell(\sigma) \},
\]
where $\sft$ (without a subscript) denotes an arbitrary element of $K^B_n$. 

Combining \cref{eq: Prop 8.1.5 of BB05} with \cite[Proposition 8.1.6]{BB05} yields
\begin{align}\label{eq: inversion description}
I^B(\sigma) = 
\left\{
\sft_{(i,j)} \in K^B_n \; \middle| \;
\begin{array}{l}
\text{$1 \le i < j \le n$ and $\sigma^{-1}(i) > \sigma^{-1}(j)$, or} \\[1ex]
\text{$1 \le i \le -j \le n$ and $\sigma^{-1}(j) > \sigma^{-1}(i)$}
\end{array}
\right\}.
\end{align}
Let $\sigma, \rho \in \SG^B_n$. By \cite[Proposition 3.1.3]{BB05}, 
\begin{align}\label{eq: weak order and inversion}
\sigma \preceq_R \rho
\quad \text{if and only if} \quad
I^B(\sigma) \subseteq I^B(\rho).
\end{align}

We now introduce a subfamily of distinguished $B_n$ posets. We call these posets \emph{regular}, following the analogous definition of regular posets on $[n]$ \cite[p. 110]{BW91}. We will show that regular $B_n$ posets are precisely the distinguished $B_n$ posets $P$ for which $\Sigma^B_R(P)$ is a right weak Bruhat interval.

\begin{definition}
Given a $B_n$ poset $P$, we say that $P$ is \emph{regular} if $P$ is a distinguished $B_n$ poset and for all $x,y,z \in [-n,n]$ with $x < y < z$, 
\begin{align}\label{eq: regularity}
&\text{$x \prec_P y$ or $y \prec_P z$ if $x \prec_P z$}
\quad \text{and} \quad
\text{$z \prec_P y$ or $y \prec_P x$ if $z \prec_P x$}. 
\end{align}
\end{definition}

Let $P$ be a regular $B_n$ poset.
Towards proving that $\Sigma^B_R(P)$ is a right weak Bruhat interval, we define elements $\upsigma_P$ and $\uprho_P$ of $\SG^B_n$ via the following algorithm.

\begin{algorithm}\label{alg: upsig and up rho}
Let $Q_1^{(0)}$ and $Q_2^{(0)}$ be the subposets of $P$ with underlying sets
\begin{align*}
Q_1^{(0)}
& = \{x \in [-n,n] \mid 0 \prec_P x\} \cup \{x \in [n] \mid \text{$x$ is incomparable to $0$}\}, \\
Q_2^{(0)}
& = \{x \in [-n,n] \mid 0 \prec_P x\} \cup \{x \in [-n,-1] \mid \text{$x$ is incomparable to $0$}\},
\end{align*}
and set $k := 1$.

\begin{enumerate}[label = {\it Step \arabic*.}]
\item 
Let $a_{k}$ be the smallest integer among the minimal elements of $Q_1^{(k-1)}$ and $b_{k}$ be the largest integer among the minimal elements of $Q_2^{(k-1)}$.
Set $Q_1^{(k)} := Q_1^{(k-1)} \setminus \{a_{k}\}$ and $Q_2^{(k)} := Q_2^{(k-1)} \setminus \{b_{k}\}$.

\item
If $k < n$, then set $k := k+1$ and go to {\it Step 1}; otherwise, go to {\it Step 3}.

\item 
Define $\upsigma_P := [a_1,a_2,\ldots, a_n] \in \SG^B_n$ and $\uprho_P := [b_1,b_2,\ldots, b_n] \in \SG^B_n$.
Return $\upsigma_P$ and $\uprho_P$, and terminate the algorithm.
\end{enumerate}
\end{algorithm}
If $P$ is clear from context, we will drop the subscript $P$ from $\upsigma_P$ and $\uprho_P$.

\begin{example}
Let 
\[
\def \hp {0.6}
\def \vp {0.8}
P = 
\begin{array}{l}
\begin{tikzpicture}

\node at (0*\hp, 4*\vp) {$\bullet$};
\node[right] at (0*\hp, 4*\vp) {\scriptsize $4$};

\draw[] (0*\hp, 4*\vp) -- (-1*\hp, 3*\vp);
\draw[] (0*\hp, 4*\vp) -- (1*\hp, 3*\vp);

\node at (-1*\hp, 3*\vp) {$\bullet$};
\node[right] at (-1*\hp, 3*\vp) {\scriptsize $-3$};

\node at (1*\hp, 3*\vp) {$\bullet$};
\node[right] at (1*\hp, 3*\vp) {\scriptsize $1$};

\draw[] (-1*\hp, 3*\vp) -- (-2*\hp, 2*\vp);
\draw[] (-1*\hp, 3*\vp) -- (0*\hp, 2*\vp);
\draw[] (1*\hp, 3*\vp) -- (0*\hp, 2*\vp);
\draw[] (1*\hp, 3*\vp) --(2*\hp, 2*\vp);

\node at (-2*\hp, 2*\vp) {$\bullet$};
\node[right] at (-2*\hp, 2*\vp) {\scriptsize $-2$};

\node at (0*\hp, 2*\vp) {$\bullet$};
\node[right] at (0*\hp, 2*\vp) {\scriptsize $0$};

\node at (2*\hp, 2*\vp) {$\bullet$};
\node[right] at (2*\hp, 2*\vp) {\scriptsize $2$};

\draw[] (-2*\hp, 2*\vp) -- (-1*\hp, 1*\vp);
\draw[] (0*\hp, 2*\vp) -- (-1*\hp, 1*\vp);
\draw[] (0*\hp, 2*\vp) -- (1*\hp, 1*\vp);
\draw[] (2*\hp, 2*\vp) --(1*\hp, 1*\vp);

\node at (-1*\hp, 1*\vp) {$\bullet$};
\node[right] at (-1*\hp, 1*\vp) {\scriptsize $-1$};

\node at (1*\hp, 1*\vp) {$\bullet$};
\node[right] at (1*\hp, 1*\vp) {\scriptsize $3$};

\draw[] (-1*\hp, 1*\vp) -- (0*\hp, 0*\vp);
\draw[] (1*\hp, 1*\vp) -- (0*\hp, 0*\vp);

\node at (0*\hp, 0*\vp) {$\bullet$};
\node[right] at (0*\hp, 0*\vp) {\scriptsize $-4$};

\node at (4*\hp, 2.5*\vp) {$\bullet$};
\node[right] at (4*\hp, 2.5*\vp) {\scriptsize $5$};

\node at (4*\hp, 1.5*\vp) {$\bullet$};
\node[right] at (4*\hp, 1.5*\vp) {\scriptsize $6$};

\draw[] (4*\hp, 2.5*\vp) -- (4*\hp, 1.5*\vp);

\node at (5.5*\hp, 2.5*\vp) {$\bullet$};
\node[right] at (5.5*\hp, 2.5*\vp) {\scriptsize $-6$};

\node at (5.5*\hp, 1.5*\vp) {$\bullet$};
\node[right] at (5.5*\hp, 1.5*\vp) {\scriptsize $-5$};

\draw[] (5.5*\hp, 2.5*\vp) -- (5.5*\hp, 1.5*\vp);

\end{tikzpicture}
\end{array}.
\]
Note that $P$ is a regular $B_6$ poset. We have 
\[
\def \hp {0.6}
\def \vp {0.8}
Q^{(0)}_1 = 
\begin{array}{l}
\begin{tikzpicture}

\node at (0*\hp, 4*\vp) {$\bullet$};
\node[right] at (0*\hp, 4*\vp) {\scriptsize $4$};

\draw[] (0*\hp, 4*\vp) -- (-1*\hp, 3*\vp);
\draw[] (0*\hp, 4*\vp) -- (1*\hp, 3*\vp);

\node at (-1*\hp, 3*\vp) {$\bullet$};
\node[right] at (-1*\hp, 3*\vp) {\scriptsize $-3$};

\node at (1*\hp, 3*\vp) {$\bullet$};
\node[right] at (1*\hp, 3*\vp) {\scriptsize $1$};

\draw[] (1*\hp, 3*\vp) --(2*\hp, 2*\vp);

\node at (2*\hp, 2*\vp) {$\bullet$};
\node[right] at (2*\hp, 2*\vp) {\scriptsize $2$};

\node at (4*\hp, 3.5*\vp) {$\bullet$};
\node[right] at (4*\hp, 3.5*\vp) {\scriptsize $5$};

\node at (4*\hp, 2.5*\vp) {$\bullet$};
\node[right] at (4*\hp, 2.5*\vp) {\scriptsize $6$};

\draw[] (4*\hp, 3.5*\vp) -- (4*\hp, 2.5*\vp);

\end{tikzpicture}
\end{array}
\quad \text{and} \quad
Q^{(0)}_2 = 
\begin{array}{l}
\begin{tikzpicture}

\node at (0*\hp, 4*\vp) {$\bullet$};
\node[right] at (0*\hp, 4*\vp) {\scriptsize $4$};

\draw[] (0*\hp, 4*\vp) -- (-1*\hp, 3*\vp);
\draw[] (0*\hp, 4*\vp) -- (1*\hp, 3*\vp);

\node at (-1*\hp, 3*\vp) {$\bullet$};
\node[right] at (-1*\hp, 3*\vp) {\scriptsize $-3$};

\node at (1*\hp, 3*\vp) {$\bullet$};
\node[right] at (1*\hp, 3*\vp) {\scriptsize $1$};

\draw[] (-1*\hp, 3*\vp) --(-2*\hp, 2*\vp);

\node at (-2*\hp, 2*\vp) {$\bullet$};
\node[right] at (-2*\hp, 2*\vp) {\scriptsize $-2$};

\node at (3*\hp, 3.5*\vp) {$\bullet$};
\node[right] at (3*\hp, 3.5*\vp) {\scriptsize $-6$};

\node at (3*\hp, 2.5*\vp) {$\bullet$};
\node[right] at (3*\hp, 2.5*\vp) {\scriptsize $-5$};

\draw[] (3*\hp, 3.5*\vp) -- (3*\hp, 2.5*\vp);

\end{tikzpicture}
\end{array}.
\]
Applying {\it Step 1} and {\it Step 2} of \cref{alg: upsig and up rho}, we compute the $a_k$'s and $b_k$'s as follows:
\[
\begin{array}{>{\centering\arraybackslash$} p{0.8cm} <{$}||
>{\centering\arraybackslash$} p{0.8cm} <{$}|
>{\centering\arraybackslash$} p{0.8cm} <{$}|
>{\centering\arraybackslash$} p{0.8cm} <{$}|
>{\centering\arraybackslash$} p{0.8cm} <{$}|
>{\centering\arraybackslash$} p{0.8cm} <{$}|
>{\centering\arraybackslash$} p{0.8cm} <{$} }
k & 1 & 2 & 3 & 4 & 5 & 6 \\ \hline
a_k & -3 & 2 & 1 & 4 & 6 & 5 \\ \hline
b_k & 1 & -2 & -3 & 4 & -5 & -6   
\end{array}
\]
Thus, applying {\it Step 3}, $\upsigma_P = [-3,2,1,4,6,5]$ and $\uprho_P = [1,-2,-3,4,-5,-6]$.
\end{example}

We now collect some facts that will be helpful in the proof of \cref{thm: regular and interval}. We adopt the notation from \cref{alg: upsig and up rho} and for convenience, we omit the superscript $(0)$ from $Q_1^{(0)}$ and $Q_2^{(0)}$.

\begin{lemma}\label{lem: x incomparable to y}
Let $P$ be a regular $B_n$ poset. For any $z\in [-n,n]$ we have 
\begin{enumerate}[label = {\rm (\alph*)}]
\item 
$z \in Q_1 \cap Q_2$ if and only if $0 \prec_P z$,
\item 
$z \in Q_1$ and $z \notin Q_2$ if and only if $z$ is incomparable to $0$ in $P$ and $z > 0$,
\item 
$z \notin Q_1$ and $z \in Q_2$ if and only if $z$ is incomparable to $0$ in $P$ and $z < 0$,
\item
$z \notin Q_1 \cup Q_2$ if and only if $z \preceq_P 0$.
\end{enumerate}
Moreover, for any $x,y\in [-n,n]\setminus\{0\}$ such that $x<y$ and $x$ is incomparable to $y$ in $P$, we have
\begin{enumerate}[label = {\rm (\alph*)}]
\setcounter{enumi}{4}
\item 
$x \prec_{L_{\upsigma}} y$ if and only if $x \notin Q_1$ or $y \in Q_1$,
\item
 $y \prec_{L_{\upsigma}} x$ if and only if $x \in Q_1$ and $y \notin Q_1$,
\item
$x \prec_{L_{\uprho}} y$ if and only if $x \notin Q_2$ and $y \in Q_2$,
\item
 $y \prec_{L_{\uprho}} x$ if and only if $x \in Q_2$ or $y \notin Q_2$.
\end{enumerate}
\end{lemma}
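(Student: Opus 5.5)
The plan is to verify parts (a)--(d) directly from the definitions of $Q_1$ and $Q_2$. Parts (e)--(h) then reduce to a greedy-extension argument in the style of Bj\"orner--Wachs, using the regularity condition \cref{eq: regularity}.

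\emph{Parts (a)--(d).} These are bookkeeping. Both $Q_1$ and $Q_2$ contain $\{x \mid 0 \prec_P x\}$. In addition, $Q_1$ contains exactly the positive elements incomparable to $0$, and $Q_2$ exactly the negative ones. Every $z \neq 0$ satisfies exactly one of $0 \prec_P z$, $z \prec_P 0$, or incomparability with $0$, and the element $0$ lies in neither set. Reading off membership gives the four cases.

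\emph{Shape of $L_\upsigma$ and $L_\uprho$.} The next step is to record that for each $i \in [n]$ exactly one of $\pm i$ lies in $Q_1$, and similarly for $Q_2$. Indeed, if $0 \prec_P i$ then $-i \prec_P 0$, so only $i$ qualifies. If $i$ is incomparable to $0$ then so is $-i$, and only the positive one (for $Q_1$) or the negative one (for $Q_2$) is included. Hence $|Q_1| = |Q_2| = n$. The algorithm therefore produces well-defined signed permutations, and $a_1, \ldots, a_n$ is a linear extension of $Q_1$ (every $a_k$ is minimal among the remaining elements). Consequently, in $L_\upsigma$ every element of $-Q_1 = [-n,n] \setminus (Q_1 \cup \{0\})$ lies below $0$, and every element of $Q_1$ lies above $0$ in the order $a_1, \ldots, a_n$. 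The same holds for $L_\uprho$ with $Q_2$.

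\emph{Key claim.} If $u < v$ are incomparable elements of $Q_1$, then $u$ is chosen before $v$ in the greedy sequence $a_1, \ldots, a_n$. Suppose instead that $v = a_k$ while $u$ is still present in $Q_1^{(k-1)}$. Since $a_k$ is the smallest minimal element and $u < v$, the element $u$ is not minimal, so some minimal element $w$ of $Q_1^{(k-1)}$ satisfies $w \prec_P u$, and then $w > v$. Now $u < v < w$ with $w \prec_P u$, so regularity gives $w \prec_P v$ or $v \prec_P u$. The second contradicts incomparability of $u$ and $v$. The first contradicts the minimality of $v$ in $Q_1^{(k-1)}$, since $w$ has not yet been removed. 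This is the only place where regularity enters, and I expect it to be the main point to get right. In particular, the middle element $v$ must be shown to be available in the current subposet, which holds because $v \in Q_1^{(k-1)}$.

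\emph{Parts (e) and (f).} Let $x < y$ be nonzero and incomparable, and split into cases. If $x \notin Q_1$ and $y \in Q_1$, then $x$ lies below $0$ and $y$ above $0$ in $L_\upsigma$. If both lie in $Q_1$, the claim gives $x \prec_{L_\upsigma} y$. If neither lies in $Q_1$, then $-y < -x$ are incomparable elements of $Q_1$, so the claim gives $-y \prec_{L_\upsigma} -x$; since $L_\upsigma$ is a $B_n$ poset, this yields $x \prec_{L_\upsigma} y$. The remaining case, $x \in Q_1$ and $y \notin Q_1$, puts $y$ below $0$ and $x$ above $0$, so $y \prec_{L_\upsigma} x$. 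This proves (e), and (f) is its complement because $L_\upsigma$ is a linear order.

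\emph{Parts (g) and (h).} These follow by the mirror-image argument for $\uprho$, where $b_k$ is the \emph{largest} minimal element. The analogue of the key claim says that incomparable $u < v$ in $Q_2$ appear with $v$ before $u$. Its proof uses the first half of \cref{eq: regularity}: a minimal remaining $w \prec_P v$ must satisfy $w < u$, giving the triple $w < u < v$ with $w \prec_P v$, and regularity then yields a contradiction as before. The same four-case split then gives (g) and (h).
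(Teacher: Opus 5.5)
Your proposal is correct and follows essentially the same route as the paper: (a)--(d) are read off from the definitions, and (e)--(h) use the same four-case split. The both-in-$Q_1$ case is handled by the identical regularity argument on the triple $u<v<w$ with $w \prec_P u$, and the neither-in-$Q_1$ case is reduced to it by negation. Your explicit check that exactly one of $\pm i$ lies in $Q_1$ (resp.\ $Q_2$), and your written-out mirror argument for $\uprho$ using the first half of the regularity condition, spell out steps the paper leaves implicit or calls ``similar.''
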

\begin{proof}
The equivalences (a), (b), (c) and (d) follow immediately from the definition of $Q_1$ and $Q_2$.
Clearly (e) is equivalent to (f) and (g) is equivalent to (h). We will prove (e); the argument for (g) is similar. 
Let $x,y\in [-n,n]\setminus\{0\}$ such that $x<y$ and $x$ is incomparable to $y$ in $P$.
Consider the following four cases:
\[
\textrm{(i)}\,\, x,y \in Q_1,
\quad 
\textrm{(ii)}\,\, x \in Q_1 \text{ and } y \notin Q_1,
\quad 
\textrm{(iii)}\,\, x \notin Q_1 \text{ and } y \in Q_1,
\quad 
\textrm{(iv)}\,\, x,y \notin Q_1.
\]
 
\noindent
{\it Case {\rm (i)}.}
Suppose for a contradiction that $y \prec_{L_{\upsigma}} x$. Then we have $x = a_i$ and $y = a_j$ for $1 \le j < i \le n$.
Since $j$ is smaller than $i$, $x$ and $y$ are in $Q^{(j-1)}_1$ and $y = a_j$ is the smallest integer among the minimal elements of $Q^{(j-1)}_1$.
In addition, since $x < y$, there exists a minimal element $x'$ in $Q^{(j-1)}_1$ such that $x' \prec_P x$ and $y < x'$. (Note $x'\neq y$ since $x$ and $y$ are incomparable in $P$.)
Putting these together, we have $x < y < x'$ and $x' \prec_P x$.
So, by the regularity of $P$, either $x' \prec_P y$ or $y \prec_P x$.
But the former relation cannot occur because $y$ is minimal in $Q^{(j-1)}_1$, and the latter relation also cannot occur because $x$ and $y$ are incomparable in $P$.
Therefore, in this case we must have $x \prec_{L_{\upsigma}} y$.
\smallskip

\noindent
{\it Case {\rm (ii)}.} 
In this case, we have $y \prec_{L_{\upsigma}} 0 \prec_{L_{\upsigma}} x$.
\smallskip

\noindent
{\it Case {\rm (iii)}.}
In this case, we have $x \prec_{L_{\upsigma}} 0 \prec_{L_{\upsigma}} y$.
\smallskip

\noindent
{\it Case {\rm (iv)}.}
In this case, we have $-y < -x$  and $-x,-y \in Q_1$.
Using the argument in Case (i), we have $-y \prec_{L_{\upsigma}} -x$, and therefore $x \prec_{L_{\upsigma}} y$.

Thus $x\prec_{L_{\upsigma}} y$ if and only if we are in cases (i), (iii) or (iv), which establishes (e). 
\end{proof}

\begin{theorem}\label{thm: regular and interval}
We have the following.
\begin{enumerate}[label = {\rm (\arabic*)}]
\item 
For every regular $B_n$ poset $P$, $\Sigma^B_R(P) = [\upsigma_P, \uprho_P]_R$.
\item 
For every $\sigma,\rho \in \SG^B_n$ such that $\sigma \preceq_R \rho$, the $B_n$ poset $\poset(\{\sigma,\rho\})$ is regular and $\Sigma^B_R(\poset(\{\sigma,\rho\})) = [\sigma,\rho]_R$. 
\end{enumerate}
\end{theorem}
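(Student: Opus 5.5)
For part (1), the plan is to reduce everything to inversion sets via \cref{eq: weak order and inversion}. The first step is to check that $\upsigma_P,\uprho_P \in \Sigma^B_R(P)$. The algorithm peels minimal elements of $Q_1$ (resp.\ $Q_2$) one at a time. Since $Q_1$ consists of the elements above $0$ together with one representative of each $\pm$-pair incomparable to $0$, the construction should yield a type-$B$ linear extension. The key point is that $Q_1$ is an upper set of $P \setminus \{z \preceq_P 0\}$ containing exactly one of $\pm x$ for each $x \neq 0$. This uses distinguishedness: if $x$ and $-x$ are comparable, then both are comparable to $0$.

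The second step is to characterise $\Sigma^B_R(P)$ by inversions. A signed permutation $\gamma$ lies in $\Sigma^B_R(P)$ iff $L_\gamma$ respects every relation of $P$. By \cref{eq: inversion description}, each relation $x\prec_P y$ with $x \neq 0$ says that a specific reflection $\sft$ is forced in or out of $I^B(\gamma)$. Concretely, for $0<i<j$, the relation $i\prec_P j$ forces $\sft_{(i,j)}\notin I^B(\gamma)$, and $j\prec_P i$ forces it in. Relations involving $x$ and $-y$ control $\sft_{(i,-j)}$, and relations with $0$ control $\sft_{(i,-i)}$. Thus $\Sigma^B_R(P)=\{\gamma : A\subseteq I^B(\gamma)\subseteq B\}$, where $A$ is the set of reflections forced in and $B$ is the complement of those forced out.

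Using \cref{lem: x incomparable to y}(a)--(h), I will then show $I^B(\upsigma)=A$ and $I^B(\uprho)=B$. Parts (e)--(f) say that $\upsigma$ inverts an incomparable pair only when forced to, and (g)--(h) say that $\uprho$ inverts every incomparable pair. The pairs with $0$ are handled by (a)--(d). Given these equalities, $\gamma\in\Sigma^B_R(P)$ iff $I^B(\upsigma)\subseteq I^B(\gamma)\subseteq I^B(\uprho)$, which is $\gamma\in[\upsigma,\uprho]_R$ by \cref{eq: weak order and inversion}.

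The main obstacle is the interplay between $x<y$ as integers and the sign conventions in \cref{eq: inversion description}. In particular I must verify that an incomparable pair $x,y$ with $x \in Q_1$, $y\notin Q_1$ corresponds exactly to a reflection that every linear extension is free to invert. I expect regularity to enter precisely here, in showing that the set of inversions of $\upsigma$ is exactly $A$ and not larger: a stray inversion would produce $x<y<x'$ with $x'\prec_P x$, violating \cref{eq: regularity}. This is the same argument as Case (i) of the lemma.

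For part (2), let $P=\poset(\{\sigma,\rho\})$. By \cref{lem: intersection of Bn linear order}(1), $P$ is distinguished. By \cref{prop: convex and poset} and \cref{lem: Conv and interval}, $\Sigma^B_R(P)=\Conv(\{\sigma,\rho\})=[\sigma,\rho]_R$. It remains to check \cref{eq: regularity}. Suppose $x<y<z$ and $x\prec_P z$, so $x \prec z$ in both $L_\sigma$ and $L_\rho$, but $y$ is not above $x$ nor below $z$ in $P$. Then $y$ lies outside the segment from $x$ to $z$ in at least one of $L_\sigma$ and $L_\rho$. Since $x\prec_P z$, the same must fail in the other order too. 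Translating into inversions, $\sigma$ and $\rho$ would then disagree on a pair of reflections in a way incompatible with $I^B(\sigma)\subseteq I^B(\rho)$, because inversion sets of weak-order-comparable elements are nested and each is closed under the biconvexity property. I would try this biconvexity (closed/coclosed) argument for the root triple determined by $x,y,z$ first. The second condition of \cref{eq: regularity} follows symmetrically, or by applying the first to $-z<-y<-x$.
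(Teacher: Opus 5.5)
Your plan for (1) takes a different route from the paper's, and it works once two justifications are repaired. The paper proves $\upsigma,\uprho\in\Sigma^B_R(P)$, $\upsigma\preceq_R\uprho$ and $P=\poset(\{\upsigma,\uprho\})$, and then appeals to the Gaetz--Gao result \cref{prop: convex and poset} together with \cref{lem: Conv and interval}. You instead use \cref{eq: inversion description}: each pair $a<b$ in $[-n,n]$ determines a reflection $\sft$ with $\sft\in I^B(\gamma)$ if and only if $b\prec_{L_\gamma}a$. Hence $\Sigma^B_R(P)=\{\gamma : A\subseteq I^B(\gamma)\subseteq B\}$, and \cref{eq: weak order and inversion} finishes the proof once $I^B(\upsigma)=A$ and $I^B(\uprho)=B$. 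This avoids the convexity theorem entirely. The same bookkeeping also gives $\Sigma^B_R(\poset(\{\sigma,\rho\}))=[\sigma,\rho]_R$ in (2) directly: there $A=I^B(\sigma)\cap I^B(\rho)=I^B(\sigma)$ and $B=I^B(\sigma)\cup I^B(\rho)=I^B(\rho)$. The regularity input is the same as in the paper, but you have placed it wrongly in two spots.

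\textbf{First gap: membership of $\upsigma$ needs regularity, not just distinguishedness.} Consider the $B_2$ poset whose only strict relations are $1\prec-2$ and $2\prec-1$. It is distinguished, yet $Q_1=\{1,2\}$ is not upward closed, and $\upsigma=[1,2]$ puts $-2$ below $1$. The only problematic case is $x\in Q_1$ positive and incomparable to $0$, with $x\prec_P y$ where $y<0$ is incomparable to $0$. Here \cref{eq: regularity} applied to the triple $y<0<x$ forces $x\prec_P 0$ or $0\prec_P y$, a contradiction. For $Q_2$ the dual triple is $x<0<y$.

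\textbf{Second gap: where regularity enters for $I^B(\upsigma)\subseteq A$.} \cref{lem: x incomparable to y}(e),(f) do not say that $\upsigma$ never inverts an incomparable pair. They say it inverts $x<y$ exactly when $x\in Q_1$ and $y\notin Q_1$. The triple $x<y<x'$ you cite is the Case (i) argument, which only orders pairs inside $Q_1$ and is already absorbed into the lemma. To exclude $x\in Q_1$, $y\notin Q_1$ for incomparable $x<y$, use (a)--(d). They leave only two configurations:
\begin{itemize}
\item $0<x<y$ with $y\prec_P 0$;
\item $x<y<0$ with $0\prec_P x$.
\end{itemize}
Both contradict \cref{eq: regularity}. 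For $\uprho$, the bad configuration $x\notin Q_2$, $y\in Q_2$ similarly reduces to $x<y<0$ with $x\prec_P 0$, or $0<x<y$ with $0\prec_P y$. So regularity is used through triples containing $0$.

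\textbf{On part (2).} Your observation that $y$ lies outside the segment from $x$ to $z$ on opposite sides in the two orders is exactly the paper's cases (i)--(iv). No biconvexity is needed, because the contradiction is immediate:
\begin{itemize}
\item if $y\prec x\prec z$ in $L_\sigma$ and $x\prec z\prec y$ in $L_\rho$, the reflection of the pair $x<y$ lies in $I^B(\sigma)\setminus I^B(\rho)$;
\item in the reverse configuration, the reflection of $y<z$ does.
\end{itemize}
This is the content of the paper's six-case list.

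Also, the second condition of \cref{eq: regularity} cannot be obtained by applying the first to $-z<-y<-x$. For a $B_n$ poset, $a\mapsto -a$ carries each of the two conditions to itself. Either repeat the argument with the roles of $x$ and $z$ exchanged, or pass to $P^*$, using $\poset(\{w_0\rho,w_0\sigma\})=\poset(\{\sigma,\rho\})^*$ and $w_0\rho\preceq_R w_0\sigma$.
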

\begin{proof}
(1)
Let $P$ be a regular $B_n$ poset. We will show that $\upsigma, \uprho \in  \Sigma^B_R(P)$, $\upsigma \preceq_R \uprho$, and $P = \poset(\{\upsigma,\uprho\})$. Once these facts are established, by \cref{prop: convex and poset} and
\cref{lem: Conv and interval} we have
\[
\Sigma^B_R(P) 
= \Sigma^B_R(\poset(\{\upsigma, \uprho\})) 
= \Conv(\{\upsigma, \uprho\}) 
= [\upsigma, \uprho]_R.
\]

First we show that $\upsigma \in \Sigma^B_R(P)$. 
Since in \cref{alg: upsig and up rho} the elements $a_1,a_2,\ldots, a_n$ are chosen according to the partial order $\preceq_P$ and $\upsigma$ is defined by the signed permutation $[a_1,a_2,\ldots, a_n]$, it suffices to show that 
\begin{align}\label{eq: upsig membership condition claim}
x \not \prec_P y \quad \text{for all $x \in Q_1$ and $y \in [-n,n] \setminus Q_1$}. 
\end{align}

Take any $x \in Q_1$ and $y \in [-n,n] \setminus Q_1$.
For the sake of contradiction, suppose that $x \prec_P y$.
If $y \preceq_P 0$, then we have $x \prec_P 0$, which contradicts the choice of $x \in Q_1$.
So, $y$ is incomparable to $0$ in $P$.
Moreover, if $0 \prec_P x$, then the assumption $x \prec_P y$ implies $0 \prec_P y$, which contradicts the previous observation that $y$ is incomparable to $0$.
So, $x$ is also incomparable to $0$.
Then, by the definition of $Q_1$, we have $y < 0 < x$.
Since $P$ is regular and $x \prec_P y$, it follows that either $x \prec_P 0$ or $0 \prec_P y$, which is a contradiction.
Thus, \cref{eq: upsig membership condition claim} holds, and so $\upsigma \in \Sigma^B_R(P)$. A similar argument shows that $\uprho \in \Sigma^B_R(P)$.

Next, we show that $\upsigma \preceq_R \uprho$, equivalently, $I^B(\upsigma) \subseteq I^B(\uprho)$.
Take any $\sft_{(i,j)} \in I^B(\upsigma)$.
By the definition of $I^B(\upsigma)$, we have 
\[
1 \le i < j\le n \text{ and } \upsigma^{-1}(i) > \upsigma^{-1}(j)
\quad \text{or} \quad
1 \le i \le -j \le n \text{ and } \upsigma^{-1}(j) > \upsigma^{-1}(i).
\]
We will prove $\sft_{(i,j)} \in I^B(\uprho)$ in the first of these two cases; the argument for the second case is similar. Suppose that $1 \le i < j\le n$ and $\upsigma^{-1}(i) > \upsigma^{-1}(j)$. In this case, we have $j \prec_{L_{\upsigma}} i$. 
If $i$ and $j$ are comparable in $P$, then since $\upsigma, \uprho \in \Sigma_R^B(P)$, we must also have $j \prec_{L_{\uprho}} i$,  that is, $\uprho^{-1}(i) > \uprho^{-1}(j)$, and therefore $\sft_{(i,j)} \in I^B(\uprho)$. 
If $i$ and $j$ are incomparable in $P$, suppose for a contradiction that $\sft_{(i,j)} \notin I^B(\uprho)$. Then $\uprho^{-1}(i)<\uprho^{-1}(j)$, equivalently, $i\prec_{L_{\uprho}} j$. From \cref{lem: x incomparable to y} (f) and (g), we have $i \in Q_1$, $j \notin Q_1$, $i \notin Q_2$, and $j \in Q_2$, and then from \cref{lem: x incomparable to y} (b) and (c) we have $i>0$ and $j<0$, contradicting the assumption $i<j$. 
Thus $\upsigma \preceq_R \uprho$. It now follows from \cref{prop: convex and poset} and \cref{lem: Conv and interval} that 
\[\Sigma^B_R(\poset(\{\upsigma, \uprho\})) 
= \Conv(\{\upsigma, \uprho\}) 
= [\upsigma, \uprho]_R.
\]

Finally, we show that $P = \poset(\{\upsigma,\uprho\})$. Since $\Sigma^B_R(P)$ is convex (\cref{cor: P convex}) and $\upsigma, \uprho \in  \Sigma^B_R(P)$, we have $\Sigma^B_R(\poset(\{\upsigma, \uprho\})) = \Conv(\{\upsigma, \uprho\}) \subseteq \Sigma^B_R(P)$. 
Moreover, by \cref{lem: intersection of Bn linear order}(2) we have $\preceq_P \; = \; \preceq_{\poset(\Sigma^B_R(P))}$. 
Thus we have 
\begin{equation}\label{eqn: preceq containment}
\preceq_P \; \subseteq \; \preceq_{\poset(\{\upsigma,\uprho\})}.
\end{equation}

We will prove that $\preceq_{\poset(\{\upsigma,\uprho\})} \; \subseteq \; \preceq_P$.
Let $x,y \in [-n,n]$ such that $x\neq y$ and suppose $(x,y)\notin \; \preceq_P$. Then either $(y,x) \in \; \preceq_P$ or $x$ and $y$ are incomparable in $P$. If $(y,x) \in \; \preceq_P$, then by \cref{eqn: preceq containment} we have $(y,x) \in \;  \preceq_{\poset(\{\upsigma,\uprho\})}$, so $(x,y)\notin \; \preceq_{\poset(\{\upsigma,\uprho\})}$.

Hence, suppose $x$ and $y$ are incomparable in $P$. We now consider the case $x<y$. 
If $x=0$, then $y$ is incomparable to $0$ in $P$ and $y>0$. By \cref{lem: x incomparable to y} (b), $y\in Q_1$ and $y\notin Q_2$, which implies $x=0 \prec_{L_{\upsigma}} y$ and $y \prec_{L_{\uprho}} 0=x$. Thus, $x$ and $y$ are incomparable in $\poset(\{\upsigma,\uprho\})$. If $y=0$, then $x$ is incomparable to $0$ in $P$ and $x<0$. By \cref{lem: x incomparable to y} (c), $x\notin Q_1$ and $x\in Q_2$, which implies $x \prec_{L_{\upsigma}} 0=y$ and $y=0 \prec_{L_{\uprho}} x$. Thus, $x$ and $y$ are incomparable in $\poset(\{\upsigma,\uprho\})$. So $(x,y)\notin \; \preceq_{\poset(\{\upsigma,\uprho\})}$ if either $x$ or $y$ is $0$.

Now, suppose that neither $x$ nor $y$ are $0$, and suppose for a contradiction that $(x,y) \in \; \preceq_{\poset(\{\upsigma,\uprho\})}$; in particular, $x \prec_{L_{\upsigma}} y$ and $x \prec_{L_{\uprho}} y$. 
Then by \cref{lem: x incomparable to y} (g) we have $x\notin Q_2$ and $y\in Q_2$, and by \cref{lem: x incomparable to y} (e), we have one of the following: 
\[
x,y \in Q_1,
\quad
x,y \notin Q_1, 
\quad \text{or} \quad
x \notin Q_1 \text{ and } y \in Q_1.
\]
If $x,y \in Q_1$, then by \cref{lem: x incomparable to y} (b) and (a) we have 
\[
\text{$x$ is incomparable to $0$ in $P$}, 
\quad  x > 0,
\quad \text{and} \quad 0 \prec_P y.
\]
Since $0<x<y$ and $0$ is comparable to $y$ in $P$, the regularity of $P$ implies that $x$ is comparable to either $0$ or $y$ in $P$, which is a contradiction because $x$ is incomparable to both $0$ and $y$ in $P$.

If $x,y \notin Q_1$, then by \cref{lem: x incomparable to y} (d) and (c) we have 
\[
x \prec_P 0,
\quad
\text{$y$ is incomparable to $0$ in $P$}, 
\quad \text{and} \quad  y < 0.
\]
Since $x<y<0$ and $x$ is comparable to $0$ in $P$, the regularity of $P$ implies that $y$ is comparable to either $x$ or $0$ in $P$, which is a contradiction because $y$ is incomparable to both $0$ and $x$ in $P$.

If $x \notin Q_1$ and $y \in Q_1$, then by \cref{lem: x incomparable to y} (d) and (a) we have that $x \prec_P 0 \prec_P y$, which contradicts the assumption that $x$ and $y$ are incomparable in $P$.

Thus we cannot have both $x \prec_{L_{\upsigma}} y$ and $x \prec_{L_{\uprho}} y$. In particular, $(x,y)\notin \; \preceq_{L_{\upsigma}}\cap \preceq_{L_{\uprho}}$, so $(x,y)\notin \; \preceq_{\poset(\{\upsigma,\uprho\})}$. 
Therefore, $P = \poset(\{\upsigma,\uprho\})$. 
We omit the proof for the case $x > y$, since it is similar to the proof for the case $x < y$, using \cref{lem: x incomparable to y}(h), (f) instead of \cref{lem: x incomparable to y}(g), (e). This concludes the proof of (1). 
\medskip

(2) 
Let $\sigma,\rho \in \SG^B_n$ such that $\sigma \preceq_R \rho$, that is, $I^B(\sigma) \subseteq I^B(\rho)$. It follows from \cref{prop: convex and poset} and \cref{lem: Conv and interval} that $\Sigma^B_R(\poset(\{\sigma,\rho\}))=[\sigma,\rho]_R$.

For convenience, let $P$ denote $\poset(\{\sigma, \rho\})$. By \cref{lem: intersection of Bn linear order}(1), $P$ is a distinguished poset.
Suppose for a contradiction that $P$ is not regular. 
Then we can choose $x,y,z \in [-n,n]$ such that $x < y < z$ and $x$ and $z$ are comparable in $P$, but $x$ and $z$ are incomparable to $y$ in $P$.
Since $x$ is incomparable to $y$ in $P$, there must exist $\gamma \in [\sigma, \rho]_R$ such that the order of $x$ and $y$ in $L_\gamma$ is the reverse of that in $L_\sigma$, and then since $I^B(\sigma)\subseteq I^B(\gamma)\subseteq I^B(\rho)$,  the order of $x$ and $y$ in $L_\rho$ agrees with that in $L_\gamma$. Therefore we have
\[
{\rm (i)} \ \ x \prec_{L_\sigma} y \ \ \text{and} \ \ y \prec_{L_\rho} x
\quad \text{or} \quad
{\rm (ii)} \ \ y \prec_{L_\sigma} x \ \ \text{and} \ \ x \prec_{L_\rho} y
\]
and similarly, since $z$ is incomparable to $y$ in $P$, we have
\[
{\rm (iii)} \ \ y \prec_{L_\sigma} z \ \ \text{and} \ \  z \prec_{L_\rho} y
\quad \text{or} \quad
{\rm (iv)} \ \ z \prec_{L_\sigma} y \ \ \text{and} \ \  y \prec_{L_\rho} z.
\]
If (i) and (iii) hold, then we have $x \prec_{L_\sigma} z$ and $z \prec_{L_\rho} x$. 
This implies that $x$ and $z$ are incomparable in $P$, which is a contradiction. 
Similarly, if (ii) and (iv) hold, then $x$ and $z$ are incomparable in $P$, yielding a contradiction.
The remaining cases are that either (ii) and (iii) hold or (i) and (iv) hold.

In the case where (ii) and (iii) hold, we consider six cases and, in each case, we use (ii) to derive a contradiction to $I^B(\sigma) \subseteq I^B(\rho)$ as follows:
\begin{enumerate}[label = -, leftmargin = 4ex]
\item 
If $0 < x < y$, then $\sft_{(x,y)} \in I^B(\sigma)$, but $\sft_{(x,y)} \notin I^B(\rho)$.

\item 
If $x = 0 < y$, then $\sft_{(y,-y)} \in I^B(\sigma)$, but $\sft_{(y,-y)} \notin I^B(\rho)$.

\item 
If $x < 0 < -x < y$, then $\sft_{(-x,-y)} \in I^B(\sigma)$, but $\sft_{(-x,-y)} \notin I^B(\rho)$.

\item 
If $x < 0 < y \le -x$, then $\sft_{(y,x)} \in I^B(\sigma)$, but $\sft_{(y,x)} \notin I^B(\rho)$. 

\item 
If $x < y = 0$, then $\sft_{(-x,x)} \in I^B(\sigma)$, but $\sft_{(-x,x)} \notin I^B(\rho)$.

\item 
If $x<y<0$, then $\sft_{(-y,-x)} \in I^B(\sigma)$, but $\sft_{(-y,-x)} \notin I^B(\rho)$.
\end{enumerate}

Similarly, in the case where (i) and (iv) hold, we consider six cases and, in each case, we use (iv) to derive a contradiction to $I^B(\sigma) \subseteq I^B(\rho)$ as follows:
\begin{enumerate}[label = -, leftmargin = 4ex]
\item 
If $0 < y < z$, then $\sft_{(y,z)} \in I^B(\sigma)$, but $\sft_{(y,z)} \notin I^B(\rho)$.

\item 
If $y = 0 < z$, then $\sft_{(z,-z)} \in I^B(\sigma)$, but $\sft_{(z,-z)} \notin I^B(\rho)$.

\item 
If $y < 0 < -y < z$, then $\sft_{(-y,-z)} \in I^B(\sigma)$, but $\sft_{(-y,-z)} \notin I^B(\rho)$.

\item 
If $y < 0 < z \le -y$, then $\sft_{(z,y)} \in I^B(\sigma)$, but $\sft_{(z,y)} \notin I^B(\rho)$.

\item 
If $y < z = 0$, then $\sft_{(-y,y)} \in I^B(\sigma)$, but $\sft_{(-y,y)} \notin I^B(\rho)$.

\item 
If $y<z<0$, then $\sft_{(-z,-y)} \in I^B(\sigma)$, but $\sft_{(-z,-y)} \notin I^B(\rho)$.
\end{enumerate}
\smallskip

Consequently, for all $x,y,z \in [-n,n]$ with $x < y < z$,
if $x$ and $z$ are comparable in $P$, then $y$ is comparable to $x$ or $z$.
Therefore, $P$ is regular. 
\end{proof}

\begin{corollary}
If $P$ is a distinguished $B_n$ poset, then $\Sigma_R^B(P)$ is an interval in the right weak Bruhat order on $\SG^B_n$ if and only if $P$ is regular.     
\end{corollary}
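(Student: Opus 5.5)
The two directions follow from \cref{thm: regular and interval}, with the uniqueness of distinguished representatives (\cref{thm: dist are representatives}) used to pass between posets.

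For the ``if'' direction, suppose $P$ is a distinguished $B_n$ poset that is regular. Then \cref{thm: regular and interval}(1) gives $\Sigma^B_R(P) = [\upsigma_P, \uprho_P]_R$. This is a right weak Bruhat interval, and the direction is done.

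For the ``only if'' direction, suppose $P$ is distinguished and $\Sigma^B_R(P) = [\sigma,\rho]_R$ for some $\sigma \preceq_R \rho$. Set $P' := \poset(\{\sigma,\rho\})$. By \cref{thm: regular and interval}(2), $P'$ is regular and $\Sigma^B_R(P') = [\sigma,\rho]_R = \Sigma^B_R(P)$. Equality of the sets $\Sigma^B_R$ is the same as equality of the sets of type-$B$ linear extensions, because $E \mapsto \bfgam_E$ is a bijection. Hence $P \sim P'$.

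Both $P$ and $P'$ are distinguished: $P$ by hypothesis, and $P'$ because regular posets are distinguished by definition (it also follows from \cref{lem: intersection of Bn linear order}(1)). By \cref{thm: dist are representatives}, each equivalence class contains exactly one distinguished $B_n$ poset, so $P = P'$. Therefore $P$ is regular.

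The only step needing care is the translation between $\Sigma^B_R(P)=\Sigma^B_R(P')$ and $P \sim P'$. It is immediate from the definitions, so I expect no real obstacle. All the substantive work is already contained in \cref{thm: regular and interval}.
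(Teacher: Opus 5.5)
Your proposal is correct and matches the paper's proof. The ``if'' direction is \cref{thm: regular and interval}(1), and the ``only if'' direction combines \cref{thm: regular and interval}(2) with the uniqueness of the distinguished representative in \cref{thm: dist are representatives}; your explicit identification $P = \poset(\{\sigma,\rho\})$ simply states the paper's uniqueness step more concretely.
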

\begin{proof}
One direction is given by \cref{thm: regular and interval}(1). For the other direction, 
let $P$ be a distinguished $B_n$ poset such that $\Sigma^B_R(P) = [\sigma, \rho]_R$ for some $\sigma, \rho \in \SG^B_n$.
By \cref{thm: regular and interval}(2), $[\sigma, \rho]_R$ is the set of linear extensions of a regular $B_n$ poset, and by \cref{thm: dist are representatives} every distinguished poset is determined by its set of linear extensions. 
It follows that $[\sigma, \rho]_R$ cannot be the set of linear extensions of a non-regular distinguished $B_n$ poset.
Therefore, $P$ is regular.
\end{proof}

\subsection{The category of weak Bruhat interval modules}
In this section, we consider the full subcategory $\scrB^B(n)$ of $\scrP^B(n)$ whose objects are direct sums of finitely many isomorphic copies of $B_n$ poset modules for regular $B_n$ posets. In particular, we have
\[
\scrB^B(n) \subseteq \scrP^B(n) \subseteq \modHBn.
\] 
Using \cref{thm: regular and interval}, we identify $\scrB^B(n)$ with the subcategory of $\modHBn$ corresponding to weak Bruhat interval modules of $\SG^B_n$. We show that the Grothendieck group $\bigoplus_{n \ge 0} \calG_0(\scrB^B(n))$ is isomorphic to $\QSym^B$ as $\QSym$-modules and comodules. 

\begin{lemma}\label{lem: disjoint union of intervals}
Every right weak Bruhat interval $I$ in $\SG^B_n$ with $|I|>1$ can be written as a disjoint union of two nonempty right weak Bruhat intervals.
\end{lemma}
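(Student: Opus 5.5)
Let $I=[\sigma,\rho]_R$ with $|I|>1$, so $\sigma\prec_R\rho$. I will split $I$ according to a single inversion, in the spirit of \cref{lem: short exact sequence}. Since $\sigma\neq\rho$, \cref{eq: weak order and inversion} gives a reflection $\sft\in I^B(\rho)\setminus I^B(\sigma)$. I choose $\sft$ carefully: take a reduced word for $\sigma^{-1}\rho$, so that there is a saturated chain $\sigma=\gamma_0\prec^c_R\gamma_1\prec^c_R\cdots\prec^c_R\gamma_k=\rho$, and let $\sft$ be the unique reflection in $I^B(\gamma_1)\setminus I^B(\sigma)$, namely $\sft=\sigma s\sigma^{-1}$ where $\gamma_1=\sigma s$. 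Set
\[
I_1:=\{\gamma\in I\mid \sft\notin I^B(\gamma)\},\qquad I_2:=\{\gamma\in I\mid \sft\in I^B(\gamma)\}.
\]
These are disjoint and cover $I$. Moreover $\sigma\in I_1$ and $\gamma_1\in I_2$, so both are nonempty.

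Next I show that each part is an interval, and I expect to use poset language for this. By \cref{thm: regular and interval}(2), $I=\Sigma^B_R(P)$ with $P=\poset(\{\sigma,\rho\})$ regular. By \cref{eq: inversion description}, the condition $\sft\in I^B(\gamma)$ says that a specific pair $u<v$ appears in the order $v\prec_{L_\gamma}u$. Here $(u,v)$ is $(i,j)$ for $\sft=\sft_{(i,j)}$ with $j>0$, $(j,i)$ when $j<0$, and $(-i,i)$ when $j=-i$; up to the symmetric pair $(-v,-u)$, this pair is exactly the pair swapped between $\sigma$ and $\gamma_1$. Since $\sigma,\gamma_1\in I$, we get $(u,v)\in\inc^B(P)$. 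Hence $I_1=\Sigma^B_R(P_{(u,v)})$ and $I_2=\Sigma^B_R(P_{(v,u)})$. It remains to show these are intervals. The natural candidates are $I_2=[\gamma_1,\rho]_R$ and $I_1=[\sigma,\rho']_R$ for a suitable $\rho'$.

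For $I_2$, take $\gamma\in I_2$. I need $\gamma_1\preceq_R\gamma$, that is $I^B(\gamma_1)=I^B(\sigma)\cup\{\sft\}\subseteq I^B(\gamma)$, which is immediate from $\sigma\preceq_R\gamma$ and $\sft\in I^B(\gamma)$. Conversely, $[\gamma_1,\rho]_R\subseteq I_2$ is clear. For $I_1$, I would show that the set $\{\gamma\in[\sigma,\rho]_R\mid \sft\notin I^B(\gamma)\}$ has a unique maximal element, using that $\Sigma^B_R(P_{(u,v)})$ is convex (\cref{cor: P convex}) and that it is the intersection of $I$ with a "half-space" of the type $B$ permutohedron.

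This last step is the main obstacle: showing that $I_1$ has a top element. My first attempt is to apply \cref{thm: regular and interval}(1) directly, by proving that $\poset(\Sigma^B_R(P_{(u,v)}))$ is regular. The added relations $u\prec v$ and $-v\prec -u$ are between elements adjacent in $L_\sigma$, since they are swapped by the simple reflection $s$. Using regularity of $P$ together with \cref{eq: regularity} for triples involving $u,v$, I would check that the regularity conditions persist. If this check fails, the fallback is to choose $\sft$ differently: instead of the first step, take the last step of a chain into $\rho$, so that $\rho=\rho's$. Then $I_1=[\sigma,\rho']_R$ follows by the symmetric argument, and the burden shifts to proving that $I_2$ has a bottom element, which is the mirror image of the same issue. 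If needed, I would resolve this using the lattice property of weak order, whose meet operation preserves the relevant inversion condition.
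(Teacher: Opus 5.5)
Your split is the right one. It is in fact exactly the paper's split. The paper passes to $[\id,\sigma^{-1}\rho]_R$ and separates according to whether a fixed left descent $s$ of $\sigma^{-1}\rho$ lies in $\Des_L(\sigma^{-1}\gamma)$. Since $I^B(\gamma)=I^B(\sigma)\sqcup\sigma I^B(\sigma^{-1}\gamma)\sigma^{-1}$ for $\gamma\in[\sigma,\rho]_R$, this is precisely your condition $\sigma s\sigma^{-1}\in I^B(\gamma)$. Your proof that $I_2=[\gamma_1,\rho]_R$ is correct.

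The genuine gap is that you never show $I_1$ is an interval, and that is the whole content of the lemma. The route through regularity of $\poset(\Sigma^B_R(P_{(u,v)}))$ is only announced, and it is not a local check. The distinguished representative of $P_{(u,v)}$ is a transitive closure. Moreover, when $s=s^B_0$ the swapped entries $\sigma(-1),\sigma(1)$ are not adjacent in $L_\sigma$ (the element $0$ sits between them), and the representative picks up new relations through $0$.

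Your fallback only moves the same difficulty to the bottom of $I_2$. The proposed remedy, that meets preserve the inversion condition, is false for a general reflection. In $\SG^B_3$ one has $I^B(\bracket{3,1,2})=\{\sft_{(1,3)},\sft_{(2,3)}\}$ and $I^B(\bracket{2,3,1})=\{\sft_{(1,2)},\sft_{(1,3)}\}$. Since $\{\sft_{(1,3)}\}$ is not an inversion set, the only common lower bound is $\id$, and $\sft_{(1,3)}\notin I^B(\id)$. For the particular cover reflection you choose the claim does hold, but that is exactly what needs proof.

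The missing idea is to recognize $I_1$, after translation, as an intersection of two lower intervals. Left multiplication by $\sigma$ is a poset isomorphism $[\id,\sigma^{-1}\rho]_R\to[\sigma,\rho]_R$. Writing $\gamma=\sigma x$, the condition $\sft\notin I^B(\gamma)$ becomes $s\notin I^B(x)$, that is, $s\notin\Des_L(x)$.

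Next, $I^B(sw_0)=K^B_n\setminus\{s\}$. Hence, by \cref{eq: weak order and inversion}, $\{x\in\SG^B_n\mid s\notin\Des_L(x)\}=[\id,sw_0]_R$; this is the Bj\"orner--Wachs fact the paper quotes. Because the right weak order is a lattice, $[\id,\sigma^{-1}\rho]_R\cap[\id,sw_0]_R=[\id,\sigma^{-1}\rho\wedge sw_0]_R$. Translating back gives $I_1=[\sigma,\sigma(\sigma^{-1}\rho\wedge sw_0)]_R$. With this, the detour through $\poset(\{\sigma,\rho\})$, $\inc^B$ and regularity is unnecessary.
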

\begin{proof}
Let $I=[\sigma, \rho]_R$ be a right weak Bruhat interval in $\SG^B_n$ with $|I|>1$, and consider the right weak Bruhat interval $[\id, \sigma^{-1}\rho]_R$. Since $\sigma^{-1} \rho \neq \id$, we can choose $k \in [0,n-1]$ such that $s_{k} \preceq_R \sigma^{-1} \rho$. Note that $\SG^B_n$ is the disjoint union of $\{\gamma \in \SG^B_n \mid s_k \notin \Des_L(\gamma)\}$ and $\{\gamma \in \SG^B_n \mid s_k \in \Des_L(\gamma)\}$. It follows from \cite[Theorem 6.2(2)]{BW88} that the former set is equal to the right weak Bruhat interval $[\id, s_k w_0]_R$, and the latter set is equal to the right weak Bruhat interval $[s_k, w_0]_R$.

Therefore, $I_1 := [\id, \sigma^{-1}\rho]_R \cap  [\id, s_k w_0]_R$ and $I_2:= [\id, \sigma^{-1}\rho]_R \cap [s_k, w_0]_R$ are disjoint right weak Bruhat intervals whose union is $[\id, \sigma^{-1}\rho]_R$. Moreover, both $I_1$ and $I_2$ are nonempty because $\id\in [\id, s_k w_0]_R$ and, by the choice of $s_k$, $\sigma^{-1}\rho \in [s_k, w_0]_R$. 

By \cite[Proposition 3.1.6]{BB05}, $[\id, \sigma^{-1}\rho]_R \cong I$ as posets. Therefore, the images of $I_1$ and $I_2$ under an isomorphism between $[\id, \sigma^{-1}\rho]_R$ and $I$  are disjoint nonempty right weak Bruhat intervals whose union is $I$. 
\end{proof}

\begin{lemma}\label{lem: regular poset module ses}
Let $P$ be a regular $B_n$ poset with $|\calL^B(P)| > 1$.
There exist two regular $B_n$ posets $P',P''$ such that $|\calL^B(P')|,~|\calL^B(P'')| < |\calL^B(P)|$ and
\[
\begin{tikzcd}
0 \arrow[r] & {M^B_{P'} } \arrow[r] & M^B_P \arrow[r] & {M^B_{P''}} \arrow[r] & 0
\end{tikzcd}
\]
is a short exact sequence of $H^B_n(0)$-modules.
\end{lemma}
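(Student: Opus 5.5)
The plan is to combine \cref{thm: regular and interval} with \cref{lem: disjoint union of intervals}, and then check directly that the splitting is compatible with the $0$-Hecke action, as in the proof of \cref{lem: short exact sequence}. By \cref{thm: regular and interval}(1), $\Sigma^B_R(P) = [\upsigma_P,\uprho_P]_R$, and this interval has more than one element. Let $\sigma:=\upsigma_P$ and $\rho:=\uprho_P$, and choose $k$ with $s^B_k \preceq_R \sigma^{-1}\rho$. Translating the decomposition in the proof of \cref{lem: disjoint union of intervals} back by left multiplication by $\sigma$ gives
\[
I_1 = \{\gamma \in [\sigma,\rho]_R \mid s^B_k \notin \Des_L(\sigma^{-1}\gamma)\}
\quad\text{and}\quad
I_2 = \{\gamma \in [\sigma,\rho]_R \mid s^B_k \in \Des_L(\sigma^{-1}\gamma)\}.
\]
Both are nonempty right weak Bruhat intervals, and each is a proper subset. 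By \cref{thm: regular and interval}(2), each $I_j$ equals $\Sigma^B_R(\poset(I_j'))$ for a regular $B_n$ poset, namely $\poset$ of the two endpoints of $I_j$. Set $P''$ to be the regular poset for $I_1$ and $P'$ to be the regular poset for $I_2$. Their numbers of type-$B$ linear extensions are then strictly smaller than $|\calL^B(P)|$.

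The next step is to show that $\C I_2$ is an $H^B_n(0)$-submodule of $M^B_P$. Take $\gamma\in I_2$ and $i$ with $\gamma\cdot\opi^B_i=\gamma s^B_i\in[\sigma,\rho]_R$. Here $s^B_i\notin\Des_R(\gamma)$, so $\gamma\preceq_R \gamma s^B_i$. Since $\sigma\preceq_R\gamma$, we have $\ell(\sigma^{-1}\gamma s^B_i)=\ell(\sigma^{-1}\gamma)+1$, and so $\sigma^{-1}\gamma \preceq_R \sigma^{-1}\gamma s^B_i$. The left descent set only grows along right weak order: if $u\preceq_R v$ then $v=uw$ with lengths adding, so a reduced word for $v$ may start with any reduced word of $u$. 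Hence $s^B_k\in\Des_L(\sigma^{-1}\gamma s^B_i)$, and $\gamma s^B_i\in I_2$. The other two cases of the action (multiplying by $-1$, or acting by $0$) preserve $\C I_2$ trivially. This is the step I expect to need the most care, namely the translation between the intervals $[\id,\sigma^{-1}\rho]_R$ and $[\sigma,\rho]_R$ via $\gamma\mapsto\sigma^{-1}\gamma$. The key point is that this map preserves the covering relations $\gamma\preceq^c_R\gamma s$ inside the interval, using $\ell(\gamma)=\ell(\sigma)+\ell(\sigma^{-1}\gamma)$ for $\gamma\in[\sigma,\rho]_R$.

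Finally, one checks that the submodule $\C I_2$ is isomorphic to $M^B_{P'}$ and that the quotient $M^B_P/\C I_2$ is isomorphic to $M^B_{P''}$. The action on $\C I_2$ inside $M^B_P$ agrees with the action defining $M^B_{P'}$, because both use the rule ``$\gamma s^B_i$ if it lies in the set, else $0$''. The only possible discrepancy would be some $\gamma s^B_i\in I_2\setminus I_2$, which cannot occur. For the quotient, take $\gamma\in I_1$ with $\gamma s^B_i\in[\sigma,\rho]_R$. Then $\gamma s^B_i$ lies either in $I_1$, where the two actions agree, or in $I_2$, where it becomes $0$ in the quotient. This matches the action on $M^B_{P''}$, since $\gamma s^B_i\notin I_1=\Sigma^B_R(P'')$ in the latter case. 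The maps $\iota$ and $\mathrm{pr}$, defined as in \cref{lem: short exact sequence}, then give the required short exact sequence.
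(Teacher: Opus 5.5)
Your proposal is correct and follows essentially the same route as the paper. Both arguments write $\Sigma^B_R(P)=[\upsigma_P,\uprho_P]_R$, split it via \cref{lem: disjoint union of intervals} into two intervals realized by regular posets through \cref{thm: regular and interval}(2), and show that the piece containing $\uprho_P$ (your $I_2$) spans a submodule. The only local difference is how upward closure is justified: the paper uses $\gamma s^B_i\in[\gamma,\uprho_P]_R\subseteq J$, while you use that left descent sets grow along right weak order, which is equally valid. One typo: in your last paragraph the discrepancy to exclude is $\gamma s^B_i\in[\sigma,\rho]_R\setminus I_2$, not $I_2\setminus I_2$.
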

\begin{proof}
Let $P$ be a regular $B_n$ poset with $|\calL^B(P)| > 1$.
By \cref{thm: regular and interval}(1), $\Sigma^B_R(P) = [\upsigma_P, \uprho_P]_R$.
By \cref{lem: disjoint union of intervals}, we have two disjoint nonempty right weak Bruhat intervals $I$ and $J$ such that $I \cup J = [\upsigma_P, \uprho_P]_R$. 
By \cref{thm: regular and interval}(2), $\poset(I)$ and $\poset(J)$ are regular. 
Supposing without loss of generality that $\uprho_P\in J$, we claim that
\[
\begin{tikzcd}
0 \arrow[r] & {M^B_{\poset(J)} } \arrow[r] & M^B_P \arrow[r] & {M^B_{\poset(I)}} \arrow[r] & 0
\end{tikzcd}
\]
is exact. 
It suffices to show that $\C J$ is a submodule of $M^B_P$. 
Suppose for the sake of contradiction that there exists $\gamma\in J$ and $i \in [0,n-1]$ such that $\gamma \cdot \opi_i \notin \C J$.
Then
\[
\gamma \cdot \opi_i = \gamma s_i \in [\gamma, \uprho_P]_R \setminus J.
\]
However, since $\gamma, \uprho_P\in J$, we have $[\gamma, \uprho_P]_R\subseteq J$, which contradicts $\gamma s_i \in [\gamma, \uprho_P]_R \setminus J$.
Therefore, $\gamma \cdot \opi_i \in \C J$, and consequently $\C J$ is a submodule of $M^B_P$. 
\end{proof}

Given a poset $P = ([n], \preceq_{P})$, we say that $P$ is \emph{regular} if for all $x,y,z \in [n]$ with $x < y < z$,
$x \prec_P y$ or $y \prec_P z$ if $x \prec_P z$ and $z \prec_P y$ or $y \prec_P x$ if $z \prec_P x$.

\begin{proposition}\label{prop: regular poset modules}
We have the following.
\begin{enumerate}[label = {\rm (\arabic*)}]
\item 
Let $P_1$ be a regular $B_m$ poset and $P_2 = ([n], \preceq_{P_2})$ a regular poset.
Then $P_1 \sqcup_B P_2$ is a regular $B_{m+n}$ poset.

\item 
Let $P$ be a regular $B_n$ poset and $1 \le m \le n$.
For $Q \in \LS^B(P;m)$ and $U \in \upper(Q)$, $\stB(Q)$ is a regular $B_m$ poset and $\rmst(U)$ is a regular poset.

\item 
For any regular $B_n$ poset $P$, $P^*$ is a regular $B_n$ poset.
\end{enumerate}
\end{proposition}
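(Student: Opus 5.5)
Each part needs two checks: distinguishedness and the three-point condition \cref{eq: regularity}. Both are local statements about comparabilities among a few elements, so the plan is to verify them directly from the definitions of $\sqcup_B$, $\stB$, $\rmst$ and $(\cdot)^*$. Where a direct check becomes awkward, I would fall back on the characterization in \cref{thm: regular and interval} together with \cref{thm: dist are representatives}.

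(3) is the easiest, so I would do it first. Distinguishedness is symmetric under reversing the order, so $P^*$ is distinguished. The relation $x \prec_{P^*} z$ means $z \prec_P x$, and the two clauses of \cref{eq: regularity} are exchanged under reversal. Hence regularity transfers immediately.

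For (1), comparabilities in $P_1\sqcup_B P_2$ occur only within a single block $\calI_{-1}$, $\calI_0$ or $\calI_1$. These blocks are intervals of integers ordered $\calI_{-1}<\calI_0<\calI_1$.
\begin{enumerate}
\item Distinguishedness: if $x$ and $-x$ are comparable, both lie in $\calI_0$, so the condition follows from $P_1$ being distinguished.
\item Regularity: take $x<y<z$ with $x\prec z$. Then $x$ and $z$ lie in the same block, so $y$ lies in that block too. The condition then reduces to regularity of $P_1$, of $P_2$ (for $\calI_1$), or of $P_2$ reflected by $i\mapsto -i-m$ (for $\calI_{-1}$). Reflection reverses both the integer order and the poset order, and regularity is invariant under doing both.
\end{enumerate}

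For (2), $\stB(Q)$ and $\rmst(U)$ are induced subposets, relabelled by order-preserving bijections of the underlying integers.
\begin{enumerate}
\item Three-point condition: it is preserved when we restrict to a subset of integers and relabel monotonically, provided the subposet is \emph{convex} in the following sense. If $x<y<z$ all lie in the subposet and $x\prec_P z$, regularity of $P$ gives $x\prec_P y$ or $y\prec_P z$, and both relations are then visible in the induced order. So the three-point condition passes to every induced subposet, and no convexity is actually needed. This is the observation I would verify carefully.
\item Hence $\rmst(U)$ is regular.
\item For $\stB(Q)$, it remains to check distinguishedness. If $x,-x\in Q$ are comparable in $Q$, then they are comparable in $P$. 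By distinguishedness of $P$, $x$ is comparable to $0\in Q$, so the same holds in $Q$. Relabelling by $\stB$ sends $0\mapsto 0$ and $-x\mapsto$ the negative of the image of $x$, so this is preserved.
\end{enumerate}

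The main obstacle I expect is confirming that the paper's regularity condition really is purely inherited by induced subposets and block unions, as argued above. In particular, I would double-check the boundary case of (1) in which $0$ or the sign interplay between $\calI_{-1}$ and $\calI_1$ might matter. If a subtlety arises there, I would instead use \cref{thm: regular and interval}: realize $\Sigma^B_R(P_1\sqcup_B P_2)$ as $\Sigma^B_R(P_1)\shuffle_B\Sigma_R(P_2)$ via \cref{lem: B shuffle of Sigma}, and show that this shuffle of intervals is an interval.
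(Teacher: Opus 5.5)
Your proposal is correct and follows the paper's approach. For (1), the paper reduces distinguishedness to $P_1$ via the block structure and calls the three-point condition straightforward, as you do; for (2) and (3), which the paper omits as routine, your arguments supply exactly those checks: the three-point condition is inherited by induced subposets, $x^Q_{-i}=-x^Q_i$ gives distinguishedness of $\stB(Q)$, and order reversal swaps the two clauses of \cref{eq: regularity}, so the fallback via \cref{thm: regular and interval} is not needed.
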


\begin{proof}
(1)
From \cref{eq: calIs} and \cref{eq: def of sqcupB}, recall the definitions of $\calI_{-1}, \calI_0, \calI_1$ and $P_1 \sqcup_B P_2$.
To prove that $P_1 \sqcup_B P_2$ is distinguished, take any $x \in [-m-n, m+n]$ such that $x$ and $-x$ are comparable.
If $x$ is in $\calI_{-1} \cup \calI_1$, then $x$ cannot be comparable to $-x$ in $P_1 \sqcup_B P_2$, so both $-x, x \in \calI_0$.
Since $P_1$ is distinguished, $x$ is comparable to $0$ in $P_1$, therefore, in $P_1 \sqcup_B P_2$.
Thus, $P_1 \sqcup_B P_2$ is distinguished.
Moreover, the property \cref{eq: regularity} for $P_1 \sqcup_B P_2$ is straightforwards from that of $P_1$ and $P_2$.
Therefore, $P_1 \sqcup_B P_2$ is regular.

We omit the proofs of (2) and (3), since they are routine.
\end{proof}

Let $(W,S)$ be a finite Coxeter system and $I$ a right weak Bruhat interval in $W$.
The \emph{weak Bruhat interval module associated with $I$} is the $H_W(0)$-module $\mathsf{B}(I)$ with $\C I$ as the underlying space and with the $H_W(0)$-action defined by
\begin{align*}
\gamma \cdot \opi_{s} :=
\begin{cases}
-\gamma & \text{if $s \in \Des_R(\gamma)$}, \\
0 & \text{if $s \notin \Des_R(\gamma)$ and $\gamma s \notin I$,} \\
\gamma s & \text{if $s \notin \Des_R(\gamma)$ and $\gamma s \in I$}
\end{cases}
\end{align*}
for any  $s \in S$ and $\gamma \in I$.
These modules were first introduced in type $A$ in \cite{JKLO22}, and subsequently studied for arbitrary finite Coxeter types in \cite{BS23, YY24}. 

By \cref{thm: regular and interval}, the set $\{M^B_P \mid \text{$P$ is a regular $B_n$ poset}\}$ coincides with the set of weak Bruhat interval modules of $H^B_n(0)$. Therefore, $\scrB^B(n)$ may be identified with the full subcategory of $\modHBn$ whose objects are direct sums of finitely many isomorphic copies of weak Bruhat interval modules of $\SG^B_n$. 

Let $\scrB(n)$ be the full subcategory of $\modHn$ whose objects are direct sums of finitely many isomorphic copies of weak Bruhat interval modules of $H_n(0)$. 
Through \cite{DKLT96, JKLO22, CKO24} it has been shown that, as Hopf algebras,
\begin{align}\label{eq: type A isomorphisms}
\bigoplus_{n \ge 0} \calG_0(\scrB(n)) 
\cong \bigoplus_{n \ge 0} \calG_0(\scrP(n)) 
\cong \bigoplus_{n \ge 0} \calG_0(\modHn)
\cong \QSym.
\end{align}

The following theorem shows that the type-$B$ analogue of \cref{eq: type A isomorphisms} also holds.

\begin{theorem}\label{thm: Grothendieck module of WBIM}
As right $\QSym$-modules and right $\QSym$-comodules,
\[
\bigoplus_{n \ge 0} \calG_0(\scrB^B(n)) 
\cong \bigoplus_{n \ge 0} \calG_0(\scrP^B(n)) 
\cong \bigoplus_{n \ge 0} \calG_0(\modHBn)
\cong \QSymB.
\]
\end{theorem}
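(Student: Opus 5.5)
The plan is to mirror the proof of \cref{thm: Grothendieck group of poset modules} and of \cref{cor: module isom}, \cref{cor: comodule isom}, with \cref{lem: regular poset module ses} and \cref{prop: regular poset modules} playing the roles of \cref{lem: short exact sequence}, \cref{thm: induction product} and \cref{thm: restriction}. The last two isomorphisms, $\bigoplus_{n\ge0}\calG_0(\scrP^B(n))\cong\GHBb\cong\QSymB$, are already established: the first by \cref{thm: Grothendieck group of poset modules} together with \cref{cor: module isom} and \cref{cor: comodule isom}, the second by \cite[Theorem 5]{Huang17} via $\ch^B$. So the only new isomorphism needed is $\bigoplus_{n\ge0}\calG_0(\scrB^B(n))\cong\GHBb$.

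\textbf{Step 1 ($\Z$-modules).} For each $n$, I would show $\calG_0(\scrB^B(n))\cong\calG_0(\modHBn)$ by induction on $|\calL^B(P)|$. If $|\calL^B(P)|>1$, \cref{lem: regular poset module ses} gives $[M^B_P]=[M^B_{P'}]+[M^B_{P''}]$ with $P',P''$ regular and having fewer type-$B$ linear extensions. Iterating expresses each $[M^B_P]$ as a sum of classes of one-dimensional modules $M^B_P$ with $|\calL^B(P)|=1$, which are simple. Conversely, every simple $\bfF^B_I$ occurs: a singleton interval $[\gamma,\gamma]_R$ with $\Des_R(\gamma)=I$ is, by \cref{thm: regular and interval}(2), $\Sigma^B_R$ of a regular poset, and its module is $\bfF^B_I$ because the action has no $\gamma s$ term. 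Hence $\ch^B$ restricts to a bijection from the simple classes to the basis $\{F^B_\alpha\}$, giving $\calG_0(\scrB^B(n))\cong\QSymB_n$.

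\textbf{Step 2 (action and coaction).} Next I would check that $\boxtimes^B$ and $\updelta^B$ preserve the regular subcategories, with $\QSym$ realized through type-$A$ weak Bruhat interval modules, i.e.\ $\bigoplus\calG_0(\scrB(n))\cong\QSym$ from \cref{eq: type A isomorphisms}. Here I need that regular type-$A$ posets give exactly the type-$A$ interval modules, which is \cite{BW91}. Then \cref{thm: induction product} with \cref{prop: regular poset modules}(1) shows that inducing $M^B_{P_1}\otimes M_{P_2}$ gives $M^B_{P_1\sqcup_B P_2}$, again regular. Likewise \cref{thm: restriction} with \cref{prop: regular poset modules}(2) shows that the restriction is a direct sum of $M^B_{\stB(Q)}\otimes M_{\rmst(U)}$ with both factors regular. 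Finally, the inclusion $\scrB^B(n)\subseteq\modHBn$ induces a map on Grothendieck groups that is an isomorphism by Step 1 and is compatible with $\boxtimes^B$ and $\updelta^B$, since these are computed inside $\modHBn$. Composing with $\ch^B$ gives all the stated isomorphisms.

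\textbf{Main obstacle.} The delicate point is Step 1's well-definedness. In a full subcategory that is not closed under subquotients, relations only come from short exact sequences inside $\scrB^B(n)$. So I must argue that the map $\calG_0(\scrB^B(n))\to\calG_0(\modHBn)$ is injective, not merely surjective. I would handle this as in \cref{thm: Grothendieck group of poset modules}: compose with $\ch^B$. The expansion $\ch^B([M^B_P])=\sum_{\gamma}F^B_{\Des_R(\gamma)}$ is intrinsic, and the decomposition above shows that the classes of the one-dimensional regular modules generate and map to a basis, hence are independent. I also need \cref{prop: regular poset modules}(2) on restrictions, whose proof the paper omits. If needed I would verify it directly: the conditions defining lower and upper subposets, together with distinguishedness of $P$, should transfer \cref{eq: regularity} to $\stB(Q)$ and to $\rmst(U)$.
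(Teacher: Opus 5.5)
Your proposal is correct and follows essentially the same route as the paper. The $\Z$-module isomorphism comes from the short exact sequences of \cref{lem: regular poset module ses}, and compatibility with the action and coaction comes from \cref{thm: induction product}, \cref{thm: restriction} and \cref{prop: regular poset modules}, combined with \cref{eq: type A isomorphisms} and $\ch^B$. Your extra care about simples, injectivity via $\ch^B$, and the omitted part (2) of \cref{prop: regular poset modules} (routine, since distinguishedness, using $0\in Q$, and \cref{eq: regularity} pass to induced subposets relabeled order-preservingly) only makes explicit what the paper leaves implicit.
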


\begin{proof}
In \cref{cor: module isom} and \cref{cor: comodule isom}, we showed
\[
\bigoplus_{n \ge 0} \calG_0(\scrP^B(n)) 
\cong \bigoplus_{n \ge 0} \calG_0(\modHBn)
\]
as right $\QSym$-modules and right $\QSym$-comodules.
Similarly, combining the isomorphisms 
\[
\bigoplus_{n \ge 0} \calG_0(\scrB(n)) 
\cong \bigoplus_{n \ge 0} \calG_0(\modHn) \cong \QSym
\]
with \cref{lem: regular poset module ses} and \cref{prop: regular poset modules}, we have
\[
\bigoplus_{n \ge 0} \calG_0(\scrB^B(n)) 
\cong \bigoplus_{n \ge 0} \calG_0(\modHBn)
\]
as right $\QSym$-modules and right $\QSym$-comodules.
Therefore, by the isomorphism \cref{eq: quasi characteristic of type B}, the assertion follows.
\end{proof}

\section*{Acknowledgements}
The first author was supported by the National Research Foundation of Korea(NRF) grant funded by the Korean Government (NRF-2020R1A5A1016126), Basic Science Research Program through NRF funded by the Ministry of Education (RS-2023-00240377), and a research grant from Seoul Women’s University (2025-0241).
The second author was supported by the Marsden Fund, administered by the Royal Society of New Zealand Te Ap{\= a}rangi.

\bibliographystyle{abbrv}
\bibliography{references}

@book {Chow01,
    AUTHOR = {Chow, Chak-On},
     TITLE = {Noncommutative symmetric functions of type {B}},
      NOTE = {Thesis (Ph.D.)--Massachusetts Institute of Technology},
 PUBLISHER = {ProQuest LLC, Ann Arbor, MI},
      YEAR = {2001},
     PAGES = {(no paging)},
   MRCLASS = {99-05},
  MRNUMBER = {2717011}
}

@book {BB05,
    AUTHOR = {Bj\"orner, Anders and Brenti, Francesco},
     TITLE = {Combinatorics of {C}oxeter groups},
    SERIES = {Graduate Texts in Mathematics},
    VOLUME = {231},
 PUBLISHER = {Springer, New York},
      YEAR = {2005},
     PAGES = {xiv+363},
      ISBN = {978-3540-442387; 3-540-44238-3},
   MRCLASS = {05-01 (05E15 20F55)},
  MRNUMBER = {2133266},
MRREVIEWER = {Jian-yi\ Shi},
}

@article {Huang16,
    AUTHOR = {Huang, Jia},
     TITLE = {A tableau approach to the representation theory of 0-{H}ecke
              algebras},
   JOURNAL = {Ann. Comb.},
  FJOURNAL = {Annals of Combinatorics},
    VOLUME = {20},
      YEAR = {2016},
    NUMBER = {4},
     PAGES = {831--868},
      ISSN = {0218-0006,0219-3094},
   MRCLASS = {20C08 (05E05 05E10 16T30)},
  MRNUMBER = {3572389},
MRREVIEWER = {Michael\ Chmutov},
       DOI = {10.1007/s00026-016-0338-5}
}

@article {Huang17,
    AUTHOR = {Huang, Jia},
     TITLE = {A uniform generalization of some combinatorial {H}opf
              algebras},
   JOURNAL = {Algebr. Represent. Theory},
  FJOURNAL = {Algebras and Representation Theory},
    VOLUME = {20},
      YEAR = {2017},
    NUMBER = {2},
     PAGES = {379--431},
      ISSN = {1386-923X,1572-9079},
   MRCLASS = {16T30 (05E05 20C08 20F55)},
  MRNUMBER = {3638354},
MRREVIEWER = {Ben\ Salisbury},
       DOI = {10.1007/s10468-016-9648-x}
}

@article {Norton79,
    AUTHOR = {Norton, P. N.},
     TITLE = {{$0$}-{H}ecke algebras},
   JOURNAL = {J. Austral. Math. Soc. Ser. A},
  FJOURNAL = {Australian Mathematical Society. Journal. Series A. Pure
              Mathematics and Statistics},
    VOLUME = {27},
      YEAR = {1979},
    NUMBER = {3},
     PAGES = {337--357},
      ISSN = {0263-6115},
   MRCLASS = {16A48 (17B10)},
  MRNUMBER = {532754},
MRREVIEWER = {S.\ B.\ Conlon},
}

@article {BL09,
    AUTHOR = {Bergeron, Nantel and Li, Huilan},
     TITLE = {Algebraic structures on {G}rothendieck groups of a tower of
              algebras},
   JOURNAL = {J. Algebra},
  FJOURNAL = {Journal of Algebra},
    VOLUME = {321},
      YEAR = {2009},
    NUMBER = {8},
     PAGES = {2068--2084},
      ISSN = {0021-8693,1090-266X},
   MRCLASS = {16T30 (05E05 05E15)},
  MRNUMBER = {2501510},
MRREVIEWER = {Mark\ J.\ Wildon},
       DOI = {10.1016/j.jalgebra.2008.12.005},
       URL = {https://doi.org/10.1016/j.jalgebra.2008.12.005},
}

@article {DKLT96,
    AUTHOR = {Duchamp, G\'erard and Krob, Daniel and Leclerc, Bernard and
              Thibon, Jean-Yves},
     TITLE = {Fonctions quasi-sym\'etriques, fonctions sym\'etriques non
              commutatives et alg\`ebres de {H}ecke \`a{} {$q=0$}},
   JOURNAL = {C. R. Acad. Sci. Paris S\'er. I Math.},
  FJOURNAL = {Comptes Rendus de l'Acad\'emie des Sciences. S\'erie I.
              Math\'ematique},
    VOLUME = {322},
      YEAR = {1996},
    NUMBER = {2},
     PAGES = {107--112},
      ISSN = {0764-4442},
   MRCLASS = {05E05 (20C05)},
  MRNUMBER = {1373744},
}

@article {KT97,
    AUTHOR = {Krob, Daniel and Thibon, Jean-Yves},
     TITLE = {Noncommutative symmetric functions. {IV}. {Q}uantum linear
              groups and {H}ecke algebras at {$q=0$}},
   JOURNAL = {J. Algebraic Combin.},
  FJOURNAL = {Journal of Algebraic Combinatorics. An International Journal},
    VOLUME = {6},
      YEAR = {1997},
    NUMBER = {4},
     PAGES = {339--376},
      ISSN = {0925-9899,1572-9192},
   MRCLASS = {05E05 (17B37 20C30)},
  MRNUMBER = {1471894},
MRREVIEWER = {Jian-yi\ Shi},
       DOI = {10.1023/A:1008673127310},
       URL = {https://doi.org/10.1023/A:1008673127310},
}

@article {DHT02,
    AUTHOR = {Duchamp, G\'erard and Hivert, Florent and Thibon, Jean-Yves},
     TITLE = {Noncommutative symmetric functions. {VI}. {F}ree
              quasi-symmetric functions and related algebras},
   JOURNAL = {Internat. J. Algebra Comput.},
  FJOURNAL = {International Journal of Algebra and Computation},
    VOLUME = {12},
      YEAR = {2002},
    NUMBER = {5},
     PAGES = {671--717},
      ISSN = {0218-1967,1793-6500},
   MRCLASS = {05E05 (20C08)},
  MRNUMBER = {1935570},
MRREVIEWER = {Jian-yi\ Shi},
       DOI = {10.1142/S0218196702001139},
       URL = {https://doi.org/10.1142/S0218196702001139},
}

@article {Fayers05,
    AUTHOR = {Fayers, Matthew},
     TITLE = {0-{H}ecke algebras of finite {C}oxeter groups},
   JOURNAL = {J. Pure Appl. Algebra},
  FJOURNAL = {Journal of Pure and Applied Algebra},
    VOLUME = {199},
      YEAR = {2005},
    NUMBER = {1-3},
     PAGES = {27--41},
      ISSN = {0022-4049,1873-1376},
   MRCLASS = {20C08},
  MRNUMBER = {2134290},
MRREVIEWER = {Robert\ B\'edard},
       DOI = {10.1016/j.jpaa.2004.12.001},
       URL = {https://doi.org/10.1016/j.jpaa.2004.12.001},
}

@article {JKLO22,
    AUTHOR = {Jung, Woo-Seok and Kim, Young-Hun and Lee, So-Yeon and Oh,
              Young-Tak},
     TITLE = {Weak {B}ruhat interval modules of the 0-{H}ecke algebra},
   JOURNAL = {Math. Z.},
  FJOURNAL = {Mathematische Zeitschrift},
    VOLUME = {301},
      YEAR = {2022},
    NUMBER = {4},
     PAGES = {3755--3786},
      ISSN = {0025-5874,1432-1823},
   MRCLASS = {20C08 (05E05 05E10)},
  MRNUMBER = {4449729},
MRREVIEWER = {Jianping\ Pan},
       DOI = {10.1007/s00209-022-03025-4}
}

@article {BW91,
    AUTHOR = {Bj\"orner, Anders and Wachs, Michelle L.},
     TITLE = {Permutation statistics and linear extensions of posets},
   JOURNAL = {J. Combin. Theory Ser. A},
  FJOURNAL = {Journal of Combinatorial Theory. Series A},
    VOLUME = {58},
      YEAR = {1991},
    NUMBER = {1},
     PAGES = {85--114},
      ISSN = {0097-3165,1096-0899},
   MRCLASS = {06A07 (05E99 20B35)},
  MRNUMBER = {1119703},
MRREVIEWER = {Sergey\ V.\ Fomin},
       DOI = {10.1016/0097-3165(91)90075-R}
}

@article {BW88,
    AUTHOR = {Bj\"orner, Anders and Wachs, Michelle L.},
     TITLE = {Generalized quotients in {C}oxeter groups},
   JOURNAL = {Trans. Amer. Math. Soc.},
  FJOURNAL = {Transactions of the American Mathematical Society},
    VOLUME = {308},
      YEAR = {1988},
    NUMBER = {1},
     PAGES = {1--37},
      ISSN = {0002-9947,1088-6850},
   MRCLASS = {05A99 (06F99 20B30 20F99)},
  MRNUMBER = {946427},
MRREVIEWER = {Joseph\ Kung},
       DOI = {10.2307/2000946},
       URL = {https://doi.org/10.2307/2000946},
}

@article {DS25,
    AUTHOR = {Defant, C. and Searles, D.},
     TITLE = {0-{H}ecke modules, domino tableaux, and type-${B}$ quasiysmmetric functions},
     JOURNAL = {Canad. J. Math.},
    VOLUME = {to appear, 32pp},
      YEAR = {2025},
    NUMBER = {},
     PAGES = {},
     note = {doi:10.4153/S0008414X24000762},
}

@article {Searles25,
    AUTHOR = {Dominic Searles},
     TITLE = {Diagram supermodules for $0$-{H}ecke--{C}lifford algebras},
   JOURNAL = {Math. Z.},
  FJOURNAL = {Mathematische Zeitschrift},
      YEAR = {2025},
    VOLUME = {310},
     PAGES = {Article No. 43},
       DOI = {https://doi.org/10.1007/s00209-025-03750-6},
       URL = {https://doi.org/10.1007/s00209-025-03750-6},
}

@article {CKO24,
    AUTHOR = {Choi, Seung-Il and Kim, Young-Hun and Oh, Young-Tak},
     TITLE = {Poset modules of the 0-{H}ecke algebras and related
              quasisymmetric power sum expansions},
   JOURNAL = {European J. Combin.},
  FJOURNAL = {European Journal of Combinatorics},
    VOLUME = {120},
      YEAR = {2024},
     PAGES = {Paper No. 103965, 34pp},
      ISSN = {0195-6698,1095-9971},
   MRCLASS = {20C08 (05E10)},
  MRNUMBER = {4732135},
MRREVIEWER = {Guiyu\ Yang},
       DOI = {10.1016/j.ejc.2024.103965},
       URL = {https://doi.org/10.1016/j.ejc.2024.103965},
}

@article {Stanley72,
    AUTHOR = {Stanley, Richard P.},
     TITLE = {Ordered structures and partitions},
   JOURNAL = {Mem. Amer. Math. Soc},
  FJOURNAL = {Memoirs of the American Mathematical Society},
    VOLUME = {1},
      YEAR = {1972},
    NUMBER = {119},
     PAGES = {},
}

@incollection {Gessel84,
    AUTHOR = {Gessel, Ira M.},
     TITLE = {Multipartite {$P$}-partitions and inner products of skew
              {S}chur functions},
 BOOKTITLE = {Combinatorics and algebra ({B}oulder, {C}olo., 1983)},
    SERIES = {Contemp. Math.},
    VOLUME = {34},
     PAGES = {289--317},
 PUBLISHER = {Amer. Math. Soc., Providence, RI},
      YEAR = {1984},
      ISBN = {0-8218-5029-6},
   MRCLASS = {05A17 (20C30)},
  MRNUMBER = {777705},
MRREVIEWER = {J.\ D\'esarm\'enien},
       DOI = {10.1090/conm/034/777705},
       URL = {https://doi.org/10.1090/conm/034/777705},
}

@article {GG22,
    AUTHOR = {Gaetz, Christian and Gao, Yibo},
     TITLE = {The hull metric on {C}oxeter groups},
   JOURNAL = {Comb. Theory},
  FJOURNAL = {Combinatorial Theory},
    VOLUME = {2},
      YEAR = {2022},
    NUMBER = {2},
     PAGES = {Paper No. 7, 15pp},
      ISSN = {2766-1334},
   MRCLASS = {05E16 (20F55)},
  MRNUMBER = {4449815},
}

@article {BS23,
    AUTHOR = {Bardwell, Joshua and Searles, Dominic},
     TITLE = {Weak {B}ruhat interval modules of finite-type {$0$}-{H}ecke
              algebras and projective covers},
   JOURNAL = {Electron. J. Combin.},
  FJOURNAL = {Electronic Journal of Combinatorics},
    VOLUME = {32},
      YEAR = {2025},
    NUMBER = {2},
     PAGES = {Article No. P2.54, 20pp},
      ISSN = {1077-8926},
   MRCLASS = {05E10 (05E05 20C08)},
  MRNUMBER = {4921234},
       DOI = {10.37236/13516}
}

@article{Gessel15,
    AUTHOR = {Gessel, Ira},
     TITLE = {A {H}istorical {S}urvey of ${P}$-{P}artitions},
   JOURNAL = {arXiv preprint},
      YEAR = {arXiv:1506.03508 [math.CO], 2015}
}

@article{YY24,
    AUTHOR = {Han Yang and Houyi Yu},
     TITLE = {Classification of weak {B}ruhat interval modules of $0$-{H}ecke algebras},
   JOURNAL = {arXiv preprint},
      YEAR = {arXiv:2410.07990 [math.RT], 2024}
}

@article{GR20,
    AUTHOR = {Darij Grinberg and Victor Reiner},
     TITLE = {{H}opf {A}lgebras in {C}ombinatorics},
   JOURNAL = {arXiv preprint},
      YEAR = {arXiv:1409.8356 [math.CO], 2020}
}

@article{Reiner93,
    AUTHOR = {Victor Reiner},
     TITLE = {Signed {P}osets},
   JOURNAL = {J. Combin. Theory Ser. A},
  FJOURNAL = {Journal of Combinatorial Theory. Series A},
    VOLUME = {62},
      YEAR = {1993},
    NUMBER = {2},
     PAGES = {324--360},
}

@article{Reiner92,
    AUTHOR = {Victor Reiner},
     TITLE = {Quotients of {C}oxeter complexes and ${P}$-partitions},
   JOURNAL = {Mem. Amer. Math. Soc},
  FJOURNAL = {Memoirs of the American Mathematical Society},
    VOLUME = {95},
      YEAR = {1992},
    NUMBER = {460},
     PAGES = {},
}

@article {TvW15,
    AUTHOR = {V. Tewari and S. van Willigenburg},
     TITLE = {Modules of the {$0$}-{H}ecke algebra and quasisymmetric {S}chur
              functions},
   JOURNAL = {Adv. Math.},
    VOLUME = {285},
      YEAR = {2015},
     PAGES = {1025--1065}
}

@article {TvW19,
    AUTHOR = {Tewari, V. and van Willigenburg, S.},
     TITLE = {Permuted composition tableaux, $0$-{H}ecke algebra and labeled binary trees},
   JOURNAL = {J. Combin. Theory Ser. A},
  FJOURNAL = {Journal of Combinatorial Theory Series A},
    VOLUME = {161},
      YEAR = {2019},
    NUMBER = {},
     PAGES = {420--452},
}

@article{Searles19,
    AUTHOR = {Searles, Dominic},
     TITLE = {Indecomposable {$0$}-{H}ecke modules for extended {S}chur
              functions},
   JOURNAL = {Proc. Amer. Math. Soc.},
  FJOURNAL = {Proceedings of the American Mathematical Society},
    VOLUME = {148},
      YEAR = {2020},
    NUMBER = {5},
     PAGES = {1933--1943}
}

@article {BBSSZ15,
    AUTHOR = {C. Berg and N. Bergeron and F. Saliola and
              L. Serrano and M. Zabrocki},
     TITLE = {Indecomposable modules for the dual immaculate basis of
              quasi-symmetric functions},
   JOURNAL = {Proc. Amer. Math. Soc.},
    VOLUME = {143},
      YEAR = {2015},
    NUMBER = {3},
     PAGES = {991--1000}
}

@article {KY24,
    AUTHOR = {Kim, Young-Hun and Yoo, Semin},
     TITLE = {Weak {B}ruhat interval modules of the $0$-{H}ecke algebra for genomic {S}chur functions},
   JOURNAL = {Electron. J. Combin.},
    VOLUME = {31},
      YEAR = {2024},
    NUMBER = {4},
     PAGES = {Article No. P4.73, 49pp}
}

@article{CKNO22,
    AUTHOR = {S.-I. Choi and Y.-H. Kim and S.-Y. Nam and Y.-T. Oh},
     TITLE = {The projective cover of tableau-cyclic indecomposable ${H}_n(0)$-modules},
   JOURNAL = {Trans. Amer. Math. Soc. },
    VOLUME = {375},
      YEAR = {2022},
     PAGES = {7747--7782}
}

@article{NSvWVW22,
    AUTHOR = {Niese, Elizabeth and Sundaram, Sheila and van Willigenburg, Stephanie and Vega, Julianne and Wang, Shiyun},
     TITLE = {0-{H}ecke modules for row-strict dual immaculate functions},
   JOURNAL = {Trans. Amer. Math. Soc. },
    VOLUME = {377},
      YEAR = {2024},
     PAGES = {2525--2582}
}

@article {BS22,
    AUTHOR = {Bardwell, Joshua and Searles, Dominic},
     TITLE = {0-{H}ecke modules for {Y}oung row-strict quasisymmetric
              {S}chur functions},
   JOURNAL = {European J. Combin.},
  VOLUME = {102},
      YEAR = {2022},
     PAGES = {103494, 18pp}
}

\end{document}